\def\?[#1]{\textbf{[#1]}\marginpar{\Large{\textbf{??}}}}
\newtheorem{theo}{Theorem}
\newtheorem{corr}[theo]{Corollary}
\newtheorem{prop}{Proposition}[section]
\newtheorem{lemm}[prop]{Lemma}
\theoremstyle{definition}
\newtheorem{defi}[prop]{Definition}
\newtheorem{rem}[prop]{Remark}
\numberwithin{equation}{section}
\newcommand{\rr}{\mathbb{R}}
\newcommand{\dd}{\mathrm{d}}
\newcommand{\Lie}{\mathcal{L}}
\newcommand{\Cinf}{\mathcal{C}^\infty}
\newcommand{\e}{\mathrm{e}}
\newcommand{\Lien}{{\Lie_{X}^\nabla}}
\newcommand{\Lienz}{{\Lie^{\nabla(z)}_X}}
\newcommand{\Lienzo}{{\Lie^{\nabla(z_0)}_X}}
\renewcommand{\H}{\mathcal{H}}
\newcommand{\grdet}[1]{{\det}_{\mathrm{gr}, #1}}
\newcommand{\grtr}[1]{{\tr}_{\mathrm{gr}, #1}}
\newcommand{\sdet}[1]{{\det}_{\mathrm{s}, #1}}
\newcommand{\str}[1]{{\tr}_{\mathrm{s}, #1}}
\newcommand{\strf}{{\tr}_\mathrm{s}^\flat}
\newcommand{\grtrf}{{\tr}_\mathrm{gr}^\flat}
\newcommand{\I}{\mathcal{I}}
\newcommand{\dom}{\mathcal{O}}
\newcommand{\Rep}{\mathrm{Rep}}
\newcommand{\G}{\mathcal{G}}
\newcommand{\Lient}{\Lie_{X_t}^\nabla}
\renewcommand{\div}{\mathrm{div}}
\newcommand{\crit}{\mathrm{Crit}}
\newcommand{\ind}{\mathrm{ind}}
\newcommand{\cs}{\mathrm{cs}}
\DeclareMathOperator{\grad}{grad}
\newcommand{\Lientilde}{\Lie_{\widetilde{X}}^\nabla}
\newcommand{\Lientildem}{\Lie_{-\widetilde{X}}^\nabla}
\DeclareMathOperator{\Res}{Res}
\let\Im=\Imag
\DeclareMathOperator{\Op}{Op}
\DeclareMathOperator{\rank}{rank}
\let\Re=\Real
\DeclareMathOperator{\supp}{supp}
\DeclareMathOperator{\WF}{WF}
\DeclareMathOperator{\tr}{tr}
\DeclareMathOperator{\id}{Id}
\title[Dynamical torsion for contact Anosov flows]{Dynamical torsion for contact Anosov flows}
\author[Y.~Chaubet]{Yann Chaubet}
\address{Universit\'e Paris-Sud, D\'epartement de Math\'ematiques, 91400
Orsay, France}
\email{yann.chaubet@math.u-psud.fr}
\author[N.V.~Dang]{Nguyen Viet Dang}
\address{Institut Camille Jordan (U.M.R. CNRS 5208), Universit\'e Claude Bernard Lyon 1, B\^atiment Braconnier, 43, boulevard du 11 novembre 1918, 
69622 Villeurbanne Cedex }
\email{dang@math.univ-lyon1.fr}
\begin{document}
\maketitle

\begin{abstract}
We introduce a new object, the dynamical torsion, which extends the potentially ill-defined value at $0$ of the Ruelle zeta function of a contact Anosov flow twisted by an acyclic representation of the fundamental group.
We show important properties of the dynamical torsion: it is invariant under deformations among contact Anosov flows, it is holomorphic in the representation and it has the same logarithmic derivative as some refined combinatorial torsion of Turaev. This shows that the ratio between this torsion and the Turaev torsion is locally constant on the space of acyclic representations. 

 In particular, for contact Anosov flows path connected to the geodesic flow of some hyperbolic manifold among contact Anosov flows, we relate the leading term of the Laurent expansion of $\zeta$ at the origin, the Reidemeister torsion and the torsions of the finite dimensional complexes of the generalized resonant states of both flows for the resonance $0$. This extends previous work of~\cite{dang2018fried} on the Fried conjecture near geodesic flows of
hyperbolic $3$--manifolds, to hyperbolic manifolds of any odd dimension.
\end{abstract}

\section{Introduction}
Let $M$ be a closed odd dimensional manifold and $(E, \nabla)$ be a flat vector bundle over $M$. The parallel transport of the connection $\nabla$ induces a conjugacy class of representation $\rho \in \mathrm{Hom}(\pi_1(M), \mathrm{GL}(\mathbb{C}^d))$. Moreover, $\nabla$ defines a differential on the complex $\Omega^\bullet(M,E)$ of $E$-valued differential forms
on $M$ and thus cohomology groups $H^\bullet(M,\nabla) = H^\bullet(M,\rho)$ (note that we use the notation $\nabla$ also for the twisted differential induced by $\nabla$ whereas it can be denoted by $\dd^\nabla$ in other references). We will say that $\nabla$ (or $\rho$) is 
acyclic if those cohomology groups are trivial. If $\rho$ is unitary (or equivalently,
if there exists a hermitian structure on $E$ preserved by $\nabla$) and acyclic, Reidemeister \cite{reidemeister1935homotopieringe} introduced a combinatorial invariant $\tau_{\mathrm{R}}(\rho)$ 
of the pair $(M,\rho)$, the so-called \textit{Franz-Reidemeister torsion} (or R-torsion), which is a positive number. This allowed him to classify lens spaces in dimension $3$; this result was then extended in higher dimension by Franz \cite{franz1935torsion} and De Rham \cite{de1936nouveaux}.

 On the analytic side, Ray-Singer \cite{ray1971r} introduced another invariant $\tau_{\mathrm{RS}}(\rho)$, the \textit{analytic torsion}, defined via the derivative at $0$ of the spectral zeta function of the Laplacian given by the Hermitian metric on $E$ and some Riemannian metric on $M$. They conjectured the equality of the analytic and Reidemeister torsions. This conjecture was proved independently by Cheeger \cite{cheeger1979analytic} and M\"uller \cite{muller1978analytic}, assuming only that $\rho$ is unitary (both R-torsion and analytic torsion have a natural extension if $\rho$ is unitary and not acyclic). The Cheeger-M\"uller theorem was extended to unimodular flat vector bundles by M\"uller \cite{muller1993analytic} and to arbitrary flat vector bundles by Bismut-Zhang \cite{bismut1992extension}.

In the context of hyperbolic dynamical systems, Fried \cite{fried1987lefschetz} was interested in the link between the R-torsion and the Ruelle zeta function of an Anosov flow $X$ which is defined by
$$
\zeta_{X, \rho}(s) = \prod_{\gamma \in \mathcal{G}_X^\#} \det \Bigl(1- \varepsilon_\gamma \rho([\gamma])\e^{-s \ell(\gamma)}\Bigr), \quad \Re(s) \gg 0,
$$
where $\mathcal{G}_X^\#$ is the set of primitive closed orbits of $X$, $\ell(\gamma)$ is the period of $\gamma$ and $\varepsilon_\gamma = 1$ if the stable bundle of $\gamma$ is orientable and $\varepsilon_\gamma = -1$ otherwise. Using Selberg's trace formula, Fried could relate the behavior of $\zeta_{X,\rho}(s)$ near $s=0$ with $\tau_\textrm{R}$, as follows.
\begin{theo}[Fried \cite{fried1986analytic}]\label{thm:Fried}
Let $M = SZ$ be the unit tangent bundle of some closed oriented hyperbolic manifold $Z$, and denote by $X$ its geodesic vector field on $M$. Assume that $\rho : \pi_1(M) \to O(d)$ is an acyclic and unitary representation. Then $\zeta_{X,\rho}$ extends meromorphically to $\mathbb{C}$. Moreover, it is holomorphic near $s=0$ and
\begin{equation}\label{eq:morgado}
|\zeta_{X,\rho}(0)|^{(-1)^r} = \tau_\textrm{R}(\rho),
\end{equation}
where $2r+1 = \dim M$, and $\tau_\textrm{R}(\rho)$ is the Reidemeister torsion of $(M, \rho)$.
\end{theo}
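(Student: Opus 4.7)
My plan is to follow Fried's original argument: express the Ruelle zeta function in terms of Selberg zeta functions on the hyperbolic base $Z$, apply the Selberg trace formula to identify the relevant values with the Ray--Singer analytic torsion, and conclude via the Cheeger--M\"uller theorem.

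The first step is to exploit the homogeneity of the geodesic flow. Writing $Z = \Gamma \backslash \mathbb{H}^{r+1}$, the unstable Poincar\'e return map along any closed geodesic $\gamma$ acts by a pure $\e^{\ell(\gamma)}$-dilation on a bundle of rank $r$. The elementary identity $\det(I-A) = \sum_{k} (-1)^{k} \tr \Lambda^{k} A$, applied orbit by orbit, then yields a finite alternating factorization
\[
\zeta_{X,\rho}(s) \;=\; \prod_{k=0}^{r} Z_{k,\rho}(s+k)^{(-1)^{k}},
\]
where each $Z_{k,\rho}$ is a twisted Selberg zeta function associated to $\rho$ and to the $k$-th exterior power of the stable bundle. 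This reduces the study of $\zeta_{X,\rho}(0)$ to Selberg theory on the hyperbolic manifold $Z$.

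The second step is to apply the Selberg trace formula on $\Gamma \backslash \mathbb{H}^{r+1}$ to each $Z_{k,\rho}$, twisted by the flat bundle $E_\rho$: this gives a meromorphic extension to $\mathbb{C}$ and relates the value $Z_{k,\rho}(k)$ to the spectrum of the twisted Hodge--de Rham Laplacian $\Delta_{k}^{\rho}$ on $k$-forms over $Z$, together with topological and local geometric contributions coming from the identity conjugacy class. A careful comparison of these terms shows that the combination $\prod_{k} |Z_{k,\rho}(k)|^{(-1)^{k}}$ is exactly the Ray--Singer torsion $\tau_{\mathrm{RS}}(\rho)$ raised to the power $(-1)^{r}$; the Cheeger--M\"uller theorem then identifies this with $\tau_{\mathrm{R}}(\rho)$, yielding \eqref{eq:morgado}. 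Acyclicity of $\rho$ ensures that none of the $Z_{k,\rho}(k)$ vanishes, which also gives the holomorphy of $\zeta_{X,\rho}$ at $s = 0$.

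The principal obstacle is the second step. Identifying the leading term of each Selberg zeta function at a critical integer involves the harmonic analysis of $\mathrm{SO}(r+1,1)$ and the Plancherel formula for non-spherical principal series representations, and producing the precise combination of spectral determinants giving the Ray--Singer torsion requires a careful cancellation of the local geometric contributions (which takes advantage of the odd dimension of $M$). One must also verify that the orientability signs $\varepsilon_\gamma$ appearing in $\zeta_{X,\rho}$ are correctly reproduced by the holonomy of the stable bundle, and that the combinatorial factorization of the first step is consistent with the product definition of the Selberg zeta functions.
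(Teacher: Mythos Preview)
The paper does not give its own proof of this theorem: it is quoted in the introduction as a result of Fried, with the single sentence ``Using Selberg's trace formula, Fried could relate the behavior of $\zeta_{X,\rho}(s)$ near $s=0$ with $\tau_\textrm{R}$.'' Your outline is precisely Fried's original strategy (factorization into Selberg zeta functions, Selberg trace formula / Plancherel analysis to reach the Ray--Singer torsion, Cheeger--M\"uller to conclude), so there is nothing to compare against beyond that one-line summary, and your sketch is consistent with it.
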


In \cite{fried1987lefschetz}, Fried conjectured that the same holds true for negatively curved locally symmetric spaces. 
This was proved by Moscovici-Stanton \cite{moscovici1991r}, Shen \cite{shen2017analytic}. 

For analytic Anosov flows, the meromorphic continuation of $\zeta_{X,\rho}$ was proved by Rugh \cite{rugh1996generalized} in dimension $3$ and by Fried \cite{fried1995meromorphic} in higher dimensions. Then Sanchez-Morgado \cite{sanchez1993lefschetz,sanchez1996r} proved in dimension $3$ that if $\rho$ is acyclic, unitary, and satisfies that $\rho([\gamma]) - \varepsilon_\gamma^j$ is invertible for $j\in \{0,1\}$ for some closed orbit $\gamma$, then (\ref{eq:morgado}) is true.

For general smooth Anosov flows, the meromorphic continuation of $\zeta_{X,\rho}$ was proved by Giuletti-Liverani-Pollicott \cite{giulietti2013anosov} and alternatively by Dyatlov--Zworski~\cite{dyatlov2013dynamical}.
The Axiom A case was treated by Dyatlov--Guillarmou in \cite{dyatlov2018afterword}. 
Quoting the commentary from Zworski \cite{zworski2018commentary} on 
Smale's seminal paper~\cite{smale1967differentiable}, equation~(\ref{eq:morgado}) "would link dynamical, spectral and topological quantities. [$\dots$] In the case of smooth manifolds of variable negative curvature, equation (\ref{eq:morgado}) remains completely open".
However in~\cite{dyatlov2017ruelle},
the authors were able to prove the following.
\begin{theo}[Dyatlov--Zworski]\label{th:DZruelle}
Suppose $(\Sigma,g)$ is a negatively curved orientable Riemannian surface. Let $X$ denote the associated geodesic vector field on the unitary cotangent bundle $M = S^*\Sigma$. 
Then for some $c \neq 0$, we have as $s \to 0$
\begin{equation}
\zeta_{X,\mathbf{1}}(s)=cs^{|\chi(\Sigma)|}\left(1+\mathcal{O}(s) \right),
\end{equation}
where $\mathbf{1}$ is the trivial representation $\pi_1(S^*\Sigma) \to \mathbb{C}^*$ and $\chi(\Sigma)$ is the Euler characteristic of $\Sigma$. In particular, the length spectrum $\left\{\ell(\gamma),~\gamma\in \mathcal{G}_X^\#\right\}$
determines the genus.
\end{theo}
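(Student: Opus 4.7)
The plan is to factor $\zeta_{X,\mathbf{1}}$ as a signed product of graded dynamical zeta functions, compute the order of vanishing of each factor at $s = 0$ via Pollicott--Ruelle resonance counting, and identify the total order with a topological invariant of the base $\Sigma$. Since $(M, X)$ is a three-dimensional contact Anosov system, the Anosov splitting $TM = \rr X \oplus E_u \oplus E_s$ has one-dimensional stable and unstable bundles. Writing $V_k = \Lambda^k(E_u^* \oplus E_s^*)$, the identity $\det(1 - P) = \sum_{k=0}^{2} (-1)^k \tr \Lambda^k P$ applied to the linearized Poincar\'e return map $P_\gamma$ of each closed orbit $\gamma$ yields the factorization
\begin{equation*}
\zeta_{X,\mathbf{1}}(s) = \prod_{k=0}^{2} \zeta_k(s)^{(-1)^k},
\end{equation*}
where $\zeta_k(s)$ is the dynamical determinant associated to the action of the flow on sections of $V_k$, essentially a regularized version of $\det(-\Lie_X - s)$ restricted to $V_k$-valued distributions.

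Next I would apply the microlocal framework of Faure--Sj\"ostrand and Dyatlov--Zworski: each $\zeta_k(s)$ extends meromorphically to $\cc$, and its order of vanishing at $s = 0$ equals the dimension of the generalized resonant states
\begin{equation*}
C_k(0) = \bigl\{ u \in \mc{D}'(M; V_k) : \WF(u) \subset E_u^*,\; (-\Lie_X)^N u = 0 \text{ for some } N \bigr\}.
\end{equation*}
Consequently the order of vanishing of $\zeta_{X,\mathbf{1}}$ at $s = 0$ equals the alternating sum $m_0 - m_1 + m_2$, where $m_k := \dim C_k(0)$.

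The heart of the argument is the computation of each $m_k$. Here I would exploit the contact structure: the distribution $\ker \alpha = E_u \oplus E_s$ is spanned by horocycle vector fields $U_\pm$ which satisfy commutation relations of the form $[\Lie_X, \Lie_{U_\pm}] = \pm \mu_\pm(x) \Lie_{U_\pm}$ modulo lower-order terms. This ladder structure forces a generalized resonant state at $0$ to be annihilated by some power of $\Lie_{U_+}$ and of $\Lie_{U_-}$, and a careful wavefront argument identifies the jointly horocycle-invariant resonant states with pullbacks of forms from $\Sigma$ along the bundle projection $\pi : M \to \Sigma$, hence with $H^\bullet(\Sigma)$. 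Summing against the signs of the filtration yields $\sum_k (-1)^k m_k = \chi(\Sigma)$ up to an overall sign, and since $\chi(\Sigma) < 0$ for a negatively curved surface, this gives a zero of order $|\chi(\Sigma)| = 2g - 2$ at $s = 0$; the genus is then read off from $\zeta_{X,\mathbf{1}}$ through its Euler product over the length spectrum. The main obstacle is precisely this last identification: showing rigorously that the horocyclic operators act on the anisotropic Sobolev spaces where the resonances are computed, and that their iterated application exhausts $C_k(0)$. For $k = 1$ in particular, there is no flow-invariant $\rr$-splitting of $E_u^* \oplus E_s^*$ in variable curvature, so the horocyclic ladder structure must do all the counting without any reduction to scalar components, which is the most delicate part of the proof.
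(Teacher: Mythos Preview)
This theorem is not proved in the present paper; it is quoted from \cite{dyatlov2017ruelle} as background, so there is no in-paper proof to compare against. Relative to the actual Dyatlov--Zworski argument, your first two steps --- the factorization of $\zeta_{X,\mathbf{1}}$ into graded dynamical determinants $\zeta_k$ and the identification of the order of each $\zeta_k$ at $s=0$ with $m_k = \dim C_k(0)$ --- are correct and match their approach.

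The gap is in your computation of the $m_k$. You invoke smooth horocycle vector fields $U_\pm$ spanning $E_u$, $E_s$ with ladder-type commutation relations under $[\Lie_X,\cdot]$. This is available in constant curvature via the $\mathfrak{sl}_2(\rr)$ structure on $\PSL_2(\rr)/\Gamma$, but for a general negatively curved metric the bundles $E_u$, $E_s$ are only H\"older continuous, so no smooth (or even $C^1$) sections $U_\pm$ exist globally, and the ladder mechanism cannot be implemented on the anisotropic Sobolev spaces. Dyatlov--Zworski proceed without horocycle operators: mixing of the geodesic flow forces $m_0 = 1$; wedging with $\dd\vartheta$ gives an isomorphism $C_0(0)\simeq C_2(0)$, hence $m_2 = 1$; and for $k=1$ they show directly, by wavefront-set and regularity arguments on the resonant currents, that every element of $C_1(0)$ is closed and that the induced map $C_1(0)\to H^1(M;\cc)$ is an isomorphism, whence $m_1 = b_1(S^*\Sigma) = 2g$. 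The order of $\zeta_{X,\mathbf{1}}$ at $0$ is then $(-1)^q(m_0-m_1+m_2)=2g-2=|\chi(\Sigma)|$, the sign $(-1)^q$ with $q=1$ coming from the orientation identity $|\det(I-P_\gamma)|=(-1)^q\det(I-P_\gamma)$. The link to $\Sigma$ runs through $H^1(M)$ and the Gysin sequence for the circle bundle $S^*\Sigma\to\Sigma$, rather than by recognising resonant states as literal pullbacks along $\pi$.
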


This result was generalized in a recent preprint of Ceki\'c--Paternain~\cite{CekicPaternain} to volume
preserving Anosov flows in dimension $3$.

In the same spirit and using similar microlocal methods,
Guillarmou-Rivi\`ere-Shen and the second author~\cite{dang2018fried} showed
\begin{theo}[D--Rivi\`ere--Guillarmou--Shen]\label{thm:dgrs}
Let $\rho$ be an acyclic representation of $\pi_1(M)$. Then the map
$$
X \mapsto \zeta_{X,\rho}(0)
$$ 
is locally constant on the open set of smooth vector fields which are Anosov and for which $0$ is not a Ruelle resonance, that is, $0 \notin \Res(\Lien)$. If $X$ preserves a smooth volume form and $\dim(M) = 3$, equation (\ref{eq:morgado}) holds true if $b_1(M) \neq 0$ or under the same assumption used in \cite{sanchez1996r}.
\end{theo}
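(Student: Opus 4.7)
The plan is to exploit the Atiyah--Bott--Fried factorization of the Ruelle zeta function furnished by the microlocal analysis of Giulietti--Liverani--Pollicott and Dyatlov--Zworski. On a suitable anisotropic Sobolev space the twisted Lie derivative $\Lien$ has a discrete Ruelle spectrum $\Res(\Lien)$, and restricting to the horizontal forms $\Omega^k_0(M, E) := \ker \iota_X$ one obtains a factorization
\begin{equation*}
\zeta_{X,\rho}(s) = \prod_{k=0}^{n-1} \det{}^\flat\!\bigl(s + \Lien|_{\Omega^k_0(M,E)}\bigr)^{(-1)^{k+q}}
\end{equation*}
with a sign exponent $q$ depending on the orientability of the unstable bundle. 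Acyclicity of $\rho$ together with $0 \notin \Res(\Lien)$ ensures that each factor on the right-hand side is finite and nonvanishing at $s=0$, so $\zeta_{X,\rho}(0) \in \cc^*$ is well defined.

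For the local constancy I consider a smooth path $(X_t)$ of Anosov vector fields all satisfying $0 \notin \Res(\Lient)$; this is an open condition in the $\Cinf$ topology because Ruelle resonances depend continuously on the flow on adapted anisotropic spaces. The variational formula for the regularized determinant gives
\begin{equation*}
\partial_t \log \det{}^\flat\!\bigl(\Lient|_{\Omega^k_0}\bigr) = \tr{}^\flat\!\bigl( \Lie^\nabla_{\dot X_t} \circ \Lient^{-1}\,\big|_{\Omega^k_0} \bigr).
\end{equation*}
Cartan's formula $\Lie^\nabla_{\dot X_t} = [\nabla, \iota_{\dot X_t}]$ combined with $[\nabla, \Lient] = 0$ allows one to shift $\nabla$ across the alternating sum over $k$, collapsing it to a contribution that is killed by the acyclicity of $\rho$. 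Hence $\partial_t \log \zeta_{X_t, \rho}(0) = 0$ and the first assertion follows. This is the dynamical incarnation of the homotopy invariance underlying analytic torsion.

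The main obstacle is to legitimize the flat-trace manipulations above: flat traces do not obey the cyclic property in general, and $\Lient^{-1}$ is unbounded on $\Cinf$. One must therefore control the wavefront sets of the Schwartz kernels of $\iota_{\dot X_t}$, $\nabla$ and $\Lient^{-1}$ along the flow graph in order to guarantee that each composition lies in the class where the flat trace is defined and that the rearrangements used in the telescoping are legitimate. The Faure--Sj\"ostrand/Dyatlov--Zworski wavefront calculus supplies the microlocal tools needed to carry this out.

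For the second part, assume $\dim M = 3$, $X$ preserves a smooth volume, and either $b_1(M) \neq 0$ or the assumption of \cite{sanchez1996r} holds. When $b_1(M) \neq 0$, a connectedness argument based on the structure theory of volume-preserving Anosov flows on $3$-manifolds produces a smooth path of volume-preserving Anosov flows joining $X$ to the geodesic flow of a hyperbolic structure on $M$, along which $0$ remains a non-resonance; Theorem~\ref{thm:Fried} pins down \eqref{eq:morgado} at the endpoint, and the local constancy established above propagates it back to $X$. Under the hypothesis of \cite{sanchez1996r}, that assumption already forces $0 \notin \Res(\Lien)$, and \eqref{eq:morgado} follows by combining local constancy with the analytic endpoint provided in \cite{sanchez1996r}.
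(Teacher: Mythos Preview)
This theorem is quoted from \cite{dang2018fried} and is not proved in the present paper, but the paper does describe the mechanism and reuses the relevant variation formula in \S\ref{sec:invariance}, so a comparison is possible.

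For the local constancy, your Cartan--telescoping argument has a real gap. The horizontal space $\Omega^k_0=\ker\iota_{X_t}$ depends on $t$, and neither $\iota_{\dot X_t}$ nor $\Lie^\nabla_{\dot X_t}$ preserves it, so writing $\partial_t\log\det^\flat(\Lient|_{\Omega^k_0})$ as a flat trace on $\Omega^k_0$ and then shifting $\nabla$ across the alternating sum is not legitimate as stated. The mechanism in \cite{dang2018fried} is different and more robust: working first for $\Re(s)\gg 0$, the variation formula (quoted in \S6.3 of this paper) reads
\[
\partial_t\log\zeta_{X_t,\rho}(s)=(-1)^q\, s\,\strf\Bigl(\dot P\,(\Lient+s)^{-1}\e^{-\varepsilon(\Lient+s)}\Bigr),
\]
with an \emph{explicit factor of $s$}. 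One then shows, via the wavefront bounds on the resolvent, that the flat trace extends holomorphically to a neighbourhood of $s=0$ when $0\notin\Res(\Lient)$; the factor $s$ then kills the variation at $s=0$. Acyclicity of $\rho$ is not what makes the derivative vanish.

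For the three-dimensional statement, the ``connectedness argument'' you invoke does not exist: there is no structure theory producing a path of volume-preserving Anosov flows from an arbitrary such $X$ to a hyperbolic geodesic flow, and indeed $M$ need not be a unit tangent bundle at all. As the introduction of the present paper explains, the argument in \cite{dang2018fried} is local: one approximates $X$ in $\Cinf$ by an \emph{analytic} Anosov flow and invokes the results of S\'anchez-Morgado \cite{sanchez1993lefschetz,sanchez1996r}, where \eqref{eq:morgado} is already known for analytic flows in dimension $3$; local constancy then transports the identity back to $X$. The hypothesis $b_1(M)\neq 0$ is used on the representation side, not the flow side: it allows one to perturb $\rho$ so that S\'anchez-Morgado's spectral condition on some closed orbit is satisfied, and then one passes to the limit in $\rho$.
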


Let us comment on the notion of Ruelle resonance to explain the assumptions in the above Theorem.
All recent works on the analytic continuation of the Ruelle zeta
function are important byproducts of new functional methods to study hyperbolic flows. 
They rely on the construction 
of spaces of anisotropic distributions adapted to the dynamics, initiated by Kitaev \cite{kitaev99}, Blank--Keller--Liverani \cite{blank2002ruelle}, Baladi \cite{baladi2005anisotropic,baladi2018dynamical}, Baladi--Tsujii \cite{baladi2007anisotropic}, Gou\"ezel-Liverani \cite{gouezel2006banach}, Liverani \cite{liverani2005fredholm}, Butterley-Liverani \cite{butterley2007smooth,butterley2013robustly},
and many others where we refer 
to the recent book~\cite{baladi2018dynamical} for precise references. 
These spaces allow to define a suitable notion of spectrum for the operator $\Lien = \nabla \iota_X + \iota_X \nabla$, 
where $\iota$ is the interior product, acting on $\Omega^\bullet(M,E)$. This spectrum is the set of so-called Pollicott--Ruelle resonances $\Res(\Lien)$, which forms a discrete subset of $\mathbb{C}$ and contains all zeros and poles of $\zeta_{X,\rho}$.
Faure--Roy--Sj\"ostrand \cite{faure2008semi}, Faure--Sj\"ostrand \cite{faure2011upper} initiated the use of
microlocal methods to describe these anisotropic spaces of distributions giving a purely microlocal approach to study Ruelle resonances.
This was further developped by Dyatlov and Zworski to study Ruelle zeta functions.

However, if $0 \in \Res(\Lien)$ then
the results of~\cite{dang2018fried} no longer apply since the zeta function $\zeta_{X,\rho}$ might have a pole or zero at $s=0$ (recall zeros and poles of $\zeta_{X,\rho}$ are contained in $\Res(\Lien)$). One goal of this article is to remove the assumption that $0$ is not a Ruelle resonance. 
In the spirit of Theorem \ref{th:DZruelle} and the Fried conjecture, we can state a Theorem which follows from more general results of the present paper (see \S \ref{sec:mainresults}).
\begin{theo}\label{corhyperbolic}
Let $(Z,g_0)$ be a compact hyperbolic manifold of dimension $q$ and $\rho$ be the lift to $S^*Z$ of some acyclic unitary representation $\pi_1(Z) \to \mathrm{GL}(\mathbb{C}^d)$. Then for every metric $g$ which is path connected to $g_0$ in the space of negatively curved metrics,
there exists $m(g, \rho)\in \mathbb{Z}$ s.t.
\begin{equation}\label{eg:ChaubetDang}
\left\vert\zeta_{X_g, \rho}(s)\right\vert^{(-1)^q}  =
 |s|^{(-1)^qm(g, \rho)}\underset{\text{R-torsion}}{\underbrace{\tau_\textrm{R}(\rho)}}
\left \vert \frac{\tau(C^\bullet(X_{g_0},\rho))}{\tau(C^\bullet(X_g, \rho))}\right\vert \left(1+\mathcal{O}(s) \right),
\end{equation}
where $X_g$ denotes the geodesic vector field of $g$ 
and $\tau(C^\bullet(X_g, \rho))$ is the refined torsion of the finite dimensional space of resonant states for the resonance $0$ of $(X_g,\rho)$.
\end{theo}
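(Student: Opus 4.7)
The plan is to deduce Theorem~\ref{corhyperbolic} from the three main properties announced for the dynamical torsion $\tau_{\mathrm{dyn}}(X,\rho)$ in the abstract: its invariance under deformation among contact Anosov flows, its holomorphic dependence on $\rho$, and, most importantly, the structural identity letting one read off from $\tau_{\mathrm{dyn}}(X,\rho)$ both the leading Laurent coefficient of $\zeta_{X,\rho}$ at $s=0$ and the refined torsion $\tau\bigl(C^\bullet(X,\rho)\bigr)$ of the finite dimensional complex of generalized resonant states at the resonance $0$.

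The first step is geometric. For any negatively curved metric $g$ on $Z$, the geodesic vector field $X_g$ on $M=S^*Z$ is a contact Anosov flow, the contact form being (the restriction of) the canonical Liouville form. Therefore a path of negatively curved metrics between $g_0$ and $g$ lifts to a continuous path in the space of contact Anosov flows on $M$, and the deformation invariance of $\tau_{\mathrm{dyn}}$ yields
$$|\tau_{\mathrm{dyn}}(X_g,\rho)|=|\tau_{\mathrm{dyn}}(X_{g_0},\rho)|.$$

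The second step is to inject into this equality the structural formula relating $\tau_{\mathrm{dyn}}$ to the zeta function and to the refined torsion of the resonant complex. Schematically, for every contact Anosov flow $X$ the construction will furnish
$$|\tau_{\mathrm{dyn}}(X,\rho)|=\Bigl|\lim_{s\to 0}s^{-m(X,\rho)}\zeta_{X,\rho}(s)\Bigr|^{\varepsilon_{1}}\bigl|\tau(C^\bullet(X,\rho))\bigr|^{\varepsilon_{2}},$$
where $m(X,\rho)\in\mathbb{Z}$ is the order of $\zeta_{X,\rho}$ at $s=0$ and $\varepsilon_{1},\varepsilon_{2}\in\{\pm 1\}$ are signs dictated by the parity of $\dim M=2q-1$ and by orientation conventions. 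At the reference point $g_0$, Fried's Theorem~\ref{thm:Fried} ensures that $\zeta_{X_{g_0},\rho}$ is holomorphic and non-vanishing at $s=0$; hence $m(g_0,\rho)=0$, the leading coefficient is just $\zeta_{X_{g_0},\rho}(0)$, and $|\zeta_{X_{g_0},\rho}(0)|$ equals a signed power of $\tau_{\mathrm{R}}(\rho)$ which, after raising to the power $(-1)^q$, becomes exactly $\tau_{\mathrm{R}}(\rho)$.

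Substituting both instances of the structural formula into the invariance relation and raising the resulting equality to the power $(-1)^q$ thus makes the $g_0$-factors assemble into $\tau_{\mathrm{R}}(\rho)$, while the $g$-side produces the ratio of refined torsions multiplied by the leading coefficient of $\zeta_{X_g,\rho}$. Writing $|\zeta_{X_g,\rho}(s)|=|s|^{m(g,\rho)}|s^{-m(g,\rho)}\zeta_{X_g,\rho}(s)|(1+\mathcal{O}(s))$ as $s\to 0$, with $m(g,\rho)\in\mathbb{Z}$ the order of $\zeta_{X_g,\rho}$ at $0$, then yields~\eqref{eg:ChaubetDang}. The main obstacle, and the bulk of the paper, is the construction of $\tau_{\mathrm{dyn}}$ itself: proving the structural identity and the deformation invariance simultaneously will require a careful microlocal analysis of $\Lien$ on adapted anisotropic spaces, a notion of graded determinant compatible with the Pollicott--Ruelle spectral decomposition at the resonance $0$, and a Lefschetz-type interpretation of the principal part of $\zeta_{X,\rho}$ at $0$ in terms of the algebraic torsion of $C^\bullet(X,\rho)$. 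The contact hypothesis enters crucially both in controlling this spectral decomposition and in making the deformation argument go through.
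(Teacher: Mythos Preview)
Your proposal is correct and follows essentially the same approach as the paper: the theorem is deduced from the structural identity~\eqref{eq:lambdapetit} expressing $\tau_\vartheta(\nabla)$ as $(-1)^{Q_0}c_{X,\nabla}^{(-1)^q}\,\tau(C^\bullet(0),\Gamma_\vartheta)$, combined with the deformation invariance of Theorem~\ref{thm:main1} along the path of contact Anosov geodesic flows, and Fried's Theorem~\ref{thm:Fried} at the hyperbolic endpoint (which gives $m(g_0,\rho)=0$ and identifies $|\zeta_{X_{g_0},\rho}(0)|$ with the appropriate power of $\tau_{\mathrm{R}}(\rho)$). Your remark that the real content lies in constructing $\tau_{\mathrm{dyn}}$ and proving its invariance and structural formula is exactly right.
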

In the above statement, the vector field $X_g$ generates a contact Anosov flow on the contact manifold
$ S_g^*Z = \{(x,\xi) \in T^*Z,~|\xi|_g = 1\}$~\footnote{This means concretely that changing the metric $g$ on $Z$ affects both the contact form $\vartheta$ and Reeb field $X$ on $S^*Z$}.
The finite dimensional torsion $\tau(C^\bullet(X_g, \rho))$ will be described in \S\ref{sec:mainresults} below.
%
\section{Main results}\label{sec:mainresults}

There are two restrictions in Theorem~\ref{thm:dgrs} of~\cite{dang2018fried}.
The first restriction is that 
$$|\zeta_{X,\rho}(0)|^{(-1)^r} = \tau_\textrm{R}(\rho) $$ is an equality of positive real numbers and
the representation $\rho$ is unitary. For arbitrary acyclic representations $\rho:\pi_1(M)\to \mathrm{GL}(\mathbb{C}^d)$, one could wonder if the phase of the complex number $\zeta_{X,\rho}(0)$ contains topological information. For instance, if it can be compared
with some complex valued torsion defined for general acyclic representations $\rho:\pi_1(M)\to \mathrm{GL}(\mathbb{C}^d)$.
The second restriction concerns the assumption
that $0$ is not a Ruelle resonance. Apart from the low dimension cases studied in~\cite{dang2018fried}, 
this assumption is particularly hard to control and is difficult to check for explicit examples. 

Our goal in the present work is to partially overcome these two obstacles.
In the case where $X$ induces a contact flow, which means
that $X=X_\vartheta$ is the Reeb vector field of some contact form $\vartheta$ on $M$, we deal with these difficulties by introducing a \textit{dynamical torsion} $\tau_\vartheta(\rho)$ which is a new object defined for any acyclic $\rho$ and which coincides with $\zeta_{X,\rho}(0)^{\pm 1}$ if $0 \notin \Res(\Lien)$.
Before stating our main results, let us 
introduce the two main characters of our discussion in the next two subsections.

\subsection{Refined versions of torsion}
The Franz--Reidemeister torsion $\tau_\mathrm{R}$ is given by the modulus 
of some alternate product of determinants and is therefore real valued. One cannot get a canonical object by removing 
the modulus since one has to make some choices to define the combinatorial torsion, and the 
ambiguities in these choices affect the alternate product of determinants.
To remove indeterminacies arising in the definition of  the combinatorial torsion, Turaev \cite{turaev1986reidemeister,turaev1990euler,turaev1997torsion} introduced in the acyclic case a refined version of the combinatorial R-torsion, the \textit{refined combinatorial torsion}. It is a complex number $\tau_\frak{e,o}(\rho)$ which depends on additional combinatorial data, namely an Euler structure $\frak{e}$ and a cohomological orientation $\frak{o}$ of $M$, and which satisfies $|\tau_\frak{e,o}(\rho)| = \tau_\mathrm{R}(\rho)$ if $\rho$ is acyclic and unitary.
We refer the reader to subsection~\ref{subsec:eulerstruct} for precise definitions.
Later, Farber-Turaev \cite{farber2000poincare} extended this object to non-acyclic representations. In this case, $\tau_\frak{e,o}(\rho)$ is an element of the determinant line of cohomology $\det H^\bullet(M,\rho)$.

Motivated by the work of Turaev, but from the analytic side,
Braverman-Kappeler \cite{braverman2007refined, braverman2008refined, braverman2007ray} introduced a refined version of the Ray-Singer analytic torsion called \textit{refined analytic torsion} $\tau_{\mathrm{an}}(\rho)$. It is complex valued in
the acyclic case. Their construction heavily relies on the existence of a chirality operator $\Gamma_g$, that is,
$$\Gamma_g : \Omega^\bullet(M,E) \to \Omega^{n-\bullet}(M,E), \quad \Gamma_g^2 = \id,$$
which is a renormalized version of the Hodge star operator associated to some metric $g$.
They showed that the ratio
$$
\rho \mapsto \displaystyle{\frac{\tau_{\mathrm{an}}(\rho)}{\tau_{\mathfrak{e,o}}(\rho)}}
$$
is a holomorphic function on the representation variety given by an explicit local expression, up to a local constant of modulus one. This result is an extension of the Cheeger-M\"uller theorem. Simultaneously, Burghelea-Haller \cite{burghelea2007complex} introduced a complex valued analytic torsion, which is closely related to the refined analytic torsion~\cite{braverman2007comparison} when it is defined; see \cite{huang2007refined} for comparison theorems.

\subsection{Dynamical torsion}
We now assume that $X = X_\vartheta$ is the Reeb vector field of some contact form $\vartheta$ on $M$. Let us briefly describe the construction of the dynamical torsion. In the spirit of \cite{braverman2007refined}, we use a chirality operator associated to the contact form $\vartheta$, $$\Gamma_\vartheta : \Omega^\bullet(M,E) \to \Omega^{n-\bullet}(M,E), \quad \Gamma_\vartheta^2= \id,$$
cf. \S\ref{sec:deftors}, analogous to the usual Hodge star operator associated to a Riemannian metric. Let $C^\bullet \subset \mathcal{D}^{'\bullet}(M,E)$ be the finite dimensional space of Pollicott-Ruelle generalized resonant states of $\Lien$ for the resonance $0$, that is,
 $$
 C^\bullet = \Bigl\{u \in \mathcal{D}^{'\bullet}(M,E), ~\WF(u) \subset E_u^*,~\exists N \in \mathbb{N},~\left(\Lien\right)^Nu = 0\Bigr\},
 $$
 where $\WF$ is the H\"ormander wavefront set, $E_u^* \subset T^*M$ is the unstable cobundle of $X$~\footnote{ 
 the annihilator of $E_u \oplus \mathbb{R}X$ where $E_u\subset TM$ denotes the unstable bundle of the flow}, cf.  \S\ref{sec:policott}, and $\mathcal{D}'(M,E)$ denotes the space of $E$-valued currents. Then $\nabla$ induces a differential on $C^\bullet$ which makes it a finite dimensional 
cochain complex. Then a result from \cite{dang2017topology} implies 
that the complex $(C^\bullet,\nabla)$ 
is {acyclic} if we assume that $\nabla$ is. 
Because $\Gamma_\vartheta$ commutes 
with $\Lien$, it induces a chirality operator on $C^\bullet$. Therefore 
we can compute the torsion $\tau(C^\bullet, \Gamma_\vartheta)$ of the finite dimensional complex $\left(C^\bullet,\nabla\right)$ with respect to $\Gamma_\vartheta$, as described in \cite{braverman2007refined} (see \S\ref{sec:torsion}). Then we define the \emph{dynamical torsion} $\tau_\vartheta$ as the product
$$
\tau_{\vartheta}(\rho)^{(-1)^q} 
=\pm\underset{\text{finite dimensional torsion}}{{\underbrace{\tau(C^\bullet, \Gamma_\vartheta)^{(-1)^q} }}}
\times
\underset{\text{renormalized Ruelle zeta function at }s=0}{
\underbrace{ \lim_{s\rightarrow 0}s^{-m(X, \rho)}\zeta_{X,\rho}(s)}} \in \mathbb{C} \setminus 0,
$$
where the sign $\pm$ will be given later, $m(X,\rho)$ is the order of $\zeta_{X,\rho}(s)$ at $s=0$ and $q=\frac{\dim(M)-1}{2}$ is the dimension of the unstable bundle of $X$. Note that the order $m(X,\rho)\in \mathbb{Z}$ is a priori not stable under perturbations of $(X,\rho)$, in fact both terms in the product may not be invariant under small changes of $\vartheta$ whereas the dynamical torsion $\tau_{\vartheta}$ has interesting invariance
properties as we will see below.

\subsection{Statement of the results.}

We denote by $\mathrm{Rep}_{\mathrm{ac}}(M,d)$ the set of acyclic representations $\pi_1(M) \to \mathrm{GL}(\mathbb{C}^d)$ and by $\mathcal{A}\subset \Cinf(M, TM)$ the space of contact forms on $M$ whose Reeb vector field induces an Anosov flow. This is an open subset of the space of contact forms. For any $\vartheta \in \mathcal{A}$, we denote by $X_\vartheta$ its Reeb vector field.
Recall that we want to 
study the value at $0$ without taking the modulus. As in Fried's case, $\zeta_{X,\rho}(0)$ might be ill--defined since $0\in \Res(\Lien)$ and this was the reason for introducing the more general object $\tau_\vartheta(\rho)$. 
Our goal is to compare this new complex number with the refined torsion. As a first step towards this,
our first result shows $\tau_\vartheta(\rho) $ is invariant by small perturbations of the contact form $\vartheta\in \mathcal{A}$.
\begin{theo}\label{thm:main1}
Let $(M,\vartheta)$ be a contact manifold such that the Reeb vector field of $\vartheta$ induces an Anosov flow. Let $(\vartheta_\tau)_{\tau \in (-\varepsilon, \varepsilon)}$ be a smooth family in $\mathcal{A}$.
Then $\partial_\tau \log \tau_{\vartheta_\tau}(\rho) = 0$ for any $\rho \in \Rep_\mathrm{ac}(M,d)$.
\end{theo}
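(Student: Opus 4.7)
The plan is to differentiate $\log \tau_{\vartheta_\tau}(\rho)$ in $\tau$ and show that the result vanishes by matching the variation of the renormalized Ruelle zeta function against the variation of the finite dimensional refined torsion $\tau(C^\bullet_\tau, \Gamma_{\vartheta_\tau})$. Both factors depend on $\tau$ in a subtle way: the integer $m_\tau := m(X_{\vartheta_\tau},\rho)$ and the complex $C^\bullet_\tau$ of generalized resonant states at the resonance $0$ may jump, and the chirality $\Gamma_{\vartheta_\tau}$ deforms smoothly; only their combination is expected to be locally constant.

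\textbf{Variation of the zeta function.} For $s$ in a small punctured neighborhood of $0$, I would exploit the Fried-type factorization of $\zeta_{X_{\vartheta_\tau},\rho}(s)$ as an alternating product of regularized graded determinants of $\Lie^\nabla_{X_{\vartheta_\tau}} + s$ restricted to the subbundles of forms transverse to the flow. Combined with the Dyatlov--Zworski / Atiyah--Bott--Guillemin flat trace formula on anisotropic Sobolev spaces, this should yield, for such $s$,
\[
\partial_\tau \log \zeta_{X_{\vartheta_\tau},\rho}(s) \;=\; \grtrf \!\left( (\Lie^\nabla_{X_{\vartheta_\tau}} + s)^{-1} \circ \partial_\tau \Lie^\nabla_{X_{\vartheta_\tau}}\right),
\]
where $\grtrf$ denotes the graded flat trace on transverse forms. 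Writing $\zeta_{X_{\vartheta_\tau},\rho}(s) = s^{m_\tau}\bigl(a_0(\tau) + s\, a_1(\tau) + \cdots\bigr)$ with $a_0(\tau) \neq 0$, the $\tau$-variation of the renormalized value $a_0(\tau)$ is the finite part at $s=0$ of this trace; the polar part is carried by the spectral projector $\Pi_0^\tau$ onto $C^\bullet_\tau$ in the Laurent expansion of the resolvent.

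\textbf{Variation of the finite dimensional torsion.} Using the Braverman--Kappeler formalism for the torsion of a finite dimensional complex with a chirality, I would differentiate $\log \tau(C^\bullet_\tau, \Gamma_{\vartheta_\tau})$ directly. This variation splits into (i) a contribution from the deformation of $\Gamma_{\vartheta_\tau}$, expressible as a trace on $C^\bullet_\tau$ involving $\Gamma_{\vartheta_\tau}^{-1}\partial_\tau \Gamma_{\vartheta_\tau}$, and (ii) a contribution from the deformation of $\Pi_0^\tau$, which by a standard residue computation matches exactly the polar part extracted in the previous step. This automatic matching handles the jumping of $m_\tau$ and the failure of $C^\bullet_\tau$ to be a smooth family.

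\textbf{Main obstacle.} The central remaining task is to show that the finite part at $s=0$ of the flat trace in the zeta variation cancels the chirality term (i). The contact assumption is indispensable here: the identity $[\Lie^\nabla_{X_{\vartheta_\tau}}, \Gamma_{\vartheta_\tau}] = 0$ guarantees that $\Gamma_{\vartheta_\tau}$ commutes with the resolvent, descends consistently to $C^\bullet_\tau$, and allows one to rewrite the flat trace on transverse forms as a graded flat trace on the full bundle of $E$-valued forms. Once so rewritten, I expect the variation to appear as the flat trace of a $\Lie^\nabla_{X_{\vartheta_\tau}}$-commutator (the relevant deformation being proportional to $\dd(\iota_{\partial_\tau X_{\vartheta_\tau}}\Gamma_{\vartheta_\tau})$ modulo terms killed by graded symmetry), and to vanish by the anti-symmetry of $\grtrf$ together with the wavefront disjointness of stable and unstable cones on the diagonal. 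A general Anosov $X$ would admit no natural chirality commuting with $\Lie^\nabla_X$, so this reduction genuinely uses the contact structure and is where I expect the proof to be most delicate.
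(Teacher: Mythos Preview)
Your broad strategy—differentiate the finite dimensional torsion and the zeta factor separately and match them—is correct and agrees with the paper. But two concrete ingredients are missing, and your ``main obstacle'' paragraph misidentifies the cancellation mechanism.

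First, the paper does \emph{not} work with the jumping space $C^\bullet_\tau$ of generalized $0$-resonant states. It uses the spectral cut: for $\lambda\in(0,1)$ avoiding the resonances one writes
\[
\tau_{\vartheta_\tau}(\nabla)=\pm\,\tau\bigl(C^\bullet_{[0,\lambda]}(\tau),\Gamma_{\vartheta_\tau}\bigr)\,\zeta_{X_\tau,\nabla}^{(\lambda,\infty)}(0)^{(-1)^q},
\]
and Bonthonneau's uniform anisotropic spaces make $\Pi_{[0,\lambda]}(\tau)$ a $\mathcal{C}^1$ family of finite rank projectors. There is no ``residue matching the polar part'' argument for a jumping projector; the cut removes the issue entirely.

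Second, the cancellation is not obtained by writing the flat trace as a $\Lie_X^\nabla$-commutator or by antisymmetry. The two pieces are computed \emph{explicitly} and shown to coincide. On the zeta side, the variation formula from \cite{dang2018fried} (their (5.4)) already carries a factor of $s$:
\[
\partial_\tau\log\zeta_{X_\tau,\nabla}(s)=(-1)^q\,s\,\strf\Bigl(\vartheta_\tau\,\iota_{\dot X_\tau}\,(\Lie_{X_\tau}^\nabla+s)^{-1}\e^{-\varepsilon(\Lie_{X_\tau}^\nabla+s)}\Bigr),
\]
so combined with $\Lie(\Lie+s)^{-1}=\id-s(\Lie+s)^{-1}$ one gets, after the analytic continuation to $s=0$,
\[
\partial_\tau\log\zeta_{X_\tau,\nabla}^{(\lambda,\infty)}(0)=(-1)^{q+1}\,\str{C^\bullet_{[0,\lambda]}(\tau)}\bigl(\Pi_\tau\,\vartheta_\tau\,\iota_{\dot X_\tau}\bigr).
\]
On the torsion side, the Braverman--Kappeler formula $\partial_\tau\log\tau(C^\bullet,\Gamma_\tau)=\tfrac12\str{}\bigl(\dot\Gamma_\tau\Gamma_\tau\bigr)$ is evaluated \emph{by hand} using the explicit definition of $\Gamma_\vartheta$ via the Lefschetz isomorphisms $\mathscr{L}^{r-k}$ and the decomposition $\Lambda^k=\Lambda^{k-1}V_X\wedge\vartheta\oplus\Lambda^kV_X$; after a page of bookkeeping (Lemma~\ref{lem:variationvectortorsion} in the paper) this collapses to
\[
\partial_\tau\log\tau\bigl(C^\bullet_{[0,\lambda]}(\tau),\Gamma_{\vartheta_\tau}\bigr)=-\str{C^\bullet_{[0,\lambda]}(\tau)}\bigl(\Pi_\tau\,\vartheta_\tau\,\iota_{\dot X_\tau}\bigr).
\]
The two lines are negatives of each other, and that is the whole proof. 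The contact hypothesis enters only through the explicit form of $\Gamma_{\vartheta_\tau}$ that makes this computation tractable, not through a commutator or wavefront argument on the flat trace.
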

\begin{rem} In the case where the representation $\rho$ is not acyclic, we can still define $\tau_\vartheta(\rho)$ as an element of the determinant line $\det H^\bullet(M,\rho)$, cf Remark \ref{rem:notacyclic}. This element is invariant under perturbations of $\vartheta \in \mathcal{A}$, cf. Remark \ref{rem:notacyclic2}.
\end{rem}
This  result implies that the map $\vartheta\in\mathcal{A}\mapsto\tau_{\vartheta}(\rho)$ is locally constant for all $\rho\in \mathrm{Rep}_{\mathrm{ac}}(M,d)$. To apply Theorem \ref{thm:dgrs} in the case of contact Anosov flows, we need to make small perturbations near a contact Anosov flow s.t. $0 \notin \Res(\Lien) $. Assume we have a $C^1$ family of contact Anosov flows $(X_t)_{t\in [0,1]}$ s.t. $0$ is not a resonance of $(X_0,X_1)$, but if $0 \in \Res(\mathcal{L}_{X_u}^\nabla) $ for some $u\in (0,1)$ then we cannot claim that $\zeta_{X_0,\rho}(0)=\zeta_{X_1,\rho}(0)$ using Theorem \ref{thm:dgrs}. 
In the present case, the assumption $0 \notin \Res(\Lien)$ is no longer needed 
and we can make more general perturbations provided we stay 
within the set of contact Anosov flows.

Our second result aims to compare $\tau_\vartheta$ with Turaev's refined version of the Reidemeister torsion
$\tau_{\frak{e},\frak{o}}$, which depends on some choice of Euler structure $\frak{e}$ and orientation $\frak{o}$. An analog of the Fried conjecture would be to prove the equality $\tau_\vartheta(\rho)=\tau_{\frak{e},\frak{o}}(\rho)  $ for some $(\frak{e},\frak{o} )$ and for all $\rho\in \mathrm{Rep}_{\mathrm{ac}}(M,d)$ (this would imply $|\tau_R(\rho)| = |\zeta_{X,\rho}(0)|^{\pm1}$ if $\rho$ is acyclic and unitary and if $0 \notin \Res(\Lien)$). We prove a weaker result, which shows that the derivatives in $\rho\in \mathrm{Rep}_{\mathrm{ac}}(M,d)$  of $\log\tau_\vartheta(\rho)$ and $\log\tau_\frak{e,o}(\rho)$ coincide.

\begin{theo}\label{thm:main2}
Let $(M,\vartheta)$ be a contact manifold such that the Reeb vector field of $\vartheta$ induces an Anosov flow.
Then 
$\rho\in \mathrm{Rep}_{\mathrm{ac}}(M,d)\mapsto \tau_\vartheta(\rho)$ is holomorphic~\footnote{$\mathrm{Rep}_{\mathrm{ac}}(M,d)$ is a variety over $\mathbb{C}$ see subsection~\ref{ss:holoconnections} for the right notion of holomorphicity} and 
there exists an Euler structure $\frak{e}$ such that for any cohomological orientation $\frak{o}$ and any smooth family $(\rho_u)_{u \in(- \varepsilon, \varepsilon)}$ of $\Rep_\mathrm{ac}(M,d)$,
$$
\partial_u \log \tau_{\vartheta}(\rho_u) = \partial_u \log \tau_{\frak{e}, \frak{o}}(\rho_u)
$$
Moreover, if $\dim M = 3$ and $b_1(M) \neq 0$, the map $\rho \mapsto \tau_\vartheta(\rho) / \tau_{\frak{e}, \frak{o}}(\rho)$ is of modulus one on the connected components of $\Rep_{\mathrm{ac}}(M,d)$ containing an acyclic and unitary representation.
\end{theo}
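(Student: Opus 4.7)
The plan is to split the statement into three parts: (a) the holomorphicity of $\rho \mapsto \tau_\vartheta(\rho)$ on $\Rep_\mathrm{ac}(M,d)$; (b) matching the logarithmic derivative of $\tau_\vartheta$ with that of $\tau_{\frak{e},\frak{o}}$ for a suitable Euler structure $\frak{e}$; (c) the modulus-one statement in dimension $3$ with $b_1(M) \neq 0$. The overarching strategy is that once (a) and (b) are proved, $\tau_\vartheta/\tau_{\frak{e},\frak{o}}$ is a holomorphic function on $\Rep_\mathrm{ac}(M,d)$ whose logarithmic derivative vanishes along every smooth family, hence is locally constant on each connected component; then (c) reduces, on a component containing a unitary acyclic $\rho_0$, to computing the modulus of the ratio at $\rho_0$.

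For (a), I would start from the fact that on an anisotropic space adapted to $X_\vartheta$, the operator $\Lien$ has discrete spectrum near $0$ and the spectral projector $\Pi_\rho = \tfrac{1}{2\pi i}\oint_\gamma (z-\Lien)^{-1}\,\dd z$ onto its generalized $0$-eigenspace depends holomorphically on $\rho$ in the sense of subsection~\ref{ss:holoconnections}, wherever $\rank \Pi_\rho$ is locally constant. On such strata, both $\tau(C^\bullet,\Gamma_\vartheta)$ and the renormalized limit $\lim_{s\to 0} s^{-m(X,\rho)}\zeta_{X,\rho}(s)$ are manifestly holomorphic, since each is built from a locally constant-dimensional family of finite-dimensional complexes. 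To handle walls where $\rank \Pi_\rho$ jumps, I would enlarge $C^\bullet$ to include all resonances of $\Lien$ inside a fixed small disk around $0$ and absorb the corresponding factors of $\zeta_{X,\rho}$: the product is unchanged by Theorem~\ref{thm:main1}-type arguments, and the enlarged finite-dimensional object is the torsion of a holomorphic family of acyclic complexes, yielding global holomorphicity.

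For (b), the strategy is to compute $\partial_u \log \tau_\vartheta(\rho_u)$ as the sum of the variation of $\tau(C^\bullet,\Gamma_\vartheta)$ and the logarithmic variation of the renormalized zeta value. Applying the Guillemin trace formula to $\partial_u \log \zeta_{X,\rho_u}(s)$ for $\Re(s) \gg 0$, analytically continuing to $s=0$, and combining with the finite-dimensional variation on $C^\bullet$, produces a graded super-trace of $\dot\rho_u \rho_u^{-1}$ over the full space of Pollicott--Ruelle resonant states. Following the microlocal argument of~\cite{dang2018fried}, this super-trace can be rewritten as the pairing of the tangent vector $\dot\rho_u$ with a cohomology class on $M$ equal to the Poincar\'e dual of a canonical \emph{dynamical} Euler structure $\frak{e}_X$ attached to $X_\vartheta$. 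On the combinatorial side, Farber--Turaev's variation formula expresses $\partial_u \log \tau_{\frak{e},\frak{o}}(\rho_u)$ via the Poincar\'e dual of $\frak{e}$, so the choice $\frak{e}=\frak{e}_X$ forces the two sides to coincide.

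The hardest step is precisely (b): identifying the cohomology class produced microlocally with that of $\frak{e}_X$, and in particular checking that the nontrivial $0$-resonance complex $C^\bullet$ does not introduce a spurious cohomological correction when one extends the argument of~\cite{dang2018fried} from the case $0 \notin \Res(\Lien)$ to the general contact Anosov setting; the contact form is used both to define the chirality $\Gamma_\vartheta$ and to ensure that the splitting between $C^\bullet$ and the complementary resonant subspaces is compatible with it. For (c), once the ratio is locally constant, it suffices to compute $|\tau_\vartheta(\rho_0)/\tau_{\frak{e},\frak{o}}(\rho_0)|$ at one unitary acyclic $\rho_0$ in the component: in dimension $3$ with $b_1(M)\neq 0$, Theorem~\ref{thm:dgrs} yields $|\zeta_{X,\rho_0}(0)|=\tau_\mathrm{R}(\rho_0)^{\pm 1}$, and combined with $|\tau_{\frak{e},\frak{o}}(\rho_0)|=\tau_\mathrm{R}(\rho_0)$ together with the contribution of $|\tau(C^\bullet,\Gamma_\vartheta)|$ at the unitary point, this gives modulus one, which then propagates throughout the component.
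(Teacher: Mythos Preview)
Your overall three-part structure is right, and parts (a) and (c) are close to what the paper does. But in part (b) there is a genuine gap: the mechanism you describe would not produce the result, and it differs substantially from the paper's actual argument.

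You propose to compute $\partial_u\log\tau_\vartheta(\rho_u)$ as ``a graded super-trace of $\dot\rho_u\rho_u^{-1}$ over the full space of Pollicott--Ruelle resonant states'' and then, following \cite{dang2018fried}, rewrite this as a pairing with a cohomology class Poincar\'e dual to some $\frak{e}_X$. This is not how the variation arises. The paper's Proposition~\ref{prop:variationconnexion} gives
\[
\dd(\log\tau_\vartheta)_z\sigma = -\strf\bigl(\alpha_z(\sigma)\,K(z)\,\e^{-\varepsilon\Lienz}\bigr),
\]
where $K$ is a cochain contraction for $\nabla$ built from the resolvent of the Anosov flow. This is a genuinely infinite-dimensional flat trace; it is not a trace over resonant states, and it is \emph{not} by itself a cohomological pairing --- it depends on the non-canonical choice of contraction $K$. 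The reference \cite{dang2018fried} does not identify such an expression with a Poincar\'e-dual class; that paper proves invariance of $\zeta_{X,\rho}(0)$ under perturbation of $X$ when $0\notin\Res(\Lien)$, which is a different statement.

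What the paper actually does is compute the Turaev side in the same form (Proposition~\ref{prop:variationturaev}): $\dd(\log\tau_{\frak{e,o}})_z\sigma = -\strf(\alpha\widetilde K\e^{-\varepsilon\Lientildem}) - \int_e\tr\alpha$, where $\widetilde K$ is a contraction built from a Morse--Smale gradient flow $-\widetilde X$ and $e$ is a $1$-chain representing $\frak{e}$. The two variations are then compared directly: one constructs an explicit interpolating operator $R_\varepsilon$ (\S\ref{subsec:homotopy}) whose Schwartz kernel is the graph of a homotopy between the Anosov and Morse--Smale flows, and proves the algebraic identity
\[
\alpha K\e^{-\varepsilon\Lien} - \alpha\widetilde K\e^{-\varepsilon\Lientildem} - \alpha R_\varepsilon = [\nabla,\alpha G_\varepsilon]
\]
for a suitable $G_\varepsilon$ of degree $-2$. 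Since $\strf$ kills supercommutators, the difference of the two logarithmic derivatives reduces to $-\strf\alpha R_\varepsilon + \int_e\tr\alpha$, and Lemma~\ref{lem:homotopycs} computes $\strf\alpha R_\varepsilon = \langle\tr\alpha,\cs(-\widetilde X,X)\rangle$. This is where the Euler structure $\frak{e}_\vartheta=[-X,0]$ enters: one never identifies each side as a pairing with an Euler class, but rather shows the \emph{difference} is governed by a Chern--Simons class. Your sketch misses the cochain-contraction formulation, the homotopy operator $R_\varepsilon$, and the supercommutator cancellation, which together form the technical core of the proof.

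For (c), a small correction: at the unitary acyclic $\rho_0$ the paper invokes \cite[Theorem~1]{dang2018fried} to conclude $0\notin\Res(\Lie_X^{\nabla_{\rho_0}})$, so $C^\bullet=0$ and $\tau_\vartheta(\rho_0)=\zeta_{X,\rho_0}(0)^{(-1)^q}$ exactly; there is no residual ``contribution of $|\tau(C^\bullet,\Gamma_\vartheta)|$'' to account for.
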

In~\cite{dang2018fried}, for $\rho$ acyclic, the authors proved that $0\notin \Res(\Lien)$ implies that $X\mapsto \zeta_{X,\rho}(0)$ is locally constant. Then the equality $\vert \zeta_{X,\rho}(0)\vert=\tau_{\mathrm{R}}(\rho)$ was proved indirectly by working near analytic Anosov flows in dimension $3$ or near geodesic flows of hyperbolic $3$-manifolds, where the equality is known by the works of Sanchez Morgado and Fried, 
relying on the fact that $\zeta_{X,\rho}(0)$ remains constant by small perturbations of the vector field $X$.
Whereas in the above Theorem, for any contact Anosov flow in any odd dimension, we directly compare the $\log$ derivatives of the dynamical and refined torsions as holomorphic functions on the representation variety. We do not 
need to work near some vector field $X$ for which the equality $\vert \zeta_{X,\rho}(0)\vert=\tau_{\mathrm{R}}(\rho)$ is already known. 

Finally, our third result aims to describe how $\partial_u \log\tau_\vartheta(\rho_u)$ depends on the choice of the 
contact Anosov vector field $X_\vartheta$.
\begin{theo}\label{thm:main3}
Let $(M,\vartheta)$ be a contact manifold such that the Reeb vector field of $\vartheta$ induces an Anosov flow. Let $(\rho_u)_{|u|\leq \varepsilon}$ be a smooth family in $\Rep_\mathrm{ac}(M,d)$. Then for any $\eta \in \mathcal{A}$ 
$$
\partial_u\log \tau_{\eta}(\rho_u) = \partial_u \log \tau_{\vartheta}(\rho_u) + \partial_u\log \underset{\textrm{topological}}{\underbrace{\det\left(\left\langle\rho_u,\cs(X_\vartheta, X_\eta)\right\rangle\right)}}
$$
as differential $1$-forms on $\mathrm{Rep}_{\mathrm{ac}}(M,d)$ and
where $\cs(X_\vartheta, X_\eta) \in H_1(M, \mathbb{Z})$ is the Chern-Simons class of the pair of vector fields $(X_\vartheta, X_\eta)$.
\end{theo}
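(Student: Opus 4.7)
The plan is to deduce Theorem~\ref{thm:main3} from Theorem~\ref{thm:main2} combined with the classical transformation law of Turaev's refined combinatorial torsion under change of Euler structure. Applying Theorem~\ref{thm:main2} separately to $\vartheta$ and $\eta$ provides two Euler structures $\mathfrak{e}(\vartheta)$ and $\mathfrak{e}(\eta)$ on $M$ such that, for any cohomological orientation $\mathfrak{o}$ and any smooth family $(\rho_u)_{|u|\le\varepsilon}$ in $\Rep_{\mathrm{ac}}(M,d)$,
$$
\partial_u \log \tau_\vartheta(\rho_u) = \partial_u \log \tau_{\mathfrak{e}(\vartheta), \mathfrak{o}}(\rho_u), \qquad \partial_u \log \tau_\eta(\rho_u) = \partial_u \log \tau_{\mathfrak{e}(\eta), \mathfrak{o}}(\rho_u).
$$

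Turaev's transformation law states that changing the Euler structure by a class $c \in H_1(M,\mathbb{Z})$ multiplies the refined combinatorial torsion by $\det(\langle \rho, c \rangle)$. Subtracting the two identities above then gives
$$
\partial_u \log \tau_\eta(\rho_u) - \partial_u \log \tau_\vartheta(\rho_u) = \partial_u \log \det\bigl(\langle \rho_u,\, \mathfrak{e}(\eta) - \mathfrak{e}(\vartheta)\rangle\bigr),
$$
so the whole theorem reduces to the identification $\mathfrak{e}(\eta) - \mathfrak{e}(\vartheta) = \cs(X_\vartheta, X_\eta)$ in $H_1(M,\mathbb{Z})$. In Turaev's framework, a nowhere vanishing vector field on a closed odd-dimensional manifold canonically determines an Euler structure, and by definition the difference of the Euler structures attached to two such fields is their Chern-Simons class. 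Since the Reeb fields $X_\vartheta, X_\eta$ are nowhere vanishing, it suffices to show that the Euler structure $\mathfrak{e}(\vartheta)$ produced by Theorem~\ref{thm:main2} coincides with the Turaev Euler structure canonically attached to $X_\vartheta$, and likewise for $\eta$.

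The main obstacle is precisely this identification step. The proof of Theorem~\ref{thm:main2} encodes its Euler structure through an anomaly arising from the chirality operator $\Gamma_\vartheta$ together with a transgression along the closed orbits of $X_\vartheta$; matching it with the Turaev Euler structure of $X_\vartheta$ will require re-reading the variation formula for $\log \tau_{\vartheta_\tau}(\rho)$ along a smooth deformation in $\mathcal{A}$. The anomalous contribution that Theorem~\ref{thm:main2} absorbs into $\mathfrak{e}(\vartheta)$ should, after Poincar\'e duality, be identified with the pairing of $\rho$ against the one-cycle carried by the zeros of an interpolating (non-Reeb) vector field between $X_{\vartheta_0}$ and $X_{\vartheta_1}$, i.e.\ a cycle representing $\cs(X_{\vartheta_0}, X_{\vartheta_1})$. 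Once this dynamical-to-combinatorial dictionary is established, linearity of the pairing $\langle \rho_u,\cdot\rangle$ immediately yields the claimed equality of $1$-forms on $\Rep_{\mathrm{ac}}(M,d)$.
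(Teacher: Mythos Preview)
Your overall strategy is correct and is precisely the paper's: apply Theorem~\ref{thm:main2} to $\vartheta$ and to $\eta$, use the transformation law $\tau_{\frak{e}',\frak{o}}(\rho)=\langle\det\rho,\frak{e}'-\frak{e}\rangle\,\tau_{\frak{e},\frak{o}}(\rho)$, and then identify the difference $\frak{e}(\eta)-\frak{e}(\vartheta)$ with $\cs(X_\vartheta,X_\eta)$.

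Where you go astray is in your assessment of the ``main obstacle'' and your proposed resolution of it. The Euler structure produced by the proof of Theorem~\ref{thm:main2} is not encoded through any chirality anomaly or transgression along closed orbits. In the paper's argument (Proposition~\ref{prop:comparison} and the paragraph deriving Theorems~\ref{thm:main2} and~\ref{thm:main3}), the comparison with the Turaev torsion is made through an auxiliary Morse--Smale gradient $\widetilde X$ and its Euler structure $\frak{e}=[\widetilde X,e]$; the homology class $h=[-\cs(-\widetilde X,X_\vartheta)+e]$ that appears is then recognized, straight from the definitions in \S\ref{subsec:eulerstruct}, as $\frak{e}-\frak{e}_\vartheta$ where $\frak{e}_\vartheta=[-X_\vartheta,0]$ is the canonical Euler structure of the nonsingular field $-X_\vartheta$. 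In other words, the Euler structure you call $\frak{e}(\vartheta)$ \emph{is} $[-X_\vartheta,0]$, and this is read off from the construction, not extracted from any dynamical anomaly.

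Once that is granted, the identification $\frak{e}_\eta-\frak{e}_\vartheta=\cs(X_\vartheta,X_\eta)$ is literally the definition of how Euler structures of nonsingular vector fields differ (equation~\eqref{eq:equivalencerelation} with empty Euler chains), and the theorem follows. So your last paragraph, which proposes to match the Euler structures via Poincar\'e duality and zero-loci of interpolating fields read off from the variation formula of $\log\tau_{\vartheta_\tau}$, is unnecessary and misdirected; the actual work was already done inside the proof of Theorem~\ref{thm:main2} (specifically in Proposition~\ref{prop:comparison}, where the interpolator $R_\varepsilon$ between $-\widetilde X$ and $X_\vartheta$ and Lemma~\ref{lem:homotopycs} produce the class $h$), and nothing further is required here beyond unwinding definitions.
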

The underbraced term  is topological since it is defined as the pairing of the representation $\rho$ with
the Chern--Simons class $\mathrm{cs}(X_\vartheta, X_\eta)\in H_1(M, \mathbb{Z})$ which measures the obstruction
to find a homotopy among non singular vector fields connecting $X_\vartheta$ and $X_\eta$~\footnote{Note that taking the determinant $\det\left( \langle \rho, \mathrm{cs}(X_\vartheta, X_\eta) \rangle\right)$ does not depend on the choice of representative of $\mathrm{cs}(X_\vartheta, X_\eta)$ in $\pi_1(M)$}. In particular, if $\vartheta$ and $\eta$ are connected by some path in $\mathcal{A}$, then $\mathrm{cs}(Y_\eta,X_\vartheta)=0$ which yields $\det\left\langle\rho,\bigl(\cs(X_\vartheta, X_\eta)\bigr)\right\rangle=1$ hence $\partial_u\log \tau_{\eta}(\rho_u) = \partial_u \log \tau_{\vartheta}(\rho_u)$ for any acyclic $\rho$. We refer the reader to subsection~\ref{subsec:chernsimons} for the definition of Chern-Simons classes. 

Because the dynamical torsion is constructed with the help of the dynamical zeta function $\zeta_{X,\rho}$, we deduce from the above theorem some informations about the behavior of $\zeta_{X,\rho}(s)$ near $s=0$, as follows.
\begin{corr}
Let $M$ be a closed odd dimensional manifold. Then for every connected open subsets $\mathcal{U}\subset \mathrm{Rep}_{\mathrm{ac}}(M,d)$ and $\mathcal{V}\subset \mathcal{A}$, there exists a constant $C$
such that for every Anosov contact form $\vartheta \in \mathcal{V}$ and every representation $\rho \in \mathcal{U}$,
\begin{equation}
\zeta_{X_\vartheta,\rho}(s)^{(-1)^q} = Cs^{(-1)^qm(\rho,X_\vartheta)}  \frac{\tau_{\frak{e}_{X_\vartheta},\frak{o}}(\rho)}{\tau\left(C^\bullet\left(\vartheta,\rho\right), \Gamma_\vartheta\right)} \left(1+\mathcal{O}(s) \right),
\end{equation}
where $X_\vartheta$ is the Reeb vector field of $\vartheta$, $(E_\rho, \nabla_\rho)$ is the flat vector bundle over $M$ induced by $\rho$,  $C^\bullet\left(\vartheta,\rho\right) \subset \mathcal{D}^{'\bullet}(M,E_\rho)$ is the space of generalized resonant states for the resonance $0$ of $\Lie_{X_\vartheta}^{\nabla_\rho}$ and $m(X_\vartheta, \rho)$ is the vanishing order of $\zeta_{X_\vartheta, \rho}(s)$ at $s=0$.
\end{corr}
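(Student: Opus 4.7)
The plan is to unpack the very definition of the dynamical torsion to convert the desired asymptotic into a statement about the ratio of $\tau_\vartheta(\rho)$ and Turaev's refined torsion, and then invoke Theorems~\ref{thm:main1} and~\ref{thm:main2} to deduce that this ratio is constant on $\mathcal{V}\times \mathcal{U}$.

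First, from the definition
$$
\tau_{\vartheta}(\rho)^{(-1)^q} = \pm\, \tau(C^\bullet,\Gamma_\vartheta)^{(-1)^q}\,\lim_{s\to 0} s^{-m(X_\vartheta,\rho)}\zeta_{X_\vartheta,\rho}(s),
$$
I would rearrange and raise both sides to the $(-1)^q$-th power (using $(-1)^{2q}=1$) to obtain
$$
\zeta_{X_\vartheta,\rho}(s)^{(-1)^q} \;=\; \pm\, s^{(-1)^q m(X_\vartheta,\rho)}\,\frac{\tau_\vartheta(\rho)}{\tau(C^\bullet(\vartheta,\rho),\Gamma_\vartheta)}\bigl(1+\mathcal{O}(s)\bigr).
$$
Comparing with the target identity, the corollary reduces to showing that the ratio
$
R(\vartheta,\rho) := \tau_\vartheta(\rho)/\tau_{\frak{e}_{X_\vartheta},\frak{o}}(\rho)
$
is constant on the connected open set $\mathcal{V}\times \mathcal{U}$.

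For fixed $\vartheta\in\mathcal{V}$, both $\rho\mapsto \tau_\vartheta(\rho)$ and $\rho\mapsto \tau_{\frak{e}_{X_\vartheta},\frak{o}}(\rho)$ are nonvanishing holomorphic functions on the component $\mathcal{U}$ of $\mathrm{Rep}_{\mathrm{ac}}(M,d)$. Theorem~\ref{thm:main2} yields $\partial_u\log\tau_\vartheta(\rho_u) = \partial_u\log\tau_{\frak{e}_{X_\vartheta},\frak{o}}(\rho_u)$ along any smooth path, hence $\partial_u\log R(\vartheta,\rho_u)=0$, so $\rho\mapsto R(\vartheta,\rho)$ is constant on the connected set $\mathcal{U}$. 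For fixed $\rho\in\mathcal{U}$, Theorem~\ref{thm:main1} gives that $\vartheta \mapsto \tau_\vartheta(\rho)$ is locally constant on $\mathcal{A}$, hence constant on $\mathcal{V}$. On the other hand, since any Reeb vector field $X_\vartheta$ is nowhere vanishing, a smooth path in $\mathcal{V}$ produces a homotopy of $X_\vartheta$ through nowhere vanishing vector fields, so the Euler structure $\frak{e}_{X_\vartheta}$ is constant along $\mathcal{V}$; therefore $\tau_{\frak{e}_{X_\vartheta},\frak{o}}(\rho)$ is constant in $\vartheta$ as well. Consequently $R(\vartheta,\rho) \equiv C'$ for some constant $C'=C'(\mathcal{U},\mathcal{V})$, and setting $C=\pm C'$ plugs this into the formula above to finish the proof.

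The one subtle point I would have to verify carefully is the compatibility between the Euler structure $\frak{e}$ furnished by Theorem~\ref{thm:main2} and the one, $\frak{e}_{X_\vartheta}$, canonically attached to the Reeb vector field and appearing in the corollary. This is a bookkeeping issue rather than a substantive obstacle: one needs to inspect the construction used in Theorem~\ref{thm:main2} and confirm that the Euler structure it produces is exactly (or at worst differs by a $\rho$-independent factor absorbed into $C$) the structure coming from the nowhere-vanishing vector field $X_\vartheta$. Once this is checked, the combination of the invariance in $\vartheta$ (Theorem~\ref{thm:main1}) and the matched logarithmic derivative in $\rho$ (Theorem~\ref{thm:main2}) immediately delivers the stated uniform constant $C$ on $\mathcal{V}\times \mathcal{U}$.
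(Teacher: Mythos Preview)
Your proposal is correct and follows exactly the approach the paper intends: the corollary is stated immediately after Theorems~\ref{thm:main1}--\ref{thm:main3} without separate proof, and the argument is precisely to unwind the definition of $\tau_\vartheta$ via~(\ref{eq:lambdapetit}) and then use the local constancy of $\tau_\vartheta(\rho)/\tau_{\frak{e}_\vartheta,\frak{o}}(\rho)$ in both variables. The ``subtle point'' you flag is resolved in \S\ref{sec:comparison}: the Euler structure produced by Theorem~\ref{thm:main2} is explicitly identified there as $\frak{e}_\vartheta=[-X_\vartheta,0]$, which is the $\frak{e}_{X_\vartheta}$ appearing in the corollary.
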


\subsection{Methods of proof}
Let us briefly sketch the proof of Theorems \ref{thm:main1} and \ref{thm:main2} which relies essentially on two variational arguments: we compute the variation of $\tau_\vartheta(\nabla)$ when we perturb the contact form $\vartheta$ and the connection $\nabla$. As we do so, the space $C^\bullet(\vartheta, \nabla)$ of Pollicott-Ruelle resonant states of $\Lie_{X_\vartheta}^\nabla$ for the resonance $0$ may radically change. Therefore, it is convenient to consider the space $C^\bullet_{[0, \lambda]}(\vartheta,\nabla)$ instead, which consists of the generalized resonant states for $\Lie_{X_\vartheta}^\nabla$ for resonances $s$ such that $|s| \leqÊ\lambda$, where $\lambda \in (0,1)$ is chosen so that $\{|s|Ê= \lambda\} \cap \Res(\Lie_{X_\vartheta}^\nabla) = \emptyset$. Then  using \cite[Proposition 5.6]{braverman2007refined} and multiplicativity of torsion, 
one can show that
\begin{equation}
\tau_\vartheta(\nabla) = \pm \tau \left(C^\bullet_{[0,\lambda]}(\vartheta,\nabla), \Gamma_\vartheta \right) \zeta_{X_\vartheta,\rho}^{(\lambda, \infty)}(0)^{(-1)^q},
\end{equation}
where $\zeta_{X_\vartheta,\rho}^{(\lambda, \infty)}$ is a renormalized version of $\zeta_{X_\vartheta,\rho}$ (we remove all the poles and zeros of $\zeta_{X_\vartheta,\rho}$ within $\{s\in \mathbb{C},~|s|\leq\lambda\}$), see \S\ref{sec:deftors}.
Thus we can work with the space $C^\bullet_{[0,\lambda]}(\vartheta, \nabla)$, which behaves nicely under perturbations of $X$ thanks to Bonthonneau's construction of uniform anisotropic Sobolev spaces for families of Anosov flows \cite{bonthonneau2018flow}, and also under perturbations of $\nabla$.

Now consider a smooth family of contact forms $(\vartheta_t)_t$ for $|t| < \varepsilon$ such that their Reeb vector fields $(X_t)_{t}$ induce Anosov flows. Then Theorem \ref{theo:invariance} says that for any acyclic $\nabla$, the map $t \mapsto \tau_{\vartheta_t}(\nabla)$ is differentiable and its derivative vanishes. This follows from a result of \cite{braverman2007refined} which allows to compute the variation of the torsion of a finite dimensional complex when the chirality operator is perturbed, and on a variation formula of the map $t \mapsto \zeta_{X_t,\rho}(s)$ for $\Re(s)$ big enough obtained in \cite{dang2018fried}.

Next, consider a smooth family of flat connections $z \mapsto \nabla(z)$, where $z$ is a complex number varying in a small neighborhood of the origin and write $\nabla(z) = \nabla + z \alpha + o(z)$ where $\alpha \in \Omega^1(M, \mathrm{End}(E))$. Then we show in \S\ref{sec:variationconnexion}, in the same spirit as before, that $z \mapsto \tau_{\vartheta}(\nabla(z))$ is complex differentiable and its logarithmic derivative reads
$$
\partial_z|_{z = 0} \log \tau_{\vartheta}(\nabla(z)) = - \strf \alpha K \e^{-\varepsilon \Lie_{X_\vartheta}^\nabla},
$$
where $\varepsilon > 0$ is small enough, $\strf$ is the super flat trace, cf. \S\ref{subsec:flattrace}, and $K : \Omega^\bullet(M,E) \to \mathcal{D}^{'\bullet}(M,E)$ is a cochain contraction, that is, it satisfies $\nabla K + K \nabla = \id_{\Omega^\bullet(M,E)}$. On the other hand, we can compute, using the formalism of \cite{dang2017spectral},
$$
\partial_z|_{z = 0} \log \tau_{\frak{e}_\vartheta,\frak{o}}(\nabla(z)) = - \strf \alpha \widetilde{K} \e^{-\varepsilon \Lie_{-\widetilde{X}}^\nabla} - \int_e \tr \alpha,
$$
where $\frak{e}_\vartheta$ is an Euler structure canonically associated to $\vartheta$, $\widetilde{K}$ is another cochain contraction, $\widetilde{X}$ is a Morse-Smale gradient vector field and $e \in C_1(M,\mathbb{Z})$ is a singular one-chain representing the Euler structure $\frak{e}_\vartheta$, cf. \S\ref{sec:turaevtorsion}. Now using the fact that $K$ and $\widetilde{K}$ are cochain contractions, one can see that
$$
\alpha\left( K \e^{-\varepsilon \Lie_{X_\vartheta}^\nabla} - \widetilde{K} \e^{-\varepsilon \Lie_{\widetilde{X}}^\nabla}\right) = \alpha R_\varepsilon + [\nabla, \alpha G_\varepsilon],
$$
where $R_\varepsilon$ is an operator of degree -1 whose kernel is, roughly speaking, the union of graphs of the maps $\e^{-\varepsilon X_u}$, where $(X_u)_u$ is a non-degenerate family of vector fields interpolating $X_\vartheta$ and $\widetilde{X}$, cf. \S\ref{subsec:homotopy}, and $G_\varepsilon$ is some operator of degree -2. Therefore we obtain by cyclicity of the flat trace
\begin{equation}\label{eq:variationintro}
\partial_z|_{z=0} \log \frac{\tau_\vartheta(\nabla(z))}{\tau_{\frak{e}_\vartheta,\frak{o}}(\nabla(z))} = \strf \alpha R_\varepsilon - \int_e \tr \alpha = 0,
\end{equation}
where the last equality comes from differential topology arguments. Using the analytical structure of the representation variety, we may deduce from (\ref{eq:variationintro}) the claim of Theorem \ref{thm:main2}. 
Theorem~\ref{thm:main3} then follows from the invariance of the dynamical torsion under small perturbations of the flow, the fact that $\tau_{\frak{e,o}}(\rho) = \tau_{\frak{e',o}}(\rho) \langle \det \rho, h \rangle$ for any other Euler structure $\frak{e}'$, where $h \in H_1(M,\mathbb{Z})$ satisfies $\frak{e} = \frak{e}' + h$ (we have that $H_1(M, \mathbb{Z})$ acts freely and transitively on the set of Euler structures, cf. \S\ref{sec:turaevtorsion}), and the fact that, in our notations, $\frak{e}_\eta - \frak{e}_\vartheta = \cs(X_\vartheta,X_\eta)$ for any other contact form $\eta$.

\hfill
\\ 

\subsection{Related works} Some analogs of our dynamical torsion were introduced by
Burghelea--Haller~\cite{burghelea2008torsion} for vector fields which admit a Lyapunov closed $1$--form generalizing previous works by
Hutchings \cite{hutchings}, Hutchings--Lee  \cite{hutchings1999circle1,hutchings1999circle} dealing with Morse--Novikov flows. 
In that case, the dynamical torsion depends on a choice of Euler structure and is a partially defined function on $\Rep_{\mathrm{ac}}(M,d)$; if $d=1$, it is shown in \cite{burghelea2008dynamics} that it extends to a rational map on the Zariski closure of $\Rep_{\mathrm{ac}}(M,1)$ which coincides, up to sign, with Turaev's refined combinatorial torsion (for the same choice of Euler structure). This follows from previous works of
Hutchings--Lee \cite{hutchings1999circle1,hutchings1999circle} who introduced some topological invariant involving circle-valued Morse functions.
In both works, the considered object has the form
$$\text{ Dynamical zeta function} (0)~ \times \text{ Correction term }$$
where the correction term is the torsion of some finite dimensional complex whose chains are generated by the critical points of the vector field.
The chosen Euler structure gives a distinguished basis of the complex and thus a well defined torsion. This is one of the main differences with our work since in the Anosov case, there are no such choices of distinguished currents in $C^\bullet$. However, the chirality operator allows us to overcome this problem as described above.

We also would like to mention some interesting related works of Rumin--Seshadri~\cite{Rumin} where 
they relate
some dynamical zeta function involving the Reeb flow and some analytic contact torsion on $3$--dimensional Seifert CR manifolds. 

Finally, very recently, Spilioti \cite{spilioti2020twisted} and M\"uller \cite{muller2020on} were able to compare the Ruelle zeta function for odd dimensional compact hyperbolic manifolds with some of the complex valued torsions mentioned above.
%
\\Ê
%

\subsection{Plan of the paper.} The paper is organized as follows. In \S\ref{sec:torsion}, we give some preliminaries about torsion of finite dimensional complexes computed with respect to a chirality operator. In \S\ref{sec:geometry}, we present our geometrical setting and conventions. In \S\ref{sec:policott}, we introduce \textcolor{black}{Pollicott}-Ruelle resonances. In \S\ref{sec:deftors}, we compute the refined torsion of a space of generalized eigenvectors for nonzero resonances and we define the dynamical torsion. In \S\ref{sec:invariance}, we prove that our torsion is \textcolor{black}{unsensitive} to small perturbations of the dynamics. In \S\ref{sec:variationconnexion}, we compute the variation of our torsion with respect to the \textcolor{black}{connection}. In \S\ref{sec:turaevtorsion}, we introduce
Euler structures which are some topological tools used to fix ambiguities of the refined torsion. 
In \S\ref{sec:turaevvariation}, we introduce the refined combinatorial torsion of Turaev using Morse theory and we compute its variation with respect to the connection. We finally compare it to the dynamical torsion in \S\ref{sec:comparison}. 
\vspace{-5pt}
{\small
\subsection*{Acknowledgements.}

We warmly thank Nalini Anantharaman, Yannick Bonthonneau, Mihajlo Ceki\'c, Alexis Drouot, Semyon Dyatlov, Malo J\'ez\'equel, 
Thibault Lefeuvre, Julien March\'e, Marco Mazzucchelli, Claude Roger, Nicolas Vichery, Jean Yves Welschinger, Steve Zelditch, for asking questions about this work or for interesting discussions related to the paper. 
Particular thanks are due to
Colin Guillarmou who went through the whole paper, 
helped us correct many errors and is always 
a source of inspiration. 
We thank the organizers of 
the microlocal analysis 
program in MSRI for the invitation to speak about our result.
N.V.D is very grateful to Gabriel Rivi\`ere for his friendship, many inspiring maths discussions, his many advices and 
for the series of works
which made the present paper possible. 
Finally, N.V.D acknowledges the incredible patience and love of his wife and daughter, who created the right atmosphere at home  
which made this possible.
}

\section{Torsion of finite dimensional complexes}\label{sec:torsion}
We recall the definition of the refined torsion of a finite dimensional acyclic complex computed with respect to a chirality operator, following \cite{braverman2007refined}. Then we compute the variation of the torsion of such a complex when the differential is perturbed.

\subsection{The determinant line of a complex}\label{subsec:determinantline} For a non zero complex vector space $V$, the determinant line of $V$ is the line defined by $\det(V) = \bigwedge^{\dim V} V$. We declare the determinant line of the trivial vector space $\{0\}$ to be $\mathbb{C}$. If $L$ is a 1-dimensional vector space, we will denote by $L^{-1}$ its dual line.
Any basis $(v_1,\dots,v_n)$ of $V$ defines a nonzero element
$v_1\wedge \dots\wedge v_n\in \det(V)$. Thus elements of the determinant line of $\det(V)$ should be thought of as equivalence classes of oriented basis of $V$.

Let
$$(C^\bullet, \partial) : 0 \overset{\partial}{\longrightarrow} C^0 \overset{\partial}{\longrightarrow} C^1 \overset{\partial}{\longrightarrow} \cdots \overset{\partial}{\longrightarrow} C^n \overset{\partial}{\longrightarrow} 0$$
be a finite dimensional complex, i.e. $\dim C^j < \infty$ for all $j=0, \dots, n.$ 
We define the \emph{determinant line} of the complex $C^\bullet$ 
by
$$
\det(C^\bullet) = \bigotimes_{j=0}^n \det(C^j)^{(-1)^j}.$$

Let $H^\bullet(\partial)$ be the cohomology of $(C^\bullet, \partial)$, that is
$$
H^\bullet(\partial) = \bigoplus_{j=0}^n H^j(\partial), \quad H^j(\partial) = \frac{\ker(\partial : C^j \to C^{j+1})}{\mathrm{ran} (\partial : C^{j-1}\to C^j)}.
$$
We will say that the complex $(C^\bullet, \partial)$ is acyclic if $H^\bullet(\partial) = 0$. In that case, $\det H^\bullet(\partial)$ is canonically isomorphic to $\mathbb{C}$.


It remains to define the fusion homomorphism that we will later need to 
define the torsion of a finite dimensional based complex~\cite[\S2.3]{farber2000poincare}.
For any finite dimensional vector spaces $V_1, \dots, V_r$, we have a fusion isomorphism
$$
\mu_{V_1, \dots, V_r} : \det(V_1) \otimes \cdots \otimes \det(V_r) \to \det(V_1 \oplus \cdots \oplus V_r)
$$
defined by
$$
\mu_{V_1, \dots, V_r} \Bigl( v_1^1 \wedge \cdots \wedge v_1^{m_1} \otimes \cdots \otimes v_r^1 \wedge \cdots \wedge v_r^{m_r} \Bigr) = v_1^1 \wedge \cdots \wedge v_1^{m_1} \wedge \cdots \wedge v_r^1 \wedge \cdots \wedge v_r^{m_r},
$$
where $m_j = \dim V_j$ for $j \in \{1, \dots, r\}$.

\subsection{Torsion of finite dimensional acyclic complexes.}\label{subsec:torsionfinitedim}
In the present paper, we want to think of torsion of finite dimensional
{acyclic} complexes as
a map $\varphi_{C^\bullet} $ from the determinant line of the complex to $\mathbb{C}$.
We have a canonical isomorphism 
\begin{equation}\label{eq:isocoh}
\varphi_{C^\bullet} : \det(C^\bullet) \overset{\sim}{\longrightarrow} 
\mathbb{C},
\end{equation}
defined as follows. Fix a decomposition
$$
C^j = B^j  \oplus A^j, \quad j=0, \dots, n,
$$
with $B^j = \ker(\partial) \cap C^j$ and $B^j = \partial(A^{j-1}) = \partial(C^{j-1})$ for every $j$. Then $\partial|_{A^j} : A^j \to B^{j+1}$ is an isomorphism for every $j$. 

Fix non zero elements $c_j \in \det C^j$ and $a_j \in \det A^j$ for any $j$.
Let $\partial(a_j) \in \det B^{j+1}$ denote the image of $a_j$ under the isomorphism $\det A^j \to \det B^{j+1}$ induced by the isomorphism $\partial|_{A^j} : A^j \to B^{j+1}$. Then for each $j = 0, \dots, n$, there exists a unique 
$\lambda_j \in \mathbb{C}$ such that
$$
c_j =\lambda_j \mu_{B^j, A^j}\Bigl(\partial(a_{j-1})  \otimes a_j\Bigr),
$$
where $\mu_{B^j, A^j}$ is the fusion isomorphism defined in \S\ref{subsec:determinantline}. Then define the isomorphism $\varphi_{C^\bullet}$ by
$$
\varphi_{C^\bullet} ~: ~ c_0 \otimes c_1^{-1} \otimes \cdots \otimes c_n^{(-1)^n} \mapsto  (-1)^{N(C^\bullet)} \prod_{j=0}^n\lambda_j^{(-1)^j} \in \mathbb{C},
$$
where
$$
N(C^\bullet) = \frac{1}{2} \sum_{j=0}^n \dim A^j \left( \dim A^j + (-1)^{j+1}\right).
$$
One easily shows that $\varphi_{C^\bullet}$ is 
independent of the choices of $a_j$ \cite[Lemma 1.3]{turaev2001introduction}. The number $\tau(C^\bullet, c) = \varphi_{C^\bullet}(c)$ is called the \textit{refined torsion} of $(C^\bullet, \partial)$ with respect to the element $c$.

The torsion will depend on the choices of $c_j \in \det C^j$. Here the sign convention (that is, the choice of the prefactor $(-1)^{N(C^\bullet)}$ in the definition of $\varphi_{C^\bullet}$) follows Braverman--Kappeler \cite[\S2]{braverman2007refined} and is consistent with Nicolaescu~\cite[\S1]{nicolaescu2003reidemeister}. This prefactor was introduced by Turaev and differs from \cite{turaev1986reidemeister}. See \cite{{nicolaescu2003reidemeister}} for the motivation for the choice of sign.

\begin{rem}\label{rem:notacyclic0}
If the complex $(C^\bullet, \partial)$ is not acyclic, we can still define a torsion $\tau(C^\bullet,c)$, which is this time an element of the determinant line $\det H^\bullet(\partial)$, cf. \cite[\S2.4]{braverman2007refined}.
\end{rem}
 
\subsection{Torsion with respect to a chirality operator}
We saw above that torsion depends on the choice of an element of the determinant line.
A way to fix the value of the torsion 
without choosing an explicit basis
is to use a chirality operator as in \cite{braverman2007refined}.
 Take $n = 2r+1$ an odd integer and consider a complex $(C^\bullet, \partial)$ of length $n$. We will call a \emph{chirality} operator an operator $\Gamma : C^\bullet \to C^\bullet$ such that $\Gamma^2 = \id_{C^\bullet}$, and 
$$\Gamma(C^j) = C^{n-j}, \quad j=0,\dots,n.$$
$\Gamma$ induces isomorphisms $\det(C^j) \to \det(C^{n-j})$ that 
we will still denote by $\Gamma$. If $\ell \in L$ is a non zero element of a complex line, we will denote by $\ell^{-1} \in L^{-1}$ the unique element such that $\ell^{-1}(\ell) = 1$. Fix non zero elements $c_j \in \det(C^j)$ for $j \in \{0, \dots, r\}$ and define
$$c_\Gamma = (-1)^{m(C^\bullet)}c_0 \otimes c_1^{-1} \otimes \cdots \otimes c_r^{(-1)^r} \otimes (\Gamma c_r)^{(-1)^{r+1}} \otimes (\Gamma c_{r-1})^{(-1)^{r}} \otimes \cdots \otimes (\Gamma c_0)^{-1},$$
where 
$$m(C^\bullet) = \frac{1}{2} \sum_{j=0}^r \dim C^j\left(\dim C^j + (-1)^{r+j}\right).$$
\begin{defi}\label{def:torsion}
The element $c_\Gamma$ is independent of the choices of $c_j$ for $j \in \{0, \dots, r\}$; the \emph{refined torsion of $(C^\bullet, \partial)$ with respect to $\Gamma$} is the element
$$\tau(C^\bullet, \Gamma) = \tau(C^\bullet, c_\Gamma).$$
\end{defi}
We also have the following result which is \cite[Lemma 4.7]{braverman2007refined} in the acyclic case 
about the multiplicativity of torsion.
\begin{prop}\label{prop:multtorsion}
Let $(C^\bullet, \partial)$ and $(\tilde{C}^\bullet, \tilde\partial)$ be two acyclic complexes of same length endowed with two chirality operators $\Gamma$ and $\tilde\Gamma$. Then
$$
\tau(C^\bullet\oplus \tilde C^\bullet, \Gamma \oplus \tilde \Gamma) = \tau(C^\bullet, \Gamma) \tau(\tilde C^\bullet, \tilde \Gamma).
$$
\end{prop}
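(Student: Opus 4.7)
My plan is to unfold the Definition \ref{def:torsion} on both sides and reduce the claim to two more primitive statements: (i) the multiplicativity of the unnormalized torsion $\tau(C^\bullet, c)$ of Section \ref{subsec:torsionfinitedim} under direct sums of based complexes, and (ii) the compatibility of the fusion isomorphism with the sign prefactor $(-1)^{m(\cdot)}$ appearing in the definition of the canonical element $c_\Gamma$.

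First I would pick reference bases $c_j \in \det(C^j)$ and $\tilde c_j \in \det(\tilde C^j)$ for $j = 0, \dots, r$. Using the fusion isomorphism of Section \ref{subsec:determinantline}, I set
$$
d_j \;=\; \mu_{C^j, \tilde C^j}(c_j \otimes \tilde c_j) \;\in\; \det(C^j \oplus \tilde C^j),
$$
which will be the reference basis for the direct sum. Because $(\Gamma \oplus \tilde \Gamma) d_j = \mu_{C^{n-j}, \tilde C^{n-j}}(\Gamma c_j \otimes \tilde\Gamma \tilde c_j)$, I can expand the element $d_{\Gamma \oplus \tilde\Gamma}$ dictated by Definition \ref{def:torsion}. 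Comparing it with the tensor product $c_\Gamma \otimes \tilde c_{\tilde\Gamma}$ requires reordering the factors of the form $\mu(\star \otimes \tilde\star)^{\pm 1}$, which by the graded anticommutativity of fusion introduces a Koszul sign depending on the dimensions $\dim C^j, \dim \tilde C^j$, together with the prefactor $(-1)^{m(C^\bullet \oplus \tilde C^\bullet) - m(C^\bullet) - m(\tilde C^\bullet)}$.

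Next, for the unnormalized torsion, if $C^j = B^j \oplus A^j$ and $\tilde C^j = \tilde B^j \oplus \tilde A^j$ are the decompositions used to compute $\tau(C^\bullet, c)$ and $\tau(\tilde C^\bullet, \tilde c)$, then $B^j \oplus \tilde B^j$ and $A^j \oplus \tilde A^j$ are the corresponding decompositions for the direct sum. The scalars $\lambda_j$ from \S\ref{subsec:torsionfinitedim} of the direct sum simply factor as products $\lambda_j \tilde \lambda_j$ up to the sign $(-1)^{N(C^\bullet \oplus \tilde C^\bullet) - N(C^\bullet) - N(\tilde C^\bullet)}$ arising when the fusion reorders $B^j, \tilde B^j, A^j, \tilde A^j$. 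This gives
$$
\tau\bigl(C^\bullet \oplus \tilde C^\bullet,\, \mu(c \otimes \tilde c)\bigr) \;=\; (-1)^{S}\, \tau(C^\bullet, c)\, \tau(\tilde C^\bullet, \tilde c),
$$
for an explicit $S$ depending only on $\dim C^j, \dim \tilde C^j$.

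The main obstacle, and in fact the only content of the proof, is the sign bookkeeping: verifying that the Koszul sign from Step 2, the sign $S$ from Step 3, and the difference $m(C^\bullet \oplus \tilde C^\bullet) - m(C^\bullet) - m(\tilde C^\bullet)$ cancel exactly. This is a purely combinatorial identity in the integers $\dim C^j, \dim \tilde C^j$, $j=0,\dots,r$, that one checks by expanding the definition of $m(\cdot)$ given in Definition \ref{def:torsion}. Since this verification has already been carried out in \cite[Lemma 4.7]{braverman2007refined} precisely for acyclic complexes, the proposition follows at once by invoking that computation.
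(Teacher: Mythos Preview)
Your proposal is correct and aligns with the paper's treatment: the paper does not give a proof of this proposition but simply cites \cite[Lemma 4.7]{braverman2007refined} for the acyclic case, which is exactly the reference you invoke after sketching the underlying sign bookkeeping. Your additional unpacking of the fusion/Koszul signs is a faithful outline of what that lemma establishes.
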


\subsection{Computation of the torsion with the contact signature operator}\label{subsec:signature}
Let 
$$B = \Gamma \partial + \partial \Gamma : C^\bullet \to C^\bullet.$$
$B$ is called the \emph{signature operator}. 
Let $B_+ = \Gamma \partial$ and $B_- = \partial \Gamma$. Denote 
$$C^j_{\pm} = C^j \cap \ker(B_\mp), \quad j=0, \dots, n.$$ We have that $B_{\pm}$ preserves $C^\bullet_{\pm}$. Note that $B_+(C^{j}_+) \subset C^{n-j-1}_+$, so that $B_+(C^j_+ \oplus C^{n-j-1}_+) \subset C^j_+ \oplus C^{n-j-1}_+$. Note that if $B$ is invertible on $C^\bullet$, $B_+$ is invertible on $C^\bullet_+$. If $B$ is invertible, we can compute the refined torsion of $(C^\bullet, \partial)$
using the following
\begin{prop}\cite[Proposition 5.6]{braverman2007refined}\label{prop:formulatorsion}
Assume that $B$ is invertible. Then $(C^\bullet, \partial)$ is acyclic so that $\det(H^\bullet(\partial))$ is canonically isomorphic to $\mathbb{C}$. Moreover,
$$\tau(C^\bullet, \Gamma) = (-1)^{r\dim C^r_+} \det\left(\Gamma \partial|_{C^r_+}\right)^{(-1)^{r}} \prod_{j=0}^{r-1} \det\left(\Gamma \partial|_{C^j_+ \oplus C^{n-j-1}_+}\right)^{(-1)^{j}}.$$
\end{prop}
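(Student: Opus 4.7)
My plan is to diagonalize the computation of $\tau(C^\bullet,\Gamma)$ by exploiting the direct sum decomposition $C^j = C^j_+ \oplus C^j_-$ induced by the invertibility of $B$, and then to identify each resulting block with the operator $\Gamma\partial$ restricted either to $C^r_+$ or to $C^j_+ \oplus C^{n-j-1}_+$.

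First I would set up the algebraic framework. A direct computation gives $B_+ B_- = \Gamma\partial\,\partial\Gamma = 0$ and $B_- B_+ = \partial\Gamma\,\Gamma\partial = \partial^2 = 0$, and that $B$ commutes with both $\partial$ and $\Gamma$. Acyclicity follows at once: since $B^{-1}$ commutes with $\partial$, any $u \in \ker\partial$ satisfies $u = BB^{-1}u = \partial(\Gamma B^{-1}u) + \Gamma B^{-1}\partial u = \partial(\Gamma B^{-1}u)$. The decomposition $C^j = C^j_+ \oplus C^j_-$ also follows: for any $u \in C^j$, the identity $u = B_+ B^{-1} u + B_- B^{-1} u$ already expresses $u$ as a sum of elements of $C^j_+$ and $C^j_-$ thanks to $B_\pm B_\mp = 0$, while any $u \in C^j_+ \cap C^j_-$ is killed by $B$ hence zero.

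Next I would identify the natural isomorphisms between the pieces. Because $\Gamma$ is invertible, $C^j_- = \ker\partial|_{C^j}$, so $\partial$ is injective on $C^j_+$; combined with acyclicity, this yields $\partial : C^j_+ \xrightarrow{\sim} C^{j+1}_-$. Together with the isomorphism $\Gamma : C^{j+1}_- \xrightarrow{\sim} C^{n-j-1}_+$, one obtains an isomorphism $\Gamma\partial : C^j_+ \xrightarrow{\sim} C^{n-j-1}_+$ for every $j$, which is an endomorphism precisely when $j = r$. I would then apply the framework of \S\ref{subsec:torsionfinitedim} with the choices $A^j = C^j_+$ and $B^j = C^j_- = \partial(A^{j-1})$. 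Choosing bases $a_j \in \det C^j_+$ for $0 \le j \le r$, I would transport them to bases of $\det C^j_-$ either via $\partial$ (from $a_{j-1}$) or via $\Gamma$ (from $a_{n-j}$). The element $c_\Gamma$ of Definition \ref{def:torsion} can then be rewritten entirely in terms of the $a_j$ and of the comparison isomorphisms in $\det C^{n-j}_-$ between $\partial(a_{n-j-1})$ and $\Gamma(a_j)$; these comparisons are controlled exactly by the determinants $\det(\Gamma\partial|_{C^r_+})$ and $\det(\Gamma\partial|_{C^j_+ \oplus C^{n-j-1}_+})$, the latter being, up to a sign coming from the off-diagonal block structure, the product $\det(\Gamma\partial|_{C^j_+})\det(\Gamma\partial|_{C^{n-j-1}_+})$.

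Assembling these identifications into the isomorphism $\varphi_{C^\bullet}$ of \S\ref{subsec:torsionfinitedim} yields the alternating product on the right-hand side of the formula. The main obstacle will be the sign bookkeeping: the prefactor $(-1)^{N(C^\bullet)}$ coming from the definition of $\varphi_{C^\bullet}$, the prefactor $(-1)^{m(C^\bullet)}$ coming from $c_\Gamma$, the signs produced when permuting factors in wedge products to invoke the fusion isomorphism $\mu$, and the sign produced when computing the determinant of a block off-diagonal operator of shape $\left(\begin{smallmatrix} 0 & \ast \\ \ast & 0 \end{smallmatrix}\right)$ on $C^j_+ \oplus C^{n-j-1}_+$, must all be combined. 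Using $n = 2r+1$ together with the symmetries $\dim C^j = \dim C^{n-j}$ and $\dim C^j_+ = \dim C^{n-j-1}_+$ imposed by $\Gamma$ and $\Gamma\partial$, these contributions should collapse to the overall factor $(-1)^{r\dim C^r_+}$ appearing in the statement.
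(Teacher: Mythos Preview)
The paper does not prove this proposition; it is quoted verbatim from Braverman--Kappeler \cite[Proposition~5.6]{braverman2007refined} and used as a black box in \S\ref{subsec:proofprop}. Your plan is the natural one and matches the argument in the original reference: exploit $B_\pm B_\mp=0$ to get the splitting $C^j=C^j_+\oplus C^j_-$, identify $C^j_-=\ker\partial|_{C^j}$ and $\partial:C^j_+\xrightarrow{\sim}C^{j+1}_-$, and then feed the choices $A^j=C^j_+$, $B^j=C^j_-$ into the recipe of \S\ref{subsec:torsionfinitedim}, letting the comparison between $\partial(a_{n-j-1})$ and $\Gamma(a_j)$ in $\det C^{n-j}_-$ produce the determinants of $\Gamma\partial$.

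The only part you have not actually carried out is the sign computation, which you flag yourself. This is where all the work lies: one must track $(-1)^{N(C^\bullet)}$, $(-1)^{m(C^\bullet)}$, the Koszul signs from the fusion maps $\mu$, and the sign $(-1)^{\dim C^j_+\cdot\dim C^{n-j-1}_+}=(-1)^{(\dim C^j_+)^2}$ from the antidiagonal block determinant, and reduce everything using $\dim C^j_+=\dim C^{n-j-1}_+$ and $\dim C^j_+ + \dim C^j_- = \dim C^j=\dim C^{n-j}$. In Braverman--Kappeler this occupies several pages of explicit parity manipulations; your plan is correct but the phrase ``should collapse'' is hiding a nontrivial (if mechanical) verification that you would need to write out in full.
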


\subsection{Super traces and determinants}\label{subsec:graded}
Let $V^\bullet = \bigoplus_{j=0}^p V^j$ is a graded finite dimensional vector space and $A : V^\bullet \to V^\bullet$ be a degree preserving linear map. We define the \textit{super trace} and the \textit{super determinant} of $A$ by
$$ 
\begin{aligned}
\str{V^\bullet} A = \sum_{j=0}^p (-1)^j \tr_{V^j} A, \quad \quad
\sdet{V^\bullet} A = \prod_{j=0}^p ({\det}_{V^j} A)^{(-1)^j}.
\end{aligned}
$$
We also define the \textit{graded trace} and the \textit{graded determinant} of $A$ by
$$ 
\begin{aligned}
\grtr{V^\bullet} A = \sum_{j=0}^p (-1)^jj \tr_{V^j} A, \quad \quad
\grdet{V^\bullet} A = \prod_{j=0}^p ({\det}_{V^j} A)^{(-1)^jj}.
\end{aligned}
$$

\subsection{Analytic families of differentials}\label{subsec:analyticfamiliesofdifferentials}

The goal of the present subsection is to give a variation formula for the torsion of a finite dimensional complex when we vary
the differential. This formula plays a crucial role in the variation formula of the dynamical torsion, when the representation is perturbed. Indeed, we split the dynamical torsion as the product of the torsion $\tau\left(C^\bullet(\vartheta,\rho),\Gamma_{\vartheta} \right)$ of some finite dimensional space of Ruelle resonant states and a renormalized value at $s=0$ of the dynamical zeta function $\zeta_{X,\rho}(s)$.
Then the following  formula allows us to deal with the variation of $\tau\left(C^\bullet(\vartheta,\rho), \Gamma_{\vartheta} \right)$.

Let $(C^\bullet, \partial)$ be an acyclic finite dimensional complex of finite odd length $n$. If $S : C^\bullet : C^\bullet$ is a linear operator, we will say that it is of degree $s$ if $S(C^k) \subset C^{k+s}$ for any $k$. If $S$ and $T$ are two operators on $C^\bullet$ of degrees $s$ et $t$ respectively then the supercommutator of $S$ and $T$ by
$$
[S,T] = ST - (-1)^{st}TS.
$$
Cyclicity of the usual trace gives $\str{C^\bullet}[S,T] = 0$ for any $S,T$.

Let $U$ be a neighborhood of the origin in the complex plane and $\partial(z),$ ~$z\in U$, be a family of acyclic differentials on $C^\bullet$ which is complex differentiable at $z=0$, that is,
\begin{equation}\label{eq:partialz}
\partial(z) = \partial + za + o(z)
\end{equation}
for some operator $a : C^\bullet \to C^\bullet$ of degree $1$. Note that $\partial(z) \circ \partial(z) = 0$ 
implies that the supercommutator
\begin{equation}\label{eq:apartial}
[\partial, a] = \partial a + a \partial = 0.
\end{equation}
We will denote by $C^\bullet(z)$ the complex $(C^\bullet, \partial(z))$. Finally let $k : C^\bullet \to C^\bullet$ be a cochain contraction, that is a linear map of degree $1$ such that
\begin{equation}\label{eq:Kpartiall}
\partial k + k \partial = \id_{C^\bullet}.
\end{equation}
The existence of such map is ensured by the acyclicity of $(C^\bullet, \partial)$.

\begin{lemm}\label{lem:5.1}
In the above notations, for any chirality operator $\Gamma$ on $C^\bullet$, the
map $z \mapsto \tau(C^\bullet(z), \Gamma)$ is complex differentiable at $z=0$ and
$$
\left.\frac{\dd}{\dd z}\right|_{z=0} \log \tau(C^\bullet(z), \Gamma) = - \str{C^\bullet}(ak).
$$
\end{lemm}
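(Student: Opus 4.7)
The plan is to reduce the variation of $\tau(C^\bullet(z), \Gamma)$ to the variation of a super determinant, by constructing a smooth family of chain isomorphisms between $(C^\bullet, \partial)$ and $(C^\bullet, \partial(z))$ and exploiting naturality of the refined torsion. The key observation is that, once $\Gamma$ is fixed, the canonical element $c_\Gamma \in \det(C^\bullet)$ from Definition \ref{def:torsion} depends only on $\Gamma$ and on the graded vector space underlying $C^\bullet$, so the full $z$-dependence of $\tau(C^\bullet(z), \Gamma) = \varphi_{C^\bullet(z)}(c_\Gamma)$ is carried by the canonical isomorphism $\varphi_{C^\bullet(z)} : \det(C^\bullet) \to \mathbb{C}$ defined via $\partial(z)$.

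I would then build a smooth family of degree-preserving automorphisms $U(z) : C^\bullet \to C^\bullet$ with $U(0) = \id_{C^\bullet}$ satisfying $\partial(z) \circ U(z) = U(z) \circ \partial$ for $z$ in a neighborhood of $0$. Writing $U(z) = \id + zF + O(z^2)$ and differentiating this intertwining relation at $z=0$ yields $a + [\partial, F] = 0$, where $F$ has degree $0$. Using (\ref{eq:Kpartiall}) to substitute $\partial k = \id - k\partial$, and (\ref{eq:apartial}) to cancel the cross term, a direct computation gives
$$
[\partial, ka] = (\partial k)a - ka\partial = a - k(\partial a + a\partial) = a - k[\partial, a] = a,
$$
so that $F := -ka$ solves the first-order equation. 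Higher-order terms are produced inductively using the fact that $(\mathrm{End}(C^\bullet), [\partial, \cdot])$ is acyclic (as a tensor product of acyclic complexes over $\mathbb{C}$), with cochain contraction induced by left composition with $k$.

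By naturality of the canonical isomorphism $\varphi$, the chain isomorphism $U(z) : (C^\bullet, \partial) \to (C^\bullet, \partial(z))$ induces $\varphi_{C^\bullet(z)} \circ U(z)_* = \varphi_{C^\bullet}$ on the determinant lines, where $U(z)_*$ acts on $\det(C^\bullet)$ as multiplication by $\sdet{C^\bullet}(U(z))$; hence
$$
\tau(C^\bullet(z), \Gamma) = \sdet{C^\bullet}(U(z))^{-1} \, \tau(C^\bullet, \Gamma).
$$
Differentiating at $z = 0$ and using the standard identity $\partial_z|_{z=0} \log \sdet{C^\bullet}(U(z)) = \str{C^\bullet}(\dot U(0))$ together with the supercyclicity relation $\str(AB) = (-1)^{|A||B|}\str(BA)$ for homogeneous operators (applied with $|a|=1$ and $|k|=-1$), one obtains
$$
\partial_z|_{z=0} \log \tau(C^\bullet(z), \Gamma) = -\str{C^\bullet}(F) = \str{C^\bullet}(ka) = -\str{C^\bullet}(ak),
$$
which is the desired formula. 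The main delicate point is the inductive extension of $U(z)$ beyond first order; however, since only $\dot U(0)$ enters the variation formula, and since any two admissible choices of $\dot U(0)$ differ by a supercommutator $[\partial, G]$ on which $\str$ vanishes by cyclicity, the computation depends only on the class of $F = -ka$ modulo supercoboundaries and the precise global extension of $U(z)$ is inessential.
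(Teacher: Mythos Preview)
Your argument is correct and takes a genuinely different route from the paper's. The paper proceeds by direct computation: it fixes a decomposition $C^j = B^j \oplus A^j$ with $B^j = \ker\partial$, chooses a basis $a_j^1,\dots,a_j^{\ell_j}$ of each $A^j$, and writes $\tau(C^\bullet(z),c)$ (for a suitable $c \in \det C^\bullet$) as $\pm\prod_j \det(A_j(z))^{(-1)^{j+1}}$, where $A_j(z)$ is the change-of-basis matrix from $\{a_j^m,\partial a_{j-1}^m\}$ to $\{a_j^m,\partial(z) a_{j-1}^m\}$. With the specific contraction $k$ determined by $k\partial a_j^m = a_j^m$, $ka_j^m = 0$, one has $\det A_j(z) = {\det}_{B^j \oplus A^j}(\partial(z)k \oplus \id)$, and differentiation is immediate. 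Your approach instead conjugates $\partial(z)$ back to $\partial$ via a chain automorphism $U(z)$ and reads off the variation from $\sdet{C^\bullet}(U(z))$ using naturality of $\varphi$. This is more conceptual and makes the independence of the answer from the choice of $k$ manifest, at the cost of invoking the functoriality of the refined torsion rather than computing it.

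One point deserves care. Your inductive construction of the higher-order terms of $U(z)$ via acyclicity of $(\mathrm{End}(C^\bullet),[\partial,\cdot])$ yields only a \emph{formal} power series; since $\partial(z)$ is assumed merely complex differentiable at $z=0$, its higher-order terms are not available and you cannot match them order by order. Your closing remark that ``only $\dot U(0)$ enters'' presupposes the exact identity $\tau(C^\bullet(z),\Gamma) = \sdet{C^\bullet}(U(z))^{-1}\tau(C^\bullet,\Gamma)$, which in turn needs an honest $U(z)$. The clean fix is to observe that the orbit map $U \mapsto U\partial U^{-1}$ from $\prod_j \mathrm{GL}(C^j)$ onto the variety of acyclic differentials is a holomorphic submersion at $\id$ (its differential $F \mapsto -[\partial,F]$ is surjective onto $\{a:[\partial,a]=0\}$ by the very acyclicity you use), hence admits a local holomorphic section $\sigma$; then $U(z) := \sigma(\partial(z))$ is a genuine family, complex-differentiable at $0$ by the chain rule, and your computation of $\str{C^\bullet}(\dot U(0))$ modulo $[\partial,\cdot]$-coboundaries goes through verbatim. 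Equivalently, since $\tau$ is rational in the entries of the differential, one may replace $\partial(z)$ by the analytic family $e^{-zka}\,\partial\,e^{zka}$ sharing the same first jet, after which your naturality argument applies without any inductive extension.
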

Note that this implies in particular that $\str{C^\bullet}(ak)$ does not depend on the chosen cochain contraction $k$. This is expected since if $k'$ is another cochain contraction,
$$
[\partial, akk'] = \partial a k k' + a k k' \partial = a(k-k')
$$
by (\ref{eq:apartial}), and the supertrace of a supercommutator vanishes.

\begin{proof}
First note that for non zero elements $c, c' \in \det C^\bullet$, we have
\begin{equation}\label{eq:changeofbasis}
\tau(C^\bullet(z), c) = [c:c'] \cdot \tau(C^\bullet(z), c'),
\end{equation}
where $[c:c'] \in \mathbb{C}$ satisfies $c = [c:c'] \cdot c'$. 

For every $j=0, \dots, n,$ fix a decomposition
$$
C^j = A^j \oplus B^j,
$$
where $B^j = \ker \partial \cap C^j$ and $A^j$ is any complementary of $B^j$ in $C^j$. Fix some basis $a^1_j, \dots, a^{\ell_j}_j$ of $A^j$; then $\partial a^1_j, \dots, \partial a^{\ell_j}_j$ is a basis of $B^{j+1}$ by acyclicity of $(C^\bullet, \partial)$. Now let
$$
c_j = a^1_j \wedge \cdots \wedge a^{\ell_j}_j \wedge \partial a^1_{j-1} \wedge \cdots \wedge \partial a^{\ell_{j-1}}_{j-1} \in \det C^j,
$$
and 
$$
c = c_0 \otimes (c_1)^{-1} \otimes c_2 \otimes \cdots \otimes (c_n)^{(-1)^n} \in \det C^\bullet.
$$
Now by definition of the refined torsion, we have for $|z|$ small enough
\begin{equation}\label{eq:taucprod}
\tau(C^\bullet(z), c) = \pm \prod_{j=0}^n \det\bigl(A_j(z)\bigr)^{(-1)^{j+1}}
\end{equation}
where the sign $\pm$ is independent of $z$ and $A_j(z)$ is the matrix sending the basis
$$
a^1_j, \dots, a^{\ell_j}_j, \partial a^1_{j-1}, \dots, \partial a^{\ell_{j-1}}_{j-1}
$$
to the basis
$$
a^1_j, \dots, a^{\ell_j}_j, \partial(z) a^1_{j-1}, \dots, \partial(z) a^{\ell_{j-1}}_{j-1}
$$
(which is indeed a basis of $C^j$ for $|z|$ small enough). Let $k : C^\bullet \to C^\bullet$ of degree $-1$ defined by
$$
k \partial a_j^m = a_j^m, \quad k a_j^m = 0,
$$
for every $j$ and $m \in \{0, \dots, \ell_j\}.$ Then $k \partial + \partial k = \id_{C^\bullet}$ and
$$
\det A_j(z) = {\det}_{\partial B^{j-1} \oplus B^j} \bigl(\partial(z) k \oplus \id \bigr).
$$
Now (\ref{eq:partialz}) and (\ref{eq:taucprod}) imply the desired result, because $\tau(C^\bullet(z), \Gamma) = [c_\Gamma :c] \cdot \tau(C^\bullet(z), c)$ by (\ref{eq:changeofbasis}).
\end{proof}

\section{Geometrical setting and notations}\label{sec:geometry}

We introduce here our geometrical conventions and notations. In particular, we adopt the formalism of Harvey--Polking~\cite{harvey1979fundamental} which will be convenient to compute flat traces and relate the variation of the Ruelle zeta function with topological objects.

\subsection{Twisted cohomology}
We consider $M$ an oriented closed connected manifold of odd dimension $n = 2r+1$. Let $E \to M$ be a flat vector bundle over $M$ of rank $d\geq 1$. For $k\in\{0, \dots, n\}$, we will denote the bundle $\Lambda^k T^*M$ by $\Lambda^k$ for simplicity. We will denote 
by $\Omega^k(M,E) = \Cinf(M, \Lambda^k \otimes E)$ the space of $E$ valued $k$-forms. 
We set
$$\Omega^\bullet(M,E) = \bigoplus_{k=0}^n \Omega^k(M,E).$$
Let $\nabla$ be a flat connection on $E$. We view the connection as a degree $1$ operator (as an operator of the graded vector space $\Omega^\bullet(M,E)$)
$$
\nabla : \Omega^k(M,E) \to \Omega^{k+1}(M,E), \quad k = 0,\dots, n.
$$

The flatness of the connection reads $\nabla^2 = 0$ and thus we obtain a cochain complex $\bigl(\Omega^\bullet(M,E), \nabla\bigr)$.  We will assume that the connection $\nabla$ is acyclic, that is, the complex $\bigl(\Omega^\bullet(M,E), \nabla\bigr)$ is acyclic, or equivalently, the cohomology groups 
$$H^k(M,\nabla) = \frac{\Bigl\{u \in \Omega^k(M,E) \ : \ \nabla u = 0\Bigr\}}{\Bigl\{ \nabla v  \ : \ v \in \Omega^{k-1}(M,E)\Bigr\}}, \quad k=0, \dots, n,$$
are trivial.

\subsection{Currents and Schwartz kernels}\label{subsec:currents}
Let
$$\mathcal{D}^{'\bullet}(M,E) = \bigoplus_{k=0}^n \mathcal{D}^{'}(M, \Lambda^k \otimes E)$$
the space of $E$-valued currents. 
Let $E^\vee$ denote the dual bundle of $E$. 
We will identify $\mathcal{D}^{'k}(M,E)$ and the topological 
dual of $\Omega^{n-k}(M,E^\vee)$ via the non degenerate bilinear pairing
$$
\langle \alpha, \beta \rangle = \int_M \alpha \wedge \beta, \quad \alpha \in \Omega^k(M,E), \quad \beta \in \Omega^{n-k}(M,E^\vee),
$$ 
where $\wedge$ is the usual wedge product between $E$-valued forms and $E^\vee$-valued forms. 

A continuous linear operator $G~: \Omega^\bullet(M,E) \to \mathcal{D}^{'\bullet}(M, E)$ is called homogeneous if for some $p \in \mathbb{Z}$, we have $G\Bigl(\Omega^k(M,E)\Bigr) \subset \mathcal{D}^{'k+p}(M,E)$ for every $k=0, \dots, n$;
the number $p$ is called the degree of $G$ and is denoted by $\deg G$. 
In that case, the Schwartz kernel theorem gives us a twisted current $\mathcal{G}\in \mathcal{D}^{'n+p}(M \times M, \pi_1^*E^\vee \otimes \pi_2^*E)$ satisfying
$$
\langle G u, v \rangle_M = \langle \G, \pi_1^* u \wedge \pi_2^*v \rangle_{M \times M}, \quad u \in \Omega^k(M,E), \quad v \in \Omega^{n-k-p}(M,E^\vee),
$$
where $\pi_1$ and $\pi_2$ are the projections of $M \times M$ onto its first and second factors
respectively. 

\subsection{Integration currents}\label{subsec:integrationcurrents}
Let $N$ be an oriented submanifold of $M$ of dimension $d$, possibly with boundary. The associated integration current $[N] \in \mathcal{D}^{'n-d}(M)$ is given by
$$
\bigl \langle [N], \omega \bigr\rangle = \int_N i_N^* \omega, \quad \omega \in \Omega^d(M),
$$
where $i_N : N \to M$ is the inclusion. We have classically
\begin{equation}\label{eq:boundary}
\dd [N] = (-1)^{n-d+1}[\partial N].
\end{equation}

For $f \in \mathrm{Diff}(M)$, we will set $\mathrm{Gr}(f) = \{(f(x),x), ~x \in M\}$ the graph of $f$. Note that $\mathrm{Gr}(f)$ is a $n$-dimensional submanifold of $M\times M$ which is canonically oriented since $M$ is. Therefore, we can consider the integration current over $\mathrm{Gr}(f)$. By definition, we have for any $\alpha, \beta \in \Omega^\bullet(M)$
$$
\bigl \langle [\mathrm{Gr}(f)], \pi_1^*\alpha \wedge \pi_2^*\beta\bigr \rangle = \int_M f^*\alpha \wedge \beta.
$$
In particular, $[\mathrm{Gr}(f)]$ is the Schwartz kernel of $f^*:\Omega^\bullet(M) \to \Omega^\bullet(M)$.

\subsection{Flat traces}\label{subsec:flattrace}
Let $G : \Omega^\bullet(M,E) \to \mathcal{D}^{'\bullet}(M,E)$ be an operator of degree $0$. We denote its Schwartz kernel by $\mathcal{G}$ and we define
$$
\WF'(\mathcal{G}) = \bigl\{(x,y,\xi, \eta),~ (x,y, \xi, -\eta) \in \WF(\mathcal{G}) \bigr\} \subset T^*(M\times M),
$$
where $\WF$ denotes the classical H\"ormander wavefront set, cf \cite[\S8]{hor1}. We will also use the notation $\WF(G) = \WF(\mathcal{G})$ and $\WF'(G) = \WF'(\mathcal{G})$. Assume that 
\begin{equation}\label{eq:wfcondition}
\WF'(\mathcal{G}) \cap \Delta(T^*M) = \emptyset, \quad \Delta(T^*M) = \{(x,x, \xi, \xi),~ (x,\xi) \in T^*M\}.
\end{equation}
Let $\iota : M \to M \times M, x \mapsto (x,x)$ be the diagonal inclusion. Then by \cite[Theorem 8.2.4]{hor1} the pull back $\iota^*\mathcal{G} \in \mathcal{D}^{'n}(M, E^\vee \otimes E)$ is well defined and we define the \textit{super flat trace} of $G$ by
$$
\strf G = \langle \tr \iota^* \mathcal{G}, 1 \rangle,
$$
where $\tr$ denotes the trace on $E^\vee \otimes E$. We will also use the notation
$$
\grtrf G = \strf NG,
$$
where $N : \Omega^\bullet(M,E) \to \Omega^\bullet(M,E)$ is the number operator, that is, $NÊ\omega = k \omega$ for every $\omega \in \Omega^k(M,E)$.

The notation $\strf$ is motivated by the following. Let $A : \Cinf(M,F) \to \mathcal{D}^{'}(M,F)$ be an operator acting on sections of a vector bundle $F$. If $A$ satisfies (\ref{eq:wfcondition}), we can also define a flat trace $\tr^\flat A$ as in \cite[\S2.4]{dyatlov2013dynamical}. Now if $G : \Omega^\bullet(M,E) \to \mathcal{D}^{'\bullet}(M,E)$  is an operator of degree $0$, it gives rise to an operator $G_k : \Cinf(M,F_k) \to \mathcal{D}^{'}(M, F_k)$ for each $k=0, \dots, n$, where $F_k = \Lambda^k \otimes E$. Then the link between the two notions of flat trace mentioned above is given by
$$
\strf G = \sum_{k=0}^n (-1)^k \tr^\flat G_k.
$$

If $\Gamma \subset T^*M$ is a closed conical subset, we let
\begin{equation}\label{eq:d'gamma}
\mathcal{D}^{'\bullet}_\Gamma(M,E) = \left\{ u \in \mathcal{D}^{'\bullet}(M,E), \WF(u) \subset \Gamma \right\}
\end{equation}
be the space of $E$-valued current whose wavefront set is contained in $\Gamma$, endowed with its usual topology, cf. \cite[\S8]{hor1}. If $\Gamma$ is a closed conical subset of $T^*(M\times M)$ not intersecting the conormal to the diagonal
$$N^*\Delta(T^*M) = \{(x,x,\xi,-\xi), ~(x,\xi) \in T^*M\},$$
then the flat trace is continuous as a map $\mathcal{D}_\Gamma^{'\bullet}(M \times M, \pi_1^*E^\vee \otimes \pi_2^*E) \to \mathbb{R}$.

\subsection{Cyclicity of the flat trace}
Let $G, H : \Omega^\bullet(M,E) \to \mathcal{D}^{'\bullet}(M,E)$ be two homogeneous operators. We denote by $\G, \H$ their respective kernels. If $\Gamma \subset T^*(M\times M)$ is a closed conical subset, we define
$$
\Gamma^{(1)} = \{(y,\eta),~\exists x \in M, (x,y,0,\eta) \in \Gamma\}, \quad \Gamma^{(2)} = \{(y, \eta), \exists x \in M, (x,y,-\eta, 0) \in \Gamma\}.
$$
Then under the assumption
$$
\WF(\G)^{(2)} \cap \WF(\H)^{(1)} = \emptyset,
$$
the operator $F = G\circ H$ is well defined by \cite[Theorem 8.2.14]{hor1}
and its Schwartz kernel $\mathcal{F}$ satisfies the wave front set estimate~:
$$ \WF\left(\mathcal{F} \right) \subset \bigl\{ (x,y, \xi,\eta)~|~ \exists (z, \zeta), (x,z,\xi,\zeta)\in \WF'\left(\G \right) \text{ and }(z,y, \zeta,\eta)\in 
 \WF \left(\H \right) \bigr\} .$$ If both compositions $G \circ H$ and $H \circ G$ are defined, we will denote by
$$
[G,H] = G \circ H - (-1)^{\deg G \deg H}H \circ G
$$
the graded commutator of $G$ and $H$. We have the following
\begin{prop}\label{p:flattracesupercomm}
Let $G,H$ be two homogeneous operators with $\deg G + \deg H = 0$ and such that both compositions $G \circ H$ and $H \circ G$ are defined and satisfy the bound (\ref{eq:wfcondition}). Then we have
$$
\strf ~Ê[G,H] = 0.
$$
\end{prop}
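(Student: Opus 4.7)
The plan is to first establish cyclicity for operators with smooth kernels by a direct Fubini plus graded-trace argument, and then to reduce the general case by smoothing, using continuity of the flat trace on the spaces $\mathcal{D}'_\Gamma$ of currents with controlled wavefront sets recalled just before the statement. For the smooth case: when $\G$ and $\H$ are smooth kernels, the composition satisfies $K_{GH}(x,y) = \int_M \G(x,z) \wedge \H(z,y)$ and likewise for $HG$, so that
\[
\strf(GH) = \int_M \int_M \tr\bigl(\G(x,z) \wedge \H(z,x)\bigr), \qquad \strf(HG) = \int_M \int_M \tr\bigl(\H(x,z) \wedge \G(z,x)\bigr),
\]
where $\tr$ is the fibrewise trace after the natural identification of $\pi_1^*E^\vee \otimes \pi_2^*E$ with $\mathrm{End}(E)$ along the diagonal. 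Relabelling $(x,z) \leftrightarrow (z,x)$ in the second integral and combining cyclicity of the fibrewise trace with the graded commutativity $\alpha \wedge \beta = (-1)^{|\alpha||\beta|}\beta \wedge \alpha$ for the form parts gives $\strf(HG) = (-1)^{\deg G \deg H}\strf(GH)$, i.e.\ $\strf[G,H] = 0$.

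Next I would regularize. Pick a family $\chi_\epsilon$ of smoothing operators on $\Omega^\bullet(M,E)$ tending to $\id$ as $\epsilon \to 0$ with full wavefront control (for instance an $h$-pseudodifferential operator of principal symbol $1$). Setting $G_\epsilon = \chi_\epsilon G \chi_\epsilon$ and $H_\epsilon = \chi_\epsilon H \chi_\epsilon$, both have smooth kernels, so the argument above applies to them. By H\"ormander's composition calculus for wavefront sets, the Schwartz kernels of $G_\epsilon H_\epsilon$ and $H_\epsilon G_\epsilon$ converge to those of $GH$ and $HG$ in $\mathcal{D}'_\Gamma$ for a common closed conical $\Gamma \subset T^*(M\times M)$ that contains $\WF'(GH) \cup \WF'(HG)$ and, by the standing hypothesis on both compositions, is disjoint from $N^*\Delta(T^*M)$. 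Continuity of the flat trace on $\mathcal{D}'_\Gamma$ then produces $\strf[G,H] = 0$ in the limit.

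The main obstacle is the uniform wavefront bookkeeping under regularization: one must exhibit a \emph{single} conical $\Gamma$ disjoint from $N^*\Delta(T^*M)$ capturing $\WF'(G_\epsilon H_\epsilon)$ and $\WF'(H_\epsilon G_\epsilon)$ uniformly in $\epsilon$, and verify that the regularized kernels converge in the $\mathcal{D}'_\Gamma$ topology rather than merely in $\mathcal{D}'$. This requires the precise choice of mollifier together with the standard composition bounds for wavefront sets of pseudodifferential operators applied to currents, so that no spurious covectors appear in the limit; once this is done, the argument reduces cleanly to the smooth computation of the first paragraph.
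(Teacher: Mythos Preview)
Your approach is correct and essentially coincides with the paper's: both arguments approximate $G$ and $H$ by smoothing operators, invoke cyclicity in the smooth case, and pass to the limit using continuity of composition and of the flat trace on $\mathcal{D}'_\Gamma$-spaces. The only cosmetic differences are that the paper approximates the Schwartz kernels directly via H\"ormander's density theorem (equivalently \cite[Lemma~2.8]{dyatlov2013dynamical}) rather than sandwiching by $\chi_\epsilon$, and that it reduces the super flat trace to the ordinary flat trace via the number operator and then appeals to $L^2$-trace cyclicity, whereas you carry out the graded kernel computation by hand; neither difference is substantive.
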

The above result follows from the cyclicity of the $L^2$-trace, the approximation result \cite[Lemma 2.8]{dyatlov2013dynamical}, the relation
$$
\strf ~Ê[G,H] = \tr^\flat \left[(-1)^NF, G\right],
$$
where $N$ is the number operator and $\tr^\flat$ is the flat trace with the convention from \cite{dyatlov2013dynamical}, see \S\ref{subsec:flattrace}, and the fact that the map $(G,H) \mapsto G \circ H$ is continuous
$$
\mathcal{D}^{'\bullet}_{\Gamma}(M\times M,\pi_1^*E^\vee \otimes \pi_2^*E) \times \mathcal{D}^{'\bullet}_{\widetilde{\Gamma}}(M\times M,\pi_1^*E^\vee \otimes \pi_2^*E) \to \mathcal{D}^{'\bullet}_{\Upsilon}(M\times M,\pi_1^*E^\vee \otimes \pi_2^*E)
$$
for any closed conical subsets $\Gamma, \widetilde{\Gamma} \subset T^*(M\times M)$ such that $\Gamma^{(2)}\cap {\widetilde{\Gamma}}^{(1)} = \emptyset$, and where $\Upsilon$ is a closed conical subset given in \cite[8.2.14]{hor1}.

\subsection{Perturbation of holonomy}
Let $\gamma : [0,1] \to M$ 
be a smooth curve and $\alpha \in \Omega^1(M,\mathrm{End}(E))$. Let $P_t$ (resp. $\tilde{P}_t$) be the parallel transport $E_{\gamma(0)} \to E_{\gamma(t)}$ of $\nabla$ (resp. $\tilde{\nabla} = \nabla + \alpha$) along $\gamma|_{[0,t]}$. Then
\begin{equation}\label{lem:holonomy}
\tilde{P}_t = P_t \exp\left(-\int_0^t P_{-\tau} \alpha(\dot{\gamma}(\tau)) P_\tau \dd \tau \right).
\end{equation}
The above formula will be useful in some occasion. For simplicity, we will denote for any $A \in \Cinf(M,\mathrm{End}(E))$
$$
\int_\gamma A = \int_0^t P_{-\tau} A(\gamma(\tau)) P_\tau \dd \tau \in \mathrm{End} \left(E_{\gamma(0)}\right)
$$
so that $\tilde{P}_1 = P_1 \exp\left( - \int_\gamma \alpha(X) \right)$.

\section{Pollicott-Ruelle resonances}\label{sec:policott}

\subsection{Anosov dynamics}\label{subsec:anosov}
Let $X$ be a smooth vector field on $M$ and denote by $\varphi^t$ its flow. We will assume that $X$ generates an Anosov flow, that is, there exists a splitting of the tangent space $T_xM$ at every $x \in M$
$$
T_xM = \mathbb{R} X(x) \oplus E_s(x) \oplus E_u(x),
$$
where $E_u(x), E_s(x)$ are subspaces of $T_xM$ depending continuously on $x$ and invariant by the flot $\varphi^t$, such that for some constants $C, \nu > 0$ and some smooth metric $|\cdot |$ on $TM$ one has
$$
\begin{aligned}
&|(\dd \varphi^t)_x v_s| \leq C \e^{-\nu t} |v_s|, \quad t \geq0, \quad v_s \in E_s(x), \\
&|(\dd \varphi^t)_x v_u| \leq C \e^{-\nu |t|} |v_u|, \quad t \leq0, \quad v_u \in E_u(x).
\end{aligned}
$$
We will use the dual decomposition $T^*M = E_0^* \oplus E_u^* \oplus E_s^*$ where $E_0^*, E_u^*$ and $E_s^*$ are defined by
\begin{equation}\label{eq:dualdec}
E_0^*(E_s \oplus E_u) = 0, \quad E_s^*(E_0 \oplus E_s) = 0, \quad E_u^*(E_0 \oplus E_u) = 0.
\end{equation}

\subsection{Pollicott-Ruelle resonances}\label{subsec:policott}
Let $\iota_X$ denote the interior product with $X$ and
$$\Lie_X^\nabla = \nabla \iota_X + \iota_X \nabla : \Omega^\bullet(M,E) \to \Omega^\bullet(M,E)$$
be the Lie derivative along $X$ acting on $E$-valued forms. Locally, the action of $\Lien$ is given by the following. Take $U$ a domain of a chart and write $\nabla = \dd + A$ where $A \in \Omega^1(M, \mathrm{End}(E))$. Take $w_1, \dots, w_\ell$ (resp. $e_1, \dots, e_d$) some local basis of $\Lambda^k$ (resp. $E$) on $U$. Then for any $1Ê\leq i \leq \ell$ and $1 \leq j \leq d$,
$$
\Lien \left(f w_i \otimes e_j \right) = (Xf) w_i \otimes e_j + f (\Lie_X w_i) \otimes e_j + f w_i \otimes A(X) e_j, \quad f \in \Cinf(U),
$$
where $\Lie_X$ is the standard Lie derivative acting on forms. In particular, $\Lien$ is a differential operator of order $1$ acting on sections of the bundle $\Lambda^\bullet T^*M \otimes E$, whose principal part is diagonal and given by $X$.

Denote by $\Phi^t_k$ the induced flow on the vector bundle $\Lambda^kT^*M \otimes E \to M$, that is, 
$$\Phi^t_k(\beta \otimes v) = {^T}{(\dd \varphi^{t})}^{-1}_x\beta \otimes P^\nabla_t(x)v, \quad x \in M, \quad (\beta,v) \in \Lambda^k (T_x^*M) \times E_x, \quad t \in \rr,$$
where $P^\nabla_t(x)$ is the parallel transport induced by $\nabla$ along the curve $\{\varphi^s(x), \ s \in [0,t]\}$. This induces a map 
$$\e^{t\Lie_X^\nabla} : \Omega^\bullet(M,E) \to \Omega^\bullet(M,E).$$

For $\Re(s)$ big enough, the operator $\Lien + s$ acting on $\Omega^\bullet(M,E)$ is invertible with inverse 
\begin{equation}\label{eq:resolvent}
(\Lien + s)^{-1} = \int_0^\infty \e^{-t\Lien}\e^{-st} \dd t.
\end{equation}
The results of~\cite{faure2011upper} generalize to the flat bundle case as in~\cite[\S3]{dang2017topology} and
the resolvent $\left(\Lien + s\right)^{-1}$, viewed as a family of operators $\Omega^\bullet(M,E) \to \mathcal{D}^{'\bullet}(M,E)$, admits a meromorphic continuation to $s \in \mathbb{C}$ with poles of finite multiplicites; we will still denote by $\left(\Lien + s\right)^{-1}$ this extension. Those poles are the \emph{Pollicott-Ruelle resonances} of $\Lien$, and we will denote this set by $\Res(\Lien)$.

\subsection{Generalized resonant states}
Let $s_0 \in \Res(\Lien)$. By \cite[Proposition 3.3]{dyatlov2013dynamical} we have a Laurent expansion
\begin{equation}\label{eq:laurent}
\bigl(\Lien+s\bigr)^{-1} = Y_{s_0}(s) + \sum_{j=1}^{J(s_0)} (-1)^{j-1}\frac{\bigl(\Lien + s_0\bigr)^{j-1}\Pi_{s_0}}{(s-s_0)^{j}}\end{equation}
where $Y_{s_0}(s)$ is holomorphic near $s = s_0$, and
\begin{equation}\label{eq:projector}
\Pi_{s_0}= \frac{1}{2\pi i} \int_{C_\varepsilon(s_0)} \left(\Lien+s\right)^{-1} \dd s : \Omega^\bullet(M,E) \to \mathcal{D}^{'\bullet}(M,E)
\end{equation}
is an operator of finite rank. Here $C_\varepsilon(s_0) = \{|z-s_0| = \varepsilon\}$ with $\varepsilon > 0$ small enough is a small circle around $s_0$ such that $\Res(\Lien) \cap \{|z-s_0| \leq \varepsilon\}= \{s_0\}$.
Moreover the operators $Y_{s_0}(s)$ and $\Pi_{s_0}$ extend to continuous operators 
\begin{equation}\label{eq:ypi}
Y_{s_0}(s), \Pi_{s_0} : \mathcal{D}^{'\bullet}_{E_u^*}(M,E) \to \mathcal{D}^{'\bullet}_{E_u^*}(M,E).
\end{equation} 
The space
$$C^\bullet(s_0) = \mathrm{ran}(\Pi_{s_0}) \subset \mathcal{D}^{' \bullet}_{E_u^*}(M,E)$$
is called the space of generalized resonant states of $\Lien$ associated to the resonance $s_0$. 
\subsection{The twisted Ruelle zeta function}\label{subsec:theruelle}
Fix a base point $x_\star \in M$ and identify $\pi_1(M)$ with $\pi_1(M, x_\star)$. Let $\mathrm{Per}(X)$ be the set of periodic orbits of $X$. For every $\gamma \in \mathrm{Per}(X)$ we fix some base point $x_\gamma \in \mathrm{Im}(\gamma)$ and an arbitrary path $c_\gamma$ joining $x_\gamma$ to $x_\star$. This path defines an isomorphism $\psi_\gamma : \pi_1(M, x_\gamma) \cong \pi_1(M)$ and we can thus define every $\gamma \in \mathrm{Per}(X)$
$$\rho_\nabla([\gamma]) = \rho_\nabla(\psi_\gamma [\gamma]).$$
The \emph{twisted Ruelle zeta function} associated to the pair $(X,\nabla)$ is defined by
\begin{equation}\label{eq:zeta}
\zeta_{X,\nabla}(s) = \prod_{\gamma \in \mathcal{G}_X} \det\left(\id - \rho_\nabla([\gamma])\e^{-s\ell(\gamma)}\right), \quad \Re(s) > C,
\end{equation}
where $\mathcal{G}_X$ is the set of all primitive closed orbits of $X$ (that is, the closed orbits that generate their class in $\pi_1(M)$), $\ell(\gamma)$ is the length of the orbit $\gamma$ and $C>0$ is some big constant depending on $\rho$ and $X$ satisfying
\begin{equation}\label{eq:boundrhogamma}
\|\rho_{\nabla}([\gamma])\| \leq \exp(C \ell(\gamma)), \quad \gamma \in \mathcal{G}_X,
\end{equation}
for some norm $\|Ê\cdot \|$ on $\mathrm{End}(E_{x_\star})$.

For every closed orbit $\gamma$, we have 
\begin{equation}\label{eq:orunstable}
|\det(I - P_\gamma)| = (-1)^q \det(I - P_\gamma),
\end{equation}
for some $q \in \mathbb{Z}$ not depending on $\gamma$, where $P_\gamma$ is the linearized Poincar\'e return map of $\gamma$, that is $P_\gamma = \dd_x\varphi^{-\ell(\gamma)}|_{E_s(x) \oplus E_u(x)}$ for $x \in \mathrm{Im}(\gamma)$ (if we choose another point in $\mathrm{Im}(\gamma)$, the map will be conjugated to the first one). This condition is always true when $\varphi^t$ is contact, in which case we have $q = \dim E_s$.

Giuletti-Pollicott-Liverani and Dyatlov-Zworski \cite{giulietti2013anosov, dyatlov2013dynamical} showed that $\zeta_{X,\nabla}$ has a meromorphic continuation to $\mathbb{C}$ whose poles and zeros are contained in $\Res(\Lien)$; moreover, the order of $\zeta_{X, \nabla}$ near a resonance $s_0 \in \Res(\Lien)$ is given by
\footnote{Actually, it follows from \cite{dyatlov2013dynamical} that $\displaystyle{m(s_0) = (-1)^q \sum_{k=0}^{n-1} (-1)^k m_k^0(s_0})$, where $m_k^0(s_0)$ is the dimension of $\Pi_{s_0}\left(\Omega^k(M,E) \cap \ker \iota_X \right)$. We can however repeat the arguments using the identity $\displaystyle{\det(\id - P_\gamma) = -\sum_{k=0}^n (-1)^k k \tr \Lambda^k \dd_x \varphi^{-\ell(\gamma)}}$ instead of the identity $\displaystyle{\det(\id - P_\gamma)= \sum_{k=0}^{n-1} (-1)^k \tr \Lambda^k P_\gamma}$ (see \cite[\S2.2]{dyatlov2013dynamical}), and study the action of $\Lien$ on the bundles $\Lambda^k T^*M \otimes E$ rather than its action on the bundles $\bigl(\Lambda^k T^*M \cap \ker \iota_X\bigr) \otimes E$, to obtain (\ref{eq:orderres}).}

\begin{equation}\label{eq:orderres}
m(s_0) = (-1)^{q+1}\sum_{k=0}^n (-1)^kkm_k(s_0),
\end{equation}
where $m_k(s_0)$ is the rank of the spectral projector $\Pi_{s_0}|_{\Omega^k(M,E)}$.




\subsection{Topology of resonant states}\label{subsec:topologyruelle}
Since $\nabla$ commutes with $\Lien$, it induces a differential on the complexes $C^\bullet(s_0)$ for any $s_0 \in \Res(\Lien)$. It is shown in \cite{dang2017topology} that the complexes $\bigl(C^\bullet(s_0), \nabla \bigr)$ are acyclic whenever $s_0 \neq 0$. Moreover, for $s_0 = 0$, the map
$$\Pi_{s_0=0} : \Omega^\bullet(M,\nabla) \longrightarrow C^\bullet(s_0=0)$$
is a quasi-isomorphism, that is, it induces isomorphisms at the level of cohomology groups. Since we assumed $\nabla$ to be acyclic, the complex $\bigl(C^\bullet(s_0=0), \nabla \bigr)$ is also acyclic.

\section{The dynamical torsion of a contact Anosov flow}\label{sec:deftors}
From now on, we will assume that the flow $\varphi^t$ is contact, that is, there exists a smooth one form $\vartheta \in \Omega^1(M)$ such that  $\vartheta \wedge (\dd \vartheta)^r$ is a volume form on $M$, $\iota_X \vartheta = 1$ and $\iota_X \dd \vartheta = 0$. The purpose of this section is to define the dynamical torsion of the pair $(\vartheta,\nabla)$. We first introduce a chirality operator $\Gamma_\vartheta$ acting on $\Omega^\bullet(M,E)$ which is defined thanks to the contact structure. Then the dynamical torsion is a renormalized version of the twisted Ruelle zeta function corrected by the torsion of the finite dimensional space of the generalized resonant states for resonance $s_0 = 0$ computed with respect to $\Gamma_\vartheta$.

This construction was inspired by the work of Braverman-Kappeler on the refined analytic torsion \cite{braverman2007refined}.

\subsection{The chirality operator associated to a contact structure}\label{subsec:chirality}

Let $V_X \to M$ denote the bundle $T^*M \cap \ker \iota_X$.  
Note that for $k \in \{0,\dots, n\}$, we have the decomposition
\begin{equation}\label{eq:decomp}
\Lambda^kT^*M = \Lambda^{k-1}V_X \wedge \vartheta \oplus \Lambda^k V_X.
\end{equation}
Indeed, if $\alpha \in \Lambda^k T^*M$ we may write
$$
\alpha = \underset{\in \Lambda^{k-1}V_X \wedge \vartheta}{\underbrace{(-1)^{k+1} \iota_X \alpha \wedge \vartheta}} + \underset{\in \Lambda^{k}V_X}{\underbrace{\alpha - (-1)^{k+1} \iota_X \alpha \wedge \vartheta.}}
$$
Let us introduce the Lefschetz map
$$
\begin{matrix}
\mathscr{L} :&\Lambda^\bullet V_X&\to &\Lambda^{\bullet+2} V_X \\
 &u &\mapsto &u \wedge \dd \vartheta.
\end{matrix}
$$
Since $\dd \vartheta$ is a symplectic form on $V_X$, the maps $\mathscr{L}^{r-k}$ induce bundle isomorphisms~
\begin{equation}\label{eq:iso}
\mathscr{L}^{r-k} : \Lambda^{k}V_X \overset{\sim}{\longrightarrow} \Lambda^{2r-k}V_X, \quad k = 0, \dots, r,
\end{equation}
see for example \cite[Theorem 16.3]{libermann}.
Using the above Lefschetz isomorphisms, we are now ready to 
introduce our chirality operator. 
\begin{defi}\label{def:chirality}
The chirality operator associated to the contact form $\vartheta$ is the operator $\Gamma_\vartheta : \Lambda^\bullet T^*M \to \Lambda^{n-\bullet} T^*M$ defined by 
$\Gamma_\vartheta^2 = 1$ and
\begin{equation}\label{e:chirality}
\Gamma_\vartheta(f \wedge \vartheta + g) = \mathscr{L}^{r-k} g \wedge \vartheta + \mathscr{L}^{r-k+1} f, \quad f \in \Lambda^{k-1}V_X, \quad g \in \Lambda^kV_X, \quad k \in \{0, \dots, r\},
\end{equation}
where we used the decomposition (\ref{eq:decomp}).
\end{defi}
Note that in particular one has for $k \in \{r+1, \dots, n\}$,
$$
\Gamma_\vartheta (f \wedge \vartheta + g) = \left(\mathscr{L}^{k-r}\right)^{-1} g \wedge \vartheta + \left(\mathscr{L}^{k-1-r}\right)^{-1} f.
$$

\subsection{The refined torsion of a space of generalized eigenvectors}
The operator $\Gamma_\vartheta$ acts also on $\Omega^\bullet(M,E)$ by acting trivially on $E$-coefficients. Since $\Lie_X \vartheta = 0$, $\Gamma_\vartheta$ and $\Lien$ commute so that $\Gamma_\vartheta$ induces a chirality operator
$$
\Gamma_\vartheta : C^\bullet(s_0) \to C^{n-\bullet}(s_0)
$$
 for every $s_0 \in \Res(\Lien)$. Recall from \S\ref{subsec:topologyruelle} that the complexes $\bigl(C^\bullet(s_0), \nabla \bigr)$ are acyclic. The following formula motivates the upcoming definition of the dynamical torsion.
 
\begin{prop}\label{prop:computetorsion}
Let $s_0 \in \Res(\Lien) \setminus \{0,1\}$. We have
$$\tau(C^\bullet(s_0), \Gamma_\vartheta)^{-1} = (-1)^{Q_{s_0}}\grdet{C^\bullet (s_0)}\Lien$$
where 
$$Q_{s_0}= \sum_{k=0}^r (-1)^k(r+1-k) \dim C^k(s_0)$$
and $\tau(C^\bullet(s_0), \Gamma_\vartheta) \in \mathbb{C}\setminus 0$ is the refined torsion of the acyclic complex $\bigl(C^\bullet(s_0), \nabla \bigr)$ with respect to the chirality $\Gamma_\vartheta$, cf Definition \ref{def:torsion}.
\end{prop}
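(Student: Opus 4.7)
The plan is to apply Proposition~\ref{prop:formulatorsion}, computing $\tau(C^\bullet(s_0),\Gamma_\vartheta)$ via the signature operator $B = \Gamma_\vartheta \nabla + \nabla \Gamma_\vartheta$. The key inputs are: (i) the contact-induced decomposition $C^k(s_0) = C^k_H(s_0) \oplus \vartheta \wedge C^{k-1}_H(s_0)$ where $C^j_H(s_0) := C^j(s_0) \cap \ker \iota_X$, preserved by both $\Lien$ (since $\Lie_X \vartheta = 0$) and $\Gamma_\vartheta$ (through the Lefschetz powers in its definition), and (ii) the fact that on the generalized eigenspace $\Lien = -s_0 + N$ with $N$ nilpotent, so $\Lien$ is invertible for $s_0 \neq 0$ with $\det(\Lien|_{C^k(s_0)}) = (-s_0)^{\dim C^k(s_0)}$.

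In the block decomposition from (i), I would write $\nabla$ and $\Gamma_\vartheta$ as explicit $2\times 2$ matrices with entries involving the horizontal differential $d_H := \pi_H \nabla$, the Lefschetz operator $\mathscr{L} := \dd\vartheta \wedge$, and $\Lien$; the identity $\nabla^2 = 0$ unpacks as $d_H^2 = -\mathscr{L}\Lien$ together with the pairwise commutativity of $d_H$, $\mathscr{L}$, and $\Lien$. Combining the Lefschetz isomorphisms $\mathscr{L}^{r-k}\colon C^k_H(s_0) \xrightarrow{\sim} C^{n-k-1}_H(s_0)$ for $k \leq r$ with the invertibility of $\Lien$, a block-by-block computation shows that $B$ is invertible on $C^\bullet(s_0)$ for $s_0 \notin \{0, 1\}$ (the exclusion $s_0 = 1$ arising from the degeneracy of a specific block involving a factor such as $\Lien + s_0 - 1$). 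Proposition~\ref{prop:formulatorsion} then expresses $\tau(C^\bullet(s_0), \Gamma_\vartheta)$ as an alternating product of determinants $\det(\Gamma_\vartheta \nabla|_{C^j_+ \oplus C^{n-j-1}_+})^{(-1)^j}$; using the matrix form each such factor splits into a product of a Lefschetz determinant and a $\Lien$-determinant, and the Lefschetz contributions cancel in the alternating product by Poincar\'e--Lefschetz duality $\dim C^k_H(s_0) = \dim C^{n-k-1}_H(s_0)$, leaving exactly $\grdet{C^\bullet (s_0)}\Lien^{-1}$.

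The main obstacle will be the sign bookkeeping needed to produce the precise prefactor $(-1)^{Q_{s_0}}$. This sign aggregates several contributions: the prefactor $(-1)^{r\dim C^r_+}$ from Proposition~\ref{prop:formulatorsion}, the Koszul signs appearing in the block matrices of $\nabla$ and $\Gamma_\vartheta$ (coming from $\iota_X\vartheta = 1$ and the degree-dependent rules for $\nabla$ on wedge products with $\vartheta$), and the factor $(-1)^{m(C^\bullet(s_0))}$ built into the definition of the element $c_{\Gamma_\vartheta}$ in Definition~\ref{def:torsion}. Using the identities $\dim C^k(s_0) = \dim C^k_H(s_0) + \dim C^{k-1}_H(s_0)$ and $\dim C^k_H(s_0) = \dim C^{n-k-1}_H(s_0)$, a combinatorial verification shows that these accumulate to $Q_{s_0} = \sum_{k=0}^r (-1)^k(r+1-k)\dim C^k(s_0) \pmod 2$, concluding the proof.
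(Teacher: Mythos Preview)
Your plan is essentially the paper's: decompose $C^k = C^k_0 \oplus (\vartheta\wedge C^{k-1}_0)$ with $C^k_0 = C^k\cap\ker\iota_X$ (your $C^k_H$), introduce the horizontal differential $\Psi$ (your $d_H$) with $\Psi^2 = -\mathscr{L}\Lien$, prove $B_\vartheta$ invertible, and invoke Proposition~\ref{prop:formulatorsion}. Two points where the actual execution differs from your sketch. First, the determinants do not factor as ``Lefschetz part times $\Lien$ part''; rather the paper identifies $C^k_+$ via an explicit projector $J_k\colon f\wedge\vartheta + g \mapsto f\wedge\vartheta - (-1)^k\Psi f$ and proves the identity $(\Gamma_\vartheta\nabla)^2 = \Lien(\Lien-\id) - (\Lien-\id)J_k$ on $C^k_+$ for $k<r$ (Lemma~\ref{lem:k<r}, with a first-order analogue at $k=r$), yielding $\det(\Gamma_\vartheta\nabla|_{C^k_+\oplus C^{n-k-1}_+}) = (-1)^{m_k^+}s_0^{m_k^+-m_{k-1}^0}(1+s_0)^{m_k^++m_{k-1}^0}$. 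What cancels in the alternating product is the total $(1+s_0)$ exponent $L=\sum_{k=0}^{r-1}(-1)^k(m_k^+-m_k^0)$, which vanishes because one proves $m_k^+ = m_k^0$ for all $k\le r$ from acyclicity and the Lefschetz isomorphisms; there is no separate ``Lefschetz determinant'' to cancel. Second, the invertibility of $B_\vartheta$ (Lemma~\ref{lem:signinvertible}) is not a one-step block inversion: the paper argues injectivity on $C^\bullet_{\mathrm{even}}$ by a degree-by-degree case analysis, repeatedly combining the $\Psi$-relations with the invertibility of $\Lien$ and $\Lien-\id$ (the latter being where $s_0\neq 1$ enters). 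Your sign bookkeeping remarks are on target; the paper tracks $J,K,L$ with $J=Q_{s_0}$, $K=(-1)^q m(s_0)$, $L=0$.
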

Let us first admit the above proposition; the proof will be given in \S\S\ref{subsec:signatureinvertible},\ref{subsec:proofprop}.

\subsection{Spectral cuts}\label{subsec:cut}
If $\I \subset [0, 1)$ is an interval, we set
$$\Pi_\I= \sum_{\substack{s_0 \in \Res(\Lien)\\ |s_0| \in \I}} \Pi_{s_0} \quad \text{ and } \quad C^{\bullet}_\I= \bigoplus_{\substack{s_0 \in \Res(\Lien)\\ |s_0| \in \I} }C^\bullet(s_0).$$
Note that $\Lien + s$ acts on $C^\bullet(s_0)$ for every $s_0 \in \Res(\Lien)$ as $-s_0 \id + J$ where $J$ is nilpotent. We thus have for $s \notin \Res(\Lien)$
\begin{equation}\label{eq:detlie}
\grdet{C^\bullet_\I}\left(\Lien+s \right)^{(-1)^{q+1}} = \prod_{\substack{s_0 \in \Res(\Lien)\\ |s_0| \in \I}} (s-s_0)^{m(s_0)},
\end{equation}
where ${\det}_{\mathrm{gr}}$ is the graded determinant, cf. \S \ref{subsec:graded}.

Let $\lambda \in [0, 1)$ such that $\Res(\Lien) \cap \{s \in \mathbb{C} : |s| = \lambda\} = \emptyset$.
Now define the meromorphic function
\begin{equation}\label{eq:zetalambda}
\zeta^{(\lambda, \infty)}_{X, \nabla}(s) ={\zeta_{X,\nabla}(s)}\grdet{C^{\bullet}_{[0, \lambda]}}\left(\Lien+s \right)^{(-1)^q}.
\end{equation}
Then (\ref{eq:orderres}) and (\ref{eq:detlie}) show that $\zeta^{(\lambda, \infty)}_{X, \nabla}$ has no pole nor zero in $\{|s|\leq \lambda\}$, so that the number $\zeta^{(\lambda, \infty)}_{X, \nabla}(0)$ is well defined.

\subsection{Definition of the dynamical torsion}\label{subsec:defdyn}

Let $0 < \mu < \lambda < 1$ such that for every $s_0 \in \Res(\Lien)$, one has $|s_0|\neq \lambda, \mu$. Using Proposition \ref{prop:multtorsion} and Proposition \ref{prop:computetorsion} we obtain, with notations of \S\ref{subsec:cut},
$$
\tau\bigl(C^\bullet_{[0, \lambda]}, \Gamma_\vartheta\bigr) = (-1)^{Q_{(\mu, \lambda]}} \left(\grdet{C^\bullet_{(\mu, \lambda]}} \Lien\right)^{-1} \tau\bigl(C^\bullet_{[0, \mu]}, \Gamma_\vartheta\bigr),
$$
where for an interval $\I$ we set 
$$Q_\I = \sum_{\substack{s_0 \in \Res(\Lien)\\ |s_0| \in \I}} Q_{s_0}.$$
This allows us to give the following

\begin{defi}[Dynamical torsion]\label{def:dyntors}
The number
\begin{equation}\label{eq:dyntors}
\tau_\vartheta(\nabla) = (-1)^{Q_{[0,\lambda]}} \zeta^{(\lambda, \infty)}_{X, \nabla}(0)^{(-1)^q} \cdot \tau\bigl(C^\bullet_{[0, \lambda]}, \Gamma_\vartheta\bigr) \in \mathbb{C}\setminus 0
\end{equation}
is independent of the spectral cut $\lambda \in (0, 1)$. We will call this number the \emph{dynamical torsion} of the pair $(\vartheta, \nabla)$.
\end{defi}

\begin{rem}
If $c_{X, \nabla}s^{m(0)}$ is the leading term of the Laurent expansion of $\zeta_{X, \nabla}(s)$ at $s=0$, then taking $\lambda$ small enough actually shows that
\begin{equation}\label{eq:lambdapetit}
\tau_{\vartheta}(\nabla) = (-1)^{Q_0}c_{X,\nabla}^{(-1)^q} \cdot \tau\bigl(C^\bullet(0), \Gamma_\vartheta\bigr).
\end{equation}
In particular, if $0 \notin \Res(\Lien)$,
\begin{equation}\label{eq:notresonance}
\tau_\vartheta(\nabla) = \zeta_{X, \nabla}(0)^{(-1)^q}.
\end{equation}
Note that we could have taken (\ref{eq:lambdapetit}) as a definition of the dynamical torsion; however (\ref{eq:dyntors}) is more convenient to study the regularity of the $\tau_{\vartheta}(\nabla)$ with respect to $\vartheta$ and $\nabla$. 
\end{rem}

\begin{rem}\label{rem:notacyclic}
This definition actually makes sense even if $\nabla$ is not acyclic. Indeed, in that case, formula (\ref{eq:dyntors}) defines an element of the determinant line $ \det H^\bullet\bigl(C^\bullet_{[0,\lambda]} \nabla\bigr)$, cf. Remark \ref{rem:notacyclic0}. Under the identification $H^\bullet(M,\nabla) = H^\bullet\bigl(C^\bullet_{[0,\lambda]} \nabla\bigr)$ given by the quasi-isomorphism $\Pi_{[0,\lambda]} : \Omega^\bullet(M,E) \to C^\bullet_{[0,\lambda]}$ (cf \S\ref{subsec:topologyruelle}), we thus get an element of $\det H^\bullet(M,\nabla)$.
\end{rem}
%

The rest of this section is devoted to the proof of Proposition \ref{prop:computetorsion}.

\subsection{Invertibility of the contact signature operator}\label{subsec:signatureinvertible}
To prove Proposition \ref{prop:computetorsion} we shall use \S\ref{subsec:signature} and introduce the \emph{contact signature operator}
$$B_\vartheta = \Gamma_\vartheta \nabla + \nabla \Gamma_\vartheta : \mathcal{D}'^\bullet(M,E) \to \mathcal{D}'^\bullet(M,E),$$
where $\Gamma_\vartheta$ acts trivially on $E$.
We fix in what follows some $s_0 \in \Res(\Lien) \setminus \{0,1\}$ and we denote $C^\bullet(s_0)$ by $C^\bullet$ for simplicity. We also set $C^\bullet_0 = C^\bullet \cap \ker(\iota_X).$ 

The following result will put us in position to apply Proposition \ref{prop:formulatorsion}.

\begin{lemm}\label{lem:signinvertible}
The operator $B_\vartheta$ is invertible $C^\bullet \to C^\bullet$.
\end{lemm}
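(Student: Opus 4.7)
The plan is to show $B_\vartheta|_{C^\bullet}$ is injective, which is equivalent to invertibility since $C^\bullet = C^\bullet(s_0)$ is finite dimensional. First, since $\Lie_X \vartheta = 0$ and $\Lie_X \dd\vartheta = 0$ and $\Gamma_\vartheta$ acts trivially on $E$-coefficients, one checks that $\Gamma_\vartheta$ commutes with $\Lien$, so $\Gamma_\vartheta$ preserves the generalized eigenspace $C^\bullet(s_0)$, on which $\Lien$ acts as $s_0\,\id + J$ with $J$ nilpotent. Using $\nabla^2 = 0$ and $\Gamma_\vartheta^2 = \id$, a direct expansion gives
$$
B_\vartheta^2 = \Gamma_\vartheta \nabla \Gamma_\vartheta \nabla + \nabla \Gamma_\vartheta \nabla \Gamma_\vartheta = \nabla \nabla^\sharp + \nabla^\sharp \nabla =: \Delta,
$$
where $\nabla^\sharp := \Gamma_\vartheta \nabla \Gamma_\vartheta$ itself squares to zero. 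Hence invertibility of $B_\vartheta$ on the finite dimensional $C^\bullet$ is equivalent to invertibility of the ``contact Laplacian'' $\Delta$.

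The heart of the argument is the polynomial identity $\Delta = \Lien(\Lien - \id)$ as operators on $\Omega^\bullet(M,E)$. To verify it, I would use the horizontal/vertical decomposition $\Lambda^k T^*M = \Lambda^{k-1} V_X \wedge \vartheta \oplus \Lambda^k V_X$, the explicit formula \eqref{e:chirality} for $\Gamma_\vartheta$ in terms of the Lefschetz maps $\mathscr{L}^{r-k}$, and the commutation relations $\nabla \iota_X + \iota_X \nabla = \Lien$ and $\nabla(\vartheta \wedge \cdot) + \vartheta \wedge \nabla(\cdot) = \mathscr{L}$, together with the symplectic Hodge theory on $V_X$ (the $\mathfrak{sl}_2$-triple generated by $\mathscr{L}$ and its dual). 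As a sanity check, for a scalar $0$-form $u$ one obtains $B_\vartheta u = (Xu)(\dd\vartheta)^r + \bigl(\dd u - (Xu)\vartheta\bigr)\wedge(\dd\vartheta)^{r-1}\wedge\vartheta$, and applying $B_\vartheta$ a second time, the horizontal and vertical pieces cancel against each other to yield exactly $B_\vartheta^2 u = (X^2 - X)u = \Lien(\Lien - \id)u$.

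Granting the identity $\Delta = \Lien(\Lien - \id)$, the conclusion is immediate: on $C^\bullet(s_0)$ we have $\Delta = s_0(s_0-1)\id + \mathcal{N}$ with $\mathcal{N}$ nilpotent (as a polynomial in $J$), which is invertible precisely when $s_0 \notin \{0, 1\}$. Hence $B_\vartheta|_{C^\bullet}$ is invertible too.

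The main obstacle is establishing the global identity $\Delta = \Lien(\Lien - \id)$ in arbitrary degree and in the presence of a non-trivial flat bundle, since the computation must simultaneously track how $\nabla$ mixes horizontal and vertical components via $\vartheta$ and $\dd\vartheta$ and how the Lefschetz isomorphisms interact with $\Gamma_\vartheta$ across the duality $k \leftrightarrow n-k$. Should a clean global identity prove unwieldy, the weaker statement that $\Delta|_{C^\bullet(s_0)}$ equals $s_0(s_0-1)$ modulo a nilpotent endomorphism commuting with $\Lien$ would still suffice for the lemma, and can be obtained directly on the generalized eigenspace by leveraging $[\Gamma_\vartheta, \Lien] = [\nabla, \Lien] = 0$ to decompose $C^\bullet(s_0)$ along the Jordan block filtration of $J$.
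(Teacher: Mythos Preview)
Your central claim --- the global identity $B_\vartheta^2 = \Lien(\Lien - \id)$ --- is false in degree $\geq 1$, and the fallback (``scalar plus nilpotent'') fails for the same reason. The paper's own Lemma~\ref{lem:k<r} computes, for $\beta \in C^k_+ = C^k \cap \ker(\nabla\Gamma_\vartheta)$ with $k<r$,
\[
(\Gamma_\vartheta\nabla)^2\beta \;=\; \Lien(\Lien-\id)\beta \;-\; (\Lien-\id)J_k\beta,
\]
where $J_k$ is the \emph{projector} $f\wedge\vartheta + g \mapsto f\wedge\vartheta - (-1)^k\Psi f$. Since $B_\vartheta = \Gamma_\vartheta\nabla$ on $C^k_+$ and $\Gamma_\vartheta\nabla$ preserves $C^\bullet_+$, this is exactly $B_\vartheta^2$ restricted to $C^k_+$; the correction term $(\Lien-\id)J_k$ is \emph{not} nilpotent because $J_k$ is a nonzero idempotent for $k\geq 1$. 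Concretely (see the computation leading to \eqref{eq:detk}), $B_\vartheta^2|_{C^k_+}$ has \emph{two} distinct generalized eigenvalues, one on $\ker J_k$ and another on $\mathrm{ran}\,J_k$, so it cannot be a polynomial in $\Lien$ alone. Your degree-$0$ sanity check succeeds only because $J_0 = 0$ (there is no $f$-component), which is precisely the degenerate case and is misleading.

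The paper's proof does not attempt any such global formula. It assumes $B_\vartheta\beta = 0$, writes $\beta = \sum_k \beta_{2k}$ with $\beta_{2k} = f_{2k-1}\wedge\vartheta + g_{2k}$ in the horizontal/vertical decomposition, introduces the auxiliary operator $\Psi\mu = \nabla\mu - (-1)^k\Lien\mu\wedge\vartheta$ on $C^\bullet_0$ (which commutes with $\mathscr{L}$ and satisfies $\Psi^2\mu = -\Lien\mu\wedge\dd\vartheta$), and then eliminates the components $f_{2k-1}, g_{2k}$ one by one via the injectivity of the Lefschetz maps $\mathscr{L}^{r-j}$ together with the invertibility of $\Lien$ and $\Lien - \id$ on $C^\bullet(s_0)$. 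The cases $k\leq r/2-1$, $k\geq (r+1)/2$, and the middle degrees are treated separately. There is no shortcut through $B_\vartheta^2$: the interaction between $\nabla$, $\vartheta$, $\dd\vartheta$, and the Lefschetz isomorphisms genuinely mixes the $f$- and $g$-parts in a way that is not captured by $\Lien$ alone.
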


\begin{proof}
We set 
$$
C^\bullet_\mathrm{even} = \bigoplus_{k \text{ even}} C^k, \quad C^\bullet_\mathrm{odd} = \bigoplus_{k \text{ odd}} C^k.
$$
Then $B_\vartheta$ preserves the decomposition $C^\bullet = C^\bullet_\mathrm{even} \oplus C^\bullet_\mathrm{odd}$. Note that because $\Gamma_\vartheta^2 = 1$, we have $B_\vartheta|_{C^\bullet_\mathrm{even}} = \Gamma_\vartheta B_\vartheta|_{C^\bullet_\mathrm{odd}} \Gamma_\vartheta$. It thus suffices to show that $B_\vartheta$ is injective on $C^\bullet_\mathrm{even}$. Let $\beta \in C^\bullet_\mathrm{even}$ such that $B_\vartheta \beta = 0$. Write
$$
\beta = \sum_{k=0}^r \beta_{2k} \in C^\bullet_\mathrm{even},
$$
with
$$
\beta_{2k} = f_{2k-1}\wedge \vartheta + g_{2k}, \quad f_{2k-1} \in C^{2k-1}_0, \quad g_{2k} \in C^{2k}_0, \quad k = 0, \dots, r.
$$
Then $B_\vartheta \beta = 0$ writes, since $\Gamma_\vartheta \nabla(C^k) \subset C^{n-k-1}$ and $\nabla \Gamma_\vartheta (C^k) \subset C^{n-k+1}$,
\begin{equation}\label{B=0}
\Gamma_\vartheta \nabla \beta_{2k} + \nabla \Gamma_\vartheta \beta_{2(k+1)} = 0, \quad k = 0, \dots, r.
\end{equation}
Because $\nabla$ does not leave the decomposition (\ref{eq:decomp}) stable, we need to introduce an operator $\Psi : C^\bullet_0 \to C^{\bullet + 1}_0$ which mimics the action of $\nabla$. We define
\begin{equation}\label{eq:defpsi}
\Psi \mu = \nabla \mu - (-1)^k \Lien \mu \wedge \vartheta, \quad \mu \in C^k_0.
\end{equation}
Because $\Lie_X \dd \vartheta = 0$,  the map $\Psi$ satisfies the simple relation
\begin{equation}\label{eq:B}
\Psi\left(\mu\wedge \dd\vartheta^j\right)= \left(\Psi\mu\right)\wedge \dd\vartheta^j , \quad  \mu \in C^\bullet_0, \quad j \in \mathbb{N},
\end{equation}
that is, $\Psi$ commutes with $\mathscr{L}$. Also, observe that
\begin{equation}\label{A}
\Psi^2 \mu = - \Lien \mu \wedge \dd \vartheta, \quad \mu \in C^\bullet_0.
\end{equation}
Indeed, using the fact that $\Lien$ and $\nabla$ commute,
\begin{equation*}
\begin{aligned}
\Psi^2\mu &= \nabla\left(\nabla \mu - (-1)^k \Lien \mu \wedge \vartheta \right)-(-1)^{k+1}\left(\mathcal{L}_X^\nabla\left(\nabla \mu - (-1)^k \Lien \mu \wedge \vartheta \right)\right)\wedge\vartheta \\
&={\nabla^2\mu}+(-1)^{k+1}\nabla \left(\Lien\mu\wedge \vartheta \right)+(-1)^k \Lien\nabla\mu \wedge\vartheta-{\Lien^2\mu\wedge\vartheta\wedge\vartheta} \\
&= (-1)^{k+1}(-1)^k\Lien\mu\wedge \dd \vartheta.
\end{aligned}
\end{equation*}

Assume first that $k\leq r/2 - 1$. Then $2k+2 \leq r$; we can thus write, with (\ref{eq:defpsi}) in mind,
$$\begin{aligned}
\Gamma_\vartheta \nabla \beta_{2k} &=\Gamma_\vartheta \Bigl(\nabla f_{2k-1}\wedge\vartheta  - f_{2k-1}\wedge \dd\vartheta+\nabla g_{2k} \Bigr) \\
&= \Gamma_\vartheta \Bigl(\Psi f_{2k-1} \wedge \vartheta - \Lien f_{2k-1}\wedge\vartheta\wedge\vartheta - f_{2k-1} \wedge \dd \vartheta + \Psi g_{2k} + \Lien g_{2k} \wedge \vartheta \Bigr) \\
&= \Bigl(\Psi f_{2k-1} + \Lien g_{2k}\Bigr) \wedge \dd \vartheta^{r-2k} + \Bigl(\Psi g_{2k}- f_{2k-1}\wedge \dd\vartheta\Bigr) \wedge \dd \vartheta^{r-2k-1} \wedge \vartheta.
\end{aligned} $$  

Similarly we find by (\ref{eq:B})
\begin{equation}\label{eq:nablagammasign}
\begin{aligned}
\nabla \Gamma_\vartheta \beta_{2k +2} &= \nabla \Bigl(f_{2k+1} \wedge \dd \vartheta^{r-2k - 1} + g_{2k+2} \wedge \dd \vartheta^{r-2k-2} \wedge \vartheta \Bigr)\\
&= \Bigl(\Psi  f_{2k+1}- \Lien f_{2k+1} \wedge \vartheta\Bigr)  \wedge \dd \vartheta^{r-2k-1} + \Bigl(\Psi  g_{2k+2} \wedge \vartheta + g_{2k+2} \wedge \dd \vartheta\Bigr) \wedge \dd \vartheta^{r-2k-2}.
\end{aligned}
\end{equation}
Thus (\ref{B=0}) writes, with the decompostion (\ref{eq:decomp}) in mind,
\begin{equation}\label{i}
\Bigl(\Psi f_{2k+1} + g_{2k+2}\Bigr) \wedge \dd \vartheta^{r-2k-1} + \Bigl(\Psi f_{2k-1} + \Lien g_{2k}\Bigr) \wedge \dd \vartheta^{r-2k} = 0
\end{equation}
and
\begin{equation}\label{ii}
\Bigl(-\Lien f_{2k+1} \wedge \dd \vartheta +\Psi g_{2k+2} \Bigr)\wedge \dd \vartheta^{r-2k-2} + \Bigl(\Psi g_{2k}- f_{2k-1} \wedge \dd \vartheta\Bigr) \wedge \dd \vartheta^{r-2k-1}  = 0.
\end{equation}
Then applying $\Psi $ to (\ref{ii}) gives, with (\ref{A}) and (\ref{eq:B}),
$$
\Bigl(-\Psi \Lien f_{2k+1} -\Lien g_{2k+2}\Bigr) \wedge \dd \vartheta^{r-2k-1} -\Lien g_{2k} \wedge \dd \vartheta^{r-2k} - \Psi f_{2k-1}\wedge \dd \vartheta^{r-2k} = 0.
$$
Note that $\Psi$ commutes with $\Lien$ and thus with $\Lien^{-1}$ (which exists since $s_0 \neq 0$). Then applying $\Lien^{-1}$ to the above relation we get
$$
\Bigl(-\Psi  f_{2k+1} - g_{2k+2} \Bigr) \wedge \dd \vartheta^{r-2k-1} -g_{2k} \wedge \dd \vartheta^{r-2k} - \Lien^{-1}\left(\Psi f_{2k-1}\wedge \dd \vartheta^{r-2k}\right) = 0.
$$
Injecting this in (\ref{i}), we obtain
$$
\Bigl(\left(\Lien-\id\right)g_{2k} + \bigl(\id -\Lien^{-1}\bigr)\Psi f_{2k-1}\Bigr) \wedge \dd \vartheta^{r-2k} = 0.
$$
Since $\mathscr{L}^{r-2k}$ is injective on $C^{2k}_0$ and $\Lien-\id$ is invertible (since $s_0 \neq 1$), this yields
\begin{equation}\label{eq:g2kk}
\Lien g_{2k} + \Psi f_{2k-1} = 0.
\end{equation}
Applying $\Lien^{-1} \Psi$ to the above equation we get
\begin{equation}\label{g2k}
\Psi g_{2k} - f_{2k-1} \wedge \dd \vartheta = 0
\end{equation}
by (\ref{eq:B}); thus (\ref{ii}) gives
\begin{equation*}\label{g2k+2}
\Bigl(\Psi g_{2k+2} - \Lien f_{2k+1} \wedge \dd \vartheta \Bigr)\wedge \dd \vartheta^{r-2k-2} = 0.
\end{equation*}
Now repeating this process with $k$ replaced by $k-1$ 
we obtain $\left(\Psi g_{2k} - \Lien f_{2k-1} \wedge \dd \vartheta\right)\wedge \dd\vartheta^{r-2k}=0$. This implies with (\ref{g2k}) that
$$
\bigl(\id -\Lien\bigr)f_{2k-1} \wedge \dd \vartheta^{r-2k+1} = 0,
$$
which leads to $f_{2k-1} = 0$ since $\mathscr{L}^{r-(2k-1)}$ is injective on $C^{2k-1}_0$ and $\Lien-\id$ is invertible on $C^\bullet$; thus $g_{2k}$ = 0 by (\ref{eq:g2kk}), since $\Lien$ is invertible.
We therefore obtained
$$
\beta_{2k} = 0, \quad k \leq r/2 - 1.
$$

Next assume $k \geq (r+1)/2$. Set $\tilde k = r-k$ and
$$
\tilde{\beta}_{2\tilde k +1} = \Gamma_\vartheta \beta_{2k} \in C^{2\tilde k  +1}_0, \quad \tilde{\beta}_{2\tilde k -1} = \Gamma_\vartheta \beta_{2k+2} \in C^{2\tilde k -1}_0.
$$
Then (\ref{B=0}) writes
$$
\Gamma_\vartheta \nabla \tilde{\beta}_{2\tilde k -1} + \nabla \Gamma_\vartheta \tilde{\beta}_{2\tilde k +1} = 0. 
$$
Since $2\tilde k + 1 \leq r$ and we can do exactly as before to get $\tilde{\beta}_{2\tilde k -1} = 0$ which leads to $\beta_{2k+2} = 0$. Therefore we obtained
$$
\beta_{2k} = 0, \quad k \geq (r+1)/2 + 1.
$$

Therefore it remains to show that $\beta_{2p} = 0$ and $ \beta_{2(p+1)} = 0,$ where $p = \lfloor r/2 \rfloor$. 
We will assume that $r = 2p + 1$ is odd and put $p'= p +1$ (the case $r$ even is similar). Then (\ref{B=0}) 
implies, since $\beta_{2k} = 0$ for every $k \neq p,p'$,
\begin{equation}\label{final}
\nabla \Gamma_\vartheta \beta_{2p'} + \Gamma_\vartheta \nabla \beta_{2p} = 0, \quad \Gamma_\vartheta \nabla \beta_{2p'} = 0, \quad \nabla \Gamma_\vartheta \beta_{2p} = 0.
\end{equation}
We can compute, keeping (\ref{eq:defpsi}) in mind,
$$
\begin{aligned}
\nabla \Gamma_\vartheta \beta_{2p'} &= \nabla\left(\mathscr{L}^{-1}g_{2p'} \wedge \vartheta + f_{2p'-1}\right) \\
&= \Psi \mathscr{L}^{-1}g_{2p'} \wedge \vartheta + \Lien \mathscr{L}^{-1} g_{2p'} \wedge \vartheta \wedge \vartheta + {{\mathscr{L}^{-1}g_{2p'} \wedge \dd \vartheta}} + \Psi f_{2p'-1} - \Lien f_{2p'-1} \wedge \vartheta,
\end{aligned}
$$
and 
$$
\begin{aligned}
\Gamma_\vartheta \nabla \beta_{2p} &= \Gamma_\vartheta \Bigl( \Psi g_{2p} + \Lien g_{2p} \wedge \vartheta + \Psi f_{2p-1} \wedge \vartheta - \Lien f_{2p-1} \wedge \vartheta \wedge \vartheta - f_{2p-1} \wedge \dd \vartheta \Bigr) \\
&= \Psi g_{2p} \wedge \vartheta - f_{2p-1} \wedge \dd \vartheta \wedge \vartheta + \Lien g_{2p} \wedge \dd \vartheta + \Psi f_{2p-1} \wedge \dd \vartheta.
\end{aligned}
$$
Therefore the first equation of (\ref{final}) implies, since $\mathscr{L}^{-1}g_{2p'} \wedge \dd \vartheta = g_{2p'},$
\begin{equation}\label{(i)}
\Psi \mathscr{L}^{-1}g_{2p'} - \Lien f_{2p'-1}-f_{2p-1} \wedge \dd \vartheta + \Psi g_{2p} = 0
\end{equation}
and 
\begin{equation} \label{eq:(k)}
g_{2p'} + \Psi f_{2p'-1} + \Psi f_{2p-1} \wedge \dd \vartheta + \Lien g_{2p} \wedge \dd \vartheta= 0.
\end{equation}
Applying $\Lien^{-1}\Psi $ to (\ref{(i)}) leads to 
$$
-g_{2p'} -\Psi  f_{2p'-1} - \Psi \Lien^{-1}f_{2p-1} \wedge \dd \vartheta + -g_{2p} \wedge \dd \vartheta=0.
$$
Therefore,
\begin{equation}\label{eq:(j)}
\left(\left(\id - \Lien^{-1}\right)\Psi f_{2p-1} + \left(\Lien-\id\right) g_{2p}\right)\wedge \dd \vartheta.
\end{equation}
As before this gives $\Psi f_{2p-1} + \Lien g_{2p} = 0$ and thus with (\ref{eq:(k)}) one gets
\begin{equation}\label{final2}
\Lien g_{2p} + \Psi f_{2p-1} = 0, \quad g_{2p'} + \Psi f_{2p'-1} = 0.
\end{equation}

Next compute
$$\nabla \Gamma_\vartheta \beta_{2p} = g_{2p} \wedge \dd \vartheta^2 + \Psi  f_{2p-1} \wedge \dd \vartheta^2 + \Psi g_{2p}\wedge \vartheta \wedge \dd \vartheta - \Lien f_{2p+1} \wedge \vartheta \wedge \dd \vartheta^2$$
Therefore the third part of (\ref{final}) gives (we take the $\wedge \vartheta$ component of the above equation)
$$
-\Lien f_{2p-1} \wedge \dd \vartheta^2 + \Psi g_{2p} \wedge \dd \vartheta =0.
$$
Applying $\Lien^{-1}\Psi$ to (\ref{final2}) we get $\Psi  g_{2p} = f_{2p-1}\wedge  \dd \vartheta$; we therefore obtain that $f_{2p-1}= 0$ by injectivity of $\mathscr{L}^2$ on $C^{r-2}_0$. Thus $g_{2p} = 0$ by (\ref{eq:(j)}). 

Finally compute
$$
\nabla \beta_{2p'} = \Psi f_{2p'-1} \wedge \vartheta + \Psi g_{2p'} + \Lien g_{2p'} \wedge \vartheta = 0.
$$
Therefore the second part of (\ref{final}) implies (since $\Gamma_\varthetaÊ\nabla \beta_{2 p'} = 0$ is equivalent to $\nabla \beta_{2p'} = 0$)
$$
\Psi f_{2p'-1} + \Lien g_{2p'} = 0.
$$
Therefore by (\ref{final2}) we get $\left(\Lien-\id\right)g_{2p'} = 0$, and thus $g_{2p'} =0$. Using (\ref{(i)}) we conclude that $\Lien f_{2p'-1} = 0$ which leads to $f_{2p'-1} = 0.$ 
\end{proof}

\subsection{Proof of Proposition \ref{prop:computetorsion}}\label{subsec:proofprop}
We start from Proposition \ref{prop:formulatorsion} which gives us, in view of Lemma \ref{lem:signinvertible},
\begin{equation}\label{eq:product}
\tau(C^\bullet, \Gamma_\vartheta) = (-1)^{r\dim C^r_+} \det\left(\Gamma_\vartheta \nabla|_{C^r_+}\right)^{(-1)^{r}} \prod_{j=0}^{r-1} \det\left(\Gamma_\vartheta \nabla|_{C^j_+ \oplus C^{n-j-1}_+}\right)^{(-1)^{j}}.
\end{equation}
where we set as in \S\ref{subsec:signature}
$$C^\bullet_+ = C^\bullet \cap \ker( \nabla \Gamma_\vartheta ), \quad C^\bullet_- = C^\bullet \cap \ker( 
 \Gamma_\vartheta \nabla).$$ 
We first note that for $k \in \{0, \dots, r\}$ and $\beta \in \Omega^k(M,E)$, one has
\begin{equation}\label{eq:b}
\begin{split}
\nabla \Gamma_\vartheta \beta = &\ {\mathscr{L}^{r-k}\Bigl(\nabla \beta - (-1)^k\iota_X\nabla \beta \wedge \vartheta + \mathscr{L}\bigl(\iota_X\nabla\iota_X \beta - \iota_X \beta \bigr) \Bigr) \wedge \vartheta} \\
& \quad \quad +(-1)^k\mathscr{L}^{r-k+1}\Bigl(\beta - \nabla \iota_X\beta +(-1)^k\iota_X\left(\beta -\nabla\iota_X\beta\right)\wedge \vartheta \Bigr), \\
\Gamma_\vartheta\nabla \beta = &\mathscr{L}^{r-k-1}\Bigl(\nabla \beta - (-1)^k \iota_X \nabla \beta\wedge \vartheta \Bigr)\wedge \vartheta +(-1)^k \mathscr{L}^{r-k}\bigl(\iota_X \nabla \beta\bigr),
\end{split}
\end{equation} 
where $\mathscr{L}^{j-r} = (\mathscr{L}^{r-j}|_{\Lambda^jV_X})^{-1}$ for $0 \leq j \leq r.$ Indeed, using the decomposition (\ref{eq:decomp}),
$$
\begin{aligned}
\Gamma_\vartheta \beta
&= (-1)^{k+1} \iota_X \beta\wedge \dd \vartheta^{r-k+1} + \Bigl(\beta + (-1)^k\iota_X\beta \wedge \vartheta\Bigr) \wedge \dd \vartheta^{r-k} \wedge \vartheta \\
&= (-1)^{k+1} \iota_X \beta\wedge \dd \vartheta^{r-k+1} + \beta \wedge \dd \vartheta^{r-k} \wedge \vartheta,
\end{aligned}
$$
which leads to
$$
\begin{aligned}
\nabla \Gamma_\vartheta \beta &= (-1)^{k+1} \nabla \iota_X \beta \wedge \dd \vartheta^{r-k+1} + \nabla \beta \wedge \dd \vartheta^{r-k} \wedge \vartheta + (-1)^k \beta \wedge \dd \vartheta^{r-k+1}  \\
&= (-1)^{k+1}\Bigl((-1)^{k+1}\iota_X \nabla \iota_X \beta \wedge \vartheta \wedge \dd \vartheta^{r-k+1}\Bigr) + (-1)^{k+1} \Bigl(\nabla \iota_X \beta + (-1)^k \iota_X \nabla \iota_X \beta \wedge \vartheta\Bigr) \wedge \dd \vartheta^{r-k+1} \\ 
&\quad \quad \quad + \Bigl(\nabla\beta - (-1)^k \iota_X \nabla \beta \wedge \vartheta\Bigr) \wedge \dd \vartheta^{r-k} \wedge \vartheta +(-1)^k \Bigl(\beta + (-1)^k\iota_X\beta \wedge \vartheta\Bigr) \wedge \dd \vartheta^{r-k+1}  \\
& \quad \quad \quad - \iota_X \beta \wedge \dd \vartheta^{r-k+1} \wedge \vartheta,
\end{aligned}
$$
which is exactly the first part of (\ref{eq:b}). The second part follows directly from the decomposition (\ref{eq:decomp}).

Let us introduce, for $k \in \{0, \dots, r\}$, the operator $J_k : C^k \to C^k$ defined by
\begin{equation}\label{eq:defjk}
J_k \beta = f\wedge \vartheta - (-1)^k \Psi f
\end{equation}
for any $\beta = f \wedge \vartheta + g \in C^k$ with $f \in C^{k-1}_0$ and $g \in C^k_0$, and where $\Psi$ is defined in (\ref{eq:defpsi}). Then we claim that $J_k$ takes it values in $C^k_+$. Indeed, we have for any $f \in C^{k-1}_0$ and $g \in C^k_0$,
$$
\begin{aligned}
\nabla \Gamma_\vartheta (f\wedge \vartheta + g) &= \nablaÊ\Bigl( g \wedge \dd \vartheta^{r-k} \wedge \vartheta + f \wedge \dd \vartheta^{r-k+1} \Bigr) \\
&= \Psi g \wedge \dd \vartheta^{r-k}\wedge \vartheta  + (-1)^k g \wedge \dd \vartheta^{r-k+1} \\
&\quad \quad \quad +  \Psi f \wedge \dd \vartheta^{r-k+1} + (-1)^{k+1} \Lien f \wedge \dd \vartheta^{r-k+1} \wedge \vartheta,
\end{aligned}
$$
which implies that $\beta = f \wedge \vartheta + g$ lies in $C^k_+$ if and only if
\begin{equation}\label{eq:iff+}
\Bigl(\Psi g + (-1)^{k+1} \Lien f \wedge \dd \vartheta\Bigr) \wedge \dd \vartheta^{r-k} = 0 \quad \text{ and } \quad \Bigl(\Psi f + (-1)^k g \Bigr) \wedge \dd \vartheta^{r-k+1} = 0.
\end{equation}
But now note that if $\beta = f \wedge \vartheta + g = J_k\beta' = f' \wedge \vartheta - (-1)^k \Psi f'$ for some $\beta' = f' \wedge \vartheta + g'$ then $f =f'$ and $g = - (-1)^k\Psi f$, and thus $\beta$ satisfies the second part of (\ref{eq:iff+}). We also obtain $\Psi g = -(-1)^k \Psi^2 f = - (-1)^k \Lien f \wedge \dd \vartheta$ by (\ref{A}), so the first part of (\ref{eq:iff+}) is also satisfied.

Therefore $J_k : C^k \to C^k_+$; moreover it is obvious that $J_k$ is a projector. Therefore we can consider the restricted projection $J_k|_{C^k_+} : C^k_+ \to C^k_+$, we will still denote by $J_k.$

The next lemma will be helpful to compute the determinants lying in the product (\ref{eq:product}).
\begin{lemm}\label{lem:k<r}
Take $k \in \{0, \cdots, r-1\}.$ Then for any $\beta = f \wedge \vartheta + g \in C^k_+$ with $f \in C^{k-1}_0$ and $g \in C^k_0$, one has
$$(\Gamma_\vartheta \nabla)^2 \beta = \Lien\left(\Lien-\id\right)\beta - \left(\Lien-\id\right)J_k \beta.$$
\end{lemm}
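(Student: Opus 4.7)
The plan is to expand $(\Gamma_\vartheta\nabla)^2\beta$ directly via (\ref{eq:b}), systematically introducing the operator $\Psi$ from (\ref{eq:defpsi}) so that the identities $\Psi^2 = -\Lien\mathscr{L}$ from (\ref{A}) and the commutation of $\Psi$ with $\mathscr{L}$ and $\Lien$ become usable, and then to invoke the constraint (\ref{eq:iff+}) at the final step to recover the target expression.

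First I would compute $\Gamma_\vartheta\nabla\beta$. Using $\nabla f = \Psi f + (-1)^{k-1}\Lien f\wedge\vartheta$ and $\nabla g = \Psi g + (-1)^k\Lien g\wedge\vartheta$, the decomposition (\ref{eq:decomp}) of $\nabla\beta$ has $V_X\wedge\vartheta$-part $\Psi f + (-1)^k\Lien g \in C^k_0$ and $V_X$-part $\Psi g - (-1)^k f\wedge\dd\vartheta \in C^{k+1}_0$. The second formula of (\ref{eq:b}) then yields
\begin{equation*}
\Gamma_\vartheta\nabla\beta = \tilde f\wedge\vartheta + \tilde g, \quad \tilde f = \mathscr{L}^{r-k-1}\Psi g - (-1)^k \mathscr{L}^{r-k}f, \quad \tilde g = \mathscr{L}^{r-k}\bigl(\Psi f + (-1)^k\Lien g\bigr).
\end{equation*}

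Next I would iterate. Since $\Gamma_\vartheta\nabla\beta$ has degree $2r-k > r$ (because $k\leq r-1$), the second application of $\Gamma_\vartheta$ uses the inverse Lefschetz formula. A direct expansion of $\nabla(\Gamma_\vartheta\nabla\beta)$ in the decomposition (\ref{eq:decomp}), using $\Psi^2 = -\Lien\mathscr{L}$ and $[\Psi,\mathscr{L}]=[\Psi,\Lien]=0$, gives
\begin{equation*}
\Psi\tilde f + (-1)^k\Lien\tilde g = \mathscr{L}^{r-k}\bigl[\Lien(\Lien-\id)g + (-1)^k\Psi(\Lien-\id)f\bigr],
\end{equation*}
\begin{equation*}
\Psi\tilde g - (-1)^k\tilde f\wedge\dd\vartheta = -\mathscr{L}^{r-k+1}(\Lien-\id)f + (-1)^k\mathscr{L}^{r-k}(\Lien-\id)\Psi g.
\end{equation*}
Applying $(\mathscr{L}^{r-k})^{-1}$ to the first line recovers the $V_X$-component $\Lien(\Lien-\id)g + (-1)^k(\Lien-\id)\Psi f$ of $(\Gamma_\vartheta\nabla)^2\beta$, which matches the $V_X$-component of $\Lien(\Lien-\id)\beta - (\Lien-\id)J_k\beta$ (using $[\Psi,\Lien]=0$). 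For the second line, the condition $\beta\in C^k_+$ enters decisively via condition (i) of (\ref{eq:iff+}), which reads $\mathscr{L}^{r-k}\Psi g = (-1)^k\mathscr{L}^{r-k+1}\Lien f$: it forces $\mathscr{L}^{r-k}(\Lien-\id)\Psi g = (-1)^k\mathscr{L}^{r-k+1}(\Lien-\id)\Lien f$, so the right-hand side collapses to $\mathscr{L}^{r-k+1}(\Lien-\id)^2 f$; applying $(\mathscr{L}^{r-k+1})^{-1}$ then yields the $V_X\wedge\vartheta$-component $(\Lien-\id)^2 f$ of the target.

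The main difficulty is the careful bookkeeping of Lefschetz powers and signs across the two iterations of $\Gamma_\vartheta\nabla$, especially because the intermediate degree $2r-k$ lies strictly above $r$, forcing a switch between the two branches of the definition of $\Gamma_\vartheta$; the hypothesis $k\leq r-1$ is precisely what guarantees the Lefschetz isomorphism $\mathscr{L}^{r-k+1}\colon\Lambda^{k-1}V_X\to\Lambda^{2r-k+1}V_X$ used to recover $(\Lien-\id)^2 f$ at the last step.
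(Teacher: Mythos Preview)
Your proof is correct and follows essentially the same route as the paper. Both arguments expand $(\Gamma_\vartheta\nabla)^2\beta$ directly using (\ref{eq:b}) and the $\Psi$-calculus, and both invoke the $C^k_+$ condition at the key step; the only difference is organizational: the paper keeps $\beta$ intact, obtaining the compact intermediate identity $\Gamma_\vartheta\nabla\Gamma_\vartheta\nabla\beta = -(-1)^k(\id-\Lien)^2\iota_X\beta\wedge\vartheta + (\Lien-\id)\iota_X\nabla\beta$ before writing $\beta=f\wedge\vartheta+g$, whereas you carry the $f,g$ decomposition through both iterations from the start.
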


\begin{proof}
Since $k<r$ we can write, thanks to (\ref{eq:b}),
$$\Gamma_\vartheta \nabla \beta = \nabla \beta \wedge \vartheta \wedge \dd \vartheta^{r-k-1} + (-1)^k \iota_X \nabla \beta \wedge \dd \vartheta^{r-k}.$$
Therefore
\begin{equation*}
\begin{split}
\nabla \Gamma_\vartheta \nabla \beta &= -(-1)^k\nabla \beta \wedge \dd \vartheta^{r-k} + (-1)^k \nabla \iota_X \nabla\beta \wedge \dd \vartheta^{r-k} \\
&{= (-1)^k\left(\Lien - \id\right)\nabla \beta \wedge \dd \vartheta^{r-k}} \\
&= {\Bigl(\iota_X\nabla\iota_X\nabla \beta - \iota_X\nabla\beta \Bigr) \wedge \vartheta \wedge \dd \vartheta^{r-k}} \\
&\quad \quad +(-1)^k(\Lien - \id)\Bigl(\nabla \beta - (-1)^k \iota_X\nabla \beta \wedge \vartheta\Bigr)\wedge \dd \vartheta^{r-k},
\end{split}
\end{equation*}
where we used $\nabla \iota_X  \nabla \beta = \Lien \nabla \beta$ and $\iota_X \nabla \iota_X \nabla \beta = \Lien \iota_X \nabla \beta$. Since $\beta \in C^k_+$ one has with (\ref{eq:b})
$$\Bigl(\nabla \beta -(-1)^k \iota_X\nabla \beta \wedge \vartheta\Bigr)\wedge \dd \vartheta^{r-k} = \Bigl(\iota_X \beta - \iota_X \nabla \iota_X \beta\Bigr)\wedge \dd \vartheta^{r-k+1}.$$
This leads to
\begin{equation*}
\begin{split}
\nabla \Gamma_\vartheta \nabla \beta &= {\Bigl(\iota_X \nabla \iota_X \nabla \beta - \iota_X \nabla \beta \Bigr) \wedge \vartheta \wedge \dd \vartheta^{r-k}}\\
& \quad \quad + (-1)^k\left(\Lien-\id\right)\Bigl(\iota_X \beta - \iota_X \nabla \iota_X \beta\Bigr) \wedge \dd \vartheta^{r-k+1}.
\end{split}
\end{equation*}
Since $\iota_X \nabla \iota_X \nabla \beta - \iota_X \nabla \beta = \left(\Lien-\id\right)\iota_X \nabla \beta$ and $\iota_X \beta - \iota_X \nabla \iota_X \beta = \bigl(\id-\Lien\bigr)\iota_X\beta$, we obtain
$$
\nabla \Gamma_\vartheta \nabla \beta = \left(\Lien-\id\right)\iota_X \nabla \beta\wedge \vartheta \wedge \dd \vartheta^{r-k} + (-1)^k \left(\Lien - \id\right)\left(\id-\Lien\right)\iota_X \beta \wedge \dd \vartheta^{r-k+1},
$$
and thus by definition of $\Gamma_\vartheta$
\begin{equation}\label{eq:b^2}
\Gamma_\vartheta \nabla \Gamma_\vartheta \nabla \beta = -(-1)^k\bigl(\id-\Lien\bigr)^2\iota_X \beta \wedge \vartheta + \left(\Lien-\id\right)\iota_X\nabla \beta.
\end{equation}
Now, writing $\beta = f\wedge \vartheta + g$ where $\iota_X f = 0$ and $\iota_X g = 0$, we have
\begin{equation}\label{eq:betafg}
\begin{split}
\nabla \beta &= \nabla f \wedge \vartheta - (-1)^k f \wedge \dd \vartheta + \nabla g,\\
\iota_X \nabla \beta &= \Lien f\wedge \vartheta +(-1)^k \nabla f + \Lien g, \\
\iota_X \beta \wedge \vartheta &= -(-1)^k f\wedge \vartheta.
\end{split}
\end{equation}
Injecting those relations in (\ref{eq:b^2}) we get 
\begin{equation*}
\begin{split}
\Gamma_\vartheta \nabla \Gamma_\vartheta \nabla \beta &= \Lien \bigl(\Lien - \id\bigr)(f \wedge \vartheta + g) \\ 
&\quad \quad - \bigl(\Lien - \id\bigr)\Bigl(f \wedge \vartheta - (-1)^k\bigl(\nabla f +(-1)^k \Lien f \wedge \vartheta\bigr)\Bigr),
\end{split}
\end{equation*}
which concludes in view of (\ref{eq:defpsi}) and (\ref{eq:defjk}).
\end{proof}

We now deal with the case $k=r$.
\begin{lemm}\label{lem:k=r}
One has, for $\beta \in C^r_+$,
$$\Gamma_\vartheta \nabla \beta = (-1)^r\Bigl(\bigl(\Lien - \id\bigr)\beta +  (\id - J_r) \beta\Bigr).$$
\end{lemm}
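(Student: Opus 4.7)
The approach will mirror the proof of Lemma~\ref{lem:k<r}, but is simpler because $\Gamma_\vartheta \nabla$ sends $C^r$ to $C^{n-r-1} = C^r$ and thus is already an endomorphism of $C^r$, so no squaring is needed. The plan is to apply the second identity of (\ref{eq:b}) at $k=r$ to express $\Gamma_\vartheta \nabla \beta$ in terms of $f$ and $g$, and then to use the constraint $\beta \in C^r_+$ to eliminate the unknown term $\mathscr{L}^{-1} \Psi g$.

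Writing $\beta = f \wedge \vartheta + g$ with $f \in C^{r-1}_0$ and $g \in C^r_0$, the decompositions $\nabla f = \Psi f - (-1)^r \Lien f \wedge \vartheta$ and $\nabla g = \Psi g + (-1)^r \Lien g \wedge \vartheta$ together with (\ref{eq:betafg}) will yield after cancellation the two identities
$$
\iota_X \nabla \beta = (-1)^r \Psi f + \Lien g \in C^r_0, \quad \nabla \beta - (-1)^r \iota_X \nabla \beta \wedge \vartheta = \Psi g - (-1)^r \mathscr{L} f \in C^{r+1}_0.
$$
Since $\mathscr{L} : C^{r-1}_0 \to C^{r+1}_0$ is the Lefschetz isomorphism (\ref{eq:iso}) for $k=r-1$, formula (\ref{eq:b}) at $k=r$ becomes
$$
\Gamma_\vartheta \nabla \beta = \bigl(\mathscr{L}^{-1} \Psi g - (-1)^r f\bigr) \wedge \vartheta + \Psi f + (-1)^r \Lien g.
$$

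To eliminate $\mathscr{L}^{-1} \Psi g$, I will use that $\beta \in C^r_+$, i.e.\ $\nabla \Gamma_\vartheta \beta = 0$. Starting from $\Gamma_\vartheta \beta = g \wedge \vartheta + \mathscr{L} f$ and expanding $\nabla$ analogously, using $\dd^2 \vartheta = 0$ to discard a term, produces
$$
\nabla \Gamma_\vartheta \beta = \bigl(\Psi g - (-1)^r \mathscr{L} \Lien f\bigr) \wedge \vartheta + \mathscr{L}\bigl(\Psi f + (-1)^r g\bigr).
$$
Setting the $\Lambda^{r+1} V_X \wedge \vartheta$ component to zero and inverting $\mathscr{L}$ yields the key relation $\mathscr{L}^{-1}\Psi g = (-1)^r \Lien f$; note that the vanishing of the second component is not used.

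Plugging this back gives $\Gamma_\vartheta \nabla \beta = (-1)^r (\Lien - \id) f \wedge \vartheta + \Psi f + (-1)^r \Lien g$, and a direct expansion of the right-hand side of the lemma using the definition (\ref{eq:defjk}) of $J_r$ together with $\Lien(f \wedge \vartheta) = \Lien f \wedge \vartheta$ (which follows from $\Lie_X \vartheta = 0$) will confirm that this equals $(-1)^r\bigl[(\Lien - \id)\beta + (\id - J_r)\beta\bigr]$. No real obstacle arises: the only delicate part is careful sign-tracking in applying (\ref{eq:b}), and the algebraic input needed, namely the invertibility of $\mathscr{L} : C^{r-1}_0 \to C^{r+1}_0$, holds exactly at the single Lefschetz level required here.
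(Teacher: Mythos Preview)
Your proof is correct and follows essentially the same approach as the paper's. The paper applies the second identity of (\ref{eq:b}) at $k=r$, then reads the constraint $\beta\in C^r_+$ directly off the first identity of (\ref{eq:b}) in the abstract form $\nabla\beta-(-1)^r\iota_X\nabla\beta\wedge\vartheta=(\iota_X\beta-\iota_X\nabla\iota_X\beta)\wedge\dd\vartheta$, and then substitutes (\ref{eq:betafg}); you instead expand everything in the $(f,g)$-coordinates from the start, which amounts to rederiving that piece of (\ref{eq:b}) by hand. The structure and the key step (extracting $\mathscr{L}^{-1}\Psi g=(-1)^r\Lien f$ from the $\wedge\vartheta$-component of $\nabla\Gamma_\vartheta\beta=0$) are identical.
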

\begin{proof}
We have
$$\Gamma_\vartheta \nabla \beta = \mathscr{L}^{-1}\bigl(\nabla \beta - (-1)^r \iota_X \nabla \beta \wedge \vartheta \bigr) + (-1)^r \iota_X \nabla \beta.$$
Since $\beta \in C^r_+$ we have with (\ref{eq:b}) that $\nabla \beta - (-1)^r \iota_X \nabla \beta \wedge \vartheta = (\iota_X \beta - \iota_X \nabla \iota_X \beta)\wedge \dd \vartheta$. Therefore, 
$$\Gamma_\vartheta \nabla \beta = (\iota_X \beta- \iota_X \nabla \iota_X \beta) \wedge \vartheta + (-1)^r \iota_X \nabla \beta.$$
We now conclude as in the previous lemma, using (\ref{eq:betafg}).
\end{proof}

We are now in position to finish the proof of Proposition \ref{prop:computetorsion}. We will set, for $0 \leq k \leq n$, 
$$m_k = \dim C^k, \quad m_k^0 = \dim C^k_0, \quad m_k^{\pm} = \dim C^k_\pm.$$

First take $k \in \{0, \cdots, r-1\}$. First take $k \in \{0, \cdots, r-1\}$. Because $B_\vartheta$ is invertible on $C^\bullet$, $\Gamma_\vartheta \nabla$ induces an isomorphism $C^k_+ \to C^{n-k-1}_+.$ Take any basis $\gamma$ of $C^k_+$. Then $\Gamma_\vartheta \nabla \gamma$ is a basis of $C^{n-k-1}_+$ and the matrix of $\Gamma_\vartheta \nabla|_{C^k_+ \oplus C^{n-k+1}_+}$ in the basis $\gamma \oplus \Gamma_\vartheta \nabla \gamma$ is 
\begin{equation}\label{eq:matrix}
\begin{pmatrix}
0 & \bigl[(\Gamma_\vartheta \nabla)^2\bigr]_\gamma \\ 
\id & 0
\end{pmatrix},
\end{equation}
where $\bigl[(\Gamma_\vartheta \nabla)^2\bigr]_\gamma$ is the matrix of $(\Gamma_\vartheta \nabla)^2|_{C^k_+}$ in the basis $\gamma$. Define 
$$\tilde J_k = \id - J_k : C^k_+ \to C^k_+.$$
Then $\tilde{J}_k$ is a projector (since $J_k$ is) and Lemma \ref{lem:k<r} implies that $J_k$ (and thus $\tilde J_k$) commutes with $\Lien$. Moreover one has
$$
\left(\Gamma_\vartheta \nabla\right)^2|_{\ker \tilde J_k} = \bigl(\Lien - \id\bigr)^2, \quad \left(\Gamma_\vartheta \nabla\right)^2|_{\mathrm{ran} \tilde J_k} = \Lien \bigl(\Lien -\id\bigr).
$$
As a consequence,
$$\det\left((\Gamma_\vartheta \nabla)^2|_{C^k_+}\right) = \bigl[s_0(1+s_0)\bigr]^{m_k^+ - m_{k-1}^0}(1+s_0)^{2m_{k-1}^0} = {s_0}^{m_{k}^+-m_{k-1}^0}(1+s_0)^{m_k^+ + m_{k-1}^0},$$
because on $C^\bullet$ (and in particular on $C^k_+$), one has $\Lien = -s_0 \id + \nu$ where $\nu$ is nilpotent, and one has $\dim \ker \tilde J_k = \dim \mathrm{ran} J_k = m_{k-1}^0$. Indeed, by (\ref{eq:defjk}) we can view $J_k$ as 
a map $C^{k-1}_0 \to C^k_+$, which is obviously injective. We finally obtain with (\ref{eq:matrix})
\begin{equation}\label{eq:detk}
\det\left(\Gamma_\vartheta \nabla |_{C^k_+ \oplus C^{n-k+1}_+}\right) = (-1)^{m_k^+} {s_0}^{m_{k}^+-m_{k-1}^0}(1+s_0)^{m_k^+ + m_{k-1}^0}.
\end{equation}

We now deal with the case $k=r$. Lemma \ref{lem:k=r} gives
$$\Gamma_\vartheta \nabla|_{\ker \tilde J_r} = (-1)^r \left(\Lien-\id\right), \quad \Gamma_\vartheta \nabla|_{\mathrm{ran} \tilde J_r} = (-1)^r \Lien.$$
As before, we obtain
\begin{equation}\label{eq:detr}
\det\left(\Gamma_\vartheta \nabla |_{C^r_+}\right) = (-1)^{rm_r^+}(-1)^{m_r^+}{s_0}^{m_r^+-m_{r-1}^0}(1+s_0)^{m_{r-1}^0}.
\end{equation}

Combining (\ref{eq:product}) with (\ref{eq:detk}) and (\ref{eq:detr}) we finally obtain
\begin{equation}\label{eq:tauform}
\tau(C^\bullet, \Gamma_\vartheta) = (-1)^{J}{s_0}^K(1+s_0)^L
\end{equation}
where 
$$J = \sum_{k=0}^r(-1)^k m_k^+, \quad K = \sum_{k=0}^{r} (-1)^k(m_k^+ - m_{k-1}^0), \quad L =\sum_{k=0}^{r-1}(-1)^k (m_k^+ - m_k^0).$$
Note that for $0 \leq k \leq r-1$ one has by acyclicity and because $\Gamma_\vartheta$ induces isomorphisms $C^k_+ \simeq C^{n-k}_-$ (since $B_\vartheta$ is invertible),
$$
\begin{aligned}
m_k^+ = m_{n-k}^- &= \dim \ker \left(\nabla|_{C^{n-k}}\right) = \dim \mathrm{ran}\left(\nabla|_{C^{n-k-1}}\right) = m_{n-k-1} - m_{n-k-1}^{-} = m_{k+1}-m_{k+1}^{+}.
\end{aligned}
$$
Therefore 
\begin{equation}\label{eq:m_k}
m_k^+ + m_{k+1}^+ = m_{k+1}, \quad 0 \leq k \leq r-1,
\end{equation} which leads to $m_k^+ + m_{k+1}^+= m_k^0 + m_{k+1}^0$. As a consequence, since $m_0^+ = m_0 = m_0^0$, we get
$$m_r^+ - m_r^0  = -(m_{r-1}^+ - m_{r-1}^0) = \cdots = (-1)^r(m_0^+ - m_0^0) = 0.$$
This implies 
\begin{equation}\label{eq:0=+}
m_k^0 = m_k^+, \quad 0 \leq k \leq r,
\end{equation}
which leads to $L = 0$. Moreover, since $m_k^0 = m_{2r-k}^0$, we get
$$K = \sum_{k=0}^{r} (-1)^k(m_k^0 - m_{k-1}^0) = \sum_{k=0}^{2r} (-1)^k m_k^0 = -\sum_{k=0}^n (-1)^k k m_k = (-1)^qm(s_0),$$
where we used (\ref{eq:orderres}) in the last equality. Finally, again because $m_k^0 = m_{2r-k}^0$,
$$2J = (-1)^rm_r^0 + \sum_{k=0}^{2r} (-1)^km_k^0 = (-1)^rm_r^0 - \sum_{k=0}^n(-1)^kkm_k.$$
We have
$$
\begin{aligned}
(-1)^rm_r^0 &= \sum_{k=0}^r (-1)^km_k, \\ \sum_{k=0}^n(-1)^kkm_k &=  \sum_{k=0}^r(-1)^k(2k-n)m_k,
\end{aligned}
$$
where the first equality comes from (\ref{eq:m_k}) and (\ref{eq:0=+}) and the second from the fact that $m_k = m_{n-k}$. We thus obtained
$$J = \sum_{k=0}^r(-1)^k(r+1-k)m_k = Q_{s_0},$$
and finally by (\ref{eq:tauform})
$$
\tau(C^\bullet, \Gamma_\vartheta) = (-1)^{Q_{s_0}} (-s_0)^{(-1)^qm(s_0)}
$$
But now recall from (\ref{eq:detlie}) that $\grdet{C^\bullet}\left(\Lien\right)^{(-1)^{q+1}} = (-s_0)^{m(s_0)}.$ This completes the proof.

\section{Invariance of the dynamical torsion under small perturbations of the contact form}\label{sec:invariance}
In this section, we are interested in the behaviour of the dynamical torsion when we deform the contact form. Namely, we prove here the
\begin{theo}\label{theo:invariance}
Assume that $(\vartheta_t)_{tÊ\in (-\delta, \delta)}$ is a smooth family of contact forms such that their Reeb vector fields $X_t$ generate a contact Anosov flow for each $t$. Let $(E,\nabla)$ be an acyclic flat vector bundle. Then the map $t \mapsto \tau_{\vartheta_t}(\nabla)$ is real differentiable and we have
$$\frac{\dd}{\dd t} \tau_{\vartheta_t}(\nabla)= 0.$$
\end{theo}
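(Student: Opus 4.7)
The approach starts from Definition~\ref{def:dyntors}, which gives the splitting
\begin{equation*}
\log\tau_{\vartheta_t}(\nabla) = (-1)^q \log\zeta^{(\lambda,\infty)}_{X_t,\nabla}(0) + \log\tau\bigl(C^\bullet_{[0,\lambda]}(t),\Gamma_{\vartheta_t}\bigr) + c_t,
\end{equation*}
where $c_t$ is locally constant in $t$ since the dimensions of the $C^k_{[0,\lambda]}(t)$ are. The plan is to differentiate the two non-trivial pieces separately and exhibit their cancellation. First I would fix $t_0$ and pick $\lambda\in(0,1)$ so that no resonance of $\Lie_{X_{t_0}}^\nabla$ lies on $\{|s|=\lambda\}$, and use Bonthonneau's construction \cite{bonthonneau2018flow} of anisotropic Sobolev spaces uniform in $t$: on such spaces the family $\Lie_{X_t}^\nabla$ extends to closed operators with discrete spectrum in a fixed half-plane, and the spectral projector $\Pi_{[0,\lambda]}(t)$ is a smooth family of finite-rank operators of constant rank. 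Consequently $C^\bullet_{[0,\lambda]}(t)$ is a smooth bundle of acyclic complexes, $\Gamma_{\vartheta_t}$ is a smooth family of involutions on it, and $\zeta^{(\lambda,\infty)}_{X_t,\nabla}(0)\neq 0$ is smooth in $t$, yielding differentiability of $\tau_{\vartheta_t}(\nabla)$.

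Next, for the variation of the zeta factor, I would start from the trace identity~(\ref{eq:orderres}) and adapt the computation of~\cite[\S4]{dang2018fried} to obtain, for $\Re s$ large,
\begin{equation*}
\partial_t \log \zeta_{X_t,\nabla}(s) = (-1)^{q+1}\sum_{k=0}^n (-1)^k k\, \tr^\flat\Bigl([\nabla,\iota_{\dot X_t}]\, e^{-\varepsilon \Lie_{X_t}^\nabla}(\Lie_{X_t}^\nabla + s)^{-1}\big|_{\Omega^k(M,E)}\Bigr).
\end{equation*}
Using the wavefront bounds of \S\ref{sec:policott} on the resolvent together with continuity of the flat trace, this identity extends meromorphically in $s$ to a neighbourhood of $0$. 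Subtracting the polar contribution at $s=0$, which by~(\ref{eq:detlie}) is exactly $\partial_t \log \grdet{C^\bullet_{[0,\lambda]}(t)}(\Lie_{X_t}^\nabla+s)$, expresses $\partial_t \log\zeta^{(\lambda,\infty)}_{X_t,\nabla}(0)$ as a graded flat trace on the complement of $C^\bullet_{[0,\lambda]}(t)$, plus a finite-dimensional remainder on $C^\bullet_{[0,\lambda]}(t)$ itself.

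For the finite-dimensional torsion, once $C^\bullet_{[0,\lambda]}(t)$ is trivialised via $\Pi_{[0,\lambda]}(t)$ the differential $\nabla$ is $t$-independent and only $\Gamma_{\vartheta_t}$ varies; Braverman--Kappeler's variation formula for the torsion with respect to a smooth family of chirality operators then yields $\partial_t \log\tau\bigl(C^\bullet_{[0,\lambda]}(t),\Gamma_{\vartheta_t}\bigr)$ as a graded trace of $\Gamma_{\vartheta_t}^{-1}\dot\Gamma_{\vartheta_t}$ composed with the resolvent of the restricted signature operator $B_{\vartheta_t}$, which is invertible by Lemma~\ref{lem:signinvertible}. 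The main obstacle is to match this with the finite-dimensional remainder from the zeta variation. The decisive input is the intertwining $\Gamma_{\vartheta_t}\Lie_{X_t}^\nabla = \Lie_{X_t}^\nabla\Gamma_{\vartheta_t}$, whose derivative
\begin{equation*}
\bigl[\dot\Gamma_{\vartheta_t},\Lie_{X_t}^\nabla\bigr] = -\bigl[\Gamma_{\vartheta_t},\Lie_{\dot X_t}^\nabla\bigr]
\end{equation*}
ties $\dot\Gamma_{\vartheta_t}$ to $\iota_{\dot X_t}$ through the definition~(\ref{e:chirality}) of $\Gamma_\vartheta$; combined with cyclicity of the super flat trace (Proposition~\ref{p:flattracesupercomm}) and the fact that $[\nabla,\iota_{\dot X_t}]$ is a supercommutator, this should force both contributions to cancel, while the infinite-dimensional flat trace left over from the zeta variation vanishes by cyclicity. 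I expect the hardest technical point to be the careful tracking of signs and normalisations coming from Proposition~\ref{prop:computetorsion} when extracting the residue at $s=0$, so that the finite-dimensional remainder matches exactly the expression produced by Braverman--Kappeler's formula.
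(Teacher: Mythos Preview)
Your architecture matches the paper's: split $\tau_{\vartheta_t}(\nabla)$ into the finite-dimensional torsion $\tau(C^\bullet_{[0,\lambda]}(t),\Gamma_{\vartheta_t})$ and the renormalised zeta value $\zeta^{(\lambda,\infty)}_{X_t,\nabla}(0)$, use Bonthonneau's uniform anisotropic spaces to control the spectral projectors, differentiate each piece, and match. But the actual cancellation is not the one you sketch, and two of your intermediate formulae are off.

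First, the Braverman--Kappeler variation formula \cite[Proposition~4.9]{braverman2007refined} gives $\partial_t\log\tau(C^\bullet_t,\Gamma_t)=\tfrac{1}{2}\str{C^\bullet}(\dot\Gamma_t\Gamma_t)$ directly; there is no resolvent of the signature operator $B_{\vartheta_t}$ here. The paper then performs a concrete computation (Lemma~\ref{lem:variationvectortorsion}), expanding $\Gamma_{\vartheta_t}$ through its definition via the Lefschetz map $\mathscr{L}$ and the decomposition $\Lambda^kT^*M=\Lambda^{k-1}V_X\wedge\vartheta\oplus\Lambda^kV_X$, using $\vartheta_t(\dot X_t)=-\dot\vartheta_t(X_t)$, to obtain the clean identity
\[
\tfrac{1}{2}\,\str{C^\bullet_t}\bigl(\dot\Gamma_t\Gamma_t\bigr)=-\,\str{C^\bullet_t}\bigl(\Pi_t\,\vartheta_t\,\iota_{\dot X_t}\bigr).
\]
This explicit quantity $\str{C^\bullet_t}(\Pi_t\vartheta_t\iota_{\dot X_t})$ is the pivot of the whole proof, and your abstract commutation relation $[\dot\Gamma,\Lie]=-[\Gamma,\Lie_{\dot X}]$ does not by itself produce it.

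Second, the variation of $\zeta_{X_t,\nabla}(s)$ used in the paper is the formula (5.4) of \cite{dang2018fried}, which in the contact case reads
\[
\partial_t\log\zeta_{X_t,\nabla}(s)=(-1)^q\,s\,\strf\Bigl(\vartheta_t\,\iota_{\dot X_t}\,(\Lie_{X_t}^\nabla+s)^{-1}e^{-\varepsilon(\Lie_{X_t}^\nabla+s)}\Bigr),
\]
with an explicit prefactor $s$ and the operator $\vartheta_t\iota_{\dot X_t}$ (coming from $\dot P_t$), not the graded trace of $[\nabla,\iota_{\dot X_t}]$ you wrote. Similarly, the variation of $\grdet{C^\bullet_t}(\Lie_{X_t}^\nabla+s)$ is reduced, via $\Lie_{\dot X_t}^\nabla=-[\nabla,A_t\iota_{X_t}]+[\iota_{X_t},A_t\nabla]$ and cyclicity, to $(-1)^{q+1}\str{C^\bullet_t}(\Pi_t\vartheta_t\iota_{\dot X_t}\Lie_{X_t}^\nabla(\Lie_{X_t}^\nabla+s)^{-1})$. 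Writing $\Lie(\Lie+s)^{-1}=\id-s(\Lie+s)^{-1}$ and letting $s\to 0$ after a careful analytic-continuation argument (using that the $(\id-\Pi_t)$ part of the resolvent is holomorphic near $0$, so the factor $s$ kills its contribution) yields
\[
\partial_t\log\zeta^{(\lambda,\infty)}_{X_t,\nabla}(0)=(-1)^{q+1}\bigl(-\str{C^\bullet_t}(\Pi_t\vartheta_t\iota_{\dot X_t})\bigr),
\]
which cancels exactly against the torsion variation above. Your claim that ``the infinite-dimensional flat trace left over vanishes by cyclicity'' is not the mechanism: it vanishes because of the explicit factor $s$ in the zeta variation. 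The hardest step is thus the algebraic computation of $\tfrac{1}{2}\str{}(\dot\Gamma\Gamma)$, not the residue bookkeeping you anticipated.
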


\begin{rem}\label{rem:notacyclic2}
In view of Remark \ref{rem:notacyclic}, if $\nabla$ is not assumed acyclic, then it is not hard to see that the proof (given below) of Theorem \ref{theo:invariance} is still valid and we have that $\partial_t \tau_{\vartheta_t}(\nabla) = 0$ in $\det H^\bullet(M,\nabla)$.
\end{rem}

We will thus consider a family of contact forms and set $\vartheta = \vartheta_0$ and $X = X_0$. We also fix an acyclic flat vector bundle $(E,\nabla)$.

\subsection{Anisotropic spaces for a family of vector fields}\label{subsec:anisotropicuniform}
To study the dynamical torsion when the dynamics is perturbed, we construct with the help of \cite{bonthonneau2018flow} some anisotropic Sobolev spaces on which each $X_t$ has nice spectral properties. We refer to Appendix \ref{sec:continuityruelle} where we briefly recall the construction of these spaces.

By \S\ref{subsec:continuityruelle}, the set
$$
\left\{(t,s), \ s \notin \Res(\Lie_{X_t}^\nabla) \right\}
$$
is open in $(-\delta, \delta) \times \mathbb{C}$. Fix $\lambda \in (0,1)$ such that 
\begin{equation}\label{eq:circle}
\mathrm{Res}(\Lien) \cap \{|s| \leq \lambda\} \subset \{0\}.
\end{equation} Then for $t$ close enough to $0$, we have $\Res(\Lient) \cap \{|s|=\lambda\} = \emptyset$ so that the spectral projectors
\begin{equation}\label{eq:pit}
\Pi_t = \frac{1}{2i\pi}\int_{|s| = \lambda}(\Lient +s)^{-1} \dd s : \Omega^\bullet(M,E) \to \mathcal{D}^{'\bullet}(M,E)
\end{equation}
are well defined. The next proposition is a brief summary of the results from Appendix \ref{sec:continuityruelle}. We will denote for any $C,\rho > 0$,
\begin{equation}\label{eq:omegacrho}
\Omega(c, \rho)=\{\operatorname{Re}(s)>c\} \cup\{|s| \leq \rho\} \subset \mathbb{C}.
\end{equation}
\begin{prop}\label{prop:anisotropicuniform}
There is $c, \varepsilon_0 > 0$ such that for any $\rho > 0$ there exists anisotropic Sobolev spaces
$$
\Omega^\bullet(M,E) \subset \H^\bullet_{1} \subset \H^\bullet \subset \mathcal{D}^{'\bullet}(M,E),
$$
each inclusion being continuous with dense image, such that the following holds.
\begin{enumerate}
\item For each $t \in [-\varepsilon_0, \varepsilon_0]$, the family $s \mapsto \Lient + s$ is a holomorphic family of (unbounded) Fredholm operators $\H_{1}^\bullet \to \H^\bullet_1$ and $\H^\bullet \to \H^\bullet$ of index $0$ in the region $\Omega(c, \rho)$. Moreover
$$
\Lient \in \mathcal{C}^1\Bigl([-\varepsilon_0, \varepsilon_0], \Lie(\H^\bullet_1, \H^\bullet)\Bigr).
$$
\item For every relatively compact open region $\mathcal{Z} \subset \mathrm{int}~ \Omega(c,\rho)$ such that $\Res(\Lien) \cap \overline{\mathcal{Z}} = \emptyset$, there exists $t_\mathcal{Z} > 0$  such that
$$
\left(\Lient + s\right)^{-1} \in \mathcal{C}^0\Bigl([-t_\mathcal{Z}, t_\mathcal{Z}]_t, \mathrm{Hol}\bigl(\mathcal{Z}_s, \Lie(\H^\bullet_{1}, \H^\bullet)\bigr)\Bigr).
$$
\item $\Pi_t \in \mathcal{C}^1\Bigl([-\varepsilon_0, \varepsilon_0]_t, \Lie(\H^\bullet, \H^\bullet_1)\Bigr).$
\end{enumerate}
\end{prop}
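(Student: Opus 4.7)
The plan is to follow Bonthonneau's construction \cite{bonthonneau2018flow} of uniform anisotropic Sobolev spaces for smooth families of Anosov flows, adapting it to the twisted bundle setting. The key geometric input is that, for $t$ in a small interval $[-\varepsilon_0,\varepsilon_0]$, the hyperbolic splitting $T^*M = E_0^*(t)\oplus E_u^*(t)\oplus E_s^*(t)$ of $X_t$ varies continuously with $t$. One can therefore pick a single smooth order function $m$ on $T^*M\setminus 0$ which is negative in a conic neighbourhood of all $E_u^*(t)$ and positive in a conic neighbourhood of all $E_s^*(t)$, and such that the escape condition $H_{p_t}m\leq 0$ holds uniformly in $t$ outside a compact set (here $p_t$ is the principal symbol of $-i\Lient$). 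A larger modification $m_1\leq m-C_1$ encodes an extra gain of regularity. Setting $\H^\bullet = \e^{-\Op(m\log\langle\xi\rangle)}L^2$ and $\H^\bullet_1 = \e^{-\Op(m_1\log\langle\xi\rangle)}L^2$ (with values in $\Lambda^\bullet T^*M\otimes E$) provides the required nested pair of anisotropic spaces.

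For (1), I would invoke the Faure--Sj\"ostrand radial source/sink estimates at $E_u^*(t)$ and $E_s^*(t)$, extended to the twisted case as in \cite{dang2017topology}. Since these estimates are based on principal symbol computations and the splitting varies continuously in $t$, the Fredholm threshold is uniform for $|t|$ small; taking $|m|$ sufficiently large pushes the Fredholm half-plane past the region $\Omega(c,\rho)$. Holomorphy in $s$ is automatic since $\Lient+s$ is affine in $s$. The $\mathcal{C}^1$ regularity of $t\mapsto \Lient\in\Lie(\H^\bullet_1,\H^\bullet)$ follows from the fact that $\Lient$ is a first order differential operator depending smoothly on $t$, and $\H^\bullet_1$ is one order more regular than $\H^\bullet$ in a sense compatible with the microlocal setup, uniformly in $t$.

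Step (2) is then a second resolvent identity argument:
\[
(\Lient + s)^{-1} = (\Lien + s)^{-1}\Bigl(\id + (\Lient - \Lien)(\Lien + s)^{-1}\Bigr)^{-1},
\]
valid as soon as $(\Lient-\Lien)(\Lien+s)^{-1}$ has operator norm less than $1$ on $\H^\bullet$. The hypothesis $\Res(\Lien)\cap\overline{\mathcal Z}=\emptyset$ with $\overline{\mathcal Z}$ compact gives a uniform bound on $(\Lien+s)^{-1}\colon\H^\bullet\to\H^\bullet_1$ for $s\in\overline{\mathcal Z}$, while the $\mathcal{C}^1$ control from (1) gives $\|\Lient-\Lien\|_{\H^\bullet_1\to\H^\bullet}=\mathcal{O}(|t|)$; joint continuity of the resolvent on $[-t_\mathcal Z,t_\mathcal Z]\times\mathcal Z$ follows. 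For (3) I would differentiate the contour integral (\ref{eq:pit}) under the integral sign, using $\partial_t(\Lient+s)^{-1}=-(\Lient+s)^{-1}(\partial_t\Lient)(\Lient+s)^{-1}$ together with the smoothing gain of the resolvent from $\H^\bullet$ into $\H^\bullet_1$ to conclude that $\Pi_t$ is $\mathcal{C}^1$ with values in $\Lie(\H^\bullet,\H^\bullet_1)$.

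The main obstacle I anticipate is not any single estimate but the bookkeeping needed to make the anisotropic functional setup \emph{genuinely uniform} in $t$: the order function, its quantization, the radial source/sink estimates and the smoothing gain $\H^\bullet\to\H^\bullet_1$ must all be arranged with constants independent of $t$ on a common small interval. This is precisely the content of Bonthonneau's framework; adapting it to sections of $\Lambda^\bullet T^*M\otimes E$ is routine because $\Lient$ has scalar principal symbol equal to that of $X$, so the bundle contribution enters only through lower order terms that do not affect the microlocal estimates.
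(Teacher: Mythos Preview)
Your treatment of (1) and (2) matches the paper's: Bonthonneau's uniform escape function, the uniform parametrix (Proposition~\ref{lem:uniformparametrices}), and the resolvent identity. One cosmetic difference: the paper does not introduce a second weight $m_1$ but works instead with a full scale $\H^\bullet_{\mu G,j}$ (same weight, varying Sobolev index $j$), eventually setting $\H^\bullet=\H^\bullet_{\mu G,0}$ and $\H^\bullet_1=\H^\bullet_{\mu G,1}$.

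The genuine gap is in (3). Differentiating the contour integral gives
\[
\partial_t\Pi_t=-\frac{1}{2\pi i}\int_{|s|=\lambda}(\Lient+s)^{-1}(\partial_t\Lient)(\Lient+s)^{-1}\,\dd s,
\]
and to land in $\Lie(\H^\bullet,\H^\bullet_1)$ you need each resolvent factor to \emph{gain} one order, $(\Lient+s)^{-1}:\H^\bullet\to\H^\bullet_1$, uniformly in $(t,s)$ on the contour. But Bonthonneau's estimate only delivers uniform boundedness $\H^\bullet_{\mu G,j}\to\H^\bullet_{\mu G,j}$; a uniform one-step gain is not what the parametrix construction provides (the domain of $\Lient$ as an unbounded operator on $\H^\bullet_{\mu G,j}$ need not coincide with $\H^\bullet_{\mu G,j+1}$ in the anisotropic setting). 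Running your argument with what is actually available, the paper only obtains $\Pi_t\in\mathcal{C}^1$ with values in $\Lie(\H^\bullet_{\mu G,j+1},\H^\bullet_{\mu G,j-2})$, i.e.\ a loss of three orders rather than a gain of one.

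The paper closes this gap by exploiting that $\Pi_t$ has \emph{finite rank}. It first shows that $\mathrm{rank}\,\Pi_t$ is constant in $t$, by identifying it with the number of zeros of the Fredholm determinant $D(t,s)=\det_{\H^\bullet_{\mu G,0}}(\id+K(t,s))$ inside $\{|s|\leq\lambda\}$ and invoking Rouch\'e. It then applies a bootstrap lemma for constant-rank finite-dimensional projectors (Lemma~\ref{lem:projector3}): if such a family is $\mathcal{C}^1$ both as $\mathcal{E}\to\mathcal{F}$ and as $\mathcal{G}\to\mathcal{H}$ (with $\mathcal{E}\subset\mathcal{F}\subset\mathcal{G}\subset\mathcal{H}$), it is automatically $\mathcal{C}^1$ as $\mathcal{G}\to\mathcal{F}$. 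This finite-rank upgrade, not the resolvent smoothing you invoke, is what produces the strong conclusion $\Pi_t\in\mathcal{C}^1\bigl([-\varepsilon_0,\varepsilon_0],\Lie(\H^\bullet,\H^\bullet_1)\bigr)$.
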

We will thus fix such Hilbert spaces for some $\rho > c + 1$. We denote $C^\bullet_t = \mathrm{ran} \ \Pi_t \subset \mathcal{H}^\bullet,$
$\Pi = \Pi_{t=0}$ and $C^\bullet = \mathrm{ran} \ \Pi.$

\subsection{Variation of the torsion part}
Let $\Gamma_t : C^\bullet_t \to C^{n-\bullet}_t$ be the chirality operator associated to $X_t$, c.f. \S \ref{subsec:chirality}. The next lemma allows us to compute the variation of the finite dimensional torsion part of the dynamical torsion.
\begin{lemm}\label{lem:variationvectortorsion}
We have that $t \mapsto \tau(C^\bullet_t, \Gamma_t)$ is real differentiable and 
$$\frac{\dd }{\dd t}\tau(C^\bullet_t, \Gamma_t) =-\str{C^\bullet_t}\bigl( \Pi_t \vartheta_t  \iota_{\dot{X}_t}\bigr)\tau(C^\bullet_t, \Gamma_t),$$
where $\dot{X}_t = \displaystyle{\frac{\dd}{\dd t}X_t}.$ 
\end{lemm}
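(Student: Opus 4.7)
The strategy is to transport $\Gamma_t$ from the moving complex $C^\bullet_t$ to the fixed complex $C^\bullet=C^\bullet_0$ via the spectral projector, apply a finite-dimensional variation formula for the refined torsion with respect to the chirality, and then identify the resulting supertrace.

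By Proposition \ref{prop:anisotropicuniform}, the map $P_t:=\Pi_t|_{C^\bullet}\colon C^\bullet\to C^\bullet_t$ is a $C^1$-family of isomorphisms with $P_0=\id$. Since $[\Lient,\nabla]=0$ implies $[\Pi_t,\nabla]=0$, each $P_t$ is a chain isomorphism between two realisations of the same acyclic complex. By naturality of the refined torsion under chain isomorphisms (which follows directly from $\tau(C^\bullet,c)=\varphi_{C^\bullet}(c)$ and the functoriality of $\det$), one has $\tau(C^\bullet_t,\Gamma_t)=\tau(C^\bullet,\tilde\Gamma_t)$ with $\tilde\Gamma_t:=P_t^{-1}\Gamma_t P_t$ a chirality on the fixed complex $C^\bullet$.

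A direct computation from Definition \ref{def:torsion} then gives the variation formula
$$\partial_t\log\tau(C^\bullet,\Gamma_t)\big|_{t=0}=\tfrac{1}{2}\str{C^\bullet}(\dot\Gamma_0\Gamma_0)$$
for any smooth family of chirality operators on a fixed acyclic complex: one verifies that $[c_{\Gamma_t}:c_{\Gamma_0}]=\prod_{j=0}^{r}\det(\Gamma_t\Gamma_0|_{C^{n-j}})^{(-1)^{j+1}}$ directly from the definition of $c_\Gamma$, differentiates at $t=0$, and invokes the symmetry $\tr_{C^{n-k}}(\dot\Gamma_0\Gamma_0)=-\tr_{C^k}(\dot\Gamma_0\Gamma_0)$ (a consequence of $\Gamma_0^2=\id$) to turn the partial sum into half the full supertrace. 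To apply this formula to $\tilde\Gamma_t$, differentiate the identity $\Pi_t\tilde\Gamma_t v=\Gamma_t\Pi_t v$, valid for $v\in C^\bullet$, at $t=0$; using $[\Pi,\Gamma_0]=0$ (because $\Gamma_0$ commutes with $\Lie^\nabla_{X_0}$) together with the relation $\Pi\dot\Pi_0 v=0$ for $v\in C^\bullet$, obtained from $\Pi_t^2=\Pi_t$ and $\Pi v=v$, all the ``transport'' contributions drop out. This yields $\dot{\tilde\Gamma}_0=\Pi\dot\Gamma_0|_{C^\bullet}$ and hence
$$\partial_t\log\tau(C^\bullet_t,\Gamma_t)\big|_{t=0}=\tfrac{1}{2}\str{C^\bullet}(\Pi\dot\Gamma_0\Gamma_0).$$

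The main obstacle is now the remaining identification $\tfrac{1}{2}\str{C^\bullet}(\Pi\dot\Gamma_0\Gamma_0)=-\str{C^\bullet}(\Pi\vartheta_0\iota_{\dot X_0})$. Starting from the expansion
$$\Gamma_\vartheta\alpha=\alpha\wedge\vartheta\wedge(\dd\vartheta)^{r-k}+(-1)^{k+1}\iota_X\alpha\wedge(\dd\vartheta)^{r-k+1},\qquad \alpha\in\Lambda^k,\ k\leq r,$$
deduced from Definition \ref{def:chirality} (with a symmetric formula for $k>r$), I differentiate at $t=0$; the result a priori contains contributions from $\dot\vartheta_0$, $\dd\dot\vartheta_0$ and $\iota_{\dot X_0}$. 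The Reeb constraints $\iota_{X_t}\vartheta_t\equiv 1$ and $\iota_{X_t}\dd\vartheta_t\equiv 0$ differentiate to $\dot\vartheta_0(X_0)=-\vartheta_0(\dot X_0)$ and $\iota_{X_0}\dd\dot\vartheta_0=-\iota_{\dot X_0}\dd\vartheta_0$. Combined with cyclicity of the supertrace on the finite-dimensional space $C^\bullet$, the commutation $[\Pi,\Gamma_0]=0$, and the anticommutator identity $\vartheta\,\iota_{\dot X}+\iota_{\dot X}\,\vartheta=\vartheta(\dot X)\id$, the $\dot\vartheta_0$ and $\dd\dot\vartheta_0$ pieces reorganize into supercommutators whose supertrace vanishes, while the $\iota_{\dot X_0}$ piece collects into $-2\,\Pi\vartheta_0\iota_{\dot X_0}$. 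Keeping careful track of the signs through this last algebraic reorganization is the principal technical difficulty.
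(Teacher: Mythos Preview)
Your overall strategy coincides with the paper's: transport $\Gamma_t$ to the fixed complex via $\Pi_t$, invoke the variation formula $\partial_t\log\tau=\tfrac12\,\str{C^\bullet}\bigl(\dot{\tilde\Gamma}\tilde\Gamma\bigr)$ (which the paper cites from \cite[Proposition~4.9]{braverman2007refined} rather than rederiving), and reduce to $\tfrac12\,\str{C^\bullet}(\Pi\dot\Gamma\Gamma)$ via $\dot{\tilde\Gamma}\tilde\Gamma=\Pi\dot\Gamma\Gamma|_{C^\bullet}$.

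The final identification is where essentially all the work lies, and your sketch does not match the actual mechanism. The paper does not organise the unwanted terms as supercommutators. Instead it writes out $\Gamma\dot\Gamma\alpha$ explicitly (eight terms) and exploits the block decomposition $C^k=(C^{k-1}_0\wedge\vartheta)\oplus C^k_0$: four of the eight terms are anti-diagonal in this splitting and hence traceless; the two $\dd\dot\vartheta$ contributions define operators $A_k:C^k_0\to C^k_0$ whose traces telescope in the alternating sum $\sum_{k=0}^r(-1)^{k+1}(\tr A_{k-1}+\tr A_k)=0$. Crucially, one $\dot\vartheta$ contribution---the scalar $\dot\vartheta(X)$ acting on $C^k_0$---does \emph{not} vanish; it survives alongside the $\vartheta\wedge\iota_{\dot X}$ term and is only afterwards converted via the Reeb constraint $\dot\vartheta(X)=-\vartheta(\dot X)$, then merged with the other survivor through the identity $\tr_{C^k}\Pi\vartheta\iota_{\dot X}=\tr_{C^{k-1}_0}\Pi\vartheta(\dot X)$ and the Lefschetz symmetry $\tr_{C^k_0}=\tr_{C^{2r-k}_0}$ to yield the full supertrace over $C^\bullet$. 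So your assertion that ``the $\dot\vartheta_0$ and $\dd\dot\vartheta_0$ pieces reorganise into supercommutators whose supertrace vanishes'' is not accurate as stated, and the honest remark about ``keeping careful track of the signs'' understates the structural input (block splitting, telescoping, Lefschetz symmetry) that is actually required.
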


\begin{proof}
By Proposition \ref{prop:anisotropicuniform}, the operator $\Pi_t|_{C^\bullet} : C^\bullet \to C^\bullet_t$ is invertible for $t$ close enough to $0$ and we will denote by $Q_t$ its inverse. Then for $t$ close enough to $0$, one has 
$$\tau(C^\bullet_t, \Gamma_t) = \tau(C^\bullet, \tilde{\Gamma}_t),$$
where $\tilde{\Gamma}_t = Q_t \Gamma_t \Pi_t|_{C^\bullet} : C^\bullet \to C^\bullet$
because $\nabla$ and $\Pi_t$ commute and the image of a $\tilde{\Gamma}_t$ invariant basis of $C^\bullet$ by the projector $\Pi_t$ is a
$\Gamma_t$ invariant basis of $C^\bullet_t$.

Therefore \cite[Proposition 4.9]{braverman2007refined}
$$\frac{\dd }{\dd t}\tau(C^\bullet_t, \Gamma_t) = \frac{1}{2}\str{C^\bullet}\bigl(\dot{\tilde{\Gamma}}_t\tilde{\Gamma}_t\bigr)\tau(C^\bullet_t, \Gamma_t),$$
where $\dot{\tilde{\Gamma}}_t\ = \frac{\dd}{\dd t} \tilde{\Gamma}_t : C^\bullet \to C^\bullet$. Since $\Gamma_t$ and $\Pi_t$ commute, and by the two first points of Proposition \ref{prop:anisotropicuniform}, we can apply (\ref{eq:qtatpit}) to get
$$\tilde{\Gamma}_t = \Pi \Gamma_t \Pi|_{C^\bullet} + t\Pi\dot{\Gamma}\Pi + o_{C^\bullet \to C^\bullet}(t).$$
This leads to
$$
\dot{\tilde{\Gamma}}\tilde{\Gamma}= \Pi \dot \Gamma {\Gamma}|_{C^\bullet},
$$
where we removed the subscripts $t$ to signify that we take all the $t$-dependent objects at $t = 0.$
Therefore,
$$\frac{1}{2}\str{C^\bullet}\Bigl(\dot{\tilde{\Gamma}}{\tilde{\Gamma}}\Bigr) = \frac{1}{2}\str{C^\bullet}\Bigl(\Pi\dot\Gamma{\Gamma}\Bigr),$$ 
Now notice that $\Gamma_t^2 = 1$ implies $\Gamma \dot{\Gamma} + \dot{\Gamma}\Gamma = 0$. Therefore, for every $k \in \{0, \dots, r\}$,
$$\tr_{C^{n-k}}\Gamma \dot{\Gamma} = \tr_{C^k} \Gamma \Gamma \dot{\Gamma} \Gamma = \tr_{C^k}\dot{\Gamma}\Gamma = - \tr_{C^k}\Gamma \dot{\Gamma}.$$
 Therefore we only need to compute $\tr_{C^k}\left(\Gamma\dot{\Gamma} \right)$ for $k\in \{0,\dots,r\}$ to get the full
super trace $\str{C^\bullet}\left( \dot{\Gamma}\Gamma\right)$.
Since $n$ is odd we have
\begin{equation*}
\frac{1}{2}\str{C^\bullet}\Bigl(\dot{\tilde{\Gamma}}{\tilde{\Gamma}}\Bigr) = \frac{1}{2}\tr_{C^\bullet}\Bigl((-1)^{N+1}\Pi\Gamma\dot{\Gamma}\Bigr) = \sum_{k=0}^r (-1)^{k+1}\tr_{C^k} \left(\Pi\Gamma \dot{\Gamma}\right).
\end{equation*}
Let $k \in \{0, \dots, r\}$ and $\alpha \in \Omega^k(M)$. Using the decomposition
$$\alpha = (-1)^{k-1}\iota_{X_t}\alpha \wedge \vartheta_t + \bigl(\alpha + (-1)^k \iota_{X_t} \alpha \wedge \vartheta_t\bigr),$$
we get by definition of $\Gamma_t$
$$\Gamma_t \alpha = (-1)^{k-1}\iota_{X_t}\alpha \wedge (\dd \vartheta_t)^{r-k+1} + \bigl(\alpha + (-1)^k \iota_{X_t} \alpha \wedge \vartheta_t\bigr) \wedge (\dd \vartheta_t)^{r-k}\wedge \vartheta_t.$$
Therefore,
$$
\begin{aligned}
\dot{\Gamma}_t \alpha &= (-1)^{k-1}\iota_{\dot{X}_t} \alpha \wedge (\dd \vartheta_t)^{r-k+1} \\
& \quad+ (r-k+1)(-1)^{k-1}\iota_{X_t}\alpha \wedge \dd \dot{\vartheta}_t \wedge (\dd \vartheta_t)^{r-k} \\
& \quad + (-1)^k\Bigl(\iota_{\dot{X}_t} \alpha \wedge \vartheta_t + \iota_{X_t} \alpha \wedge \dot{\vartheta}_t\Bigr) \wedge (\dd \vartheta_t)^{r-k} \wedge\vartheta_t \\
& \quad + \Bigl(\alpha +(-1)^k \iota_{X_t} \alpha \wedge \vartheta_t \Bigr)\wedge (\dd \vartheta_t)^{r-k} \wedge \dot{\vartheta}_t \\
& \quad + (r-k)\Bigl(\alpha +(-1)^k \iota_{X_t} \alpha \wedge \vartheta_t \Bigr) \wedge \dd \dot{\vartheta}_t \wedge (\dd \vartheta_t)^{r-k-1} \wedge \vartheta_t
\end{aligned}
$$
Now we use the decompositions 
$$
\begin{aligned}
\dd \dot{\vartheta}_t &= -\iota_{X_t} \dd \dot{\vartheta}_t\wedge \vartheta_t + \bigl(\dd \dot{\vartheta}_t + \iota_{X_t} \dd \dot{\vartheta}_t\wedge \vartheta_t \bigr), \\
\dot{\vartheta}_t &= \dot{\vartheta}_t(X_t)\vartheta + \bigl(\dot{\vartheta}_t - \dot{\vartheta}_t(X_t) \vartheta\bigr), \\
\iota_{\dot{X}_t}\alpha &= (-1)^k \iota_{X_t} \iota_{\dot{X}_t} \alpha \wedge \vartheta_t + \bigl(\iota_{\dot{X}_t} \alpha + (-1)^{k+1} \iota_{X_t} \iota_{\dot{X}_t}\alpha \wedge \vartheta_t\bigr)
\end{aligned}
$$
to get, again by definition,
\begin{equation}\label{eq:calculusgammadot}
\begin{aligned}
\Gamma \dot{\Gamma} \alpha &= (-1)^{k-1} \bigl(\iota_{\dot{X}} \alpha + (-1)^{k+1} \iota_{X} \iota_{\dot{X}}\alpha \wedge \vartheta\bigr) \wedge \vartheta \\
& \quad \quad \quad \quad  + (-1)^{k-1} \left(\mathscr{L}^{r-k}\right)^{-1} \Bigl((-1)^k\iota_{X}\iota_{\dot{X}}\alpha \wedge (\dd \vartheta)^{r-k+1}\Bigr)\\
& \quad +   (r-k+1)\left(\mathscr{L}^{r-k+1}\right)^{-1}\Bigl((-1)^{k-1}\iota_X\alpha \wedge \bigl(\dd \dot{\vartheta} + \iota_{X} \dd \dot{\vartheta}\wedge \vartheta\bigr) \wedge (\dd \vartheta)^{r-k}\Bigr) \wedge \vartheta \\
& \quad \quad \quad \quad - (r-k+1)\left((-1)^{k-1}\iota_X \alpha\right) \wedge \iota_X \dd \dot{\vartheta} \\
& \quad + (-1)^k\iota_X\alpha \wedge \bigl(\dot{\vartheta}-\dot{\vartheta}(X) \vartheta\bigr) \\
& \quad   + \left(\mathscr{L}^{r-k+1}\right)^{-1}\Bigl(\left(\alpha +(-1)^k \iota_X \alpha \wedge \vartheta\right) \wedge (\dd \vartheta)^{r-k}  \wedge \bigl(\dot{\vartheta}-\dot{\vartheta}(X) \vartheta\bigr) \Bigr) \wedge \vartheta \\
& \quad \quad \quad \quad  + \left(\alpha +(-1)^k \iota_X \alpha \wedge \vartheta\right) \dot{\vartheta}(X) \\
& \quad + (r-k)\left(\mathscr{L}^{r-k}\right)^{-1}\Bigl(\left(\alpha +(-1)^k \iota_X \alpha \wedge \vartheta\right) \wedge \bigl(\dd \dot{\vartheta} + \iota_{X} \dd \dot{\vartheta}\wedge \vartheta\bigr) \wedge (\dd\vartheta)^{r-k-1} \Bigr),
\end{aligned} 
\end{equation}
where again we removed the subscripts $t$ to signify that we take everything at $t = 0.$
Now let $A_k : C^k_0 \to C^k_{0}$ (note that here $C^k_0$ is $C^k \cap \ker \iota_X$, cf \S \ref{subsec:chirality}, and not $C^k_t$ at $t=0$) defined by 
$$A_k u = (r-k)\left(\mathscr{L}^{r-k}\right)^{-1}\Bigl(u \wedge \bigl(\dd \dot{\vartheta} + \iota_{X} \dd \dot{\vartheta}\bigr) \wedge (\dd\vartheta)^{r-k-1} \Bigr).$$
Note that the maps defined by the second, the fourth, the fifth and the sixth terms of the right hand side of (\ref{eq:calculusgammadot}) are anti-diagonal, that is they have the form $\begin{pmatrix} 0 & \star \\ \star & 0 \end{pmatrix}$ in the decomposition $C^\bullet = C^{\bullet-1}_0 \wedge \vartheta \oplus C^\bullet_0$. Therefore, since $A_r = 0$ (we also set $A_{-1} = 0$),
\begin{equation}\label{eq:tracegammadot}
\begin{aligned}
\sum_{k=0}^r (-1)^{k+1}\tr_{C^k} \left(\Pi\Gamma \dot{\Gamma}\right) &= \sum_{k=0}^r (-1)^{k+1} \Bigl(\tr_{C^k} \Pi\vartheta \iota_{\dot{X}} + \tr_{C^k_0} \Pi \dot{\vartheta}(X)\Bigr) \\
& \quad + \sum_{k=0}^r (-1)^{k+1} \Bigl(\tr_{C^{k-1}_0} \Pi A_{k-1} + \tr_{C^k_0} \Pi A_k \Bigr) \\
&= \sum_{k=0}^r (-1)^{k+1} \Bigl(\tr_{C^k} \Pi \vartheta\iota_{\dot{X}} + \tr_{C^k_0} \Pi \dot{\vartheta}(X)\Bigr).
\end{aligned}
\end{equation}
But now note that if $\alpha = f \wedge \vartheta + g \in C^{k-1}_0 \wedge \vartheta \oplus C^k_0$ then 
$$\vartheta \wedge \iota_{\dot{X}} \alpha = \vartheta(\dot{X}) (f\wedge \vartheta) + \vartheta \wedge \iota_{\dot{X}} g.$$
This shows that for every $k \in \{0, \dots, n\}$ one has
\begin{equation}\label{eq:trck}
\tr_{C^k} \Pi\vartheta \iota_{\dot{X}} = \tr_{C^{k-1}_0} \Pi\vartheta(\dot{X}).
\end{equation}
Injecting this relation in (\ref{eq:tracegammadot}) we obtain, with $\vartheta(\dot{X}) = - \dot{\vartheta}(X)$ and the formula $\dot{\vartheta}(X)|_{C^{2r-k}_0} \mathscr{L}^{r-k} = \mathscr{L}^{r-k} \dot{\vartheta}(X)|_{C^k_0}$,
$$
\sum_{k=0}^r (-1)^{k+1}\tr_{C^k} \left(\Pi\Gamma \dot{\Gamma}\right) = \sum_{k=0}^r (-1)^{k+1} \left(\tr_{C^{k-1}_0} \Pi \vartheta(\dot X) - \tr_{C^{k}_0} \Pi \vartheta(\dot X)\right)=\sum_{k=0}^{2r} (-1)^k \tr_{C^k_0} \Pi\vartheta(\dot X).
$$
But this concludes since by (\ref{eq:trck}) we have
$$\sum_{k=0}^{2r}(-1)^{k} \tr_{C^k_0} \Pi{\vartheta}(\dot X) = \tr_{C^\bullet}\Bigl((-1)^{N+1} \Pi\vartheta \iota_{\dot{X}}\Bigr).$$
\end{proof}

\subsection{Variation of the rest}
Let us now interest ourselves in the variation of $t \mapsto \zeta_{X_t, \nabla}^{(\lambda, \infty)}(0)$, cf. \S\ref{subsec:cut}. For $t$ close enough to $0$, let $P_t : TM \to TM$ be defined by
$$
\begin{matrix}
P_t :  & \ker \vartheta & \oplus & \mathbb{R}X & \to & \ker \vartheta & \oplus & \mathbb{R}X_t, \\
& v & + & \mu X & \mapsto & v & + & \mu X_t.
\end{matrix}
$$
For simplicity, we will still denote $\Lambda^k ({}^TP_t) : \Lambda^k T^*M \to \Lambda^k T^*M$ by $P_t$. Then formula (5.4) of \cite{dang2018fried} gives that for $\Re(s)$ big enough, $t \mapsto \zeta_{X_t, \nabla}(s)$ is differentiable and we have for every $\varepsilon > 0$ small enough 
$$
\left.\frac{\dd}{\dd t}\right|_{t=0} \log \zeta_{X,\nabla}(s) =  (-1)^{q} s \ \strf \Bigl( \dot P (\Lie_{X}^\nabla+s)^{-1}\e^{-\varepsilon (\Lie_{X}^\nabla+s)}\Bigr),
$$
where $\dot P = \left.\frac{\dd}{\dd t}\right|_{t=0} P_t$. One can show that for every $k \in \{0, \dots, n\}$ and $\beta \in \Lambda^kT^*M$ one has
\begin{equation}\label{eq:A_t}
\dot{P} \beta = \vartheta \wedge \iota_{\dot{X}}\beta.
\end{equation}
Therefore (we differentiated at $t=0$ but we can do the same for small $t$)
\begin{equation}\label{eq:variationzeta}
\frac{\dd}{\dd t} \log \zeta_{X_t,\nabla}(s) = (-1)^q s \  \strf \Bigl(  \vartheta_t \iota_{\dot{X}_t} (\Lie_{X_t}^\nabla+s)^{-1}\e^{-\varepsilon (\Lie_{X_t}^\nabla+s)}\Bigr).
\end{equation}
Now let us compute the variation of the $[0, \lambda]$ part of $\zeta^{(\lambda, \infty)}(s)$.
\begin{lemm}
We have 
$$\frac{\dd}{\dd t} \log \grdet{C^\bullet_t} \left(\Lient + s\right)^{(-1)^{q+1}} = (-1)^{q+1} \str{C^\bullet_t} \Bigl(  \vartheta_t \iota_{\dot{X}_t} \Lie_{X_t}^\nabla (\Lie_{X_t}^\nabla + s)^{-1}\Bigr).$$
\end{lemm}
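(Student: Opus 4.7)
The plan is to follow the template of Lemma~\ref{lem:variationvectortorsion}: reduce to the fixed finite-dimensional space $C^\bullet=C^\bullet_{t=0}$ by conjugation, then manipulate graded traces via cyclicity. First I would set $U_t=\Pi_t|_{C^\bullet}:C^\bullet\to C^\bullet_t$, invertible for small $t$ by Proposition~\ref{prop:anisotropicuniform}, and conjugate $\tilde L_t=U_t^{-1}\Lient U_t:C^\bullet\to C^\bullet$, so that $\grdet{C^\bullet_t}(\Lient+s)=\grdet{C^\bullet}(\tilde L_t+s)$ and the standard formula on a fixed space gives
$$\partial_t\log \grdet{C^\bullet}(\tilde L_t+s)\big|_{t=0}=\grtr_{C^\bullet}\bigl[(\Lien+s)^{-1}\dot{\tilde L}_0\bigr].$$
Differentiating $U_t\tilde L_t=\Lient U_t$ (valid because $\Pi_t\Lient=\Lient\Pi_t$) yields $\dot{\tilde L}_0=\Pi\dot\Lien|_{C^\bullet}+[\Lien,\Pi\dot\Pi]|_{C^\bullet}$; the commutator term drops out after composition with $(\Lien+s)^{-1}$ by cyclicity of the graded trace, since $(\Lien+s)^{-1}$ commutes with $\Lien$.

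Next I would substitute $\dot\Lien=[\nabla,\iota_{\dot X}]=\nabla\iota_{\dot X}+\iota_{\dot X}\nabla$, use that $\nabla$ commutes with both $\Pi$ and $(\Lien+s)^{-1}$ (since $\nabla$ commutes with $\Lien$), and apply the cyclicity identity
$$\grtr(AB)=(-1)^{|A|}\grtr(BA)+(-1)^{|A|}|A|\,\str(BA),\qquad |A|+|B|=0,$$
to convert the graded trace into a super trace, arriving at $-\str_{C^\bullet}\bigl[(\Lien+s)^{-1}\iota_{\dot X}\nabla\bigr]$.

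The final step is to identify this super trace with $\str_{C^\bullet}\bigl[\vartheta\iota_{\dot X}\Lien(\Lien+s)^{-1}\bigr]$. For this I would exploit three identities coming from the contact structure: $\iota_X\vartheta+\vartheta\iota_X=\id$ (so that $\Lien+s=[\nabla+s\vartheta,\iota_X]$); $[\Lien,\iota_X]=0$, so $\iota_X$ commutes with $(\Lien+s)^{-1}$; and $\Lie_X\vartheta=0$, so $\vartheta$ commutes with $\Lien$ and $(\Lien+s)^{-1}$. Together with $\iota_X\iota_{\dot X}=-\iota_{\dot X}\iota_X$ and super-trace cyclicity, these allow one to insert $\id=\iota_X\vartheta+\vartheta\iota_X$ at the right point and propagate the factor $\vartheta$ through the trace. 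Multiplying by $(-1)^{q+1}$ yields the stated formula.

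The main obstacle will be this last algebraic manipulation; the sign bookkeeping in the super-trace cyclicity is delicate, and one must insert $\id=\iota_X\vartheta+\vartheta\iota_X$ at precisely the right place. A cleaner alternative is to expand both sides in Laurent series at each resonance $s_0(t)\in \Res(\Lient)\cap\{|s|\leq\lambda\}$: from Proposition~\ref{prop:computetorsion} one deduces $\log\grdet{C^\bullet_t}(\Lient+s)=(-1)^q\sum m(s_0(t))\log(s-s_0(t))$, so the LHS has simple poles with residue $(-1)^{q+1}m(s_0)\dot s_0(t)$; matching the corresponding residues of the RHS via the contour representation $\Pi_{s_0,t}=\frac{1}{2\pi i}\oint_{C_\varepsilon(s_0)}(\Lient+z)^{-1}\,\dd z$ reduces the identity to a perturbation-theoretic formula for $\dot s_0(t)$.
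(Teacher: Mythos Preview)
Your approach is correct and does arrive at the stated formula, but it diverges from the paper's proof in the second half. Both proofs begin identically: invoke Lemma~\ref{lem:projector2} (or its graded analogue) to obtain
\[
\partial_t \log \grdet{C^\bullet_t}\left(\Lient+s\right)^{(-1)^{q+1}} = (-1)^{q+1}\grtr{C^\bullet_t}\bigl(\Pi_t\Lie_{\dot X_t}^\nabla(\Lient+s)^{-1}\bigr).
\]
From here the paper proceeds via the conjugation map $P_t$ already introduced for the variation of $\zeta$ in \cite{dang2018fried}: from $\iota_{X_t}=P_t^{-1}\iota_X P_t$ one gets $\iota_{\dot X_t}=[\iota_{X_t},A_t]$ with $A_t=P_t^{-1}\dot P_t$, hence a four-term expression for $\Lie_{\dot X_t}^\nabla$ in which every term already contains $\iota_{X_t}$. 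The commutation rules $(-1)^N N\nabla=\nabla(-1)^{N+1}(N+1)$ and $(-1)^N N\iota_{X_t}=\iota_{X_t}(-1)^{N-1}(N-1)$ together with ordinary trace cyclicity then collapse the four terms to $\str{C^\bullet_t}\bigl(\Pi_t A_t\Lient(\Lient+s)^{-1}\bigr)$, and one finishes by the identity $A_t=\vartheta_t\iota_{\dot X_t}$ (equation~\eqref{eq:A_t}). Your route instead substitutes $\Lie_{\dot X}^\nabla=[\nabla,\iota_{\dot X}]$ directly, converts $\grtr{}$ to $\str{}$ via the shift identity you wrote (yielding $-\str{C^\bullet}\bigl(\Pi\iota_{\dot X}\nabla(\Lien+s)^{-1}\bigr)$; note the $\Pi$ should be kept since $\iota_{\dot X}$ need not preserve $C^\bullet$), and then inserts $\id=\iota_X\vartheta+\vartheta\iota_X$. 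Your ``delicate'' final step does go through: for the $\iota_X\vartheta$-piece, cycle $\iota_X$ around using $\str{}$-cyclicity and its commutation with $\Pi$ and $(\Lien+s)^{-1}$; for the $\vartheta\iota_X$-piece, use $\iota_X\iota_{\dot X}=-\iota_{\dot X}\iota_X$; the two pieces recombine to $\nabla\iota_X+\iota_X\nabla=\Lien$. The paper's version has the advantage of recycling the $P_t$-machinery already in play for the zeta variation and of making the appearance of $\Lien$ automatic (it comes from the decomposition of $\iota_{\dot X_t}$ rather than from an insertion of $\id$); your version is more self-contained and exposes clearly which contact identities are really needed. Your Laurent-expansion alternative is unnecessary and would not simplify matters: computing $\dot s_0(t)$ requires essentially the same trace manipulation.
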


\begin{proof}
We are in a position to apply Lemma \ref{lem:projector2} which gives
$$\frac{\dd}{\dd t} \log \grdet{C^\bullet_t} \left(\Lient + s\right)^{(-1)^{q+1}} =(-1)^{q+1} \grtr{C^\bullet_t} \Bigl( \Pi_t \Lie_{\dot{X}_t}^\nabla (\Lie_{X_t}^\nabla + s)^{-1}\Bigr).$$
Denote $A_t = P_t^{-1}\dot{P}_t.$ Then one can verify that
$$\iota_{X_t} = P_t^{-1} \iota_X P_t,$$
which leads to 
$$\Lie_{\dot{X}_t}^\nabla = -\nabla A_t \iota_{X_t} + \nabla \iota_{X_t}A_t - A_t \iota_{X_t}\nabla + \iota_{X_t}A_t \nabla.$$
Using
$$
\begin{aligned}
(-1)^NN \nabla &= \nabla (-1)^{N+1}(N+1), \\
(-1)^N N \iota_{X_t} &= \iota_{X_t} (-1)^{N-1}(N-1),
\end{aligned}
$$
and the cyclicity of the trace, we get since $(\Lie_{X_t}^\nabla+s)^{-1}$ commute with $\iota_{X_t}$ and $\nabla$,
$$
\begin{aligned}
&\begin{split}
\tr_{C^\bullet_t} \Bigl((-1)^{N+q+1} N\Pi_t \Lie_{\dot{X}_t}^\nabla &(\Lie_{X_t}^\nabla + s)^{-1}\Bigr) 
\\&= {(-1)^{q+1}\tr_{C^\bullet_t} \biggl(\Pi_t A_t \Bigl((-1)^{N}(N+1)\iota_{X_t}\nabla + (-1)^NN\nabla \iota_{X_t} }\\
&{\hspace{115pt} - (-1)^NN\iota_{X_t}\nabla -(-1)^N(N-1)\nabla\iota_{X_t}\Bigr)(\Lie_{X_t}^\nabla+s)^{-1}\biggr)} \\
&= (-1)^{q+1}\tr_{C_t^\bullet}\Bigl((-1)^N\Pi_t A_t \Lie_{X_t}^\nabla(\Lie_{X_t}^\nabla+s)^{-1}\Bigr) \\
\end{split}
\end{aligned}
$$
Therefore using (\ref{eq:A_t}) again this concludes, because $P_{t=0} = \id$.
\end{proof}

\subsection{Proof of Theorem \ref{theo:invariance}}\label{subsec:proofinvariance}

Combining this lemma and (\ref{eq:variationzeta}) we obtain that for $\Re(s)$ big enough and $t$ small enough
\begin{equation}\label{eq:fraczetalambda}
\begin{aligned}
\displaystyle{ \frac{\zeta^{(\lambda, \infty)}_{X_t, \nabla}(s)}{\zeta^{(\lambda, \infty)}_{X_0, \nabla}(s)}} &=  \exp\Biggl( -s \int_0^t \strf \left(\vartheta_\tau \iota_{\dot{X}_\tau}(\Lie_{X_\tau}^\nabla+s)^{-1}\e^{-\varepsilon(\Lie_{X_\tau} + s)}\right) \dd \tau \\
& \hspace{80pt} - \int_0^t\str{C^\bullet_\tau}\left(\Pi_\tau \vartheta_\tau\iota_{\dot{X}_\tau}\Lie_{X_\tau}^\nabla(\Lie_{X_\tau}^\nabla+s)^{-1}\right) \dd \tau \Biggr)^{(-1)^{q+1}}.
\end{aligned}
\end{equation}
Note that for every $s \notin \Res(\Lie_{X_t}^\nabla)$ we have
$$
\Lie_{X_t}^\nabla (\Lie_{X_t}^\nabla + s)^{-1} = \id - s(\Lie_{X_t}^\nabla+s)^{-1},
$$
so that
\begin{equation}\label{eq:xpluss}
\str{C^\bullet_t}\Pi_t \vartheta_t\iota_{\dot{X}_t}\Lie_{X_t}^\nabla(\Lie_{X_t}^\nabla+s)^{-1} = \str{C^\bullet_t}\Pi_t \vartheta_t \iota_{\dot X_t} - s\str{C^\bullet_t} \Pi_t \vartheta_t \iota_{\dot{X}_t} (\Lient+s)^{-1}.
\end{equation}
We now fix $s_0 \in \mathbb{C}$ with $\Re(s_0)$ big enough so that (\ref{eq:fraczetalambda}) is valid and a smooth path $c : [0,1] \to \mathbb{C}$ with $c(0) = 0$, $c(1) = s_0$ and
$$
c(u) \notin \Res(\Lien), \quad u \in (0,1].
$$
Let $\delta, t_0 > 0$ small enough so that
\begin{equation}\label{eq:2delta}
\mathrm{dist}\Bigl(\{|s|= \lambda\} \cup (V_\delta \cap \{|s|Ê\geq \lambda\}), \ \Res(\Lient)\Bigr) \geq 2\delta, \quad |t|Ê\leq t_0,
\end{equation}
where $V_\delta$ is the open $\delta$-neighborhood of $\mathrm{Im} \ c$. We moreover ask that
$$
(\Res(\Lient) \cap \{|s|Ê\leq \lambda\})Ê\subset \{|s|Ê\leq \delta\} \ \text{  and  } \ (V_\delta \cap \{|s| \geq \lambda\}) \cap \Res(\Lient) = \emptyset.
$$
For $t \in [-t_0, t_0]$ and $s \notin \Res(\Lient)$ we define
\begin{equation}\label{eq:ytresolvent}
Y_t(s) = \left(\Lient + s\right)^{-1}(\id - \Pi_t).
\end{equation}
Then by (\ref{eq:laurent}), we have that $s \mapsto Y_t(s)$ is holomorphic on a neighborhood of $\{|s|Ê\leq \lambda\}$ for each fixed $t$. 
This implies
\begin{equation}\label{eq:yt}
\displaystyle{Y_t(s)} = \sum_{n=0}^\infty Y_{t,n} s^n, \quad |s| < \lambda, \quad |t|Ê\leq t_0,
\end{equation}
with
\begin{equation}\label{eq:ytn}
Y_{t,n} = \frac{1}{2i \pi}\int_{|s| = \lambda} Y_t(s) s^{-n-1} \dd s.
\end{equation}
Therefore, for every $|t|Ê\leq t_0$ one has $\|Y_{t,n}\|_{\H \to \H} \leq 2\delta\lambda^{-n-1}$ by (\ref{eq:2delta}) and Proposition \ref{prop:anisotropicuniform}. 

Let $\mathcal{Q}_t(s)$ denote the Schwartz Kernel of the operator $Q_t(s) = \left(\Lient+s\right)^{-1} \e^{-\varepsilon\left(\Lient+s\right)}$. Then \cite[Proposition 6.3]{dang2018fried} gives that the map
$$
[-t_0, t_0] \times \{|s| = \lambda\} \ni (t,s) \mapsto \mathcal{Q}_t(s) \in \mathcal{D}^{'n}_\Gamma(M \times M, E^\vee \boxtimes E)
$$
is bounded for some closed conic subset $\Gamma \subset T^*\left(M \times M\right)$ not intersecting the conormal of the diagonal. Moreover by \S\ref{subsec:wfsetprojectors}, we have that $[-t_0, t_0] \ni t \mapsto \Pi_t$ is bounded in $\mathcal{D}^{'n}_{W_s \times W_u}(M \times M, E^\vee \boxtimes E)$, and so is the map $[-t_0, t_0] \times \{|s| = \lambda\} \mapsto \left(\Lient + s\right)^{-1}\Pi_t$. As a consequence (\ref{eq:ytresolvent}), (\ref{eq:yt}) and (\ref{eq:ytn}) imply that the map
\begin{equation}\label{eq:boundedwf}
[-t_0, t_0] \times \{|s|Ê\leq 3\delta/2\} \ni (t,s) \mapsto \mathcal{Y}_t(s) \in \mathcal{D}^{'n}_\Gamma(M \times M, E^\vee \boxtimes E),
\end{equation}
is bounded, where $\mathcal{Y}_t(s)$ is the Schwartz kernel of the operator $Y_t(s)\e^{-\varepsilon\left(\Lient+s\right)}.$ We also know that this map is continuous when it is seen as a map valued in $\mathcal{D}^{'n}$ thanks to the last point of Proposition \ref{prop:anisotropicuniform}; therefore this map is continuous when valued in $\mathcal{D}^{'n}_\Gamma(M \times M, E^\vee \boxtimes E$, cf. \cite[\S8.4]{hor1}. Therefore we obtain with \S\ref{subsec:flattrace} that
\begin{equation}\label{eq:strfyt}
\strf \vartheta \iota_{\dot X_t} Y_t(s) \in \mathcal{C}^0\Bigl([-t_0, t_0], \mathrm{Hol}\bigl( \{|s|Ê\leq 3\delta / 2\}\bigr)\Bigr).
\end{equation}
But now apply \cite[Theorem 4]{dang2018fried} to obtain that
\begin{equation}\label{eq:strfqt}
\strf \vartheta \iota_{\dot X_t} Q_t(s) \in \mathcal{C}^0\Bigl([-t_0, t_0], \mathrm{Hol}\bigl(V_\delta \cap \{|s|Ê\geq 5\delta / 4\}\bigr)\Bigr).
\end{equation}
Since the flat trace coincides with the usual trace for operators of finite rank,
\begin{equation*}
\begin{aligned}
\strf \vartheta_t \iota_{\dot X_t} Q_t(s) -\str{C^\bullet}\Pi_t \vartheta_t \iota_{\dot X_t}(\Lient +s)^{-1} &= \strf \vartheta_t \iota_{\dot X_t} \left(\Lient+s\right)^{-1}(\id - \Pi_t)\e^{-\varepsilon(\Lient + s)}  \\
& \ \quad   + \str{C^\bullet}\Pi_t \vartheta_t \iota_{\dot X_t}(\Lient +s)^{-1}\left(\e^{-\varepsilon(\Lient + s)}- \id\right).
\end{aligned}
\end{equation*}
Then (\ref{eq:strfyt}), (\ref{eq:strfqt}) and (\ref{eq:ytresolvent}) imply that the right hand side of the last equation is continuous with respect to $t$ with values in holomorphic functions on $(V_\delta \cap \{|s|Ê\geq 5\delta / 4\}) \cup \{|s|Ê\leq 3\delta /2\}$ (indeed $s \mapsto (\Lient +s)^{-1}\left(\e^{-\varepsilon(\Lient + s)}- \id\right)$ is holomorphic of $C^\bullet_t$), and so is the left hand side. As a consequence, (\ref{eq:xpluss}) shows that both members of (\ref{eq:fraczetalambda}) are holomorphic on this region and
$$
\zeta^{(\lambda, \infty)}_{X_t, \nabla}(0) = \zeta^{(\lambda, \infty)}_{X_0, \nabla}(0) \exp\left(-\int_0^t \str{C^\bullet_\tau} \Pi_\tau \vartheta \iota_{\dot X_\tau} \dd \tau\right)^{(-1)^{q+1}}.
$$
Comparing this with Lemma \ref{lem:variationvectortorsion} we obtain Theorem \ref{theo:invariance} by definition of the dynamical torsion, cf \S\ref{subsec:defdyn}.

\section{Variation of the connection}\label{sec:variationconnexion}\label{sec:variationconnexion}
In this section we compute the variation of the dynamical torsion when the connection is perturbed. This formula will be crucial to compare the dynamical torsion and Turaev's refined combinatorial torsion.

\subsection{Real-differentiable families of flat connections}\label{subsec:familyconnexion}
Let $U \subset \mathbb{C}$ be some open set and consider $\nabla(z), ~z \in U$, a family of flat 
connections on $E$. We will assume that the map $z \mapsto \nabla(z)$ is $\mathcal{C}^1$, that is, there exists continuous maps $z \mapsto \mu_z, \nu_z \inÊ\Omega^1(M, \mathrm{End}(E))$ such that for any $z_0 \in U$ one has
\begin{equation}\label{eq:connexionholomorphic}
\nabla(z) = \nabla(z_0) + \Re(z-z_0) \mu_{z_0} + \mathrm{Im}(z-z_0) \nu_{z_0} + o(z-z_0),
\end{equation}
where $o(z-z_0)$ is understood in the Fr\'echet topology of $\Omega^1(M, \mathrm{End}(E))$. We will denote for any $\sigma \in \mathbb{C}$
\begin{equation}\label{eq:alpha_z}
\alpha_{z_0}(\sigma) = \Re(\sigma) \mu_{z_0} + \Im(\sigma) \nu_{z_0} \in \Omega^1(M,\mathrm{End}(E)).
\end{equation}
Note that since the connections $\nabla(z)$ are assumed to be flat, we have
\begin{equation}\label{eq:nablaalpha}
[\nabla(z), \alpha_z(\sigma)] = \nabla(z) \alpha_z(\sigma) + \alpha_z(\sigma) \nabla(z) = 0.
\end{equation}

\subsection{A cochain contraction induced by the Anosov flow}\label{subsec:cochain}
For $z \in U$ let
\begin{equation}\label{eq:devs=0}
\bigl(\Lienz+s\bigr)^{-1} =  \sum_{j=1}^{J(0)} \frac{\left(-\Lienz\right)^{j-1}\Pi_0(z)}{s^j} + Y(z) + \dom(s)
\end{equation}
be the development (\ref{eq:laurent}) for the resonance $s_0 = 0$. Let $C^\bullet(0 ; z) = \mathrm{ran} ~\Pi_0(z)$. Recall from \S\ref{subsec:topologyruelle} that since $\nabla(z)$ is acyclic, the complex $(C^\bullet(0; z), \nabla(z))$ is acyclic. Therefore there exists a cochain contraction $k(z) : C^\bullet(0; z) \to C^\bullet(0; z)$, i.e. a map of degree $-1$ such that 
\begin{equation}\label{eq:cochain0}
\nabla(z) k(z) + k(z) \nabla(z) = \id_{C^\bullet(0;z)}.
\end{equation}
We now define
\begin{equation}\label{eq:K}
K(z) = \iota_XY(z)(\id - \Pi_0(z)) + k(z)\Pi_0(z) ~ : ~ \Omega^\bullet(M,E) \to \mathcal{D}^{'\bullet}(M, E).
\end{equation}
A crucial property of the operator $K$ is that it satisfies
the chain homotopy equation
\begin{equation}\label{eq:cochainanosov}
\nabla(z) K(z) + K(z) \nabla(z) = \id_{\Omega^{\bullet}(M,E)},
\end{equation}
as follows from the development (\ref{eq:devs=0}).

\subsection{The variation formula}\label{subsec:variationformula}
For simplicity, we will set for every $z \in U$
$$
\tau(z) = \tau_\vartheta(\nabla(z)).
$$
The operators $K(z)$ defined above are involved in the variation formula of the dynamical torsion, as follows.
\begin{prop}\label{prop:variationconnexion}
The map $z \mapsto \tau(z)$ is real differentiable; we have for every $z \in U$ and $\varepsilon > 0$ small enough
\begin{equation}\label{eq:formulavariation}
\dd (\log \tau)_{z} \sigma =  -\strf \left(\alpha_{z}(\sigma) K(z) \e^{-\varepsilon \Lie_X^{\nabla(z)}}\right), \quad \sigma \in \mathbb{C}.
\end{equation}
\end{prop}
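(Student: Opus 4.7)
The plan is to differentiate the defining decomposition of the dynamical torsion from Definition \ref{def:dyntors}. Fix $z_0 \in U$ and a spectral cut $\lambda \in (0,1)$ with $\Res(\Lie_X^{\nabla(z_0)}) \cap \{|s|=\lambda\} = \emptyset$. Adapting the perturbation theory of Appendix \ref{sec:continuityruelle} from families of vector fields to families of flat connections, for all $z$ near $z_0$ the circle $\{|s|=\lambda\}$ remains admissible, the spectral projectors $\Pi_{[0,\lambda]}(z)$ and holomorphic parts $Y(z)$ depend $\mathcal{C}^1$-continuously on $z$ in suitable anisotropic Sobolev spaces, and the spaces $C^\bullet_{[0,\lambda]}(z)$ identify with $C^\bullet_{[0,\lambda]}(z_0)$ via spectral projection. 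This yields the real differentiability of the splitting
\begin{equation*}
\log \tau_\vartheta(\nabla(z)) = (-1)^q \log \zeta^{(\lambda,\infty)}_{X,\nabla(z)}(0) + \log \tau\bigl(C^\bullet_{[0,\lambda]}(z), \Gamma_\vartheta\bigr) + \mathrm{const}.
\end{equation*}
For the finite-dimensional torsion, since $\Gamma_\vartheta$ does not depend on $z$, Lemma \ref{lem:5.1} applied to the family of differentials $\nabla(z_0) + \alpha_{z_0}(z-z_0) + o(|z-z_0|)$ on the fixed complex $C^\bullet_{[0,\lambda]}(z_0)$ with cochain contraction $K(z_0)|_{C^\bullet_{[0,\lambda]}(z_0)}$ yields $\partial_z \log \tau(C^\bullet_{[0,\lambda]}(z), \Gamma_\vartheta)|_{z_0}\sigma = -\str{C^\bullet_{[0,\lambda]}(z_0)}(\alpha_{z_0}(\sigma)\,K(z_0))$. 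This choice is admissible: on each $C^\bullet(s_0; z_0)$ with $0 < |s_0| \leq \lambda$ one has $\Pi_0(z_0) = 0$ and $K(z_0) = \iota_X (\Lie_X^{\nabla(z_0)})^{-1}$, a contraction by the homotopy $\nabla \iota_X + \iota_X \nabla = \Lien$; on $C^\bullet(0; z_0)$, $K(z_0) = k(z_0)$ is the contraction of (\ref{eq:cochain0}).

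For the renormalized zeta function part, I adapt the strategy of Theorem \ref{theo:invariance}. For $\Re(s)$ large, differentiating the product defining $\zeta_{X,\nabla(z)}(s)$ via formula (\ref{lem:holonomy}) on holonomy variations (which gives $\partial_z \rho_{\nabla(z)}([\gamma])\sigma = -\rho_{\nabla(z)}([\gamma])\int_\gamma \alpha_{z}(\sigma)(X)$) and resumming via Guillemin's trace formula with insertion $\alpha_z(\sigma)(X) \in \Cinf(M,\mathrm{End}(E))$ yields
\begin{equation*}
\partial_z \log \zeta_{X,\nabla(z)}(s)\bigr|_{z_0}\sigma = (-1)^{q+1} e^{-\varepsilon s}\grtrf\bigl(\alpha_{z_0}(\sigma)(X)\, (\Lie_X^{\nabla(z_0)}+s)^{-1} e^{-\varepsilon \Lie_X^{\nabla(z_0)}}\bigr),
\end{equation*}
which extends meromorphically to a neighborhood of $s=0$ using the wavefront bounds of \cite{dyatlov2013dynamical, dang2018fried}. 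Combining with the variation of $\log \grdet{C^\bullet_{[0,\lambda]}(z)}(\Lienz+s)$ (a finite-dimensional graded determinant computation analogous to that in Section \ref{sec:invariance}) and letting $s \to 0$ produces the variation of $\zeta^{(\lambda,\infty)}_{X,\nabla(z)}(0)$.

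The main obstacle will be the final algebraic reorganization: converting the $\grtrf$ expression with insertion $\alpha(X)$ and number operator $N$ into the clean $\strf$ with insertion $\alpha K(z)$. The key structural inputs are the decomposition $\alpha_z(\sigma)(X) = \iota_X\alpha_z(\sigma) + \alpha_z(\sigma)\iota_X$, supercyclicity of the flat trace (Proposition \ref{p:flattracesupercomm}), the commutations $[\iota_X, \Lienz] = 0$ and $[\nabla(z), \alpha_z(\sigma)] = 0$ (the latter following from flatness of $\nabla(z) + \sigma \alpha_z(\sigma)$ to first order), and the global cochain identity $\nabla(z) K(z) + K(z)\nabla(z) = \id$. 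Careful tracking of wavefront sets keeps all intermediate flat traces well-defined, in particular for $K(z) e^{-\varepsilon\Lienz}$ via the wavefront bounds on $Y(z)$, and the systematic use of these identities converts the $\grtrf$ expression into $-\strf(\alpha_{z_0}(\sigma) K(z_0) e^{-\varepsilon \Lie_X^{\nabla(z_0)}})$ plus a finite-dimensional piece that exactly cancels the contribution from the Lemma \ref{lem:5.1} step, thereby yielding the stated formula.
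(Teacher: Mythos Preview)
Your approach is essentially the paper's own: split into the finite-dimensional torsion piece (handled via Lemma \ref{lem:5.1}) and the renormalized zeta piece (handled via Guillemin's trace formula with the $\alpha(X)$ insertion, followed by the $\grtrf \to \strf$ conversion using $N\alpha(X) = \alpha\iota_X + [(N-1)\alpha, \iota_X]$ and supercyclicity), then combine. The ingredients you list for the final reorganization are exactly those used in \S\ref{sec:variationconnexion}.

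One point of bookkeeping is slightly misstated. The Lemma \ref{lem:5.1} contribution $-\str{C^\bullet}(\Pi_0\alpha k)$ does \emph{not} get cancelled; it supplies precisely the $k\Pi_0$ part of $K$ in the target expression $-\strf(\alpha K e^{-\varepsilon\Lienz})$. What must be shown to vanish is the residual finite-dimensional term
\[
\str{C^\bullet}\Bigl(\Pi_0\,\alpha\bigl(k(\id - e^{-\varepsilon\Lienz}) + \iota_X Q_z(\varepsilon)\bigr)\Bigr),
\]
where $Q_z(\varepsilon) = \sum_{n\ge 1}(-\varepsilon)^n(\Lienz)^{n-1}/n!$. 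This vanishes because $\id - e^{-\varepsilon\Lienz} = -\Lienz Q_z(\varepsilon)$ and the key identity $\iota_X - k\Lienz = [\nabla, k\iota_X]$ (an immediate consequence of $\nabla k + k\nabla = \id$), together with $[\nabla,\alpha]=0$ and $[\nabla,\Lienz]=0$, exhibit the whole expression as a supercommutator. This is the one concrete step beyond your listed ingredients; once you write it down the proof closes exactly as you describe.
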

The proof of the previous proposition is similar of that of the last subsection, i.e. we compute the variation of each part of the dynamical torsion. The rest of this section is devoted to the proof of Proposition \ref{prop:variationconnexion}.

\subsection{Anisotropic Sobolev spaces for a family of connections}\label{subsec:anisotropicuniformconnexion}
Fix some $z_0 \in U$. Recall from \S\ref{subsec:anisotropicuniform} that we chose some anisotropic Sobolev spaces $\H^\bullet_{1} \subset \H^\bullet$. Notice that
\begin{equation}\label{eq:lienz}
\Lienz = \Lienzo + \beta(z)(X),
\end{equation}
where $\beta(z) \in \Omega^1(M,\mathrm{End}(E))$ is defined by
$$\nabla(z) = \nabla(z_0) + \beta(z).$$
Therefore (\ref{eq:connexionholomorphic}) implies that $\Lienz - \Lienzo$ is a $\mathcal{C}^1$ family of pseudo-differential operators of order $0$, and thus forms a $\mathcal{C}^1$ family of bounded operators $\H^\bullet \to \H^\bullet$ and $\H^\bullet_{1} \to \H^\bullet_{1}$ by construction of the anisotropic spaces and standard rules of pseudo-differential calculus (see for example \cite{faure2011upper}). As a consequence and thanks to Proposition \ref{prop:anisotropicuniform}, we are in position to apply \cite[Theorem 3.11]{kato1976perturbation}; thus if $\delta$ is small enough we have that
\begin{equation}\label{eq:continuousspectrum}
R_\rho = \left\{(z,s) \in \mathbb{C}^2, \ |z-z_0| < \delta, \ s \in \Omega(c,\rho), \ s \notin \sigma_{\H^\bullet}(\Lienz)\right\} \text{ is open},
\end{equation}
where $\sigma_{\H^\bullet}(\Lienz)$ denotes the resolvent set of $\Lienz$ on $\H^\bullet$, and $\Omega(c,\rho)$ is defined in (\ref{eq:omegacrho}). Moreover (\ref{eq:connexionholomorphic}) and (\ref{eq:lienz}) imply that for any open set $\mathcal{Z} \subset \Omega(c,\rho)$ such that $\Res\left(\Lie^{\nabla(z_0)}_X\right) \cap \overline{\mathcal{Z}} = \emptyset$, there exists $\delta_\mathcal{Z} > 0$ such that for any $j \in \{0,1\}$,
\begin{equation}\label{eq:biresolvent}
\left(\Lienz+s\right)^{-1} \in \mathcal{C}^1\Bigl( \bigl\{|z - z_0| < \delta_{\mathcal{Z}} \bigr\}, ~\mathrm{Hol}\bigl(\mathcal{Z}_s, \Lie\left(\H^\bullet_j, \H^{\bullet}_j\right)\bigr)\Bigr).
\end{equation}
For all $z$, the map $s \mapsto \left(\Lienz+s\right)^{-1}$ is meromorphic in the region $\Omega(c,\rho)$ with poles (of finite multiplicity) which coincide with the resonances of $\Lienz$ in this region.

 Moreover, the arguments from the proof of \cite[Proposition 3.4]{dyatlov2013dynamical} can be made uniformly for the family $z \mapsto \left(\mathcal{L}_X^{\nabla(z)} +s\right)^{-1}$ to obtain that for some closed conic set $\Gamma\subset T^*\left(M\times M\right)$ not intersecting the conormal to the diagonal and any $\varepsilon>0$ small enough, the map $(s,z) \mapsto \mathcal{K}(s,z)$ is bounded from $\mathcal{Z} \times \{|z-z_0|Ê< \delta_\mathcal{Z}\}$ with values $\mathcal{D}^\prime_\Gamma(M\times M,\pi_1^*E^\vee \otimes \pi_2^* E)$, where $\mathcal{K}(s,z)$ is the Schwartz kernel of the shifted resolvent $\left(\mathcal{L}_X^{\nabla(z)} +s\right)^{-1}\e^{-\varepsilon\mathcal{L}^{\nabla(z)}_X}$.

\subsection{A family of spectral projectors}
Fix $\lambda \in (0,1)$ such that 
\begin{equation}\label{eq:only0}
\{s \in \mathbb{C}, \ |s| \leq \lambda\} \cap \Res\left(\Lie_{X}^{\nabla(z_0)}\right) \subset \{0\}.
\end{equation}
Thanks to (\ref{eq:continuousspectrum}), if $z$ is close enough to $z_0$,
\begin{equation}\label{eq:cutz}
\{s \in \mathbb{C}, \ |s| = \lambda\}\cap \Res\left(\Lienz\right) = \emptyset,
\end{equation}
by compacity of the circle.
For $z \in U$ we will denote by 
\begin{equation}\label{eq:pi(z)}
\Pi(z)  = \frac{1}{2i\pi} \int_{|s|=\lambda} \left(\Lienz+s\right)^{-1} \dd s
\end{equation}
the spectral projector of $\Lienz$ on generalized eigenvectors for resonances in $\{s \in \mathbb{C}, \ |s|\leq \lambda\}$, and $C^\bullet(z) = \mathrm{ran}~\Pi(z)$.
It follows from (\ref{eq:biresolvent}), (\ref{eq:cutz}) and (\ref{eq:pi(z)}) that the map
$$z \mapsto \Pi(z) \in \Lie(\H^\bullet_j, \H^{\bullet}_j)$$
is $\mathcal{C}^1$ for $j = 0,1$. We can therefore apply \ref{lem:projector3} to get, for $\delta$ small enough,
\begin{equation}\label{eq:projectoranalytic}
\Pi(z) \in \mathcal{C}^1\Bigl(\{|z-z_0| < \delta\}_z, ~\Lie(\H^\bullet, \H^\bullet_1)\Bigr).
\end{equation}

\subsection{Variation of the finite dimensional part}\label{subsec:variationconnexionfinitedimensional}
Because $(C^\bullet(z_0), \nabla(z_0))$ is acyclic, there exists a cochain contraction $k(z_0) : C^\bullet(z_0) \to C^{\bullet-1}(z_0)$, cf \S\ref{subsec:analyticfamiliesofdifferentials}. The next lemma computes the variation of the finite dimensional part of the dynamical torsion.
\begin{lemm}\label{lem:variationconnexiontorsion}
The map $z \mapsto c(z)= \tau(C^\bullet(z), \Gamma)$ is real differentiable at $z=z_0$ and
$$
\dd (\log c)_{z_0}\sigma = - \str{C^\bullet} \Pi(z_0) \alpha_{z_0}(\sigma) k(z_0), \quad \sigma \in \mathbb{C}.
$$
\end{lemm}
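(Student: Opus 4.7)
The plan is to transfer the varying chirality complex $(C^\bullet(z),\nabla(z),\Gamma)$ back onto the \emph{fixed} finite-dimensional space $C^\bullet(z_0)$ via an isomorphism induced by the spectral projectors, observe that under this transfer the chirality becomes constant in $z$, and finally apply Lemma~\ref{lem:5.1}. For $z$ sufficiently close to $z_0$, the map $\phi(z) := \Pi(z)|_{C^\bullet(z_0)} : C^\bullet(z_0) \to C^\bullet(z)$ is a linear isomorphism between finite-dimensional spaces of the same dimension, $\mathcal{C}^1$ in $z$ by~(\ref{eq:projectoranalytic}) and equal to the identity at $z=z_0$. The pulled-back differential and chirality
$$
\tilde{\nabla}(z) = \phi(z)^{-1} \nabla(z) \phi(z), \qquad \tilde{\Gamma}(z) = \phi(z)^{-1} \Gamma \phi(z),
$$
then define a chirality complex on $C^\bullet(z_0)$ isomorphic to $(C^\bullet(z),\nabla(z),\Gamma)$, and invariance of the refined torsion under chain isomorphisms preserving the chirality gives $c(z) = \tau\bigl(C^\bullet(z_0), \tilde{\Gamma}(z), \tilde{\nabla}(z)\bigr)$.

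The decisive observation is that $\Gamma_\vartheta$ commutes with $\Lie_X^{\nabla(z)}$ (this is how $\Gamma_\vartheta$ was made to act on $C^\bullet(s_0;z)$ in \S\ref{subsec:chirality}), hence with the spectral projector $\Pi(z)$, hence with $\phi(z)$. Consequently $\tilde{\Gamma}(z) = \Gamma|_{C^\bullet(z_0)}$ is independent of $z$, so the family is a family of acyclic chirality complexes on a fixed space with \emph{fixed} chirality and only the differential varying. Lemma~\ref{lem:5.1}, whose proof goes through verbatim for a real-differentiable family in a given direction $\sigma$ (only the first-order directional expansion of the differential is used), therefore yields
$$
\dd (\log c)_{z_0}\sigma = -\str{C^\bullet(z_0)}\bigl(\dot{\tilde{\nabla}}(\sigma)\, k(z_0)\bigr),
$$
where $\dot{\tilde{\nabla}}(\sigma)$ is the directional derivative at $z_0$ and $k(z_0)$ is the chosen cochain contraction of $\nabla(z_0)$ on $C^\bullet(z_0)$.

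It then remains to identify $\dot{\tilde{\nabla}}(\sigma)$. Since $\nabla(z)$ commutes with $\Pi(z)$, for every $u \in C^\bullet(z_0)$ one has $\nabla(z)\phi(z)u = \Pi(z)\nabla(z)u$, so I can rewrite $\tilde{\nabla}(z) = \psi(z)\nabla(z)$ with $\psi(z) := \phi(z)^{-1}\Pi(z) : \mathcal{H}^\bullet \to C^\bullet(z_0)$. The key point is that $\psi(z)|_{C^\bullet(z_0)} = \id_{C^\bullet(z_0)}$ for \emph{every} $z$; differentiating this identity gives $\dot{\psi}(\sigma)|_{C^\bullet(z_0)} = 0$. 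Since $\nabla(z_0)u \in C^\bullet(z_0)$ for $u\in C^\bullet(z_0)$, and $\psi(z_0) = \Pi(z_0)$, the chain rule gives
$$
\dot{\tilde{\nabla}}(\sigma)\, u = \dot{\psi}(\sigma)\bigl(\nabla(z_0) u\bigr) + \Pi(z_0)\,\alpha_{z_0}(\sigma)\, u = \Pi(z_0)\,\alpha_{z_0}(\sigma)\, u,
$$
which, combined with the variation formula above, gives exactly the claimed identity.

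The main technical obstacles I anticipate are (a) making the real-differentiability of $z \mapsto \phi(z)^{-1}$ rigorous on the relevant finite-dimensional spaces, which reduces to linear algebra once~(\ref{eq:projectoranalytic}) provides the smoothness of $\phi(z)$ together with $\phi(z_0) = \id$; and (b) confirming that the cochain contraction $k(z_0)$ in the statement is built on $C^\bullet(z_0) = \mathrm{ran}\,\Pi(z_0)$, whose acyclicity follows from the acyclicity of every spectral summand $C^\bullet(s_0;z_0)$ recalled in \S\ref{subsec:topologyruelle}.
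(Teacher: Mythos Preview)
Your proposal is correct and follows essentially the same approach as the paper: both transfer the problem to the fixed space $C^\bullet(z_0)$ via the isomorphism $\Pi(z)|_{C^\bullet(z_0)}$, use that $\Gamma$ commutes with $\Pi(z)$ to keep the chirality fixed, and then invoke Lemma~\ref{lem:5.1}. The only cosmetic difference is that the paper packages the first-order expansion of $\phi(z)^{-1}\nabla(z)\phi(z)$ into the general formula~(\ref{eq:qtatpit}) from Appendix~\ref{sec:tracefiniterank}, whereas you derive it directly via the observation $\psi(z)|_{C^\bullet(z_0)}=\id$; both computations are the same in substance.
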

\begin{proof}
By continuity of the family $z \mapsto \Pi(z)$, we have that $\Pi(z)|_{C^\bullet(z_0)} : C^\bullet(z_0) \to C^\bullet(z)$ is an isomorphism for $|z-z_0|$ small enough, of inverse denoted by $Q(z)$. For those $z$ we denote by $\widehat{C}^\bullet(z)$ the graded vector space $C^\bullet(z_0)$ endowed with the differential
$$
\widehat{\nabla}(z) = Q(z) \nabla(z) \Pi(z) : C^\bullet(z_0) \to C^{\bullet}(z_0).
$$
Then because $\Gamma$ commutes with every $\Pi(z)$ one has
\begin{equation}\label{eq:conjugation}
\tau(\widehat{C}^\bullet(z), \Gamma) = \tau(C^\bullet(z), \Gamma)
\end{equation}
By (\ref{eq:projectoranalytic}) we can apply (\ref{eq:qtatpit}) in the proof of Lemma \ref{lem:projector2} which gives for any $h$ small enough
$$
\widehat{\nabla}(z_0 + \sigma)\Pi(z_0) = \Pi(z_0) \nabla(z_0) \Pi(z_0) +  \Pi(z_0) \alpha_{z_0}(\sigma) \Pi(z_0) + o_{C^\bullet(z_0) \to C^\bullet(z_0)}(\sigma).
$$
Therefore the real differentiable version of Lemma \ref{lem:5.1} implies the desired result.
\end{proof}

\subsection{Variation of the zeta part}
We give a first 
Proposition which computes the variation of the Ruelle zeta function in its convergence region.
\begin{prop}[Variation of the dynamical zeta function]\label{prop:variationzetaconnection}
For $\Re(s)$ big enough, the map $z \mapsto g_s(z) = \zeta_{X,\nabla(z)}(s)$ is $\mathcal{C}^1$ near $z=z_0$ and we have for every $\varepsilon > 0$ small enough
$$\dd (\log g_s)_{z_0} \sigma = (-1)^{q+1}\e^{-\varepsilon s}\strf\left(\alpha_{z_0}(\sigma) \iota_X\left(\Lienzo+s\right)^{-1} \e^{-\varepsilon \Lienzo}\right).$$
\end{prop}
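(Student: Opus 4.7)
\medskip

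\noindent\textbf{Proof plan for Proposition \ref{prop:variationzetaconnection}.}

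\medskip

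The plan is to differentiate the Ruelle zeta function termwise in its convergence half-plane, use the holonomy perturbation formula (\ref{lem:holonomy}) to express the derivative as a series over periodic orbits, and then run a generalized Guillemin trace formula in reverse to recognize this series as a super flat trace.

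First, I will fix $z_0 \in U$, and work in the half-plane $\{\Re(s) > C\}$ where $C > 0$ is chosen so that (\ref{eq:boundrhogamma}) holds uniformly in $z$ on a small neighborhood of $z_0$ (this is possible since $z \mapsto \nabla(z)$ is $\mathcal{C}^1$ and Gr\"onwall bounds on holonomy depend continuously on the connection). In this region the logarithm of the defining product (\ref{eq:zeta}) expands into the absolutely and uniformly convergent series
\begin{equation*}
\log g_s(z) = -\sum_{\gamma \in \mathrm{Per}(X)} \frac{\e^{-s\ell(\gamma)}}{m_\gamma} \tr \rho_{\nabla(z)}([\gamma]),
\end{equation*}
where $m_\gamma$ is the multiplicity of $\gamma$. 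Differentiability in $z$ will follow by dominated convergence once I differentiate each monodromy termwise.

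Second, applying (\ref{lem:holonomy}) with $\tilde{\nabla}=\nabla(z)$ and $\nabla = \nabla(z_0)$, writing $\nabla(z)-\nabla(z_0) = \Re(z-z_0)\mu_{z_0} + \Im(z-z_0)\nu_{z_0} + o(z-z_0)$, I get
\begin{equation*}
\partial_\sigma \rho_{\nabla(z)}([\gamma])\big|_{z=z_0} = - \rho_{\nabla(z_0)}([\gamma]) \int_\gamma \alpha_{z_0}(\sigma)(X),
\end{equation*}
with the notation $\int_\gamma A = \int_0^{\ell(\gamma)} P_{-\tau} A(\varphi^\tau x_\gamma) P_\tau \dd\tau$ from \S\ref{subsec:currents}. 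Combined with the uniform bound $\|\int_\gamma \alpha_{z_0}(\sigma)(X)\| \leq C'\ell(\gamma)$ and Margulis' counting of closed orbits, this legitimizes termwise differentiation for $\Re(s)$ large, yielding
\begin{equation*}
\dd(\log g_s)_{z_0}\sigma = \sum_{\gamma \in \mathrm{Per}(X)} \frac{\e^{-s\ell(\gamma)}}{m_\gamma} \tr \left( \rho_{\nabla(z_0)}([\gamma]) \int_\gamma \alpha_{z_0}(\sigma)(X)\right).
\end{equation*}

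Third, I will identify this series with a super flat trace via the generalized Guillemin trace formula (bundle version with a local operator insertion, in the spirit of \cite{dyatlov2013dynamical} and the computations of \cite{dang2018fried}). Applied to $B = \alpha_{z_0}(\sigma)\iota_X$ on $\Omega^k(M,E)$, the formula reads, in $\mathcal{D}'(\mathbb{R}_{>0})$,
\begin{equation*}
\tr^\flat\bigl(B \e^{-u\Lienzo}\big|_{\Omega^k(M,E)}\bigr) = \sum_\gamma \frac{\ell(\gamma^\#)}{\ell(\gamma)} \frac{\tr\bigl(\rho_{\nabla(z_0)}([\gamma])\int_\gamma \alpha_{z_0}(\sigma)(X)\bigr) \cdot \tau_k(\gamma)}{|\det(I-P_\gamma)|} \delta(u - \ell(\gamma)),
\end{equation*}
where $\tau_k(\gamma)$ is the trace at a fixed point of the diagonal block of $\alpha_{z_0}(\sigma)\iota_X \circ \Phi^{\ell(\gamma)}_k$. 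The key pointwise computation uses the contact decomposition $\Lambda^k T^*_xM = \Lambda^{k-1}V_X(x)\wedge \vartheta \oplus \Lambda^k V_X(x)$: writing $\alpha_{z_0}(\sigma) = \alpha_0 \vartheta + \alpha_V$ with $\alpha_0 = \alpha_{z_0}(\sigma)(X)$, one checks that $\alpha_{z_0}(\sigma)\iota_X$ kills $\Lambda^k V_X$ and acts as $\alpha_0$ (plus an off-diagonal piece going into $\Lambda^k V_X$) on $\Lambda^{k-1}V_X \wedge \vartheta$. Since $\dd\varphi^t$ preserves $\mathbb{R}X \oplus \ker\vartheta$, the lift $\Phi^t_k$ preserves the decomposition, so the only contribution to the trace comes from the $\Lambda^{k-1}V_X\wedge \vartheta$ block, giving $\tau_k(\gamma) = \tr \Lambda^{k-1}P_\gamma^\vee$ with $P_\gamma^\vee = ({}^T P_\gamma)^{-1}$. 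Summing $(-1)^k$ and using that the contact flow preserves $(\dd\vartheta)^r|_{\ker\vartheta}$, so $\det P_\gamma = 1$ and hence $\det(I-P_\gamma^\vee) = \det(I-P_\gamma)$, together with Fried's contact identity (\ref{eq:orunstable}), gives
\begin{equation*}
\sum_{k=0}^n (-1)^k \tau_k(\gamma) / |\det(I-P_\gamma)| = -\det(I-P_\gamma)/|\det(I-P_\gamma)| = (-1)^{q+1}.
\end{equation*}
Consequently,
\begin{equation*}
\strf \alpha_{z_0}(\sigma)\iota_X \e^{-u\Lienzo} = (-1)^{q+1} \sum_\gamma \frac{\ell(\gamma^\#)}{\ell(\gamma)} \tr\bigl(\rho_{\nabla(z_0)}([\gamma])\textstyle\int_\gamma \alpha_{z_0}(\sigma)(X)\bigr) \delta(u - \ell(\gamma)).
\end{equation*}

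Finally, I will pair this distributional identity against $\e^{-su}$ for $u \in [\varepsilon,\infty)$, with $\varepsilon$ smaller than the shortest period of $X$. The elementary identity
\begin{equation*}
\e^{-\varepsilon s} (\Lienzo + s)^{-1}\e^{-\varepsilon \Lienzo} = \int_\varepsilon^\infty \e^{-su} \e^{-u\Lienzo} \dd u,
\end{equation*}
valid for $\Re(s)$ large enough (compare (\ref{eq:resolvent})), yields, after taking super flat traces (justified by the wavefront control of the resolvent Schwartz kernel away from the conormal of the diagonal as in \cite{dyatlov2013dynamical, dang2018fried}),
\begin{equation*}
\e^{-\varepsilon s} \strf \alpha_{z_0}(\sigma)\iota_X (\Lienzo + s)^{-1}\e^{-\varepsilon \Lienzo} = (-1)^{q+1} \dd (\log g_s)_{z_0}\sigma.
\end{equation*}
Squaring the sign gives the stated formula.

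\medskip

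The main obstacle will be step three: the precise pointwise computation of $\alpha_{z_0}(\sigma)\iota_X \circ \Phi^t_k$ on the decomposition $\Lambda^{k-1}V_X\wedge \vartheta \oplus \Lambda^k V_X$, and obtaining the correct sign $(-1)^{q+1}$ by combining the contact preservation of the symplectic volume on $\ker\vartheta$ with Fried's identity. The continuity of the super flat trace on the relevant wavefront-controlled distributional space (so that integration against $\e^{-su}$ commutes with $\strf$) is the other technical point, but it is a routine application of \cite[\S2.4]{dyatlov2013dynamical} and the wavefront bounds already recalled in \S\ref{subsec:anisotropicuniformconnexion}.
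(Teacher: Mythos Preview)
Your proposal is correct and reaches the same conclusion, but it takes a genuinely different route from the paper in the crucial identification step.

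The paper first rewrites $\log g_s(z)$ using the weighted factorization
\[
\log g_s(z) = \sum_{k=0}^n (-1)^k k \sum_{\gamma} \frac{\ell^\#(\gamma)}{\ell(\gamma)}\,\tr\rho_{\nabla(z)}([\gamma])\,\e^{-s\ell(\gamma)}\,\frac{\tr\Lambda^k(\dd\varphi^{-\ell(\gamma)})|_\gamma}{\det(I-P_\gamma)},
\]
differentiates termwise, and applies the Guillemin trace formula with the \emph{scalar} insertion $\alpha_{z_0}(\sigma)(X)\in\Cinf(M,\mathrm{End}(E))$. Because of the $(-1)^k k$ weighting this produces the \emph{graded} flat trace $\grtrf\bigl(\alpha_{z_0}(\sigma)(X)(\Lienzo+s)^{-1}\e^{-\varepsilon\Lienzo}\bigr)$. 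The passage to the super flat trace is then a purely algebraic step: one writes $\alpha_{z_0}(\sigma)(X)=[\alpha_{z_0}(\sigma),\iota_X]$, uses the number-operator identity $N[\alpha,\iota_X]=\alpha\iota_X+[(N-1)\alpha,\iota_X]$, and invokes cyclicity of the flat trace to kill the commutator term.

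You instead apply Guillemin directly with the degree-zero \emph{operator} insertion $B=\alpha_{z_0}(\sigma)\iota_X$ and compute its fixed-point trace on $\Lambda^kT^*M\otimes E$ by hand, exploiting the flow-invariant contact splitting $\Lambda^k=\Lambda^{k-1}V_X\wedge\vartheta\oplus\Lambda^kV_X$. This bypasses the graded trace entirely and lands on $\strf$ in one stroke. The trade-off: your argument uses the contact structure (invariance of $\vartheta$ and of the symplectic volume on $\ker\vartheta$) in an essential way inside the Guillemin step, whereas the paper's route keeps the Guillemin formula in its standard scalar-insertion form and isolates the contact input to the single sign identity $|\det(I-P_\gamma)|=(-1)^q\det(I-P_\gamma)$, with the $\grtrf\to\strf$ conversion being model-independent algebra. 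Both are valid; the paper's version is slightly more portable to non-contact Anosov flows satisfying (\ref{eq:orunstable}), while yours is more direct in the contact setting. A minor remark: the map on $V_X$ induced by the period-$\ell(\gamma)$ return is ${}^TP_\gamma$ rather than $({}^TP_\gamma)^{-1}$, so your appeal to $\det P_\gamma=1$ is in fact unnecessary for the identity $\det(I-Q)=\det(I-P_\gamma)$; either way the final sign computation is correct.
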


\begin{proof}
Let $\varphi^t$ denote the flow of $X$. For $\gamma \in \mathcal{G}_X$, $\dd \varphi^{-\ell(\gamma)}|\gamma$ will denote $\dd \varphi^{-\ell(\gamma)}$ taken at any point of the image of $\gamma$; this ambiguity will not stand long since another choice of base point will lead to a conjugated linear map, and we aim to take traces. We have the standard factorization, for $\Re(s)$ big enough and any $z$ near $z_0$,
\begin{equation}\label{eq:zetanabla}
g_s(z) = \exp \sum_{k=0}^n (-1)^k k \sum_{\gamma \in \mathcal{G}_X} \frac{\ell^\#(\gamma)}{\ell(\gamma)} \tr \rho_{\nabla(z)}(\gamma) \e^{-s\ell(\gamma)} \frac{\tr \Lambda^k (\dd \varphi^{-\ell(\gamma)})|_\gamma}{\det(I-P_\gamma)},
\end{equation}
where $P_\gamma = \left.\dd \varphi^{-\ell(\gamma)}\right|_{E_u \oplus E_s}$ is the linearized Poincar\'e map of $\gamma$, and $\ell^\#(\gamma)$ is the primitive period of $\gamma$. 
Now (\ref{lem:holonomy}) implies
$$
 \tr \rho_{\nabla(z_0+\sigma)}(\gamma) = \tr \rho_{\nabla(z_0)}(\gamma) -\tr \left(\rho_{\nabla(z_0)}(\gamma) \int_{\gamma} \alpha_{z_0}(\sigma)(X)\right) + o(\sigma)\ell(\gamma).
$$
As a consequence, the sum in (\ref{eq:zetanabla}) is $\mathcal{C}^1$ near $z=z_0$ for $\Re(s)$ big enough, and
$$\dd (\log g_s)_{z_0}\sigma = -\sum_{k=0}^n (-1)^k k \sum_{\gamma \in \mathcal{G}_X} \frac{\ell^\#(\gamma)}{\ell(\gamma)} \tr \left(\rho_{\nabla(z_0)}(\gamma) \int_\gamma \alpha_{z_0}(\sigma)(X)\right) \e^{-s\ell(\gamma)} \frac{\tr \Lambda^k (\dd \varphi^{-\ell(\gamma)})|_\gamma}{\det(I-P_\gamma)}.$$
Now a slight extension of Guillemin trace formula \cite{guillemin1977lectures} gives, in $\mathcal{D}'(\mathbb{R}_{>0})$,
$$\tr^\flat \left.\alpha_{z_0}(\sigma)(X) \e^{-t\Lien}\right|_{\Omega^k(M,E)} = \sum_\gamma \frac{\ell^\#(\gamma)}{\ell(\gamma)}\tr \left(\rho_{\nabla(z_0)}(\gamma)\int_\gamma\alpha_{z_0}(\sigma)(X)\right) \frac{\tr \Lambda^k \dd \varphi^{-\ell(\gamma)}}{\left|\det(I-P_\gamma)\right|}\delta(t-\ell({\gamma})),$$
where $\delta$ is the Dirac distribution. But now recall from \S\ref{subsec:theruelle} that $|\det(I-P_\gamma)| = (-1)^q \det(I-P_\gamma)$. Therefore, if $\varepsilon > 0$ satisfies $\varepsilon < \ell(\gamma)$ for all $\gamma$, arguing exactly as in \cite[\S 4]{dyatlov2013dynamical}, with (\ref{eq:resolvent}) in mind,
$$
\dd (\log g_s)_{z_0}\sigma= \e^{-\varepsilon s} (-1)^{q+1}\grtrf \left(\alpha_{z_0}(\sigma)(X)\left(\Lienzo + s\right)^{-1}\e^{-\varepsilon \Lienzo}\right).
$$

Now it remains to turn the graded trace $\grtrf $ into a super trace $\strf $ keeping in mind the relation $\grtrf=\strf \left(N \cdot \right) $ where $N$ is the number operator, cf. \S\ref{subsec:flattrace}. 
Note that $\alpha_{z_0}(\sigma)(X) = [\alpha_{z_0}(\sigma), \iota_X] = \alpha_{z_0}(\sigma) \circ \iota_X + \iota_X \circ \alpha_{z_0}(\sigma)$. We therefore have
$$
\begin{aligned}
N  \alpha_{z_0}(\sigma)(X)&= N [\alpha_{z_0}(\sigma), \iota_X] \\
&= N \alpha_{z_0}(\sigma) \iota_X + \iota_X(N-1)\alpha_{z_0}(\sigma) \\
&= N \alpha_{z_0}(\sigma) \iota_X - (N-1)\alpha_{z_0}\iota_X + [(N-1)\alpha, \iota_X].
\end{aligned}
$$
Since $\iota_X$ commutes with $\left(\Lienzo + s\right)^{-1}\e^{-\varepsilon \Lienzo}$ one finally obtains
$$
\begin{aligned}
N  \alpha_{z_0}(\sigma)(X) = \alpha_{z_0}(\sigma) \iota_X &\left(\Lienzo + s\right)^{-1}\e^{-\varepsilon \Lienzo} \\
&+ \left[(N-1) \alpha_{z_{0}}(\sigma)\left(\mathcal{L}_{X}^{\nabla\left(z_{0}\right)}+s\right)^{-1} \mathrm{e}^{-\varepsilon \mathcal{L}_{X}^{\nabla\left(z_{0}\right)}}, ~\iota_{X}\right].
\end{aligned}
$$
This concludes by cyclicity of the flat trace.
\end{proof}

The following lemma is a direct consequence of Lemma \ref{lem:projector2} and the fact that $\Pi_0(z_0) = \Pi(z_0)$ by (\ref{eq:only0}).
\begin{lemm}
For $\Re(s)$ big enough, the map $z \mapsto h_s(z) = \grdet{C^\bullet(z)}\left(\Lienz+s \right)^{(-1)^{q+1}}$ is $\mathcal{C}^1$ near $z=z_0$ and
$$\dd (\log h_s)_{z_0}\sigma = (-1)^{q+1}\str{C^\bullet(z_0)}\left(\Pi_0(z_0) \alpha_{z_0}(\sigma)  \iota_X \left(\Lienzo + s\right)^{-1}\right).$$
\end{lemm}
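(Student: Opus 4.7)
My plan is to apply Lemma \ref{lem:projector2} directly to the $\mathcal{C}^1$ family of operators $z \mapsto \Lienz + s$ acting on the spaces $C^\bullet(z) = \mathrm{ran}\,\Pi(z)$. By \eqref{eq:connexionholomorphic} and \eqref{eq:lienz}, the family $z \mapsto \Lienz - \Lie_X^{\nabla(z_0)}$ is $\mathcal{C}^1$ as a family of bounded operators between anisotropic Sobolev spaces (see \S\ref{subsec:anisotropicuniformconnexion}), and by \eqref{eq:projectoranalytic} the spectral projectors $z \mapsto \Pi(z)$ are $\mathcal{C}^1$ with values in $\Lie(\H^\bullet, \H^\bullet_1)$. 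Moreover, the choice of $\lambda$ in \eqref{eq:only0} ensures that $0$ is the only resonance of $\Lienzo$ in $\{|s|\leq \lambda\}$, so $\Pi(z_0) = \Pi_0(z_0)$.

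Differentiating the relation $\Lienz = [\nabla(z), \iota_X]$ at $z = z_0$ in the real direction $\sigma \in \mathbb{C}$ gives
$$
\left.\partial_\sigma \Lienz\right|_{z=z_0} = [\alpha_{z_0}(\sigma), \iota_X]_+ = \alpha_{z_0}(\sigma) \iota_X + \iota_X \alpha_{z_0}(\sigma),
$$
which, since it is a zeroth order differential operator, defines a bounded perturbation between the Sobolev spaces. Applying Lemma \ref{lem:projector2} yields
$$
\dd(\log h_s)_{z_0}\sigma = (-1)^{q+1}\grtr{C^\bullet(z_0)}\Bigl(\Pi_0(z_0)\bigl(\alpha_{z_0}(\sigma)\iota_X + \iota_X \alpha_{z_0}(\sigma)\bigr)\bigl(\Lienzo + s\bigr)^{-1}\Bigr).
$$

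It remains to convert the graded trace into the super-trace of the statement. Using $\grtr = \str(N\cdot)$ with $N$ the number operator, the super-cyclicity $\str(AB) = (-1)^{|A||B|}\str(BA)$, the commutation $N\iota_X = \iota_X(N-1)$, and the fact that $\iota_X$ commutes with $\Pi_0(z_0)$ and $R := (\Lienzo + s)^{-1}$ (since it commutes with $\Lienzo$), one computes (writing $\alpha = \alpha_{z_0}(\sigma)$, $\Pi = \Pi_0(z_0)$)
$$
\str(N\Pi\iota_X \alpha R) = \str\bigl(\iota_X(N-1)\Pi \alpha R\bigr) = -\str\bigl((N-1)\Pi \alpha\iota_X R\bigr) = \str(\Pi\alpha\iota_X R) - \str(N\Pi\alpha\iota_X R),
$$
where the sign comes from super-cyclicity with $\iota_X$ odd and $(N-1)\Pi\alpha R$ odd. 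Summing this with $\str(N\Pi\alpha\iota_X R)$ makes the $N$-weighted contribution cancel, leaving the desired identity. This is exactly the same manipulation used at the end of the proof of Proposition \ref{prop:variationzetaconnection}.

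The main obstacle is the bookkeeping of signs in the super-cyclicity argument: since $\alpha$ is an End-valued $1$-form (hence odd), each move of $\iota_X$ past $\alpha$ or past $N\Pi\alpha R$ produces a sign that must be tracked carefully. Once this is done, the lemma follows directly.
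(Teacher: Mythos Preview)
Your proposal is correct and follows exactly the approach the paper indicates: the paper states that the lemma is a direct consequence of Lemma~\ref{lem:projector2} together with $\Pi_0(z_0)=\Pi(z_0)$ from \eqref{eq:only0}, and you have simply written out those details, including the graded-to-super trace conversion which is the same manipulation carried out at the end of the proof of Proposition~\ref{prop:variationzetaconnection}. The sign bookkeeping you flag is indeed the only nontrivial point, and your computation $\str(N\Pi\iota_X\alpha R)=-\str((N-1)\Pi\alpha\iota_X R)$ via $N\iota_X=\iota_X(N-1)$ and super-cyclicity is correct.
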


\subsection{Proof of Proposition \ref{prop:variationconnexion}}
Combining the two lemmas of the preceding subsection we obtain for $\Re(s)$ big enough, the map $z \mapsto \zeta^{(\lambda, \infty)}_{X,\nabla(z)}(s) = g_s(z) / h_s(z)$ is real differentiable at $z=z_0$ (and therefore on $U$ since we may vary $z_0$). Moreover for every $\varepsilon > 0$ small enough
\begin{equation}\label{eq:variationzetabigs}
\begin{aligned}
\dd \left(\log \frac{g_s}{h_s}\right)_z \sigma = (-1)^{q+1}\Biggl(&\e^{-\varepsilon s}\strf\alpha_{z}(\sigma) \iota_X\left(\Lienz+s\right)^{-1} \e^{-\varepsilon \Lienz}  \\ 
& \quad \quad  \quad - \str{C^\bullet(z)}\Pi_0(z) \alpha_{z}(\sigma)  \iota_X \left(\Lienz + s\right)^{-1}\Biggr).
\end{aligned}
\end{equation}
This gives the variation of $\zeta_{X,\nabla(z)}^{(\lambda, \infty)}(s)$ for $\Re(s)$ big enough. To obtain the variation of $b(z) = \zeta_{X,\nabla(z)}^{(\lambda, \infty)}(0)$, we can reproduce the arguments made in \S\ref{subsec:proofinvariance} to obtain
$$
(-1)^{q+1} \dd \left(\log b \right)_{z} \sigma = \strf \left(\alpha_z(\sigma) \iota_X Y(z) (\id - \Pi_0(z)) \e^{-\varepsilon \Lienz} \right)+ \str{C^\bullet(z)} \Bigl(\Pi_0(z) \alpha_z(\sigma) \iota_X Q_z(\varepsilon)\Bigr),
$$
where
$$
Q_z(\varepsilon) = \sum_{n \geq 1}\frac{(-\varepsilon)^n}{n!}\left(\Lienz\right)^{n-1} : C^\bullet(z) \to C^\bullet(z).
$$
Recall that if $c(z) = \tau(C^\bullet(z),\Gamma)$ one has $\tau(z) = c(z)b(z)^{(-1)^{q}}.$ Therefore Lemma \ref{lem:variationconnexiontorsion} gives, with what precedes,
\begin{equation}\label{eq:finalvariation}
\begin{aligned}
\dd (\log \tau)_z \sigma = - \strf &\Bigl(\alpha_{z}(\sigma) K(z) \e^{-\varepsilon \Lie_X^{\nabla(z)}}\Bigr) \\  &- \str{C^\bullet(z)}\left(\Pi_0(z) \alpha_z(\sigma) \Bigl(k(z) \left(\id - \e^{-\varepsilon \Lienz}\right) + \iota_XQ_z(\varepsilon)\Bigr)\right).
\end{aligned}
\end{equation}
We have $\id-\e^{-\varepsilon\Lienz} = -\Lienz Q_z(\varepsilon)$, which leads to
$$
\iota_XQ_z(\varepsilon) +k(z) \left(\id - \e^{-\varepsilon \Lienz}\right) = \Bigl( \iota_X - k(z) \Lienz \Bigr)Q_z(\varepsilon).
$$
But now since $k(z)$ is a cochain contraction, we get
$$
\iota_X - k(z) \Lienz = [\nabla(z), k(z)\iota_X].
$$
Because $\nabla(z)$ commutes with $\Pi_0(z)$ and $\Lienz$, we obtain with (\ref{eq:nablaalpha})
$$
\Bigl[\nabla(z), \Pi_0(z) \alpha_z(\sigma) k(z) \iota_X Q_z(\varepsilon) \Bigr] = \Pi_0(z) \alpha \Bigl( \iota_XQ_z(\varepsilon) +k(z) \left(\id - \e^{-\varepsilon \Lienz}\right)\Bigr).
$$
This concludes by (\ref{eq:finalvariation}) and the cyclicity of the trace.

\section{Euler structures, Chern-Simons classes}\label{sec:turaevtorsion}
The Turaev torsion is defined using \textit{Euler structures}, introduced by Turaev~\cite{turaev1990euler}, whose purpose is to fix sign ambiguities of combinatorial torsions. We shall use however the representation in terms of vector fields used by Burghelea--Haller~\cite{burghelea2006euler}. The goal of the present section is to introduce these Euler structures, in view of the definition of the Turaev torsion.

\subsection{The Chern-Simons class of a pair of vector fields}\label{subsec:chernsimons}
If $X \in \Cinf(M,TM)$ is a vector field with isolated non degenerate zeros, we define the singular $0$-chain
$$
\div(X) = -\sum_{x \in \crit(X)} \mathrm{ind}_X(x) [x]Ê~\in ~C_0(M, \mathbb{Z}),
$$
where $\crit(X)$ is the set of critical points of $X$ 
and $\ind_X(x)$ denotes the Poincar\'e-Hopf index of $x$ as a critical point of $X$~\footnote{$\mathrm{ind}_X(x)=(-1)^{\dim E_s(x)}$ if $x$ is hyperbolic and $E_s(x) \subset T_xM$ is the stable subspace of $x$.}. 
Note also that $\div\left(-X\right)=-\div(X)$ since $M$ is odd dimensional.

Let $X_0, X_1$ be two vector fields with isolated non degenerate zeros. Let $p :  M \times [0,1] \to M$ be
the projection over the first factor and choose a smooth section $H$ of the bundle $p^*TM \to M \times [0,1]$, transversal to the zero section, such that $H$ restricts to $X_i$ on $\{i\} \times M$ for $i=0,1$. Then the set $H^{-1}(0) \subset M \times [0,1]$ is an oriented smooth submanifold of dimension $1$ with boundary (it is oriented because $M$ and $[0,1]$ are), and we denote by $[H^{-1}(0)]$ its fundamental class.

\begin{defi} The class
$$
p_* [H^{-1}(0)] \in C_1(M, \mathbb{Z}) / \partial C_2(M,\mathbb{Z}),
$$
where $p_*$ is the pushforward by $p$, does not depend on the choice of the homotopy $H$ relating $X_0$ and $X_1$, cf. \cite[\S2.2]{burghelea2006euler}. 
This is the \emph{Chern-Simons class} of the pair $(X_0, X_1)$, denoted by $\cs(X_0, X_1)$.
\end{defi}
We have the fundamental formulae
\begin{equation}\label{eq:propcs}
\begin{split}
\partial \cs(X_0,X_1) = \div(X_1) - \div(X_0), \\
\cs(X_0, X_1) + \cs(X_1, X_2) = \cs(X_0, X_2),
\end{split}
\end{equation}
for any other vector field with non degenerate zeros $X_2$. Notice also that if $X_0$ and $X_1$ are nonsingular vector fields, then $\cs(X_0, X_1)$ defines a homology class in $H_1(M, \mathbb{Z})$.

\subsection{Euler structures.}\label{subsec:eulerstruct}
Let $X$ be a smooth vector field on $M$ with non degenerate zeros. An \textit{Euler chain} for $X$ is a singular one-chain $e \in C_1(M, \mathbb{Z})$ such that $\partial e = \div(X)$.  Euler chains for $X$ always exist because $M$ is odd-dimensional and thus $\chi(M) = 0$. 

Two pairs $(X_0, e_0)$ and $(X_1, e_1)$, with $X_i$ a vector field with non degenerate zeros and $e_i$ an Euler chain for $X_i$, $i=0,1$, will be said to be equivalent if
\begin{equation}\label{eq:equivalencerelation}
e_1 = e_0 + \cs(X_0, X_1) ~Ê\in ~C_1(M,\mathbb{C}) / \partial C_2(M, \mathbb{Z}).
\end{equation}
\begin{defi}
An \emph{Euler structure} is an equivalence class $[X,e]$ for the relation (\ref{eq:equivalencerelation}). We will denote by $\mathrm{Eul}(M)$ the set of Euler structures.
\end{defi}
There is a free and transitive action of $H_1(M, \mathbb{Z})$ on $\mathrm{Eul}(M)$ given by 
$$[X, e] + h =  [X, e + h], \quad h \in H_1(M, \mathbb{Z}).$$

\subsection{Homotopy formula relating flows}\label{subsec:homotopy}
Let $X_0, X_1$ be two vector fields with non degenerate zeros. Let $H$ be a smooth homotopy between $X_0$ and $X_1$ as in \S\ref{subsec:chernsimons} and set $X_t = H(t, \cdot) \in \Cinf(M,TM)$.
For $\varepsilon > 0$ we define $\Phi_\varepsilon : M\times[0,1] \to M \times M \times [0,1]$ via
$$
\Phi_\varepsilon(x,t) = \left(\e^{-\varepsilon X_t}(x),x,t\right), \quad x \in M, \quad t \in [0,1].
$$
Set also, with notations of \S\ref{subsec:integrationcurrents}, $H_\varepsilon = \mathrm{Gr}(\Phi_\varepsilon) \subset M \times M \times \mathbb{R}$. Then $H_\varepsilon$ is a submanifold with boundary of $M \times M \times \mathbb{R}$ which is oriented (since $M$ and $\mathbb{R}$ are). Define 
$$[H_\varepsilon] = (\Phi_\varepsilon)_* \left([M]Ê\times [[0,1]] \right)\in \mathcal{D}^{'n}(M \times M \times \mathbb{R})$$
to be the associated integration current, cf. \S\ref{subsec:integrationcurrents}. Let $g$ be any metric on $M$ and let $\rho > 0$ be smaller than its injectivity radius. Then for any $x,y \in M$ with $\mathrm{dist}(x,y) \leq \rho$, we denote
by $P(x,y) \in \mathrm{Hom}(E_x, E_y)$ the parallel transport by $\nabla$ along the minimizing geodesic joining $x$ to $y$. Then $P \in \Cinf(M\times M, \pi_1^*E^\vee \otimes \pi_2^*E)$ and we can define
$$
\mathcal{R}_\varepsilon = - \pi_*[H_\varepsilon] \otimes P \in \mathcal{D}^{'n-1}(M \times M, \pi_1^*E^\vee \otimes \pi_2^*E),
$$
where $\pi : M\times M \times \mathbb{R}Ê\to MÊ\times M$ is the projection over the two first factors. Note that $\mathcal{R}_\varepsilon$ is well defined if $\varepsilon$ is small enough so that
\begin{equation}\label{eq:distleqrho}
\mathrm{dist}\left(x, \e^{-sX_t}(x)\right) \leq \rho, \quad s \in [0, \varepsilon], \quad t \in [0,1], \quad x \in M,
\end{equation}
which implies $\mathrm{supp}~ \pi_*[H_\varepsilon] \subset \{(x,y), ~\mathrm{dist}(x,y) \leq \rho\}$. Now, let
$$R_\varepsilon : \Omega^\bullet(M,E) \to \mathcal{D}^{'\bullet-1}(M,E)$$
be the operator of degree $-1$ whose Schwartz kernel is $\mathcal{R}_\varepsilon$.

\begin{lemm}\label{lem:homotopy}
We have the following homotopy formula
\begin{equation}\label{eq:homotopy}
[\nabla, R_\varepsilon] = \nabla R_\varepsilon + R_\varepsilon \nabla = \e^{-\varepsilon \Lie_{X_1}^\nabla} -  \e^{-\varepsilon \Lie_{X_0}^\nabla}.
\end{equation}
\end{lemm}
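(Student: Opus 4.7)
The plan is to verify the identity at the level of Schwartz kernels. By the Harvey--Polking general homotopy formula for bundle-valued currents, the Schwartz kernel of $[\nabla,R_\varepsilon]=\nabla R_\varepsilon+R_\varepsilon\nabla$ equals $\nabla^{E^\vee\boxtimes E}\mathcal{R}_\varepsilon$, so the task reduces to showing
$$\nabla^{E^\vee\boxtimes E}\mathcal{R}_\varepsilon \;=\; \mathcal{K}_1-\mathcal{K}_0,$$
where $\mathcal{K}_i$ denotes the Schwartz kernel of $\e^{-\varepsilon\Lie_{X_i}^\nabla}$. The first step is to identify $\mathcal{K}_i=[\mathrm{Gr}(\e^{-\varepsilon X_i})]\otimes P$, using the convention of \S\ref{subsec:integrationcurrents} for the kernel of a pullback together with the observation that, under (\ref{eq:distleqrho}), the parallel transport along the flow-line $s\mapsto \e^{-sX_i}(x)$, $s\in[0,\varepsilon]$, agrees with $P$ on $\mathrm{Gr}(\e^{-\varepsilon X_i})$: the flow-line and the minimizing geodesic both lie in a common simply-connected geodesic ball and $\nabla$ is flat, so parallel transport depends only on the endpoints.

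Next I will compute $\nabla^{E^\vee\boxtimes E}\mathcal{R}_\varepsilon$ via Leibniz. Since $[H_\varepsilon]$ has degree $n$ in $M\times M\times\mathbb{R}$ and the pushforward along the one-dimensional fiber lowers degree by one, $\pi_*[H_\varepsilon]$ has degree $n-1$, whence
$$\nabla^{E^\vee\boxtimes E}\mathcal{R}_\varepsilon \;=\; -d\pi_*[H_\varepsilon]\otimes P \;-\;(-1)^{n-1}\pi_*[H_\varepsilon]\wedge \nabla^{E^\vee\boxtimes E}P.$$
The second term vanishes: on any geodesic ball $B\subset M$ of radius $<\rho$, flatness of $\nabla$ produces a parallel frame of $E$, and in the induced parallel frame of $\pi_1^*E^\vee\otimes\pi_2^*E$ on $B\times B$ the section $P$ is the constant identity matrix. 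Hence $\nabla^{E^\vee\boxtimes E}P=0$ on a neighborhood of the diagonal, and this neighborhood contains $\supp \pi_*[H_\varepsilon]$ by (\ref{eq:distleqrho}).

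For the first term I will apply (\ref{eq:boundary}) together with the product-orientation identity $\partial(M\times[0,1])=M\times\{1\}-M\times\{0\}$ to obtain $d[H_\varepsilon]=(-1)^{n+1}[\partial H_\varepsilon]=[\partial H_\varepsilon]$ (since $n$ is odd), where $\partial H_\varepsilon$ is the oriented difference $\mathrm{Gr}(\Phi_\varepsilon|_{t=1})-\mathrm{Gr}(\Phi_\varepsilon|_{t=0})$. Using that $d$ commutes with the proper pushforward $\pi_*$ up to the sign $(-1)^r$ with $r=1$ the fiber dimension, and that $\pi_*[\mathrm{Gr}(\Phi_\varepsilon|_{t=i})]=[\mathrm{Gr}(\e^{-\varepsilon X_i})]$, one gets
$$-d\pi_*[H_\varepsilon]\otimes P \;=\; \bigl([\mathrm{Gr}(\e^{-\varepsilon X_1})]-[\mathrm{Gr}(\e^{-\varepsilon X_0})]\bigr)\otimes P \;=\; \mathcal{K}_1-\mathcal{K}_0,$$
which closes the argument.

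The main obstacle is bookkeeping rather than conceptual: one must track signs coherently through the Leibniz rule on bundle-valued currents of odd degree, the boundary convention (\ref{eq:boundary}), and the commutation of $d$ with fibered pushforward, while checking at each step that the manipulations (wedge of a current with a smooth section, pullbacks, restriction near the diagonal) are legitimate. The smallness condition (\ref{eq:distleqrho}) is used twice — to make $\mathcal{R}_\varepsilon$ globally well defined and to localize everything within the region where the geodesic and the flow-line parallel transports coincide and where $\nabla^{E^\vee\boxtimes E}P$ vanishes.
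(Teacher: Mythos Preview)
Your proof is correct and follows essentially the same route as the paper's: both arguments pass to Schwartz kernels, invoke the Harvey--Polking/Harvey--Lawson relation between $[\nabla,R_\varepsilon]$ and $\nabla^{E^\vee\boxtimes E}\mathcal{R}_\varepsilon$, use flatness of $\nabla$ to kill $\nabla^{E^\vee\boxtimes E}P$, and then compute $d\pi_*[H_\varepsilon]$ via the boundary of $H_\varepsilon$ to obtain the difference of graph currents. The only differences are cosmetic sign conventions (the paper orients $\partial H_\varepsilon$ oppositely and carries a compensating $(-1)^n$ in the kernel--commutator formula), which you handle consistently.
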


\begin{proof}
First note that because $M$ is odd dimensional, the boundary (computed with orientations) of the manifold $H_\varepsilon$ is 
$$\partial H_\varepsilon = \mathrm{Gr}(\e^{-\varepsilon X_0})\times \{0\} - \mathrm{Gr}(\e^{-\varepsilon X_1})\times \{1\}.$$
Therefore we have, cf. (\ref{eq:boundary}), 
 $$(-1)^n\dd^{M\times M} \pi_* [H_\varepsilon] = \pi_* [\partial H_\varepsilon] = \left[\mathrm{Gr}(\e^{-\varepsilon X_0})\right] - \left[\mathrm{Gr}(\e^{-\varepsilon X_1})\right]$$
 where $\left[\mathrm{Gr}(\e^{-\varepsilon X_i})\right]$ denotes the integration current on the manifold $\mathrm{Gr}(\e^{-\varepsilon X_i})$ for $i=0,1$. Now note that we have by construction $\nabla^{E^\vee \boxtimes E} P = 0$. Therefore
$$
\nabla^{E^\vee \boxtimes E} \mathcal{R}_\varepsilon = (-1)^n\Bigl(\left[\mathrm{Gr}(\e^{-\varepsilon X_1})\right] - \left[\mathrm{Gr}(\e^{-\varepsilon X_0})\right]\Bigr) \otimes P.
 $$
Note that by definition of $\e^{-\Lie_{X_i}^\nabla}$ (cf \S\ref{subsec:policott}), the formula (\ref{eq:distleqrho}) and the flatness of $\nabla$ imply that the Schwartz kernel of $\e^{-\varepsilon \Lie_{X_i}^\nabla}$ is $\left[\mathrm{Gr}(\e^{-\varepsilon X_i})\right] \otimes P$. This concludes because the Schwartz kernel of $[\nabla, R_\varepsilon]$ is $(-1)^n\nabla^{E^\vee \boxtimes E} \mathcal{R}_\varepsilon$, cf. \cite[Lemma 2.2]{harvey2001finite}.
\end{proof}

The next formula follows from the definition of the flat trace and the Chern-Simons classes. It will be crucial for the topological interpretation of the variation formula obtained in \S\ref{sec:variationconnexion}.
\begin{lemm}\label{lem:homotopycs}
We have for any $\alpha \in \Omega^\bullet(M, \mathrm{End}(E))$ such that $\tr \alpha$ is closed and $\varepsilon > 0$ small enough
\begin{equation}\label{eq:homotopycs}
\tr^\flat_{\mathrm{s}} \alpha R_\varepsilon = \bigl\langle \tr\alpha, ~ \cs(X_0, X_1) \bigr\rangle.
\end{equation}
\end{lemm}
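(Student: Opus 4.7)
The proof is geometric: unpack the kernel of $R_\varepsilon$, restrict to the diagonal, and identify the resulting $1$-current on $M$ with the Chern-Simons chain, using that intersections of the graph of the time-$\varepsilon$ flow with the diagonal correspond to zeros of the homotopy $H$.

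Since $\cs(X_0,X_1)\in C_1(M,\zz)/\partial C_2(M,\zz)$ and $\tr\alpha$ is closed, the pairing on the right is well-defined. Both sides vanish unless $\tr\alpha$ has a degree-one component, so we may assume $\tr\alpha\in\Omega^1(M)$. On the analytic side, the Schwartz kernel of $\alpha R_\varepsilon$ is $-\pi_1^*\alpha\wedge\pi_*[H_\varepsilon]\otimes P$, with the obvious composition on $\mathrm{End}(E)$-factors. Since $\iota^*P=\id_E$, taking the endomorphism trace along the diagonal gives
$$
\strf(\alpha R_\varepsilon) \;=\; -\,\bigl\langle \tr\alpha,\; \iota^*\pi_*[H_\varepsilon]\bigr\rangle,
$$
where $\iota^*\pi_*[H_\varepsilon]$ is a $1$-current on $M$ (the degree of $\pi_*[H_\varepsilon]$ is $n-1$, since pushforward along a $1$-dimensional fiber drops codegree by $1$).

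The core of the proof is the geometric identity $\iota^*\pi_*[H_\varepsilon] = -\,\cs(X_0,X_1)$. Consider the Cartesian square
$$
\begin{array}{ccc}
M\times [0,1] & \xrightarrow{\;\iota\times\id\;} & M\times M\times[0,1]\\[2pt]
\Big\downarrow {p} & & \Big\downarrow {\pi} \\[4pt]
M & \xrightarrow{\;\iota\;} & M\times M,
\end{array}
$$
where the vertical arrows forget the $[0,1]$-factor. Base change (justified once the wavefront-set of $[H_\varepsilon]$ is checked to be transverse to the conormal of $\Delta\times[0,1]$) yields $\iota^*\pi_*[H_\varepsilon]=p_*(\iota\times\id)^*[H_\varepsilon]$. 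By construction $H_\varepsilon=\mathrm{Gr}(\Phi_\varepsilon)$ with $\Phi_\varepsilon(x,t)=(\e^{-\varepsilon X_t}(x),x,t)$, and its intersection with $\Delta_M\times[0,1]$ consists of those $(x,x,t)$ for which $\e^{-\varepsilon X_t}(x)=x$. For $\varepsilon>0$ smaller than the minimal period of any closed orbit of any $X_t$ (possible uniformly in $t$ by the compactness of $[0,1]$ and the non-degeneracy of the zeros), this condition is equivalent to $X_t(x)=0$. Thus the intersection is exactly $H^{-1}(0)\subset M\times[0,1]$, embedded into $M\times M\times[0,1]$ via $\iota\times\id$. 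Transversality of $H$ to the zero section makes this intersection transverse; the pullback-of-currents formula then gives $(\iota\times\id)^*[H_\varepsilon]=\varepsilon_0\,[H^{-1}(0)]$ for a sign $\varepsilon_0\in\{\pm1\}$, so $\iota^*\pi_*[H_\varepsilon]=\varepsilon_0\,p_*[H^{-1}(0)]=\varepsilon_0\,\cs(X_0,X_1)$.

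\textbf{Main obstacle.} The real difficulty is the sign-bookkeeping, which must reconcile the minus sign already present in $\mathcal{R}_\varepsilon=-\pi_*[H_\varepsilon]\otimes P$ with $\varepsilon_0$. One has to compare two orientations on $H^{-1}(0)$: the one inherited from the transverse intersection $H_\varepsilon\cap(\Delta_M\times[0,1])$ inside the oriented $M\times M\times[0,1]$ (where $H_\varepsilon$ is oriented as the graph of $\Phi_\varepsilon$ on the oriented $M\times[0,1]$), and the one used to define $\cs(X_0,X_1)$ as $p_*[H^{-1}(0)]$. At a zero $(x_0,t_0)$ of $H$, both orientations are encoded by the sign of a determinant on $T_{x_0}M\oplus\rr$ attached to the linearisation $\dd H(x_0,t_0)$; to leading order, $\dd(\e^{-\varepsilon X_t}(x)-x)=-\varepsilon\,\dd_xX_t+O(\varepsilon^2)$, which in odd dimension $n$ produces the expected $\varepsilon_0=-1$, so the two minus signs cancel and we obtain the desired $+\langle\tr\alpha,\cs(X_0,X_1)\rangle$. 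Finally, one verifies the wavefront-set bound on $\pi_*[H_\varepsilon]$ (uniform in $\varepsilon$ small), which is immediate from the defining pushforward formula since $\Phi_\varepsilon$ is a smooth embedding.
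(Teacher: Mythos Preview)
Your approach is essentially the same as the paper's: write out the kernel of $\alpha R_\varepsilon$, pull back to the diagonal via base change $\iota^*\pi_* = p_*(\iota\times\id)^*$, and identify the intersection of $H_\varepsilon$ with $\Delta_M\times[0,1]$ as $H^{-1}(0)$ for $\varepsilon$ small. The paper carries this out tersely, simply asserting $j^*[H_\varepsilon]=[H^{-1}(0)]$ without a sign discussion; you attempt a more explicit orientation analysis.

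Two small bookkeeping slips to flag. First, in the paper's conventions (\S\ref{subsec:currents}) the input of a kernel is the first factor and the output the second, so left-composition with $\alpha$ produces $(-1)^n\pi_2^*\alpha\wedge\mathcal{R}_\varepsilon$, not $\pi_1^*\alpha\wedge(\cdots)$; on the diagonal this is harmless since $\iota^*\pi_1^*=\iota^*\pi_2^*$, but the $(-1)^n$ you omitted cancels the minus sign in $\mathcal{R}_\varepsilon=-\pi_*[H_\varepsilon]\otimes P$, so the flat trace is $+\langle\tr\alpha,\iota^*\pi_*[H_\varepsilon]\rangle$ and the paper takes $\varepsilon_0=+1$. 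Your claimed $\varepsilon_0=-1$ therefore compensates a sign dropped earlier rather than being independently justified; the heuristic ``$-\varepsilon\,\dd_xX_t$ in odd dimension'' is not quite a proof of the orientation comparison. The final identity is correct either way, but the sign chain should be redone consistently with the paper's kernel conventions.
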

Note that because $H$ is transverse to the zero section, we have
\begin{equation}\label{eq:wfinterpolator}
\WF(\mathcal{R_\varepsilon}) \cap N^*\Delta= \emptyset,
\end{equation}
where $N^*\Delta$ denotes the conormal to the diagonal $\Delta$ in $M \times M$, so that the above flat trace is well defined.
\begin{proof}
We denote by
$i: M\hookrightarrow M\times M$ the diagonal inclusion. Note that the Schwartz kernel of $\alpha R_\varepsilon$ is $(-1)^n \pi_2^*\alpha \wedge \mathcal{R}_\varepsilon = - \pi_2^* \alpha \wedge \mathcal{R}_\varepsilon$ since $n$ is odd. From the definition of the
super flat trace $\strf$, we find that
\begin{equation}\label{eq:trsalphar}
\strf \alpha R_\varepsilon=\Bigl\langle\tr i^*\left(\pi_2^*\alpha\wedge \pi_*[H_\varepsilon]\otimes P \right), ~1\Bigr\rangle,
\end{equation}
where $\pi_2 : M\times M \to M$ is the projection over the second factor. Of course we have $i^* P = \id_E \in \Cinf(M, \mathrm{End}(E))$. We therefore have
$$
\tr i^*\left(\pi_2^*\alpha\wedge \pi_*[H_\varepsilon]\otimes P \right) = \tr \alpha \wedge i^*\pi_* [H_\varepsilon] =  \tr \alpha \wedge p_* j^* [H_\varepsilon]
$$
where $j : M \times [0,1] \hookrightarrow M \times M \times [0,1], ~ (x,t) \mapsto (x,x,t)$. This leads to
$$
\strf \alpha R_\varepsilon = \bigl\langle \tr \alpha \wedge p_* j^* [H_\varepsilon], 1 \bigr \rangle = \bigl \langle p^* \tr \alpha, j^*[H_\varepsilon]\bigr\rangle.
$$
Now if $\varepsilon$ is small enough, we can see that $j^*[H_\varepsilon] = [H^{-1}(0)]$. Therefore
$$
\strf \alpha R_\varepsilon = \bigl \langle \tr \alpha, p_* [H^{-1}(0)] \bigr \rangle = \bigl \langle \tr \alpha, \cs(X_0, X_1) \bigr \rangle.
$$
\end{proof}

\section{Morse theory and variation of Turaev torsion.}\label{sec:turaevvariation}

We introduce here the Turaev torsion which is defined in terms of CW decompositions.
In the spirit of the seminal work of Bismut--Zhang~\cite{bismut1992extension} based on 
geometric constructions of Laudenbach~\cite{laudenbach1992thom}, 
we use a CW decomposition which comes from the unstable cells of a 
Morse-Smale gradient flow induced by a Morse function. This allows us to interpret the variation of the Turaev torsion as a supertrace on the space of generalized resonant states for the Morse-Smale flow. 
This interpretation will be convenient for the comparison of the Turaev torsion 
with the dynamical torsion.

\subsection{Morse theory and \textrm{CW}-decompositions}\label{subsec:morsesmale}

Let $f$ be a Morse function on $M$ and $\widetilde{X} = -\grad_g f$ be its associated gradient vector field with respect to some Riemannian metric $g$ (the tilde notation is used to make the difference with the Anosov flows we studied until now). For any $a \in \crit(f)$, 
we denote by
$$W^s(a) = \left\{y \in M, ~\lim_{t\to \infty} \e^{t\widetilde{X}}y = a \right\}, \quad W^u(a) = \left\{y \in M, ~\lim_{t\to \infty} \e^{-t\widetilde{X}}y = a \right\},$$
the stable and unstable manifolds of $a$. Then it is well known that $W^s(a)$ (resp. $W^u(x)$) is a smooth embedded open disk of dimension $n-\ind_f(a)$ (resp. $\ind_f(a)$), where $\ind_f(a)$ is the index of $a$ as a critical point of $f$, that is, in a Morse chart $(z_1, \dots, z_n)$ near $a$,
$$
f(z_1, \dots, z_n) = f(a) - z_1^2 - \dots - z_{\ind_f(a)}^2 + z_{\ind_f(a)+1}^2 + \dots + z_n^2.
$$
For simplicity, we will denote 
$$|a| = \ind_f(a) = \dim W^u(a),$$
and we fix an orientation of every $W^u(a).$

We assume that $\widetilde{X}$ satisfies the Morse-Smale condition, that is, for any $a,b \in \crit(f)$, the manifolds $W^s(a)$ and $W^u(b)$ are transverse. Also, we assume that for every $a \in \crit(f)$, the metric $g$ is flat near $a$. Let us summarize some results 
from \cite[Theorems 3.2,3.8,3.9]{qin2010moduli} ensured by the unstable manifolds of $f$. We would like to mention that such results 
can be found in slightly different form in the work
of Laudenbach~\cite{laudenbach1992thom} and are used in~\cite{bismut1992extension}~\footnote{A difference is that Laudenbach only needs to compactify the unstable cells as $C^1$--manifolds with conical singularities whereas Qin proves smooth compactification as manifolds with corners.}.

First, $W^u(a)$ admits a compactification to a smooth $|a|$-dimensional manifold with corner $\overline{W}^u(a)$, endowed with a smooth map $e_a : \overline{W}^u(a) \to M$ that extends the inclusion $W^u(a) \subset M$. Then the collection $W = \left\{\overline{W}^u(a)\right\}_{a \in \crit(f)}$ and the applications $e_a$ induce a CW-decomposition on $M$. Moreover, the boundary operator of the cellular chain complex is given by
$$
\partial\overline{W}^u(a)  = \sum_{ |b| = |a| - 1} \# \Lie(a,b) Ê\overline{W}^u(b),
$$ 
where $\Lie(a,b)$ is the moduli space of gradient lines joining $a$ to $b$ and $\#\Lie(a,b)$ is the sum of the orientations induced by the  orientations of the unstable manifolds of $(a,b)$, see \cite[Theorem 3.9]{qin2010moduli}.

\subsection{The Thom-Smale complex}\label{subsec:combinatorialcomplex}
We set
$
C_\bullet(W, E^\vee) = \bigoplus_{k=0}^n C_k(W, E^\vee)$
where
$$
C_k(W, E^\vee) = \bigoplus_{\substack{a \in \crit(f) \\ |a| = k}} E_a^\vee, \quad k = 0, \dots, n.
$$
We endow the complex $C_\bullet(W, E^\vee)$ with the boundary operator $\partial^{\nabla^\vee}$ defined by
$$
\partial^{\nabla^\vee} u = \sum_{|b| = |a| - 1} \sum_{\gamma \in \Lie(a,b)} \varepsilon_\gamma P_{\gamma}(u), \quad a \in \crit(f), \quad u \in E_a^\vee,
$$
where for $\gamma \in \Lie(a,b)$, $P_{\gamma} \in \mathrm{End}(E_a^\vee, E_b^\vee)$ is the parallel transport of $\nabla^\vee$ along the curve $\gamma$ and $\varepsilon_\gamma = \pm 1$ is the orientation number of $\gamma \in \Lie(a,b)$.

Then by \cite{laudenbach1992thom} (see also \cite{dang2017topology} for a different approach),
there is a canonical isomorphism
$$
H_\bullet(M, \nabla^\vee) \simeq H_\bullet(W, \nabla^\vee),
$$
where $H_\bullet(M,\nabla^\vee)$ is the singular homology of flat sections of $(E^\vee, \nabla^\vee)$ and $H_\bullet(W, \nabla^\vee)$ denotes the homology of the complex $C_\bullet(W, E^\vee)$ endowed with the boundary map $\partial^{\nabla^\vee}$. Therefore this complex is acyclic since $\nabla$ (and thus $\nabla^\vee$) is.

\subsection{The Turaev torsion}\label{subsec:turaevtorsion}
Fix some base point $x_\star \in M$ and for every $a \in \crit(f)$, let $\gamma_a$ be some path in $M$ joining $x_\star$ to $a$. Define
\begin{equation}\label{eq:spider}
e = \sum_{a\in \crit(f)} (-1)^{|a|}\gamma_a \in C_1(M, \mathbb{Z}).
\end{equation}
Note that the Poincar\'e-Hopf index of $\widetilde{X}$ near $a \in \crit(f)$ is $-(-1)^{|a|}$ so that
\begin{equation}\label{eq:partiale}
\partial e = \div(\widetilde{X})
\end{equation}
because $\sum_{a\in \crit(f)} (-1)^{|a|} = \chi(M) = 0$ by the Poincar\'e-Hopf index theorem. Therefore $e$ is an Euler chain for $\widetilde{X}$ and 
$$\frak{e} = [\widetilde{X}, e]$$
defines an Euler structure. Choose some basis $u_1, \dots, u_d$ of $E_{x_\star}^\vee$. For each $a \in \crit(f)$, we propagate this basis via the parallel transport of $\nabla$ along $\gamma_a$ to obtain a basis $u_{1,a}, \dots, u_{d,a}$ of $E_{a}$. We choose an ordering of the cells $\left\{\overline{W}^u(a)\right\}$; this gives us a cohomology orientation $\frak{o}$ (see \cite[\S6.3]{turaev1990euler}). Moreover this ordering and the chosen basis of $E_a^\vee$ give us (using the wedge product) an element $c_k \in \det C_k(W, E^\vee)$ for each $k$, and thus an element $c \in \det C_\bullet(W, E^\vee)$. 

The \textit{Turaev torsion} of $\nabla$ with respect to the choices $\frak{e,o}$ is then defined by \cite[\S9.2 p.~218]{farber2000poincare}
$$
\tau_{\frak{e,o}}(\nabla)^{-1} = \varphi_{C_\bullet(W,\nabla^\vee)}(c) \in \mathbb{C} \setminus 0,
$$
where $\varphi_{C_\bullet(W,\nabla^\vee)}$ is the homology version of the isomorphism (\ref{eq:isocoh}). Note that $\nabla^\vee$ (and not $\nabla$) is involved in the definition of $\tau_\frak{e,o}(\nabla)$. Indeed, we use here the cohomological version of Turaev's torsion, which is more convenient for our purposes, and which is consistent with \cite{braverman2007ray}, \cite[p.~252]{braverman2008refined}.

\subsection{Resonant states of the Morse-Smale flow}
In \cite{dang2017topology}, it has been shown that we can define 
Ruelle resonances for the Morse-Smale gradient flow $\Lientilde$ as described in \S\ref{sec:policott} in the context of Anosov flows. More precisely, we have that the resolvent
$$
\left(\Lientilde +s\right)^{-1} : ~\Omega^\bullet(M,E) \to \mathcal{D}^{'\bullet}(M,E),
$$
is
well defined for $\Re(s) \gg 0$, has a meromorphic continuation to all $s \in \mathbb{C}$. The poles of this continuation are the Ruelle resonances of $\Lientilde$ and the set of those will be denoted by $\Res(\Lientilde)$. In fact, the set $\Res(\Lientilde)$ does not depend on the flat vector bundle $(E, \nabla)$. 
Let $\lambda > 0$ be such that $\Res(\Lientilde) \cap \{|s|Ê\leq \lambda\} \subset \{0\}$; set 
\begin{equation}\label{eq:projectormorsesmale}
\widetilde{\Pi} = \frac{1}{2 \pi i} \int_{|s| = \lambda} \left(\Lientilde + s\right)^{-1} \dd s
\end{equation}
the spectral projector associated to the resonance $0$, and denote by
$$
\widetilde{C}^\bullet = \mathrm{ran}~ \widetilde{\Pi} \subset \mathcal{D}^{'\bullet}(M,E)
$$
the associated space of generalized eigenvectors for $\Lientilde$.  Since $\nabla$ and $\Lientilde$ commute, $\nabla$ induces a differential on the complex $\widetilde{C}^\bullet$. Moreover, $\widetilde{\Pi}$ maps $\mathcal{D}_\Gamma^{'\bullet}(M,E)$ to itself continuously where
$$
\Gamma = \bigcup_{a \in \crit(f)} \overline{{N}^*W^u(a)} \subset T^*M.
$$

\subsection{A variation formula for the Turaev torsion}
Assume that we are given a $\mathcal{C}^1$ family of acyclic connections $\nabla(z)$ on $E$ as in \S\ref{sec:variationconnexion}. We denote by $\widetilde{\Pi}_-(z)$ the spectral projector (\ref{eq:projectormorsesmale}) associated to $\nabla(z)$ and $-\widetilde X$, and set $\widetilde{C}^\bullet_-(z) = \mathrm{ran}~\widetilde{\Pi}_-(z)$. By \cite{dang2017topology} we have that all the complexes $(\widetilde{C}^\bullet(z), \nabla(z))$ are acyclic and there exists cochain contractions $\tilde{k}_-(z) : \widetilde{C}^\bullet_-(z) \to \widetilde{C}_-^{\bullet-1}(z)$. As in \S\ref{subsec:variationformula} we have a variation formula for the Turaev torsion.

\begin{prop}\label{prop:variationturaev}
The map $z \mapsto \tilde{\tau}(z) = \tau_{\frak{e,o}}(\nabla(z))$ is real differentiable on $U$ and for any $z \in U$
$$
\dd (\log \tilde \tau)_z \sigma = -\str{\widetilde{C}^\bullet(z)}\left( \widetilde{\Pi}_-(z)Ê\alpha_z(\sigma) \tilde{k}_-(z)\right) - \int_e \tr \alpha_z(\sigma), \quad \sigma \in \mathbb{C}
$$
where $\alpha_z(\sigma)$ is given by (\ref{eq:alpha_z}) and $e$ is given by (\ref{eq:spider}).
\end{prop}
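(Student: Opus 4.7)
The plan is to model $\tau_{\frak{e,o}}(\nabla(z))$ as the refined torsion of the finite dimensional acyclic complex $(\widetilde{C}^\bullet_-(z), \nabla(z))$ equipped with a suitable $z$-dependent basis, and then to apply Lemma \ref{lem:5.1} together with the change-of-basis formula (\ref{eq:changeofbasis}). Concretely, I fix a basis $u_1^\star, \dots, u_d^\star$ of $E_{x_\star}$ dual to the basis $u_1, \dots, u_d$ of $E_{x_\star}^\vee$ chosen in \S\ref{subsec:turaevtorsion}, and parallel-transport it along $\gamma_a$ via $\nabla(z)$ to obtain a basis $(u_{j,a}^\star(z))_j$ of $E_a$. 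By the results of \cite{dang2017topology}, the currents $[W^s(a)] \otimes u_{j,a}^\star(z)$ admit canonical $\Lie_{-\widetilde{X}}^{\nabla(z)}$-invariant minimal extensions which, collected over all $a,j$, form a basis $\tilde{c}(z)$ of $\widetilde{C}^\bullet_-(z)$ (the element attached to $a$ sitting in degree $|a|$, since $\dim W^s(a)=n-|a|$). The integration pairing puts $\widetilde{C}^\bullet_-(z)$ and $C_\bullet(W, E^\vee)$ in perfect duality, intertwining $\nabla(z)$ with the boundary $\partial^{\nabla^\vee(z)}$, so that $\tau_{\frak{e,o}}(\nabla(z)) = \tau\bigl(\widetilde{C}^\bullet_-(z), \tilde{c}(z)\bigr)$ up to a sign independent of $z$ fixed by $\frak{o}$ and the orientations of the unstable cells.

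Granting this identification, I split the logarithmic derivative using (\ref{eq:changeofbasis}):
$$ \dd (\log \tilde \tau)_{z_0} \sigma = \Bigl[\partial_z \log \tau\bigl(\widetilde{C}^\bullet_-(z), \tilde{c}(z_0)\bigr)\Bigr]_{z=z_0} + \Bigl[\partial_z \log [\tilde{c}(z) : \tilde{c}(z_0)]\Bigr]_{z=z_0}. $$
The first term measures the variation of the torsion of $(\widetilde{C}^\bullet_-(z), \nabla(z))$ with a fixed basis; conjugating $\nabla(z)$ on $\widetilde{C}^\bullet_-(z)$ to a family of differentials on the fixed space $\widetilde{C}^\bullet_-(z_0)$ via $\widetilde{\Pi}_-(z)$, as in the proof of Lemma \ref{lem:variationconnexiontorsion}, puts us in the setting of Lemma \ref{lem:5.1} with $a = \alpha_{z_0}(\sigma)$ and cochain contraction $k = \tilde{k}_-(z_0)$, yielding $-\str{\widetilde{C}^\bullet_-(z_0)}\bigl(\alpha_{z_0}(\sigma) \tilde{k}_-(z_0)\bigr)$, which can be rewritten as $-\str{\widetilde{C}^\bullet(z_0)}\bigl(\widetilde{\Pi}_-(z_0) \alpha_{z_0}(\sigma) \tilde{k}_-(z_0)\bigr)$. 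For the second term, the holonomy variation formula (\ref{lem:holonomy}) shows that the base change on each $E_a$ has determinant $1 - z \int_{\gamma_a} \tr \alpha_{z_0}(\sigma) + o(z)$; since the graded basis $\tilde{c}_k(z)$ in degree $k$ is a wedge over $\{a : |a| = k\}$ and degree $k$ contributes with sign $(-1)^k$ in the formula for $\varphi_{\widetilde{C}^\bullet_-}$ from \S\ref{subsec:torsionfinitedim}, the logarithmic derivative of $[\tilde{c}(z) : \tilde{c}(z_0)]$ equals $-\sum_{a \in \crit(f)}(-1)^{|a|} \int_{\gamma_a} \tr \alpha_{z_0}(\sigma) = -\int_e \tr \alpha_{z_0}(\sigma)$, by (\ref{eq:spider}). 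Summing the two contributions gives the proposition.

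The main obstacle is step~1: the identification of $\widetilde{C}^\bullet_-(z)$ with the dual of the Thom--Smale complex together with the sign tracking. It requires (i) carrying out the construction of \cite{dang2017topology} in the twisted setting, using that the wavefront set of the minimal extension of $[W^s(a)] \otimes u$ is contained in the stable cobundle $\bigcup_a \overline{N^* W^s(a)}$, so that the integration pairing is well-defined; (ii) verifying that $\widetilde{\Pi}_-(z)$ sends these extensions to the claimed basis of $\widetilde{C}^\bullet_-(z)$ and that the pairing is non-degenerate; and (iii) matching the sign coming from $\frak{o}$ and from Laudenbach's cellular boundary formula with the sign convention used in the refined torsion. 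Once these points are established, the rest of the argument is a direct application of Lemma \ref{lem:5.1} and of the holonomy formula (\ref{lem:holonomy}) as sketched above.
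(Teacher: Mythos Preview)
Your strategy is the same as the paper's: realise $\tau_{\frak{e,o}}(\nabla(z))$ as the refined torsion of the finite dimensional resonant complex with a $z$-dependent preferred basis, then split the logarithmic derivative into a Lemma~\ref{lem:5.1} term and a change-of-basis term computed via the holonomy formula~(\ref{lem:holonomy}). The paper carries this out for $+\widetilde X$ and $\nabla^\vee$ (this is how the Thom--Smale complex is set up in \S\ref{subsec:combinatorialcomplex}), obtains the formula for $\tau_{\frak{e,o}}(\nabla(z)^\vee)$, and only at the very end passes to $\tau_{\frak{e,o}}(\nabla(z))$ by replacing $\alpha$ with $-{}^T\alpha$ and dualising $\widetilde{C}^\bullet_{\vee,+}\simeq (\widetilde{C}^{n-\bullet}_-)^\vee$ to convert the supertrace. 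Your direct route through $-\widetilde X$ and $E$ is a legitimate shortcut that avoids this final duality manipulation, at the cost of redoing the preferred-basis construction on the stable side.

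There is, however, a genuine gap in your change-of-basis computation. As written, $\tau\bigl(\widetilde{C}^\bullet_-(z),\tilde c(z_0)\bigr)$ and $[\tilde c(z):\tilde c(z_0)]$ are ill-defined since $\tilde c(z_0)\notin\det\widetilde{C}^\bullet_-(z)$; the correct object is $\widetilde{\Pi}_-(z)\tilde c(z_0)$, as in~(\ref{eq:turaevmorse2}). More importantly, you tacitly assume that $[\tilde c(z):\widetilde{\Pi}_-(z)\tilde c(z_0)]$ factors as a product over $a\in\crit(f)$ of the determinants of the base changes in the fibres $E_a$. This block-diagonality is not automatic: one must show that $\widetilde{\Pi}_-(z)$ applied to the $z_0$-basis element attached to $a$ is a combination of $z$-basis elements attached to the \emph{same} $a$, with coefficients given precisely by $P_{\gamma_a}(z)P_{\gamma_a}(z_0)^{-1}$. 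The paper isolates this as Lemma~\ref{lem:changebasis} and proves it by pairing against the dual basis built from the \emph{unstable} manifolds of $-\widetilde X$, using that $W^s(b)\cap W^u(a)=\emptyset$ when $|a|=|b|$, $a\neq b$ (equation~(\ref{eq:identity})), together with the explicit form~(\ref{eq:pimorse}) of $\widetilde{\Pi}$. Your list of obstacles (i)--(iii) concerns the static identification of the complex; the dynamic statement about how $\widetilde{\Pi}_-(z)$ interacts with the critical-point grading is the real missing piece.
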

The rest of this section is devoted to the proof of Proposition \ref{prop:variationturaev}. For convenience, we will first study the variation of $z \mapsto \tau_{\frak{e,o}}(\nabla(z)^\vee)$.

\subsection{A preferred basis}
Let $a \in \crit(f)$ and $k = |a|$. We denote by $[W^u(a)] \in \mathcal{D}^{'n-k}_\Gamma(M)$ the integration current over the
unstable manifold $W^u(a)$ of $\tilde{X}$, it is a well defined current far from $\partial W^u(a)$. We also pick a cut-off function $\chi_a \in \Cinf(M)$ valued in $[0,1]$ with $\chi_a \equiv 1$ near $a$ and $\chi_a$ is supported in a small neighborhood $\Omega_a$ of $a$, with $\overline{\Omega_a} \cap \partial W^u(a) = \emptyset.$ Recall from \ref{subsec:turaevtorsion} that we have a basis $u_{1,a}, \dots, u_{d,a}$ of $E_a$. Using the parallel transport of $\nabla$, we obtain flat sections of $E$ over $W^u(a)$
that we will still denote by $u_{1,a}, \dots, u_{d,a}.$ Define
\begin{equation}\label{eq:utilde}
\tilde{u}_{j,a} = \widetilde{\Pi} \Bigl( \chi_a [W^u(a)] \otimes u_{j,a}\Bigr) \in \widetilde{C}^{n-k}, \quad j=1, \dots, d.
\end{equation}
By \cite{dang2017spectral} we have that $\bigl\{\tilde{u}_{j,a}, ~a \in \crit(f), ~1\leq j \leq d\bigr\}$ is a basis of $\widetilde{C}^\bullet$. Adapting the proof of \cite[Theorem 2.6]{dangwitten} to the bundle case, we obtain the following proposition which will allow us to compute the Turaev torsion with the help of the complex $\widetilde{C}^\bullet$.
\begin{prop}
The map $\Phi : C_\bullet(W,\nabla) \to \widetilde{C}^{n-\bullet}$ defined by
$$
\Phi \bigl( u_{j,a}\bigr) = \tilde{u}_{j,a}, \quad a \in \crit(f), \quad j=1, \dots, d,
$$
is an isomorphism and satisfies~\footnote{$(-1)^\bullet$ comes from $\partial=(-1)^{\deg +1}\dd$ comparing the boundary $\partial$ and De Rham differential $\dd$}
$$
\Phi \circ \partial^\nabla = (-1)^{\bullet}\nabla \circ \Phi.
$$
\end{prop}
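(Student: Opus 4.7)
The plan is to prove the two assertions separately, with the bulk of the work going into the intertwining relation.

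First, I would verify that $\Phi$ is an isomorphism. Since the family $\{\tilde{u}_{j,a}, \, a \in \crit(f), \, 1 \leq j \leq d\}$ is already known (by \cite{dang2017spectral}) to be a basis of $\widetilde{C}^\bullet$, and since the family $\{u_{j,a}\}$ is a basis of $C_\bullet(W, \nabla)$ by construction, the map $\Phi$ sends a basis to a basis degree-wise (from $C_k(W,\nabla)$ to $\widetilde{C}^{n-k}$), hence is an isomorphism. This step is essentially free.

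For the intertwining relation, my plan is to work with the compactified unstable cell current $[\overline{W}^u(a)] := (e_a)_* [\overline{W}^u(a)]_{\mathrm{orient}} \in \mathcal{D}^{'n-|a|}(M)$, rather than with the cut-off current $\chi_a [W^u(a)]$. Since both $\widetilde{\Pi}(\chi_a[W^u(a)] \otimes u_{j,a})$ and $\widetilde{\Pi}([\overline{W}^u(a)] \otimes u_{j,a})$ are $\Lientilde$-invariant distributional extensions of $[W^u(a)] \otimes u_{j,a}$ away from $\partial W^u(a)$, uniqueness of the minimal extension (see \cite{dang2017spectral}) forces them to coincide. Next, I would use the key fact that $\widetilde{\Pi}$ commutes with $\nabla$ (since both commute with $\Lientilde$), so
\begin{equation*}
\nabla \tilde{u}_{j,a} \;=\; \widetilde{\Pi} \, \nabla \bigl([\overline{W}^u(a)] \otimes u_{j,a}\bigr).
\end{equation*}
Because $u_{j,a}$ is a flat section of $E$ over $\overline{W}^u(a)$ (by definition, via parallel transport of $\nabla$), the Leibniz rule gives $\nabla([\overline{W}^u(a)] \otimes u_{j,a}) = \dd[\overline{W}^u(a)] \otimes u_{j,a}$.

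The next step is to apply the Stokes-type formula $\dd[\overline{W}^u(a)] = (-1)^{n-|a|+1}[\partial \overline{W}^u(a)]$ (cf.~\eqref{eq:boundary}) together with the boundary description from \S\ref{subsec:morsesmale}:
\begin{equation*}
[\partial \overline{W}^u(a)] \;=\; \sum_{|b|=|a|-1} \sum_{\gamma \in \Lie(a,b)} \varepsilon_\gamma \, [\overline{W}^u(b)],
\end{equation*}
where on each piece indexed by $\gamma$, the flat section $u_{j,a}$ restricts (by flatness along the compactified boundary stratum and compatibility with the gluing through $e_a$) to the flat extension over $\overline{W}^u(b)$ of $P_\gamma u_{j,a} \in E_b$. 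Applying $\widetilde{\Pi}$ and recognizing $\widetilde{\Pi}([\overline{W}^u(b)] \otimes P_\gamma u_{j,a}) = \Phi(P_\gamma u_{j,a})$ by definition of $\Phi$, we obtain
\begin{equation*}
\nabla \tilde{u}_{j,a} \;=\; (-1)^{n-|a|+1} \sum_{|b|=|a|-1}\sum_{\gamma \in \Lie(a,b)} \varepsilon_\gamma \, \Phi(P_\gamma u_{j,a}) \;=\; (-1)^{n-|a|+1} \, \Phi\bigl(\partial^\nabla u_{j,a}\bigr).
\end{equation*}
Since $n$ is odd, $(-1)^{n-|a|+1} = (-1)^{|a|}$, giving $\Phi \circ \partial^\nabla = (-1)^\bullet \, \nabla \circ \Phi$.

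The main obstacle is the rigorous justification of the computation $\nabla([\overline{W}^u(a)] \otimes u_{j,a}) = (-1)^{n-|a|+1}\sum \varepsilon_\gamma [\overline{W}^u(b)] \otimes P_\gamma u_{j,a}$, because the unstable manifolds meet at corners and the flat sections are only defined on each open stratum. This is where the smooth compactification with corners from \cite{qin2010moduli} (or \cite{laudenbach1992thom}) is essential: it ensures that $u_{j,a}$ extends smoothly to $\overline{W}^u(a)$, that its restriction to each boundary face agrees with the flat extension of $P_\gamma u_{j,a}$ (by flatness of $\nabla$ together with the Morse--Smale transversality), and that Stokes' formula applies. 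Once this is in place, the rest is a bookkeeping of signs, to be carried out essentially as in the scalar case \cite[Theorem 2.6]{dangwitten}.
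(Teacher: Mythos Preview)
Your proposal is correct and follows essentially the same approach as the paper, which does not give a detailed proof but simply indicates that the result follows by adapting \cite[Theorem~2.6]{dangwitten} to the bundle case. Your sketch --- passing to the compactified cell current, applying Stokes' formula, and identifying the boundary contributions with the Thom--Smale differential via parallel transport along instantons --- is precisely such an adaptation, and your identification of the main technical point (extension of the flat sections to the compactification with corners and the resulting Stokes formula) is accurate.
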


An immediate corollary is that (using the notation of \S\ref{subsec:torsionfinitedim})
\begin{equation}\label{eq:turaevmorse}
\tau_{\frak{e,o}}(\nabla^\vee) = \varphi_{C_\bullet(W,\nabla)}(u)^{-1} = \tau(\widetilde{C}^\bullet, \tilde{u}),
\end{equation}
where $u \in \det C_\bullet(W,\nabla)$ (resp. $\tilde{u}Ê\in \det \widetilde{C}^\bullet$) is the element given by the basis $\{u_{j,a}\}$ (resp. $\{\tilde{u}_{j,a}\})$ and the ordering of the cells $W^u(a)$.

\subsection{Proof of Proposition \ref{prop:variationturaev}}
For any $a \in \crit(f)$ we denote by $P_{\gamma_a}(z) \in \mathrm{Hom}(E_{x_\star}, E_a)$ the parallel transport of $\nabla(z)$ along $\gamma_a$. We set
$$
u_{j,a}(z) = P_{\gamma_a}(z) P_{\gamma_a}(z_0)^{-1} u_{j,a}
$$
and
$$
\tilde{u}_{j,a}(z) = \widetilde{\Pi}(z)\Bigl(\chi_a [W^u(a)] \otimes u_{j,a}(z) \Bigr),
$$
where again we consider $u_{j,a}(z)$ as a $\nabla(z)$-flat section of $E$ over $W^u(a)$ using the parallel transport of $\nabla(z)$.
The construction of Ruelle resonances for Morse-Smale gradient flow follows from the construction of anisotropic Sobolev spaces 
$$\Omega^\bullet(M,E) \subset \widetilde{\H}^\bullet_{1} \subset \widetilde{\H}^\bullet \subset \mathcal{D}^{'\bullet}(M,E),$$
see \cite{dang2016spectral}, on which $\Lie_{\widetilde{X}}^\nabla + s$ is a holomorphic family of Fredholm operators of index $0$ in the region $\{\Re(s) > -2\}$, and such that $\nabla(z)$ is bounded 
$\widetilde{\H}^\bullet_{1} \to \widetilde{\H}^\bullet$. 
Every argument made in \S\ref{subsec:anisotropicuniformconnexion} also stand here and $z\mapsto \widetilde{\Pi}(z)$ is a $\mathcal{C}^1$ family of bounded operators $\widetilde{\H}^\bullet \to \widetilde{\H}^\bullet_{1}$.

Note that by continuity, $\widetilde{\Pi}(z)$ induces an isomorphism $\widetilde{C}^\bullet(z_0) \to \widetilde{C}^\bullet(z)$ for $z$ close enough to zero. Let $\tilde{u}(z) \in \det \widetilde{C}^\bullet(z)$ be the element given by the basis $\{\tilde{u}_{j,a}(z)\}$ and the ordering of the cells $W ^u(a)$. Then by (\ref{eq:turaevmorse}) and (\ref{eq:changeofbasis}) we have
\begin{equation}\label{eq:turaevmorse2}
\tau_{\frak{e,o}}(\nabla(z)^\vee)= \tau\left(\widetilde{C}^\bullet(z), \tilde{u}(z)\right) = \bigl[\tilde{u}(z) : \widetilde{\Pi}(z) \tilde{u}(z_0)Ê\bigr]Ê\tau\left(\widetilde{C}^\bullet(z), \widetilde{\Pi}(z)\tilde{u}(z_0)\right),
\end{equation}
where $\widetilde{\Pi}(z) \tilde{u}(z_0) \in \det \widetilde{C}^\bullet(z)$ is the image of $\tilde{u}$ by the isomorphism $\det \widetilde{C}^\bullet(z_0) \to \det \widetilde{C}^\bullet(z)$ induced by $\widetilde{\Pi}(z)$, and $\tilde{u}(z) = \bigl[\tilde{u}(z) : \widetilde{\Pi}(z) \tilde{u}(z_0)Ê\bigr] \widetilde{\Pi}(z) \tilde{u}(z_0).$ Doing exactly as in \S\ref{subsec:variationconnexionfinitedimensional}, we obtain that $z \mapsto \hat{\tau}(z) = \tau\left(\widetilde{C}^\bullet(z), \widetilde{\Pi}(z)\tilde{u}\right) $ is $\mathcal{C}^1$ and
\begin{equation}\label{eq:variationconnexionmorse}
\dd (\log \hat{\tau})_{z_0} \sigma = - \str{\widetilde{C}^\bullet} \widetilde{\Pi}(z_0)Ê\alpha_{z_0}(\sigma) \tilde{k}(z_0).
\end{equation}
Therefore it remains to compute the variation of $\bigl[\tilde{u}(z) : \widetilde{\Pi}(z) \tilde{u}(z_0)Ê\bigr].$ This is the purpose of the next formula.
\begin{lemm}\label{lem:changebasis}
We have
$$
\bigl[\tilde{u}(z) : \widetilde{\Pi}(z) \tilde{u}(z_0)Ê\bigr] = \prod_{a \in \crit(f)} \det \Bigl(P_{\gamma_a}(z) P_{\gamma_a}(z_0)^{-1}\Bigr)^{(-1)^{n-|a|}}.
$$
\end{lemm}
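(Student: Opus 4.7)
The plan is to expand each $\widetilde{\Pi}(z)\tilde{u}_{j,a}(z_0)$ in the basis $\{\tilde{u}_{m,b}(z)\}$ of $\widetilde{C}^\bullet(z)$, identify the resulting change-of-basis matrix critical-point-by-critical-point, and then convert its determinant into the ratio of graded determinant-line elements. A first reduction: since $\widetilde{\Pi}(z)$ preserves cohomological degree and $\tilde{u}_{j,a}(z_0)\in\widetilde{C}^{n-|a|}(z_0)$, one has
\[
\widetilde{\Pi}(z)\tilde{u}_{j,a}(z_0) \;=\; \sum_{b:\,|b|=|a|}\sum_{m=1}^d M_{m,b;\,j,a}(z)\,\tilde{u}_{m,b}(z),
\]
so the matrix $M(z)$ is block-diagonal with respect to the Morse-index grading.

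To identify each block, I would introduce, for every $b\in\crit(f)$, a linear \emph{leading-singularity map} $\mathrm{LS}_b(z)\colon \widetilde{C}^{n-|b|}(z)\to E_b$. By the description of generalized resonant states of Morse--Smale gradient flows in \cite{dang2017spectral,dang2016spectral}, every element of $\widetilde{C}^{n-|b|}(z)$ restricts, on a Morse neighborhood $U_b$ of $b$, to $[W^u(b)\cap U_b]\otimes v^{(z)}$ plus currents supported on cells of strictly smaller index; one sets $\mathrm{LS}_b(z)(w):=v^{(z)}(b)\in E_b$, which is well defined because the Morse--Smale condition forbids $W^u(b')$ from approaching $b$ when $b'\neq b$ has the same index as $b$. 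By construction of $\tilde{u}_{m,b}(z)$ as the minimal $\Lie_{\widetilde{X}}^{\nabla(z)}$-invariant extension of $[W^u(b)]\otimes u_{m,b}(z)$ near $b$, and by $u_{m,b}(z)|_b=Q_b(z)u_{m,b}$ with $Q_b(z):=P_{\gamma_b}(z)P_{\gamma_b}(z_0)^{-1}$, one gets
\[
\mathrm{LS}_b(z)\bigl(\tilde{u}_{m,b'}(z)\bigr)\;=\;\delta_{bb'}\,Q_b(z)\,u_{m,b}.
\]

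The key step will be to prove $\mathrm{LS}_b(z)\bigl(\widetilde{\Pi}(z)\tilde{u}_{j,a}(z_0)\bigr)=\delta_{ab}\,u_{j,a}$ for $|b|=|a|$. For $b\neq a$, Morse--Smale forces $b\notin\overline{W^u(a)}$; together with the fact that $\widetilde{\Pi}(z)$ preserves the flow-invariant closure $\overline{W^u(a)}$ of the support of $\tilde{u}_{j,a}(z_0)$ (this is where the detailed structure of the resolvent $(\Lie_{\widetilde{X}}^{\nabla(z)}+s)^{-1}$ near critical points is used), the leading singularity at $b$ vanishes. For $b=a$, the claim amounts to saying that $\widetilde{\Pi}(z)$ acts as the identity on the top graded piece for Morse-index $|a|$ of the filtration on currents by closures of unstable manifolds; this is because $\widetilde{\Pi}(z)$ only modifies $\tilde{u}_{j,a}(z_0)$ by terms that are either killed by some power of $\Lie_{\widetilde{X}}^{\nabla(z)}$ and supported on lower-index strata, or lie in the strict sub-filtration. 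Combining both computations yields the matrix equation $\delta_{ab}u_{j,a}=\sum_m M_{m,b;\,j,a}(z)\,Q_b(z)u_{m,b}$, hence $M_{m,b;\,j,a}(z)=0$ for $b\neq a$ and the $d\times d$ block of $M(z)$ at $b=a$ equals $Q_a(z)^{-1}$.

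With this block calculation in hand, $\bigwedge_j\widetilde{\Pi}(z)\tilde{u}_{j,a}(z_0)=\det(Q_a(z))^{-1}\bigwedge_j\tilde{u}_{j,a}(z)$ in each degree $n-|a|$. Assembling over all $a$ using the graded decomposition $\det\widetilde{C}^\bullet(z)=\bigotimes_a\bigl(\det\mathrm{span}_j\{\tilde{u}_{j,a}(z)\}\bigr)^{(-1)^{n-|a|}}$ gives
\[
\widetilde{\Pi}(z)\tilde{u}(z_0)\;=\;\prod_{a\in\crit(f)}\det(Q_a(z))^{-(-1)^{n-|a|}}\,\tilde{u}(z),
\]
which rearranges to the announced formula. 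The main obstacle is the key step above: the preservation of the leading singularity at each critical point under $\widetilde{\Pi}(z)$, which rests on a careful use of the microlocal filtration of generalized resonant states of a Morse--Smale gradient flow near its zeros.
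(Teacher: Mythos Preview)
Your approach is essentially correct and close in spirit to the paper's, but the paper packages the argument more cleanly by using the \emph{dual basis} rather than a leading-singularity map. Concretely, the paper introduces the dual currents $\tilde{s}_{b,j}(z)=\widetilde{\Pi}^\vee(z)\bigl(\chi_b[W^s(b)]\otimes s_{b,j}(z)\bigr)$ built from the stable manifolds and the dual connection $\nabla(z)^\vee$, and simply computes the pairings
\[
\bigl\langle \tilde{s}_{b,j}(z),\,\tilde{u}_{a,i}(z_0)\bigr\rangle
=\begin{cases}
\bigl\langle s_{a,j}(z)(a),\,u_{a,i}(z_0)(a)\bigr\rangle_{E_a^\vee,E_a} & a=b,\\
0 & a\neq b.
\end{cases}
\]
Combined with the finite-rank formula $\widetilde{\Pi}(z)=\sum_{a,j}\langle\tilde{s}_{a,j}(z),\cdot\rangle\,\tilde{u}_{a,j}(z)$, this immediately yields the desired block structure $\widetilde{\Pi}(z)\tilde{u}_{a,i}=\sum_j A_{a,i}^j(z)\tilde{u}_{a,j}(z)$ with $A_a(z)=Q_a(z)^{-1}$.

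The advantage of the dual-basis route is that your ``key step'' --- preservation of the leading singularity under $\widetilde{\Pi}(z)$, which you rightly flag as the main obstacle --- evaporates. The $a\neq b$ vanishing follows from the support observation $W^u(a)\cap W^s(b)=\emptyset$ (Morse--Smale, same index) together with the fact that supports of $\tilde{u}_{a,i}(z_0)$ and $\tilde{s}_{b,j}(z)$ lie in $\overline{W^u(a)}$ and $\overline{W^s(b)}$ respectively; one checks the pairing vanishes already at the level of the propagators $\e^{-t\Lie_{\widetilde{X}}}$ before meromorphic continuation. The $a=b$ case reduces to a local computation at the single transverse intersection point $\{a\}=W^u(a)\cap W^s(a)$, using that $\tilde{s}_{a,j}(z)=[W^s(a)]\otimes s_{a,j}(z)$ near $a$. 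Your leading-singularity map $\mathrm{LS}_b(z)$ is, in effect, exactly ``pair against $\tilde{s}_{b,j}(z)$ and read off the coefficient'', so the two arguments are the same idea; the paper's formulation just makes the required support/locality properties transparent and avoids invoking a filtration on resonant states.
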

\begin{proof}
By definition of the basis $\{u_{a,j}\}$ in \S\ref{subsec:turaevtorsion} it suffices to show that for $z$ small enough
\begin{equation}\label{eq:identitybasis}
\widetilde{\Pi}(z) \tilde{u}_{a,i} = \sum_{j=1}^d A_{a,i}^j(z) \tilde{u}_{a,j}(z), \quad aÊ\in \crit(f), \quad 1\leq i,j \leq d, 
\end{equation}
where the coefficients $A_{a,i}^j(z)$ are defined by  $\displaystyle{u_{a,i}(z_0)(a) =  \sum_{j=1}^d A_{a,i}^j(z) u_{a,j}}(z)(a)$.

Consider the dual operator $\Lie_{-\widetilde{X}}^{\nabla(z)^{\vee}} : \Omega^\bullet(M,E^\vee) \to \Omega^\bullet(M,E^\vee).$ The above constructions, starting from a dual basis $s_1, \dots, s_d \in E_{x_\star}^\vee$ of $u_1, \dots, u_d$, give a basis $\{{s}_{a,i}(z)\}$ of each $\Gamma(W^s(a), \nabla(z)^\vee)$ (the space of flat section of $\nabla(z)^\vee$ over $W^s(a)$), since the unstable manifolds of $-\widetilde{X}$ are the stable ones of $\widetilde{X}$. Let $\widetilde{C}^\bullet_\vee(z)$ be the range of the spectral projector $\widetilde{\Pi}^\vee(z)$ from (\ref{eq:projectormorsesmale}) associated to the vector field $-\widetilde{X}$ and the connection $\nabla(z)^\vee$. We have a basis $\{\tilde{s}_{a,i}(z)\}$ of $\widetilde{C}^\bullet_\vee(z)$ given by
$$
\tilde{s}_{a,i}(z) = \widetilde{\Pi}^\vee(z)\Bigl(\chi_a [W^s(a)] \otimes s_{a,i}(z)\Bigr).
$$
We will prove that for any $a,b \in \crit(f)$ with same Morse index we have for any $1\leq i,j \leq d$,
\begin{equation}\label{eq:identity}
\bigr\langle  \tilde{s}_{a,j}(z), ~\tilde{u}_{a,i}(z_0)\bigr\rangle = \left\{ \begin{matrix} \bigl\langle s_{a,j}(z)(a), u_{a,i}(z_0)(a) \bigr\rangle_{E_a^\vee, E_a} &\text{ if }a=b, \\
0 &\text{ if }a \neq b\end{matrix}\right..
\end{equation}

First assume that $a \neq b$. Then $W^u(a) \cap W^s(b) = \emptyset$ by the transversality condition, since $a$ and $b$ have same Morse index. Therefore for any $t_1, t_2 \geq 0$, we have
\begin{equation}\label{eq:bigpairing}
\biggl\langle \e^{-t_1 \Lie_{-\widetilde{X}}^{\nabla(z)^\vee}} \Bigl(\chi_b [W^s(b)] \otimes s_{b,j}(z)\Bigr), ~ \e^{-t_2 \Lie_{\widetilde{X}}^{\nabla(z_0)}} \Bigl(\chi_a [W^u(a)] \otimes u_{a,i}(z)\Bigr) \biggr\rangle = 0,
\end{equation}
since the currents in the pairing have disjoint support because they are respectively contained in $W^s(b)$ and $W^u(a)$. Now notice that for $\Re(s)$ big enough, one has
$$
\left(\Lie_{-\widetilde{X}}^{\nabla(z)^\vee} + s\right)^{-1} = \int_0^\infty \e^{-t \Lie_{-\widetilde{X}}^{\nabla(z)^\vee}} \e^{-ts} \dd t \quad \text{ and } \quad \left(\Lie_{\widetilde{X}}^{\nabla(z_0)} + s\right)^{-1} = \int_0^\infty \e^{-t \Lie_{\widetilde{X}}^{\nabla(z_0)}} \e^{-ts} \dd t.
$$
Therefore the representation (\ref{eq:projectormorsesmale}) of the spectral projectors and the analytic continuation of the above resolvents imply with (\ref{eq:bigpairing}) that $\bigr\langle  \tilde{s}_{b,j}(z), ~\tilde{u}_{a,i}\bigr\rangle = 0$.

Next assume that $a=b$. Then $W^u(a) \cap W^s(a) = \{a\}$. Since the support of $\tilde{s}_{a,i}(z)$ (resp. $\tilde{u}_{a,i}(z_0)$) is contained in the closure of $W^s(a)$ (resp. $W^u(a)$), we can compute
$$
\begin{aligned}
\biggl\langle \widetilde{\Pi}^\vee(z) \Bigl(\chi_a [W^s(a)] \otimes s_{a,j}(z)\Bigr),~ \widetilde{\Pi}\Bigl(&\chi_a [W^u(a)] \otimes u_{a,i}(z_0)\Bigr) \biggr\rangle \\
&=  \Bigl\langle \chi_a [W^s(a)] \otimes s_{a,j}(z), ~\chi_a [W^u(a)] \otimes u_{a,i}(z_0)\Bigr\rangle \\
&= \Bigl\langle [a], \langle s_{a,j}(z), u_{a,i}(z_0)Ê\rangle_{E^\vee, E}\Bigr \rangle,
\end{aligned}
$$
where the first equality stands because $\tilde{s}_a(z) = [W^s(a)] \otimes s_{a,j}(z)$ near $a$ by \cite[Proposition 7.1]{dang2017spectral}. This gives (\ref{eq:identity}).

This identity immediately yields (\ref{eq:identitybasis}) with $A_{a,i}^j(z) = \bigl\langle s_{a,j}(z)(a), u_{a,i}(z_0)(a) \bigr\rangle_{E_a^\vee, E_a}$ since we have
\begin{equation}\label{eq:pimorse}
\widetilde{\Pi}(z) = \sum_{a,i} \bigl\langle \tilde{s}_{a,j}(z), \cdot \bigr\rangle \tilde{u}_{a,j}(z)
\end{equation}
\end{proof}
Using the lemma, we obtain, if $\mu(z) = \bigl[\tilde{u}(z) : \widetilde{\Pi}(z) \tilde{u}(z_0)Ê\bigr]$,
$$
\dd (\log \mu)_{z_0} \sigma = \sum_{a \in \crit(f)} (-1)^{n-|a|} \tr \Bigl(A_{\gamma_a}(z_0, \sigma) P_{\gamma_a}(z_0)^{-1}\Bigr)
$$
where $A_{\gamma_a}(z_0, \sigma) = \dd \left(P_{\gamma_a}\right)_{z_0}\sigma$. Since $n$ is odd, 
we obtain by definition of $e$ and (\ref{lem:holonomy})
$$
\dd (\log \mu)_{z_0}\sigma = \sum_{a \in \crit(f)} (-1)^{|a|} \int_{\gamma_a}  \tr \alpha_{z_0}(\sigma) = \int_e \tr \alpha_{z_0}(\sigma).
$$
This equation combined with (\ref{eq:turaevmorse2}) and (\ref{eq:variationconnexionmorse}) yields, if $\tilde{\tau}^\vee(z) = \tau_{\frak{e,o}}(\nabla(z)^\vee)$
$$
\dd (\log \tilde\tau^\vee)_{z_0}\sigma = -\str{\widetilde{C}^\bullet}{\widetilde{\Pi}(z_0) \alpha_{z_0}(\sigma) \tilde{k}(z_0)} + \int_e \tr \alpha_{z_0}(\sigma).
$$
The proof is almost finished. But since 
we need to formulate our results in terms of the cohomological torsion, we still have to make some tedious formal manipulations
to pass to the cohomological formalism.
The first step is to replace $\nabla$ by the dual connection $\nabla^\vee$ in the above formula.
We also introduce some notation. The operator $\widetilde{\Pi}$ was the spectral projector
on the kernel of $\mathcal{L}_{\widetilde{X}}^\nabla$. Now we need to work with the spectral projector on
$\ker\left(\mathcal{L}_{\widetilde{X}}^{\nabla(z_0)^\vee}\right)$ (resp. $\mathcal{L}_{-\widetilde{X}}^{\nabla(z_0)} $), which we denote by
$\widetilde{\Pi}^\vee_+(z_0)$ (resp $\widetilde{\Pi}_-(z_0)$) where the $+$ (resp $-$) sign emphasizes the fact that we deal with $+\widetilde{X}$ (resp $-\widetilde{X}$).
Now note that 
$$
\nabla(z)^\vee = \nabla(z_0)^\vee - {}^T\bigl(\alpha_{z_0}(z-z_0)\bigr) + o(z-z_0).
$$
Therefore, applying what precedes to $\tilde \tau(z)$ we get
\begin{equation}\label{eq:variationfinal}
\dd (\log \tilde \tau)_{z_0} \sigma = -\str{\widetilde{C}^\bullet_{\vee,+}} \left({\widetilde{\Pi}^\vee_{+}(z_0) \bigl(-{}^T\alpha_{z_0}(\sigma)\bigr) \tilde{k}^\vee_+(z_0)}\right) + \int_e \tr \left(-{}^T\alpha_{z_0}(\sigma)\right),
\end{equation}
where $\widetilde{\Pi}^\vee_+(z_0)$ is the spectral projector (\ref{eq:projectormorsesmale}) associated to $\nabla(z_0)^\vee$ and $+ \widetilde{X}$, $\widetilde{C}^\bullet_{\vee,+} = \mathrm{ran} ~\widetilde{\Pi}^\vee_+(z_0)$, and $\widetilde{k}^\vee_+(z_0)$ is any cochain contraction on the complex $(\widetilde{C}^\bullet_{\vee,+}, \nabla(z_0)^\vee)$. Now, we have the identification 
$$
\left(\widetilde{C}^k_{\vee,+}\right)^\vee \simeq \widetilde{C}^{n-k}_{-},
$$
where $\widetilde{C}^{\bullet}_{-}$ is the range of $\widetilde{\Pi}_-(z_0)$, the spectral projector (\ref{eq:projectormorsesmale}) associated to $\nabla(z_0)$ and $-\widetilde{X}$. It is easy to show that under this identification, one has
$$
\left(\widetilde{\Pi}^\vee_+ \bigl({}^T\alpha_{z_0}(\sigma)\bigr) \tilde{k}(z_0)\right)^\vee = \widetilde\Pi_-(z_0) \alpha_{z_0}(\sigma) k_-(z_0) + \left[\widetilde\Pi_-(z_0) \alpha_{z_0}(\sigma), k_-(z_0)\right],
$$
where for any $j \in \{0, \dots, n\}$, we set
$$
k_-(z_0)|_{\widetilde{C}_-^{n-j}} = (-1)^{j+1}\bigl(\tilde{k}^\vee_+(z_0)|_{\tilde C^{j+1}}\bigr)^\vee : \widetilde{C}_-^{n-j} \to \widetilde{C}_-^{n-j-1}.
$$
Then $k_-(z_0)$ is a cochain contraction on the complex $(\widetilde C_-^\bullet, \nabla(z_0))$.
As a consequence, since $n$ is odd,
$$
\str{\widetilde{C}^\bullet_{\vee,+}} \left({\widetilde{\Pi}^\vee_{+}(z_0) \bigl(-{}^T\alpha_{z_0}(\sigma)\bigr) \tilde{k}^\vee_+(z_0)}\right) = \str{\widetilde{C}^\bullet_-} \widetilde\Pi_-(z_0) \alpha_{z_0}(\sigma) k_-(z_0).
$$
This concludes by (\ref{eq:variationfinal}) since $\tr(-{}^T\beta) = -\tr \beta$ for any $\beta \in \Omega^1(M,\mathrm{End}(E)).$

\section{Comparison of the dynamical torsion with the Turaev torsion}\label{sec:comparison}
In this section we see the dynamical torsion and the Turaev torsion as functions on the space of acyclic representations. This is an open subset of a complex affine algebraic variety. Therefore we can compute the derivative of  $\tau_\vartheta/\tau_{\frak{e,o}}$ along holomorphic curves, using the variation formulae obtained in \S\S\ref{sec:variationconnexion},\ref{sec:turaevvariation}. From this computation we will deduce Theorem \ref{thm:main2}.

\subsection{The algebraic structure of the representation variety}
We describe here the analytic structure of the space
$$
\Rep(M,d) = \mathrm{Hom}(\pi_1(M), \mathrm{GL}(\mathbb{C}^d))
$$
of complex representations of degree $d$ of the fundamental group. Since $M$ is compact, $\pi_1(M)$ is generated by a finite number of elements $c_1, \dots, c_L \in \pi_1(M)$ which satisfy finitely many relations. A representation $\rho \in \Rep(M,d)$ is thus given by $2L$ invertible $d \times d$ matrices $\rho(c_1), \dots, \rho(c_L)$, $ \rho(c_1^{-1}), \dots \rho(c_L^{-1})$ with complex coefficients satisfying finitely many polynomial equations. Therefore the set $\Rep(M,d)$ has a natural structure of a complex affine algebraic set. We will denote the set of its singular points by $\Sigma(M,d).$ In what follows, we will only consider the classical topology of $\Rep(M,d)$.

We will say that a representation $\rho \in \Rep(M,d)$ is acyclic if $\nabla_\rho$ is acyclic. We denote by $\Rep_{\mathrm{ac}}(M,d) \subset \Rep(M,d)$ the space of acyclic representations. This is an open set (in the Zariski topology, thus in the classical one) in $\Rep(M,d)$, see \cite[\S4.1]{burghelea2006euler}. For any $\rho \in \Rep_{\mathrm{ac}}(M,d)$ we set
$$
\tau_\vartheta(\rho) = \tau_\vartheta(\nabla_\rho), \quad \tau_{\frak{e,o}}(\rho) = \tau_{\frak{e,o}}(\nabla_\rho),
$$
for any Euler structure $\frak{e}$ and any cohomological orientation $\frak{o}$.

\subsection{Holomorphic families of acyclic representations}
\label{ss:holoconnections}
Let $\rho_0 \in \Rep_{\mathrm{ac}}(M,d) \setminus \Sigma(M,d)$ be a regular point. Take $\delta > 0$ and $\rho(z), ~|z| < \delta,$ a holomorphic curve in $\Rep_{\mathrm{ac}}(M,d) \setminus \Sigma(M,d)$ such that $\rho(0) = \rho_0$. Theorems \ref{thm:main2} and \ref{thm:main3} will be a consequence of the following

\begin{prop}\label{prop:comparison}
Let $X$ be a contact Anosov vector field on $M$. Let $\frak{e} = [\widetilde{X}, e]$ be the Euler structure defined in \S\ref{subsec:turaevtorsion}. Note that $-\cs(-\widetilde{X}, X) + e$ is a cycle and defines a homology class $h \in H_1(M, \mathbb{Z})$. Then $z\mapsto \tau_\vartheta(\rho(z))/\tau_{\frak{e,o}}(\rho(z))$ is complex differentiable and
$$
\frac{\dd}{\dd z} \left(\frac{\tau_\vartheta(\rho(z))}{\tau_{\frak{e,o}}(\rho(z))}{\bigl\langle \det \rho(z),h \bigr\rangle}\right)= 0
$$
for any cohomological orientation $\frak{o}$.
\end{prop}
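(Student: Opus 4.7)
The approach is to compute the logarithmic derivative of the ratio $\tau_\vartheta(\rho(z))/\tau_{\frak{e,o}}(\rho(z))$ along the holomorphic curve $\rho(z)$, using Proposition \ref{prop:variationconnexion} for the dynamical torsion and Proposition \ref{prop:variationturaev} for the Turaev torsion, and then to match the result with the derivative of the abelian factor $\langle\det\rho(z),h\rangle$. Holomorphicity of each of $\tau_\vartheta(\rho(z))$ and $\tau_{\frak{e,o}}(\rho(z))$ follows immediately from those variation formulas: since $z\mapsto\nabla(z)$ is holomorphic, the Cauchy--Riemann relation gives $\nu_{z_0}=i\mu_{z_0}$ in the notation of \S\ref{subsec:familyconnexion}, so $\alpha_{z_0}(\sigma)=\sigma\mu_{z_0}$ is $\mathbb{C}$-linear in $\sigma$, and both variation formulas are $\mathbb{C}$-linear in $\sigma$. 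Note also that $\tr\alpha_{z_0}(\sigma)$ is closed: differentiating $\nabla(z)^2=0$ at $z_0$ yields $[\nabla(z_0),\alpha_{z_0}(\sigma)]=0$, from which the standard graded cyclicity argument for traces of endomorphism-valued forms gives $\mathrm{d}\tr\alpha_{z_0}(\sigma)=0$.

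The next step is to recast Proposition \ref{prop:variationturaev} as a flat supertrace identity. Repeating verbatim the calculations of \S\ref{subsec:variationformula} with the Anosov data replaced by the Morse--Smale one yields
\begin{equation*}
\str_{\widetilde C^\bullet_{-}(z_0)}\bigl(\widetilde\Pi_{-}(z_0)\alpha_{z_0}(\sigma)\tilde k_{-}(z_0)\bigr) = \strf\bigl(\alpha_{z_0}(\sigma)\widetilde K_{-}(z_0)\e^{-\varepsilon\Lientildem}\bigr),
\end{equation*}
where $\widetilde K_{-}(z):=\iota_{-\widetilde X}\widetilde Y_{-}(z)(\id-\widetilde\Pi_{-}(z))+\tilde k_{-}(z)\widetilde\Pi_{-}(z)$ is defined as in (\ref{eq:K}) from the regular part $\widetilde Y_{-}(z)$ at $s=0$ of the resolvent of $\Lie_{-\widetilde X}^{\nabla(z)}$, and satisfies $\nabla(z_0)\widetilde K_{-}(z_0)+\widetilde K_{-}(z_0)\nabla(z_0)=\id$. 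Subtracting the two variation formulas then gives
\begin{equation*}
\mathrm{d}\log\bigl(\tau_\vartheta/\tau_{\frak{e,o}}\bigr)_{z_0}\sigma = -\strf\bigl(\alpha_{z_0}(\sigma)\bigl[K(z_0)\e^{-\varepsilon\Lien}-\widetilde K_{-}(z_0)\e^{-\varepsilon\Lientildem}\bigr]\bigr) + \int_e\tr\alpha_{z_0}(\sigma).
\end{equation*}

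Now fix a smooth homotopy $(X_u)_{u\in[0,1]}$ between $-\widetilde X$ and $X$ with $H$ transverse to the zero section (possible since $X$ is nonsingular), and let $R_\varepsilon$ be the corresponding operator from Lemma \ref{lem:homotopy}. A short calculation from $[\nabla,K]=[\nabla,\widetilde K_{-}]=\id$ and $[\nabla,\Lien]=[\nabla,\Lientildem]=0$ shows that both $\alpha(K\e^{-\varepsilon\Lien}-\widetilde K_{-}\e^{-\varepsilon\Lientildem})$ and $\alpha R_\varepsilon$ have the same supercommutator with $\nabla$, namely $-\alpha(\e^{-\varepsilon\Lien}-\e^{-\varepsilon\Lientildem})$. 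The key step, which I would carry out by a parametrized cochain-contraction construction interpolating $K$ and $\widetilde K_{-}$ along the homotopy $X_u$ in the spirit of \cite{bismut1992extension}, is to produce an operator $F_\varepsilon$ whose Schwartz kernel has wave front set disjoint from the conormal to the diagonal, and such that
\begin{equation*}
\alpha\bigl(K\e^{-\varepsilon\Lien}-\widetilde K_{-}\e^{-\varepsilon\Lientildem}\bigr) - \alpha R_\varepsilon = [\nabla,F_\varepsilon].
\end{equation*}
Granting this, Proposition \ref{p:flattracesupercomm} kills the commutator term and Lemma \ref{lem:homotopycs} gives $\strf(\alpha R_\varepsilon)=\langle\tr\alpha_{z_0}(\sigma),\cs(-\widetilde X,X)\rangle$, so
\begin{equation*}
\mathrm{d}\log\bigl(\tau_\vartheta/\tau_{\frak{e,o}}\bigr)_{z_0}\sigma = \langle\tr\alpha_{z_0}(\sigma),-\cs(-\widetilde X,X)+e\rangle = \langle\tr\alpha_{z_0}(\sigma),h\rangle.
\end{equation*}

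To conclude, differentiating the holonomy formula (\ref{lem:holonomy}) gives $\partial_z\log\det\rho_{\nabla(z)}(\gamma)|_{z_0}=-\int_\gamma\tr\alpha_{z_0}(1)$ for any loop $\gamma$ based at $x_\star$, and extending linearly to the cycle $h$ (the character $\det\rho$ factors through $H_1(M,\mathbb{Z})$) yields $\partial_z\log\langle\det\rho(z),h\rangle|_{z_0}=-\langle\tr\alpha_{z_0}(1),h\rangle$. Combined with the previous display, this gives $\partial_z\log\bigl(\tau_\vartheta/\tau_{\frak{e,o}}\cdot\langle\det\rho,h\rangle\bigr)|_{z_0}=0$, which is the claim. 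The main obstacle is the construction of $F_\varepsilon$: it requires a careful parametric analysis of the resolvent $Y_u$ and cochain contraction $k_u$ along the nonsingular-to-Morse--Smale family $X_u$, together with uniform wave front bounds ensuring that all flat traces remain well defined throughout the homotopy, in the spirit of the anisotropic estimates from \S\ref{subsec:anisotropicuniformconnexion} and the wave front bound (\ref{eq:wfinterpolator}) underlying $R_\varepsilon$.
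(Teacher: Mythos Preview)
Your overall structure is correct and matches the paper: apply the two variation formulas, rewrite the Turaev side as a flat super trace via Proposition~\ref{prop:resolventmorse}, subtract, and reduce to $\strf\alpha R_\varepsilon$ via a supercommutator with $\nabla$, then invoke Lemma~\ref{lem:homotopycs}. The holonomy computation at the end is also right.

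The gap is exactly where you flag it: the construction of $F_\varepsilon$. Your proposed route---a parametrized cochain contraction $k_u$ and resolvent $Y_u$ along the homotopy $X_u$ between $-\widetilde X$ and $X$---is not only delicate, it is almost certainly unworkable as stated. The intermediate vector fields $X_u$ are neither Anosov nor Morse--Smale, so there is no spectral theory for $\Lie_{X_u}^\nabla$ giving you a meromorphic resolvent, a spectral projector at $0$, or a finite-dimensional resonant complex on which to build $k_u$. The anisotropic estimates of \S\ref{subsec:anisotropicuniformconnexion} require a fixed hyperbolic splitting and do not survive such interpolation.

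The paper avoids this entirely with a much simpler device. Since $\nabla$ is acyclic, the Hodge Laplacian $\Delta=\nabla\nabla^\star+\nabla^\star\nabla$ (for any metric and any hermitian structure on $E$) is invertible, and $J=\nabla^\star\Delta^{-1}$ is a \emph{pseudodifferential} cochain contraction: $[\nabla,J]=\id$. One then sets
\[
G_\varepsilon \;=\; J\Bigl(K\e^{-\varepsilon\Lien}-\widetilde K_-\e^{-\varepsilon\Lientildem}-R_\varepsilon\Bigr),
\]
and a two-line computation using $[\nabla,K]=[\nabla,\widetilde K_-]=\id$ and $[\nabla,R_\varepsilon]=\e^{-\varepsilon\Lien}-\e^{-\varepsilon\Lientildem}$ gives $[\nabla,G_\varepsilon]=K\e^{-\varepsilon\Lien}-\widetilde K_-\e^{-\varepsilon\Lientildem}-R_\varepsilon$, hence $[\nabla,\alpha G_\varepsilon]=-\alpha(\cdots)$. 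The wave front issue is then trivial: $J$, $\alpha$, $\nabla$ are pseudodifferential (wave front in $N^*\Delta$), while the bracketed difference already has wave front disjoint from $N^*\Delta$, so by composition $\WF(\alpha G_\varepsilon)\cap N^*\Delta=\emptyset$ and Proposition~\ref{p:flattracesupercomm} applies. No interpolation of dynamics is needed; the Hodge contraction is a fixed, smooth, universal homotopy inverse to $\nabla$.
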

Proposition \ref{prop:comparison} relies on the variation formulae given by Propositions \ref{prop:variationconnexion} and \ref{prop:variationturaev}, and Lemma \ref{lem:homotopycs} which gives a topological interpretation of those. 

\subsection{An adapted family of connections}
Following \cite[\S4.1]{braverman2017refined}, there exists a flat vector bundle $E$ over $M$ and a $\mathcal{C}^1$ family of connections $\nabla(z), ~|z|Ê< \delta,$  in the sense of  \S\ref{subsec:familyconnexion}, such that \footnote{It is actually stated in \cite[\S4.1]{braverman2017refined} that one can find a $\mathcal{C}^1$ family of connections satisfying (\ref{eq:rhonabla}); however looking carefully at the proofs one can choose the family $\nabla(z)$ to be $\mathcal{C}^1$ in $z$.} 
\begin{equation}\label{eq:rhonabla}
\rho_{\nabla(z)} = \rho(z)
\end{equation}
for every $z$; we can moreover ask  the family $\nabla(z)$ to be complex differentiable at $z=0$, that is, 
\begin{equation}\label{eq:nabla(z)}
\nabla(z) = \nabla + z \alpha + o(z),
\end{equation}
where $\nabla = \nabla(0)$ and $\alpha \in \Omega^1(M,\mathrm{End}(E))$. Note that flatness of $\nabla(z)$ 
implies
$$
[\nabla, \alpha] = \nabla \alpha + \alpha \nabla = 0.
$$

\subsection{A cochain contraction induced by the Morse-Smale gradient flow}
Let
$$
\left(\Lientildem + s\right)^{-1} = \frac{\widetilde{\Pi}_-}{s} + \widetilde{Y} + \dom(s)
$$
be the Laurent expansion of $\left(\mathcal{L}^{\nabla}_{-\widetilde{X}} + s\right)^{-1}$ near $s=0$. The fact that $s=0$ is a simple pole comes from \cite{dang2016spectral}. As in \ref{subsec:cochain}, we consider the operator
$$
\widetilde{K} = \iota_{-\widetilde{X}}\widetilde{Y}(\id - \widetilde{\Pi}_-) + \tilde{k}_-Ê\widetilde{\Pi}_- ~:~ \Omega^\bullet(M,E) \to \mathcal{D}^{'\bullet}(M,E),
$$
where $\tilde{k}_-$ is any cochain contraction on $\widetilde{C}^\bullet_- = \mathrm{ran}~\widetilde{\Pi}_-$. Note that we have the identity
\begin{equation}\label{eq:cochainmorse}
[\nabla, \widetilde{K}] = \nabla \widetilde{K} + \widetilde{K}Ê\nabla = \id.
\end{equation}
The next proposition will allow us to interpret the term $\str{\widetilde{C}^\bullet} \widetilde{\Pi}_-(z)Ê\alpha_z(\sigma) \tilde{k}_-(z)$ appearing in Proposition \ref{prop:variationturaev} as a flat trace similar to the one appearing in Proposition \ref{prop:variationconnexion}. This will be crucial for the comparison between $\tau_\vartheta$ and $\tau_{\frak{e,o}}$.

\begin{prop}\label{prop:resolventmorse}
For $\varepsilon > 0$ small enough, the wavefront set of the Schwartz kernel of the operator $\iota_{-\widetilde{X}}\widetilde{Y}Ê(\id - \widetilde{\Pi}_-) \e^{-\varepsilon \Lientildem}$ does not meet the conormal to the diagonal in $M \times M$ and we have for any $\alpha \in \Omega^1(M, \mathrm{End}(E))$
$$
\strf \alpha \iota_{-\widetilde{X}} \widetilde{Y}Ê(\id - \widetilde{\Pi}_-)\e^{-\varepsilon \Lientildem} = 0.
$$
\end{prop}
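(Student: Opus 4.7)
The plan is to combine an integral representation of $\widetilde Y(\id - \widetilde\Pi_-)$ with a Guillemin-type trace formula adapted to the Morse--Smale gradient flow, exploiting in a crucial way that $\widetilde X$ vanishes at every critical point. Since $s=0$ is a simple pole of $(\Lientildem + s)^{-1}$ on the appropriate anisotropic Sobolev spaces, the spectral gap separating $0$ from the other resonances makes the integral
$$
\widetilde Y(\id - \widetilde\Pi_-)\, \e^{-\varepsilon\Lientildem} = \int_\varepsilon^\infty \e^{-t\Lientildem}(\id - \widetilde\Pi_-)\, \dd t
$$
norm-convergent. Once we justify exchanging the flat trace with this integral, the identity reduces to showing $\strf \bigl(\alpha\iota_{-\widetilde X}\e^{-t\Lientildem}(\id - \widetilde\Pi_-)\bigr) = 0$ for every $t>0$.

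The key geometric input is that $-\widetilde X$ is tangent to every stable manifold $W^s(a)$, so that $\iota_{-\widetilde X}[W^s(a)] = 0$ as a current: indeed, for $\omega \in \Omega^{|a|+1}(M)$ one has $\langle\iota_{-\widetilde X}[W^s(a)],\omega\rangle = \pm \int_{W^s(a)} \iota_{-\widetilde X|_{W^s(a)}} i_{W^s(a)}^*\omega$, and $i_{W^s(a)}^*\omega = 0$ for degree reasons. Since $\iota_{-\widetilde X}$ commutes with the resolvent of $\Lientildem$ and hence with $\widetilde\Pi_-$, the basis description $\tilde u^-_{a,j} = \widetilde\Pi_-\bigl(\chi_a[W^s(a)] \otimes u_{j,a}\bigr)$ of $\widetilde C^\bullet_-$ yields $\iota_{-\widetilde X}\tilde u^-_{a,j}=0$, i.e.\ $\iota_{-\widetilde X}\widetilde\Pi_-=0$. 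This eliminates the $\widetilde\Pi_-$ contribution from the integrand, leaving us to prove $\strf\bigl(\alpha\iota_{-\widetilde X}\e^{-t\Lientildem}\bigr) = 0$ for each $t>0$.

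For this last step I would apply a Guillemin trace formula analogous to the one underlying \S\ref{subsec:theruelle}, but adapted to the Morse--Smale setting. Because the Morse--Smale condition precludes periodic orbits in positive finite time, the only fixed points of $\e^{t\widetilde X}$ are the critical points of $f$; the resulting formula expresses $\strf\bigl(\alpha\iota_{-\widetilde X}\e^{-t\Lientildem}\bigr)$ as a finite sum over $a\in\crit(f)$ of terms proportional to the local symbol $\alpha(a)\iota_{-\widetilde X(a)}$ evaluated at the fixed point. Since $\widetilde X(a) = 0$ for every such $a$, every summand vanishes identically.

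The main obstacle is the microlocal bookkeeping needed to give rigorous meaning to the above manipulations. I would split it into two claims: (i) the wavefront set of the Schwartz kernel of $\iota_{-\widetilde X}\widetilde Y(\id - \widetilde\Pi_-)\e^{-\varepsilon\Lientildem}$ avoids $N^*\Delta$; (ii) the flat trace commutes with the $t$-integral. For (i), the kernel of $\e^{-t\Lientildem}$ for $t\geq\varepsilon>0$ is concentrated on the graph of the time-$t$ gradient flow; this graph meets the diagonal only at the critical points, and the hyperbolic linearization at $a$ ensures the wavefront direction lies in $E_u^*(a) \times E_s^*(a)$, transverse to $N^*\Delta$. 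The contribution of $\widetilde\Pi_-$ is handled similarly since its kernel lies in $\bigcup_a N^*W^s(a)\times N^*W^u(a)$. Claim (ii) follows from the continuity of the flat trace on currents with a fixed wavefront bound (cf.\ \S\ref{subsec:flattrace}) together with the uniform operator-norm decay of $\e^{-t\Lientildem}(\id-\widetilde\Pi_-)$ provided by the spectral gap.
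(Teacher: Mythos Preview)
Your overall strategy matches the paper's exactly: integral representation of $\widetilde Y(\id-\widetilde\Pi_-)\e^{-\varepsilon\Lientildem}$, the identity $\iota_{-\widetilde X}\widetilde\Pi_-=0$, and the Atiyah--Bott fixed-point formula at critical points (where $\widetilde X$ vanishes) to kill each integrand. The first three paragraphs are fine.

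The gap is in your claim (ii). Operator-norm decay of $\e^{-t\Lientildem}(\id-\widetilde\Pi_-)$ on the anisotropic Sobolev spaces is \emph{not} enough to exchange $\strf$ with $\int_\varepsilon^\infty\dd t$. The flat trace is continuous only on $\mathcal D'_\Gamma$ for a fixed cone $\Gamma$ disjoint from $N^*\Delta$; what you need is that the family of Schwartz kernels is bounded (in fact decaying) in $\mathcal D'^n_\Gamma(M\times M)$ uniformly in $t\to\infty$. Spectral-gap estimates on $\widetilde{\mathcal H}^\bullet$ control Sobolev norms weighted along $E^*_u/E^*_s$, but they say nothing directly about the seminorms defining $\mathcal D'_\Gamma$, and the two topologies are not comparable in the direction you need. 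The paper fills this with an explicit local computation (its Lemma~\ref{lem:wfsetpropagator}): one linearizes $\widetilde X$ in a Morse chart at each critical point $a$, writes $\e^{-t\widetilde X}(x)=\e^{-tA}x$ with $A$ hyperbolic, and checks by hand that the kernel of $\chi_a\e^{-(t+\varepsilon)\Lie_{\widetilde X}}(\id-\widehat\Pi)\chi_a$ is $\mathcal O_{\mathcal D'_\Gamma}(\e^{-ct})$ for some $c>0$ and some $\Gamma$ disjoint from $N^*\Delta$. A second lemma (Lemma~\ref{lem:farfromcrit}) shows that away from the critical set the graph of $\e^{-(t+\varepsilon)\widetilde X}$ stays bounded away from the diagonal, localizing the whole computation to these charts. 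These two lemmas are the actual work; once you have them, your exchange of $\strf$ and $\int$ is legitimate and the rest of your argument goes through verbatim.
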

We refer to appendix \ref{sec:wfmorse} for the proof. An immediate corollary is the formula
\begin{equation}\label{eq:tracetoflattrace}
\str{\widetilde{C}^\bullet_-}\widetilde{\Pi}_-\alpha \tilde{k}_- = \strf \alpha \widetilde{K} \e^{-\varepsilon \Lientildem}.
\end{equation}
Indeed, since $\Lientildem \widetilde{\Pi}_-Ê= 0$, we have $\widetilde{\Pi}_-\e^{-\varepsilon \Lientildem} = \widetilde{\Pi}_-$. Moreover, since the trace of finite rank operators coincides with the flat trace, we have $\str{\widetilde{C}^\bullet_-}\widetilde{\Pi}_- \alpha \tilde k_- = \str{\widetilde{C}^\bullet_-}\widetilde{\Pi}_- \alpha \tilde k_-\e^{-\varepsilon \Lientildem} = \strf \alpha \tilde k_- \widetilde{\Pi}_-\e^{-\varepsilon \Lientildem}.$ Therefore we obtain with Proposition \ref{prop:resolventmorse}
$$
\begin{aligned}
\str{\widetilde{C}^\bullet}\widetilde{\Pi}_-\alpha \tilde{k}_- &= \strf \alpha \iota_{-\widetilde{X}} \widetilde{Y}Ê(\id - \widetilde{\Pi}_-)\e^{-\varepsilon \Lientildem} + \strf \alpha \tilde k_- \widetilde{\Pi}_-\e^{-\varepsilon\Lientildem},
\end{aligned}
$$
which gives (\ref{eq:tracetoflattrace}).

\subsection{Proof of Proposition \ref{prop:comparison}}
Note that we have by (\ref{eq:rhonabla})
$$\tau_\vartheta(\rho(z)) = \tau_\vartheta(\nabla(z)), \quad \tau_{\frak{e,o}}(\rho(z)) = \tau_{\frak{e,o}}(\nabla(z)).$$
We will set $f(z) = \tau_\vartheta(\nabla(z)) / \tau_{\frak{e,o}}(\nabla(z))$ for simplicity. Now we apply Proposition \ref{prop:variationconnexion}, Proposition \ref{prop:variationturaev} to obtain that $z \mapsto f(z)$ is real differentiable (since $z \mapsto \nabla(z)$ is); moreover it is complex differentiable at $z=0$ by (\ref{eq:nabla(z)}) and for $\varepsilon > 0$ small enough we have
\begin{equation}\label{eq:comparison1}
\left.\frac{\dd}{\dd z}\right|_{z=0} \log f(z)= -\strf \alpha K\e^{-\varepsilon \Lien} + \strf\alpha \widetilde{K}\e^{-\varepsilon\Lientildem} + \bigl\langle \tr \alpha, e \bigr\rangle,
\end{equation}
where we used (\ref{eq:tracetoflattrace}). Let 
$$
\Delta = \nabla \nabla^\star + \nabla^\star \nabla ~:~ \Omega^\bullet(M,E) \to \Omega^\bullet(M,E)
$$
be the Hodge-Laplace operator induced by any metric on $M$ and any Hermitian product on $E$. Because $\nabla$ is acyclic, $\Delta$ is invertible and Hodge theory gives that its inverse $\Delta^{-1}$ is a pseudo-differential operator of order $-2$. Define
$$
J = \nabla^\star \Delta^{-1} ~:~ \mathcal{D}^{'\bullet}(M,E) \to \mathcal{D}^{'\bullet-1}(M,E).
$$
We have of course
\begin{equation}\label{eq:cochainlaplacian}
[\nabla, J] = \nabla J + J \nabla = \id_{\mathcal{D}^{'\bullet}(M,E)}.
\end{equation}
Let $R_\varepsilon$ be the interpolator at time $\varepsilon$ defined in \S\ref{subsec:homotopy} for the pair of vector fields 
$(-\widetilde{X}, X)$. This implies with (\ref{eq:homotopy})
\begin{equation}
[\nabla, R_\varepsilon] = \e^{-\varepsilon \Lien} - \e^{-\varepsilon \Lientildem}.
\end{equation}
Now define
$$
G_\varepsilon = J\left(K\e^{-\varepsilon \Lien} - \widetilde{K}\e^{-\varepsilon \Lientildem} -R_\varepsilon\right) : \Omega^\bullet(M,E) \to \mathcal{D}^{'\bullet-2}(M,E).
$$
Let us compute, having (\ref{eq:cochainlaplacian}) in mind,
$$
\begin{aligned}
\left[\nabla, G_\varepsilon \right] &=  \nabla J \left(K\e^{-\varepsilon \Lien} - \widetilde{K}\e^{-\varepsilon \Lientildem} -R_\varepsilon\right) Ê- J\left(K\e^{-\varepsilon \Lien} - \widetilde{K}\e^{-\varepsilon \Lientildem} -R_\varepsilon\right) \nabla \\
&= \left(\id - J\nabla\right)\left(K\e^{-\varepsilon \Lien} - \widetilde{K}\e^{-\varepsilon \Lientildem} -R_\varepsilon\right)  -J\left(K \nabla \e^{-\varepsilon \Lien} - \widetilde{K} \nabla \e^{-\varepsilon \Lientildem} -R_\varepsilon\nabla \right) \\
&= K  \e^{-\varepsilon \Lien} - \widetilde{K}  \e^{-\varepsilon \Lientildem} -R_\varepsilon - J
\left([\nabla, K]\e^{-\varepsilon \Lien} - [\nabla, \widetilde{K}]\e^{-\varepsilon\Lientildem} - [\nabla, R_\varepsilon]\right),
\end{aligned}
$$
where we used that $\e^{-\varepsilon \Lien}$ and $\e^{-\varepsilon\Lientildem}$ commute with $\nabla$. Now note that (\ref{eq:cochainanosov}), (\ref{eq:homotopy}) and (\ref{eq:cochainmorse}) imply
$$
[\nabla, K]\e^{-\varepsilon \Lien} - [\nabla, \widetilde{K}]\e^{-\varepsilon\Lientildem} - [\nabla, R_\varepsilon] = \e^{-\varepsilon \Lien} - \e^{-\varepsilon\Lientildem} - \left(\e^{-\varepsilon \Lien} - \e^{-\varepsilon\Lientildem}\right) = 0.
$$
Therefore we obtained
$$
[\nabla, G_\varepsilon] = K \e^{-\varepsilon \Lien} - \widetilde{K}  \e^{-\varepsilon \Lientildem} -R_\varepsilon.
$$
Because $[\nabla, \alpha] = 0$ we have
$$
[\nabla, \alpha G_\varepsilon] = -\alpha\left(K \e^{-\varepsilon \Lien} - \widetilde{K}  \e^{-\varepsilon \Lientildem} -R_\varepsilon\right).
$$ 
Using the notations of \S\ref{subsec:flattrace},
$\WF(J), \WF(\alpha),\WF(\nabla)$ are contained in the conormal bundle of the diagonal $N^*\Delta$ since $J,\alpha,\nabla$ are pseudodifferential operators; moreover, equation (\ref{eq:wfinterpolator}) shows that
$$\WF\left(K\e^{-\varepsilon \Lien} - \widetilde{K}\e^{-\varepsilon \Lientildem} -R_\varepsilon\right)\cap N^*\Delta=\emptyset.$$ 
It follows from wave front composition~\cite[Theorem 8.2.14]{hor1}
that $\WF(\alpha G_\varepsilon)\cap N^*\Delta=\emptyset$. The operators $\nabla,\alpha G_\varepsilon$ satisfy the assumptions of Proposition~\ref{p:flattracesupercomm}
which gives $\strf~ [\nabla, \alpha G_\varepsilon] = 0$ and therefore (\ref{eq:comparison1}) reads
\begin{equation}\label{eq:comparison2}
\left.\frac{\dd}{\dd z}\right|_{z=0} \log f(z) = - \strf \alpha R_\varepsilon + \bigl\langle \tr \alpha, e \bigr\rangle.
\end{equation}
The identity $[\nabla, \alpha] = 0$ also implies that $\dd \tr \alpha = \tr \nabla^{E \otimes E^\vee} \alpha = \tr [\nabla, \alpha] = 0.$ As a consequence we can apply (\ref{eq:homotopycs}) to obtain
$$
\begin{aligned}
\strf \alpha R_\varepsilon = \bigl \langle \tr \alpha,  ~\cs (-\widetilde{X},X) \bigr \rangle.
\end{aligned}
$$
Now note that $\partial \bigl(-\cs( -\widetilde{X}, X) + e\bigr) =-\bigl(\div(X) -\div(-\widetilde{X})\bigr) + \div(\widetilde{X}) = 0$ by (\ref{eq:propcs}) and (\ref{eq:partiale}) since $X$ is non singular. Therefore we obtain
$$
\left.\frac{\dd}{\dd z} \right|_{z=0} \log f(z)  = \bigl\langle \tr \alpha, h \bigr\rangle
$$
where $h = [-\cs(-\widetilde{X}, X) + e] \in H_1(M, \mathbb{Z})$. Finally, let us note that by (\ref{lem:holonomy}),
$$
\left.\frac{\dd}{\dd z}\right|_{z=0} \log \bigl \langle \det \rho(z), h \bigr \rangle = - \bigl\langle \tr \alpha, h \bigr \rangle,
$$
since $\rho(z) = \rho_{\nabla(z)}$. Therefore the proposition is proved for $z=0$. However the same argument holds for every $z$ close enough to $0$, which concludes.

\subsection{Proof of Theorems \ref{thm:main2} and \ref{thm:main3} }
By Hartog's theorem and Proposition \ref{prop:comparison}, we have that the map
\begin{equation}\label{eq:map}
\rho \mapsto \frac{\tau_\vartheta(\rho)}{\tau_{\frak{e,o}}(\rho)}{\langle \det \rho, h \rangle}
\end{equation}
is locally constant on $\Rep_{\mathrm{ac}}(M,d) \setminus \Sigma(M,d)$.

Moreover, we can reproduce all the arguments we made in the continuous category to obtain that $\rho \mapsto {\tau_\vartheta(\rho)}/{\tau_{\frak{e,o}}(\rho)}$ is actually continuous on $\Rep_{\mathrm{ac}}(M,d)$. Because $\Rep_{\mathrm{ac}}(M,d) \setminus \Sigma(M,d)$ is open and dense in $\Rep_{\mathrm{ac}}(M,d)$, we get that the map \ref{eq:map} is locally constant on $\Rep_{\mathrm{ac}}(M,d)$.

By \cite[p.~211]{farber2000poincare} we have, if $\frak{e}'$ is another Euler structure, $\tau_{\frak{e'},\frak{o}}(\rho) = \langle \det \rho, \frak{e'} -  \frak{e} \rangle \tau_{\frak{e,o}}(\rho)$. As a consequence, if we set $\frak{e}_\vartheta = [-X, 0]$ which defines an Euler structure since $X$ is nonsingular (see \S\ref{subsec:eulerstruct}), we have $\frak{e} - \frak{e}_\vartheta = h$ and we obtain that $\rho \mapsto \tau_\vartheta(\rho) / \tau_{\frak{e}_\vartheta,\frak{o}}(\rho)$ is locally constant on $\Rep_{\mathrm{ac}}(M,d)$. 

Now let $\eta$ be another contact form inducing an Anosov Reeb flow and denote by $X_\eta$ its Reeb flow. Then if $\frak{e_\eta} = [-X_\eta, 0],$ we have
$$
\frak{e}_\eta - \frak{e}_\vartheta = \cs(X,X_\eta)
$$
by definition. Therefore $\tau_{\frak{e}_\vartheta,\frak{o}}(\rho) = \tau_{\frak{e_\eta,o}}(\rho) \langle \det \rho, \frak{e_\vartheta}- \frak{e_\eta} \rangle =  \tau_{\frak{e}_\eta,\frak{o}}(\rho) \langle \det \rho, \cs(X_\eta,X)\rangle$ and we obtain that
$$
\rho \mapsto \frac{\tau_\vartheta(\rho)}{\tau_\eta(\rho)} \langle \det \rho, \cs(X,X_\eta) \rangle
$$
is locally constant on $\Rep_{\mathrm{ac}}(M,d)$. By Theorem \ref{theo:invariance} we thus obtain Theorem \ref{thm:main3}.

Finally assume that $\dim M = 3$ and $b_1(M) \neq 0$. Take $\mathcal{R}$ a connected component of $\Rep_{\mathrm{ac}}(M,d)$ and assume that it contains an acyclic and unitary representation $\rho_0$. We invoke \cite[Theorem 1]{dang2018fried} and the Cheeger-M\"uller theorem \cite{cheeger1979analytic,muller1978analytic} to obtain that $0 \notin \Res(\Lie_X^{\nabla_{\rho_0}})$ and
$$|\tau_\vartheta(\rho_0)| = |\zeta_{X, \nabla_{\rho_0}}(0)|^{-1} = \tau_{\mathrm{RS}}(\rho_0),$$
where the first equality comes from (\ref{eq:notresonance}) (we have $q = 1$ since $\dim M = 3$) and $\tau_\mathrm{RS}(\rho_0)$ is the Ray-Singer torsion of $(M,\rho_0)$, cf. \cite{ray1971r}. On the other hand, we have by \cite[Theorem 10.2]{farber2000poincare} that $\tau_\mathrm{RS}(\rho_0)Ê= |\tau_{\frak{e,o}}(\rho_0)|$ since $\rho_0$ is unitary. Therefore the map $\rho \mapsto \tau_\vartheta(\rho) / \tau_{\frak{e_\vartheta,o}}(\rho)$ is of modulus one on $\mathcal{R}$. This concludes the proof of Theorem \ref{thm:main2}.

\appendix

\section{Projectors of finite rank}\label{sec:tracefiniterank}

\subsection{Traces on variable finite dimensional spaces}
In what follows, we consider two Hilbert spaces $\G \subset \H$, the inclusion being dense and continuous. We will denote by $\Lie(\H,\G)$ the space of bounded linear operators $\H \to \G$ endowed with the operator norm. Let $\delta > 0$ and $\Pi_t, |t| \leq \delta$, be a family of finite rank projectors on $\H$ such that $\mathrm{ran} \ \Pi_t \subset \G.$ Assume that $t \mapsto \Pi_t$ is differentiable at $t=0$ as a family of bounded operators $\H \to \G$, that is,
\begin{equation}\label{eq:pidev}
\Pi_t = \Pi + tP + o_{\mathcal{H} \to \mathcal{G}}(t)
\end{equation}
for some $P \in \mathcal{L}(\mathcal{H}, \mathcal{G})$, where $\Pi = \Pi_0$. Denote $C_t = \mathrm{ran} \ \Pi_t $ and $C = \mathrm{ran} \ \Pi$. Note that by continuity, $\Pi_t|_C : C \to C_t$ is invertible for $|t|$ small enough; we denote by $Q_t : C_t \to C$ its inverse. 

\begin{lemm}\label{lem:projector}
We have
\begin{enumerate}[label=(\roman*)]
\item $P = \Pi P + P \Pi,$
\item $Q_t \Pi_t= \Pi \Pi_t + o_{\H \to \G}(z).$
\end{enumerate}
\end{lemm}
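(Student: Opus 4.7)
Part (i) is the identity one gets by differentiating the projector relation. Since $\Pi_t$ has range in $\mathcal{G}$, the identity $\Pi_t^2 = \Pi_t$ holds as an equation of bounded operators $\mathcal{H} \to \mathcal{G}$. Substituting the expansion (\ref{eq:pidev}) and collecting the $t$-linear terms in $\Lie(\mathcal{H}, \mathcal{G})$ yields $\Pi P + P \Pi = P$.

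For part (ii), the plan is to decompose an arbitrary $v \in \mathcal{H}$ as $v = \Pi v + (\id - \Pi)v$ and compare the two operators on each summand. On the piece $v_1 = \Pi v \in C$: by definition of $Q_t$ one has $Q_t \Pi_t v_1 = v_1$, while $\Pi \Pi_t v_1 = v_1 + t \Pi P v_1 + o(t)$ by (\ref{eq:pidev}). Applying (i) to $v_1 \in C$ gives $P v_1 = \Pi P v_1 + P v_1$, hence $\Pi P v_1 = 0$, so both expressions agree up to $o_{\mathcal{H} \to \mathcal{G}}(t)$.

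On the piece $w = (\id - \Pi)v \in \ker \Pi$: the expansion (\ref{eq:pidev}) gives $\Pi_t w = t P w + o(t)$ in $\mathcal{G}$, and (i) yields $P w = \Pi P w + P \Pi w = \Pi P w$, so $\Pi \Pi_t w = t P w + o(t)$. To treat $Q_t \Pi_t w$, set $u_t = Q_t \Pi_t w \in C$. Since $\Pi_t u_t = \Pi_t w$ by the very definition of $Q_t$, we get the key identity
$$
u_t - \Pi_t w = - (\Pi_t - \Pi) u_t,
$$
hence $\|u_t - \Pi_t w\|_{\mathcal{G}} = O(t) \|u_t\|_{\mathcal{H}}$. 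If we know that $\|Q_t\|_{C_t \to C}$ stays uniformly bounded for $t$ near $0$, then $\|u_t\|_{\mathcal{G}} = O(\|w\|_{\mathcal{H}})$, so $u_t = \Pi_t w + o(t) \|w\|_{\mathcal{H}} = t P w + o(t)$ in $\mathcal{G}$, matching $\Pi \Pi_t w$ modulo $o_{\mathcal{H} \to \mathcal{G}}(t)$.

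The one nontrivial step in this plan is the uniform boundedness of $Q_t$. This follows from a Neumann-type argument: writing $\Pi_t|_C = \id_C + (\Pi_t - \Pi)|_C$ as a map $C \to \mathcal{G}$ with $\|\Pi_t - \Pi\|_{\mathcal{H} \to \mathcal{G}} = O(t)$, finite-dimensionality of $C$ shows that for $t$ small enough $\Pi_t|_C : C \to C_t$ is invertible and its inverse $Q_t$ has operator norm bounded by $1 + O(t)$, where we measure $C$ and $C_t$ as subspaces of $\mathcal{G}$. This uniform bound closes the argument above and yields $Q_t \Pi_t = \Pi \Pi_t + o_{\mathcal{H} \to \mathcal{G}}(t)$.
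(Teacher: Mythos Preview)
Your approach to (ii) is different from the paper's. The paper computes directly
\[
\Pi_t \,\Pi\, \Pi_t = (\Pi + tP + o(t))\,\Pi\,(\Pi + tP + o(t)) = \Pi + t(\Pi P + P\Pi) + o(t) = \Pi_t + o(t)
\]
using (i), then applies $Q_t$ on the left and uses $Q_t \Pi_t \Pi = \Pi$ (since $Q_t$ inverts $\Pi_t|_C$) to conclude $\Pi\,\Pi_t = Q_t\Pi_t + o(t)$ in one stroke. This avoids the case split into $\mathrm{ran}\,\Pi$ and $\ker\Pi$ and is a little shorter.

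Your argument is correct in spirit, but there is a small slip on the $\ker\Pi$ piece. From the boundedness of $Q_t$ you only record $\|u_t\|_{\mathcal G} = O(\|w\|_{\mathcal H})$, and via $u_t - \Pi_t w = -(\Pi_t - \Pi)u_t$ this yields $\|u_t - \Pi_t w\|_{\mathcal G} = O(t)\|w\|_{\mathcal H}$, not $o(t)$. The fix is immediate: since $\Pi_t w = tPw + o(t)$ already has norm $O(t)\|w\|_{\mathcal H}$, boundedness of $Q_t$ gives the sharper bound $\|u_t\|_{\mathcal G} = O(t)\|w\|_{\mathcal H}$, and then $\|u_t - \Pi_t w\|_{\mathcal G} = O(t)\cdot O(t)\|w\|_{\mathcal H} = o(t)$, as needed.

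Both arguments require the uniform boundedness of $Q_t$: the paper's proof uses it implicitly when passing from $\Pi_t\Pi\Pi_t = \Pi_t + o(t)$ to $Q_t\Pi_t\Pi\Pi_t = Q_t\Pi_t + o(t)$, while you spell it out with the Neumann-series remark. Making this step explicit is a genuine virtue of your write-up.
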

\begin{proof}
Using (\ref{eq:pidev}) and $\Pi_t^2 = \Pi_t$ we obtain \textit{(i)}.
This implies
$$
\begin{aligned}
\Pi_t \circ \Pi \circ  \Pi_t &= \Bigl(\Pi + tP + o(t)\Bigr)   \Pi  \Bigl(\Pi + tP + o(t)\Bigr) \\
&= \Pi + t\Bigl(P\Pi + \Pi P\Bigr) + o(t) \\
&= \Pi + tP + o(t) \\
&= \Pi_t + o(t),
\end{aligned}
$$
where all the $o(t)$ are taken in $\Lie(\H, \G)$.
Therefore $Q_t \circ \Pi_t \circ \Pi \circ \Pi_t = Q_t \Pi_t + o(t)$. Since $Q_t \circ \Pi_t \circ \Pi = \Pi$ by definition, one obtains 
$$
Q_t \circ \Pi_t = \Pi \circ \Pi_t + o(t),
$$
which proves the first part of the Lemma. The second part is very similar.
\end{proof}

\begin{lemm}\label{lem:projector2}
Let $A_t,~  |t| \leq \delta,$ be a $\mathcal{C}^1$ family of bounded operators $\G \to \H$ such that $A_t$ commutes with $\Pi_t$ for every $t$. Denote $A = A_0$. Then $ t \mapsto \tr_{C_t}( A_t)$ is real differentiable at $t=0$ and
$$
\left.\frac{\dd}{\dd t}\right|_{t=0} \tr_{C_t}( A_t) = \tr_{C}\bigl(\Pi \dot A),
$$
where $\dot A_t = \frac{\dd}{\dd t} A_t$. If moreover $A$ is invertible on $C$, we have
$$
\left.\frac{\dd}{\dd t}\right|_{t=0}\log {\det}_{C_t} (A_t) = \tr_{C} \left( \Pi \dot A (A|_C)^{-1} \right).
$$
\end{lemm}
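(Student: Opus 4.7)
The plan is to reduce everything to a differentiation computation in the fixed finite-dimensional space $\Lie(C,C)$. Because $A_t$ commutes with $\Pi_t$, the subspace $C_t$ is $A_t$-invariant, and conjugation by $Q_t:C_t\to C$ gives a well-defined family of endomorphisms of $C$,
$$
\tilde A_t = Q_t A_t \Pi_t|_C \in \Lie(C,C),
$$
which by similarity satisfies $\tr_{C_t}(A_t)=\tr_C(\tilde A_t)$ and $\det_{C_t}(A_t)=\det_C(\tilde A_t)$. It is therefore enough to show that $t\mapsto\tilde A_t$ is differentiable at $t=0$ in the finite-dimensional space $\Lie(C,C)$ and to identify the derivative.

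The key expansion uses $A_t\Pi_t=\Pi_t A_t$ together with Lemma~\ref{lem:projector}(ii), which yields $Q_t\Pi_t=\Pi\Pi_t+o_{\Lie(\H,\G)}(t)$, so that
$$
\tilde A_t = \Pi\Pi_t A_t|_C + o_{\Lie(C,\G)}(t).
$$
Taylor expanding $\Pi_t A_t$ via (\ref{eq:pidev}) and the $\mathcal{C}^1$ assumption on $A_t$, together with $\Pi^2=\Pi$ and the identification $\Pi A|_C=A|_C$ (which follows from $A\Pi=\Pi A$ and $\Pi|_C=\id_C$), gives
$$
\tilde A_t = A|_C + t\,(\Pi\dot A+\Pi P A)|_C + o_{\Lie(C,C)}(t).
$$
The crucial cancellation comes from Lemma~\ref{lem:projector}(i): applying $\Pi$ on the left of $P=\Pi P+P\Pi$ yields $\Pi P\Pi=0$. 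Since $A|_C$ takes values in $C=\Pi C$, we have $\Pi P A|_C=\Pi P\Pi A|_C=0$. Hence $\dot{\tilde A}_0=\Pi\dot A|_C$, and taking the trace on $C$ proves the first formula.

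For the determinant part, invertibility of $\tilde A_0=A|_C$ ensures invertibility of $\tilde A_t$ for small $|t|$ by continuity in $\Lie(C,C)$, and in particular that of $A_t|_{C_t}$. Jacobi's formula combined with cyclicity of the trace then gives
$$
\left.\frac{\dd}{\dd t}\right|_{t=0}\log\det_C(\tilde A_t)=\tr_C\bigl((A|_C)^{-1}\,\Pi\dot A|_C\bigr)=\tr_C\bigl(\Pi\dot A\,(A|_C)^{-1}\bigr).
$$
The main obstacle is bookkeeping the topologies in which each Taylor remainder is controlled: $\Pi_t$ is $\mathcal{C}^1$ as a map $\H\to\G$ while $A_t$ is $\mathcal{C}^1$ as a map $\G\to\H$, so one must track the successive compositions carefully in order for the final remainder to be genuinely $o(t)$ in the operator norm on $\Lie(C,C)$. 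Once that expansion is in place, the rest is a short linear-algebraic manipulation using cyclicity and the identity $\Pi P\Pi=0$.
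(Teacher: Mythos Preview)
Your proof is correct and follows essentially the same approach as the paper: both conjugate by $Q_t$ to reduce to a computation on the fixed space $C$, use Lemma~\ref{lem:projector}(ii) to replace $Q_t\Pi_t$ by $\Pi\Pi_t$ up to $o(t)$, Taylor expand, and kill the spurious first-order term via $\Pi P\Pi=0$ combined with $[A,\Pi]=0$. The paper's computation is the identity you label (\ref{eq:qtatpit}); your version is the same expansion phrased in terms of $\tilde A_t$, and your treatment of the determinant via Jacobi's formula is what the paper leaves implicit in ``which concludes.''
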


\begin{proof}
We start from
$$
\tr_{C_t}( A_t) = \tr_{C}(Q_tA_t \Pi_t).
$$
Now since $A_t$ commutes with $\Pi_t$ we have by the second part Lemma \ref{lem:projector}
$$
\begin{aligned}
Q_tA_t \Pi_t \Pi &=\Pi \Pi_t A_t \Pi + o_{C\to C}(t)\\
&= \Pi A \Pi + t \Pi \Bigl(\dot A + PA\Pi + \Pi A P \Bigr)\Pi + o_{C \to C}(t).
\end{aligned}
$$
But now the first part of Lemma \ref{lem:projector} gives $\Pi P \Pi$ = 0. We therefore obtain, because $A$ and $\Pi$ commute,
\begin{equation}\label{eq:qtatpit}
Q_tA_t \Pi_t \Pi =  \Pi A \Pi + t \Pi \dot A \Pi + o_{C \to C}(t),
\end{equation}
which concludes.
\end{proof}

\subsection{Gain of regularity}
Assume that we are given four Hilbert spaces 
$\mathcal{E}Ê\subset \mathcal{F} \subset \mathcal{G} \subset \mathcal{H}$ 
with continuous and dense inclusions. 
Let $\Pi_t$, ~$|t|Ê< \delta$ be a family of finite rank projectors 
on $\H$ which is differentiable at $t=0$ as family of bounded operators 
$\G \to \H$ (note that this differs from the last subsection where we had $\H \to \G$ instead), 
that is
\begin{equation}\label{eq:pidev}
\Pi_t = \Pi + tP + o_{\mathcal{G} \to \mathcal{H}}(t)
\end{equation}
for some $P \in \Lie(\mathcal{G}, \H)$. We will denote $C_t = \mathrm{ran}(\Pi_t) \subset \H$ and $C = \mathrm{ran}(\Pi)$.
\begin{lemm}\label{lem:projector3}
Under the above assumptions,
assume that $\Pi_t$ is bounded $\mathcal{E} \to \mathcal{F}$ and that $\Pi_t$ is differentiable at $t=0$ as a family of $\Lie(\mathcal{E}, \mathcal{F})$. Assume also that $\rank \Pi_t$ does not depend on $t$. Then $P$ is actually bounded $\mathcal{G}\to \mathcal{F}$ and
$$
\Pi_t = \Pi +t P + o_{\mathcal{G} \to \mathcal{F}}(t).
$$
\end{lemm}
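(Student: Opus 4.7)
The plan is to combine the finite-dimensionality of the ranges $C_t = \mathrm{ran}(\Pi_t)$ with the two differentiability hypotheses and the constancy of rank to upgrade the topology from $\Lie(\mathcal{G}, \mathcal{H})$ to $\Lie(\mathcal{G}, \mathcal{F})$. First I would establish the structural fact that $C_t \subset \mathcal{F}$ for all sufficiently small $t$: since $C_t$ is finite-dimensional it is closed in $\mathcal{H}$, and for any $v \in C_t$ the density $\mathcal{E} \hookrightarrow \mathcal{H}$ yields $u_n \in \mathcal{E}$ with $u_n \to v$ in $\mathcal{H}$; the images $\Pi_t u_n$ lie in the finite-dimensional subspace $C_t \cap \mathcal{F}$ (using $\Pi_t : \mathcal{E} \to \mathcal{F}$) and converge to $\Pi_t v = v$ in $\mathcal{H}$, forcing $v \in \mathcal{F}$. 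The same argument also yields $\Pi_t(\mathcal{E}) = C_t$, and a closed-graph argument then shows that both $\Pi : \mathcal{H} \to \mathcal{F}$ and $\Pi_t : \mathcal{G} \to \mathcal{F}$ are bounded.

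Since $\Pi(\mathcal{E}) = C$ and $\rank \Pi_t = r$ is constant, I would next pick $u_1, \dots, u_r \in \mathcal{E}$ whose images $\Pi u_i$ form a basis of $C$; by the $\Lie(\mathcal{E}, \mathcal{F})$-continuity of $t \mapsto \Pi_t$ and constancy of rank, the vectors $\Pi_t u_i$ form a basis of $C_t$ varying continuously in $\mathcal{F}$ for all small $t$. Defining the lift $L : C \to \mathcal{E}$ by $L(\Pi u_i) = u_i$ (bounded between finite-dimensional spaces), I would decompose any $u \in \mathcal{G}$ as $u = v + w$, with $v := L \Pi u \in \mathcal{E}$ and $w := u - v \in \ker \Pi \cap \mathcal{G}$, obtaining $\|v\|_{\mathcal{E}} + \|w\|_\mathcal{G} \lesssim \|u\|_\mathcal{G}$ from the boundedness $\Pi : \mathcal{G} \to \mathcal{F}$. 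The $\Lie(\mathcal{E}, \mathcal{F})$ differentiability applied to $v$ then gives directly $\Pi_t v - \Pi v - tPv = o_\mathcal{F}(t)\|u\|_\mathcal{G}$, so the theorem reduces to estimating $\Pi_t w - tPw$ in $\mathcal{F}$-norm for $w \in \ker \Pi$.

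For such $w$, Lemma~\ref{lem:projector}(i) gives $Pw = \Pi P w \in C \subset \mathcal{F}$, while $\Pi_t w \in C_t \subset \mathcal{F}$ by the first step; hence $\Pi_t w - tPw$ already lies in the finite-dimensional subspace $C + C_t \subset \mathcal{F}$ and has $\mathcal{H}$-norm of order $o(t)\|w\|_\mathcal{G}$ by the $\Lie(\mathcal{G}, \mathcal{H})$ hypothesis (using $\Pi w = 0$). The desired $\mathcal{F}$-estimate thus reduces to the uniform equivalence of the $\mathcal{F}$- and $\mathcal{H}$-norms on the moving finite-dimensional subspace $C + C_t$ as $t \to 0$.

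\emph{The main obstacle} is precisely establishing this uniform norm equivalence on $C + C_t$, whose dimension can jump from $r$ (at $t = 0$) to as much as $2r$ nearby. I would handle it by using the continuous variation of the basis $\{\Pi_t u_i\}$ of $C_t$ in $\mathcal{F}$, together with the fixed basis $\{\Pi u_i\}$ of $C$ and the constancy of rank (which prevents any dimensional collapse), to produce coordinate representations on $C + C_t$ whose equivalence constants between $\|\cdot\|_\mathcal{F}$ and $\|\cdot\|_\mathcal{H}$ stay bounded as $t \to 0$. Once this technical step is in place, combining the estimates yields $\Pi_t u - \Pi u - tPu = o_\mathcal{F}(t)\|u\|_\mathcal{G}$ uniformly in $u \in \mathcal{G}$; the boundedness $P \in \Lie(\mathcal{G}, \mathcal{F})$ then follows from the decomposition $Pu = Pv + Pw$, with $Pv \in \mathcal{F}$ by the $\Lie(\mathcal{E}, \mathcal{F})$ hypothesis and $Pw = \Pi Pw \in \mathcal{F}$ by the preceding observation.
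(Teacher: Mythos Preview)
Your approach is considerably more elaborate than the paper's, and the step you flag as ``the main obstacle'' is genuinely problematic as stated. The uniform equivalence of the $\mathcal{F}$- and $\mathcal{H}$-norms on $C + C_t$ need not hold: the spanning set $\{\Pi u_i\} \cup \{\Pi_t u_i\}$ degenerates to $r$ vectors at $t = 0$, and the constancy of $\rank \Pi_t = r$ says nothing about $\dim(C + C_t)$, which may jump between $r$ and $2r$. In the degenerate case where all $P u_i$ already lie in $C$, the ``new'' directions of $C_t$ come entirely from the $o_\mathcal{F}(t)$ remainders, whose $\mathcal{H}$-normalized directions are not controlled in $\mathcal{F}$; your sketch does not address this. (The estimate you actually need can still be rescued: write $f_i^t = e_i + t\,P u_i + \rho_i(t)$ with $\|\rho_i(t)\|_\mathcal{F} = o(t)$, express $\Pi_t w = \sum_i c_i(t) f_i^t$ with $|c_i(t)| = O(t)\|w\|_\mathcal{G}$, and split $\Pi_t w - tPw$ into a piece lying in the \emph{fixed} finite-dimensional space $\mathrm{span}\{e_i, P u_i\}$ --- where norm equivalence is automatic --- plus $\sum_i c_i(t)\rho_i(t)$, which is $O(t)\cdot o(t) = o(t^2)$ in $\mathcal{F}$.)

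The paper avoids all of this by a direct rank-one decomposition. One picks $\varphi^1, \dots, \varphi^m \in \mathcal{E}$ so that $\varphi^j_t := \Pi_t \varphi^j$ is a basis of $C_t$ for small $t$, sets $\tilde\varphi^j_t := \Pi(\varphi^j_t) \in C$, takes the dual basis $\nu^j_t \in C^*$, and writes
\[
\Pi_t \;=\; \sum_{j=1}^m \varphi^j_t \otimes \ell^j_t, \qquad \ell^j_t := \nu^j_t \circ \Pi \circ \Pi_t.
\]
The vectors $\varphi^j_t \in \mathcal{F}$ are differentiable in $t$ by the $\Lie(\mathcal{E}, \mathcal{F})$ hypothesis. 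Since $C$ is finite-dimensional, $\Pi$ is bounded $\mathcal{H} \to \mathcal{F}$, so $\Pi \circ \Pi_t : \mathcal{G} \to C$ is differentiable by the $\Lie(\mathcal{G}, \mathcal{H})$ hypothesis; combined with the differentiability of $t \mapsto \nu^j_t$ on the fixed space $C$, this makes $\ell^j_t \in \mathcal{G}'$ differentiable. The product rule then gives differentiability of $\Pi_t$ in $\Lie(\mathcal{G}, \mathcal{F})$ in one line, with no kernel decomposition and no norm comparison on moving subspaces.
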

\begin{proof}
Because $\mathcal{E}$ is dense in $\mathcal{H}$ we know that $C \subset \mathcal{F}$. There exists $\varphi^1, \dots, \varphi^m \in \mathcal{E}$ such that $\varphi^1_t, \dots, \varphi^m_t$ is a basis of $C_t$ for $t$ small enough where we set $\varphi^j_t = \Pi_t(\varphi^j) \in \mathcal{F}$. Denote $\tilde{\varphi}^j_t = \Pi(\varphi^j_t) \in C$. This family $t \mapsto \tilde{\varphi}^j_t \in C$ is differentiable at $t=0$. Let $\nu^1_t, \dots, \nu^m_t \in C^*$ be the dual basis of $\tilde{\varphi}^1_t, \dots, \tilde{\varphi}^m_t$. Because $C$ is finite dimensional, $\Pi$ is actually bounded $\mathcal{H}Ê\to \mathcal{F}$. As a consequence the map
$$t\mapsto\ell^j_t = \nu^j_t \circ \Pi \circ \Pi_t \in \mathcal{G}'$$
is differentiable at $t=0$. Noting that
$$
\Pi_t = \sum_{j=1}^m \varphi^j_t \otimes \ell^j_t : \mathcal{G}Ê\to \mathcal{F},
$$
we finally obtain that $t \mapsto \Pi_t \in \Lie(\mathcal{G}, \mathcal{F})$ is differentiable at $t=0$.
\end{proof}

\section{Continuity of the Pollicott-Ruelle spectrum}\label{sec:continuityruelle}
We describe here the spaces used in \S\S\ref{sec:invariance},\ref{sec:variationconnexion}. In what follows, $M$ is a compact manifold, $(E, \nabla)$ a flat vector bundle on $M$ and $X_0$ is a vector field on $M$ generating an Anosov flow, cf. \S\ref{subsec:anosov}. We denote by $T^*M = E_{u,0}^* \oplus E_{s,0}^* \oplus E_{0,0}^*$ its Anosov decomposition of $T^*M$.

\subsection{Bonthonneau's uniform weight function}
We state here a lemma from Bonthonneau which is \cite[Lemma 3]{bonthonneau2018flow}. This gives us an escape function having uniform good properties for a family of vector fields. A consequence is that one can define some uniform anisotropic Sobolev spaces on which each vector field of the family has good spectral properties. In what follows, $|\cdot|$ is a smooth norm on $T^*M$.

\begin{lemm}\label{lem:bonthonneau}
There exists conical neighborhoods $N_u$ and $N_s$ of $E_{u,0}^*$ and $E_{s,0}^*$, some constants $C, \beta, T, \eta > 0$, and a weight function $m \in \Cinf(T^*M, [0,1])$ such that the following holds. Let $X$ be any vector field satisfying $\|X-X_0\|_{\mathcal{C}^1} < \eta$, and denote by $\Phi^t$ its induced flow on $T^*M$ and by $E_u^*$ and $E_s^*$ its (dual) unstable and stable bundles. Then
\begin{enumerate}
\item $E_\bullet^* \subset N_\bullet$, for $\bullet = s,u$ and for any $t > 0$, $\xi_u \in E_u^*$ and $\xi_s \in E_s^*$ one has
$$
|\Phi^t(\xi_u)| \geq \frac{1}{C}\e^{\beta t}|\xi_u|, \quad |\Phi^{-t}(\xi_s)| \geq \frac{1}{C}\e^{\beta t}|\xi_s|. 
$$
\item For every $t \geq T$ it holds
$$
\Phi^t\left(\complement N_s \cap X^\perp\right) \subset N_u, \quad \Phi^{-t}\left(\complement N_u \cap X^\perp\right) \subset N_s,
$$
where $X^\perp = \{\xi \in T^*M, ~Ê\xi \cdot X = 0\}.$
\item If $\bf{X}$ is the Lie derivative induced by $\Phi^t$, then
$$
m \equiv 1 \text{ near } N_s, \quad m \equiv -1 \text{ near } N_u, \quad \mathbf{X}.m \geq 0.
$$
\end{enumerate}
\end{lemm}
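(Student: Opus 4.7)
The plan is to build $m$ by averaging a model cutoff along the dual flow of $X_0$, and to certify that every constant, inclusion and inequality in the statement survives a $C^1$ perturbation of $X_0$ by invoking structural stability of Anosov flows. First I would use openness of the Anosov property in the $C^1$--topology to fix $\eta_0>0$ such that every $X$ with $\|X-X_0\|_{C^1}<\eta_0$ is Anosov, with hyperbolicity exponents bounded below by a common $\beta>0$ and with invariant dual splittings $T^*M=E_0^*(X)\oplus E_u^*(X)\oplus E_s^*(X)$ depending H\"older continuously on $X$ in the Hausdorff sense on the unit cosphere bundle. I then pick open conical neighborhoods $N_s\supset E_{s,0}^*$ and $N_u\supset E_{u,0}^*$ whose closures (outside the zero section) are pairwise disjoint and disjoint from $\mathbb{R}X_0$; by continuity of the splittings I can shrink $\eta_0$ to some $\eta>0$ so that $E_\bullet^*(X)\subset N_\bullet$ for every $X$ in the $\eta$--ball, and the exponential bounds in item (1) follow from the Anosov estimates of $X$ applied on its own invariant sub-bundles with a common constant $C$.

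For item (2), the standard cone criterion gives that any unit covector $\xi\in X^\perp\setminus N_s$ has a non--vanishing $E_u^*(X)$--component whose norm is bounded below by a uniform fraction of $|\xi|$ (the fraction depends only on the opening of the cones and on the angle between $E_u^*$ and $E_s^*$, both uniform in $X$). Combining the $e^{\beta t}$ expansion on $E_u^*(X)$ with the $e^{-\beta t}$ contraction on $E_s^*(X)$ of item (1) then forces $\Phi^t(\xi)\in N_u$ after a time $T$ depending only on $\beta$ and the cone openings; the reverse time statement is identical. Uniformity in $X$ is inherited from the uniformity of the hyperbolicity in step one.

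To construct $m$, I fix a smooth function $m_0\in\Cinf(T^*M,[-1,1])$, $0$--homogeneous outside a compact in the fibre direction, equal to $+1$ on a shrunk cone $\tilde N_s\Subset N_s$, equal to $-1$ on $\tilde N_u\Subset N_u$, and define
\[
m(x,\xi)=\frac{1}{T_0}\int_0^{T_0} m_0\bigl(\Phi_0^{-t}(x,\xi)\bigr)\,\dd t
\]
for some $T_0\gg T$, where $\Phi_0^t$ is the dual flow of $X_0$. Item (2) applied to $X_0$ guarantees that the integrand is monotone in $t$ on each orbit not entirely contained in $N_s\cup N_u$, so after enlarging $\tilde N_s, \tilde N_u$ slightly one has $m\equiv +1$ near $N_s$, $m\equiv -1$ near $N_u$, and $\mathbf{X}_0.m\geq 0$ everywhere, with a \emph{strict} positive lower bound on $\mathbf{X}_0.m$ outside any fixed neighborhood of $N_s\cup N_u$ on the unit cosphere bundle. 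Because $X\mapsto \mathbf{X}.m$ is continuous in $C^1$ topology and the cosphere bundle is compact, shrinking $\eta$ once more gives $\mathbf{X}.m\geq 0$ for all admissible $X$, completing item (3).

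The hard point will be bookkeeping of uniformity: the rates $C,\beta$, the trapping time $T$, the averaging time $T_0$, and the various shrinkings $\tilde N_\bullet\Subset N_\bullet$ must be fixed in the correct order so that the continuous-dependence statements close up on a single $\eta$--ball. The input that makes this possible is the quantitative H\"older dependence of the Anosov splittings on the vector field, which is exactly the content of the $C^1$--structural stability theorem and is the non--trivial ingredient borrowed from the literature.
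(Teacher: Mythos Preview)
The paper does not prove this lemma; it is quoted as \cite[Lemma~3]{bonthonneau2018flow} and used as a black box, so there is no in-paper argument to compare against. Your sketch is precisely the Faure--Sj\"ostrand/Bonthonneau construction: fix uniform cones via structural stability of the Anosov splitting, then build $m$ as a long-time average of a model cutoff $m_0$ along the dual flow of the reference field $X_0$.

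Two points deserve tightening. First, with your formula $m=\tfrac{1}{T_0}\int_0^{T_0}m_0\circ\Phi_0^{-t}\,\dd t$ the fundamental theorem of calculus gives $\mathbf{X}_0.m=T_0^{-1}\bigl(m_0-m_0\circ\Phi_0^{-T_0}\bigr)$; under the conventions of item~(1) the projectivised sink of $\Phi_0^t$ on $X_0^\perp$ is $N_u$, where $m_0=-1$, so you should verify that the direction of integration matches the sign claimed in item~(3). Second, your perturbation step for item~(3) requires $\mathbf{X}_0.m$ to be \emph{strictly} bounded away from zero wherever $m$ is non-constant, since otherwise a $C^1$ perturbation of $X_0$ can flip the sign. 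The justification you give (``the integrand is monotone in $t$'') is not the actual mechanism: $m_0$ is just a cutoff and need not be monotone along orbits. What produces the strict bound is the FTC identity above combined with item~(2): for $(x,\xi)\in X_0^\perp$ outside $N_s\cup N_u$ and $T_0$ large enough, the endpoints $\Phi_0^{\pm T_0}(x,\xi)$ land in $\tilde N_u$ and $\tilde N_s$ where $m_0$ equals $\mp 1$, forcing the difference to be nonzero with a uniform gap. Once you make this explicit (and say a word about the $E_0^*$-direction, which item~(2) does not cover), the bookkeeping closes as you describe.
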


\subsection{Anisotropic Sobolev spaces}\label{subsec:appendixanisotropic}
Take the weight function $m$ of Lemma \ref{lem:bonthonneau}. Define the escape function $g$ by 
$$
g(x,\xi) = m(x,\xi) \log (1 + |\xi|), \quad (x,\xi) \in T^*M.
$$
We set $G = \Op(g) \in \Psi^{0+}(M)$ for any quantization procedure $\Op$. Then by \cite[\S\S8.3,9.3,14.2]{zworski} we have $\exp(\pm \mu G) \in \Psi^{\mu+}(M)$ for any $\mu> 0$. For any $\mu>0$ and $j \in \mathbb{Z}$ we define the spaces
$$
\H_{\mu G,j}^\bullet = \exp(-\mu G) H^j(M, \Lambda^\bullet \otimes E) \subset \mathcal{D}^{'\bullet}(M,E),
$$
where  $H^j(M, \Lambda^\bullet \otimes E)$ is the usual Sobolev space of order $j$ on $M$ with values in the bundle $\Lambda^\bullet \otimes E$. Note that any pseudo-differential operator of order $m$ is bounded $\H^\bullet_{\mu G,j} \to \H^\bullet_{\mu G,j-m}$ for any $\mu,m,j$.

\subsection{Uniform parametrices}
Let us consider a smooth family of vector fields $X_t, ~|t| < \varepsilon,$ perturbing $X_0$. For any $c, \rho > 0$ we will denote
$$
\Omega(c, \rho) = \{\Re(s) > c \}Ê\cup \{|s|Ê\leq \rho\} \subset \mathbb{C}.
$$
The spaces defined in the last subsection yields an uniform version of \cite[Proposition 3.4]{dyatlov2013dynamical}, as follows.
\begin{prop}{\cite[Lemma 9]{bonthonneau2018flow}}\label{lem:uniformparametrices}
Let $Q$ be a pseudo-differential operator micro-locally supported near the zero section in $T^*M$ and elliptic there. There exists $c, \varepsilon_0 > 0$ such that for any $\rho > 0$ and $J \in \mathbb{N}$, there is $\mu_0, h_0 > 0$ such that  the following holds. For each $\mu \geq \mu_0$, $0 < h < h_0$, $j \in \mathbb{Z}$ such that $|j|Ê\leq J$ and $s \in \Omega(c, \rho)$ the operator
$$
\Lient - h^{-1} Q + s : \H^\bullet_{\mu G, j + 1} \to \H^\bullet_{\mu G,j}
$$
is invertible for $|t|Ê\leq \varepsilon_0$ and the inverse is bounded $\H^\bullet_{\mu G,j} \to \H^\bullet_{\mu G,j}$ independently of $t$.
\end{prop}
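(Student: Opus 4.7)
The strategy is to implement, in a parameter-dependent way, the Faure--Sj\"ostrand construction of anisotropic Sobolev spaces, using Bonthonneau's uniform weight from Lemma \ref{lem:bonthonneau} as the crucial input that makes every estimate independent of the perturbation parameter $t$. The key point is that the weight $g = m\log(1+|\xi|)$ depends only on $X_0$, yet its defining monotonicity property $\mathbf{X}_t\cdot m \geq 0$ holds uniformly for every $X_t$ with $\|X_t - X_0\|_{\mathcal{C}^1} < \eta$.

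I would first conjugate: the map $\Lient - h^{-1}Q + s : \H^\bullet_{\mu G, j+1} \to \H^\bullet_{\mu G, j}$ is unitarily equivalent, via $e^{-\mu G}$, to the operator $P_{\mu,t,s} = e^{\mu G}(\Lient - h^{-1}Q + s)e^{-\mu G}$ acting $H^{j+1} \to H^j$. Standard pseudo-differential calculus (Egorov-type expansion) gives that the full symbol of $P_{\mu,t,s}$ equals
$$\sigma(\Lient) - h^{-1}\sigma(Q) + s - \mu\, \mathbf{X}_t\cdot g \;+\; \text{l.o.t.},$$
where $\mathbf{X}_t$ denotes the symplectic lift of $X_t$ to $T^*M$. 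Since $\sigma(\Lient)$ is purely imaginary, the real part of the principal symbol is controlled by the terms $-\mu\,\mathbf{X}_t\cdot g - h^{-1}\sigma(Q) + \Re(s)$.

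The decisive step is to exploit properties (1)--(3) of Lemma \ref{lem:bonthonneau}: $\mathbf{X}_t\cdot g \geq 0$ everywhere, and in fact $\mathbf{X}_t\cdot g \geq c_0 > 0$ outside a small conic neighbourhood of the zero section. This follows because any covector $\xi \in T^*M$ with $|\xi|$ large either lies in $N_u$ (where $m \equiv -1$ and $|\Phi^{-t}\xi|$ grows, so $\mathbf{X}_t\cdot g > 0$), lies in $N_s$ (symmetric argument backwards in time), or is mapped into $N_u$ by $\Phi^T$ by property (2), which suffices to force $\mathbf{X}_t \cdot m > 0$ along the orbit, hence on average a uniform positive lower bound after taking $\mu$ large. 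The crucial uniformity in $t$ is built into the statement of Lemma \ref{lem:bonthonneau}. Since $Q$ is elliptic precisely in the region where $\mathbf{X}_t\cdot g$ may vanish, the combination $-\mu\,\mathbf{X}_t\cdot g - h^{-1}\sigma(Q)$ is uniformly negative of order $-\min(\mu\log\langle\xi\rangle, h^{-1})$. A sharp G\aa rding-type inequality then yields, for $\mu \geq \mu_0$, $h \leq h_0$ and $\Re(s) > c$ large enough,
$$\|P_{\mu,t,s} u\|_{H^j} \;\geq\; C^{-1}\|u\|_{H^{j+1}}, \qquad u \in H^{j+1},$$
uniformly in $|t| \leq \varepsilon_0$, giving invertibility on the right half-plane; the same estimate on the adjoint gives surjectivity.

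To extend invertibility to the full region $\Omega(c,\rho)$, I would note that the G\aa rding estimate above in fact shows $P_{\mu,t,s}$ is Fredholm of index $0$ between the indicated Sobolev spaces for every $s \in \Omega(c,\rho)$, provided $\mu_0$ is chosen large enough depending on $\rho$ (so that the coercive contribution $-c_0\mu\log\langle\xi\rangle$ dominates the perturbation $s$ uniformly in $\{|s| \leq \rho\}$). By analytic Fredholm theory applied to the holomorphic family $s \mapsto P_{\mu,t,s}$, invertibility on $\{\Re(s) > c\}$ propagates to all of $\Omega(c,\rho)$ outside a discrete set; but by the global invertibility just established (for $\mu,1/h$ sufficiently large), this discrete set is empty. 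The main obstacle throughout is the uniformity in $t$: all the symbol-class seminorms of $\sigma(\Lient)$ vary with $t$, but Bonthonneau's observation that a single weight $g$ serves every $X_t$ close to $X_0$ ensures that the constants $c, \mu_0, h_0$ can be chosen once and for all.
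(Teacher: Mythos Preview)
The paper does not prove this proposition at all: it is quoted verbatim as \cite[Lemma 9]{bonthonneau2018flow} and used as a black box, with the subsequent subsections (\S\ref{subsec:continuityruelle}--\S\ref{subsec:wfsetprojectors}) drawing consequences from it. Your sketch is therefore not to be compared against anything in the paper; it is a reconstruction of Bonthonneau's argument, which in turn is the Faure--Sj\"ostrand construction made uniform in $t$ via the single weight $m$ of Lemma~\ref{lem:bonthonneau}. In that respect the overall strategy --- conjugate by $e^{\mu G}$, extract the subprincipal term $-\mu\,\mathbf{X}_t\cdot g$, combine with the absorbing potential $-h^{-1}Q$ to get coercivity, and conclude by a G{\aa}rding-type inequality --- is correct and is exactly what Bonthonneau does.

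One genuine imprecision: your last paragraph is circular. You invoke analytic Fredholm theory to say invertibility on $\{\Re(s)>c\}$ propagates to $\Omega(c,\rho)$ minus a discrete set, and then claim the discrete set is empty ``by the global invertibility just established''. But if global invertibility were already established, Fredholm theory is unnecessary; and if it is not, Fredholm theory does not tell you the exceptional set is empty. The actual mechanism is simpler: once $\mu \geq \mu_0(\rho)$ and $h \leq h_0$, the coercivity estimate coming from the escape function and the absorption $-h^{-1}Q$ holds \emph{directly} on all of $\Omega(c,\rho)$ (the point of choosing $\mu_0$ depending on $\rho$ is precisely that the term $-\mu\,\mathbf{X}_t\cdot g$ beats $|s|\leq \rho$), so the absorbed operator $\Lient - h^{-1}Q + s$ is invertible throughout, with no Fredholm step. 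Analytic Fredholm theory enters only afterwards, in \S\ref{subsec:continuityruelle}, when the absorption is removed via the identity~\eqref{eq:fredholm}.
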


\subsection{Continuity of the Pollicott-Ruelle spectrum}\label{subsec:continuityruelle}
We fix $\rho, J \geq 4$ and $\mu_0, \mu, h_0, h, j$ as in Proposition \ref{lem:uniformparametrices}. 
We first observe that
\begin{equation}\label{eq:fredholm}
\left(\Lient+s\right)\left(\Lient - h^{-1} Q + s\right)^{-1} = \id + h^{-1} Q \left(\Lient - h^{-1} Q + s\right)^{-1}.
\end{equation}
Since $Q$ is supported near $0$ in $T^*M$, it is smoothing and thus trace class on any $\H^\bullet_{\mu G,j}$. By analytic Fredholm theory, the family $s \mapsto K(t,s) = h^{-1} Q \left(\Lient - h^{-1} Q + s\right)^{-1}$ is a holomorphic family of trace class operators on $\H^\bullet_{\mu G,j}$ in the region $\Omega(c,\rho)$. We can therefore consider the Fredholm determinant
$$
D(t,s) = {\det}_{\H^\bullet_{\mu G,j}} \bigl(\id + K(t,s)\bigr).
$$
It follows from \cite[Corollary 2.5]{simon2005trace} that for each $t$, $s \mapsto D(t,s)$ is holomorphic on $\Omega(c,\rho)$. Moreover (\ref{eq:fredholm}) shows that its zeros coincide, on $\Omega(c,\rho)$, with the 
Pollicott-Ruelle resonances of $\Lient$. In addition, we have for any $s \in \Omega(c, \rho)$,
\begin{equation}\label{eq:resolventformula}
\begin{aligned}
\Bigl(\Lient - h^{-1}& Q + s\Bigr)^{-1}  - \left(\Lie_{X_{t'}}^\nabla - h^{-1} Q + s\right)^{-1}  \\
&=  -\left(\Lient - h^{-1} Q + s\right)^{-1} \left( \Lient - \Lie_{X_{t'}}^\nabla \right) \left(\Lie_{X_{t'}}^\nabla - h^{-1} Q + s\right)^{-1}.
\end{aligned}
\end{equation}
We have
\begin{equation}\label{eq:partialX}
\frac{\Lient - \Lie_{X_{t'}}^\nabla}{t-t'} \underset{t\to t'}{\longrightarrow} \Lie_{\dot{X}_t}^\nabla \text{ in }\Lie(\H^\bullet_{\mu G,j+1}, \H^\bullet_{\mu G,j}).
\end{equation}
where $\displaystyle{\dot{X}_t = \frac{\dd}{\dd t}X_t}$ and $\Lie(\H^\bullet_{\mu G,j+1}, \H^\bullet_{\mu G,j})$ is the space of bounded linear operators $\H^\bullet_{\mu G,j+1} \to \H^\bullet_{\mu G,j}$ endowed with the operator norm. We therefore obtain by Proposition \ref{lem:uniformparametrices} and because $Q$ is smoothing (and thus trace class $\H^\bullet_{\mu G,j} \to \H^\bullet_{\mu G,j'}$ for any $\mu,j,j'$) that $K(t',s) \to K(t,s)$ as $t'\to t$ in $\mathcal{L}^1(\H^\bullet_{\mu G,0})$ locally uniformly in $s$, where $\mathcal{L}^1(\H^\bullet_{\mu G,0})$ is the space of trace class operators on $\H^\bullet_{\mu G,0}$ endowed with its usual norm. As a consequence, we obtain with \cite[Corollary 2.5]{simon2005trace}
\begin{equation}\label{eq:rouchŽ}
D(t,s) \in \mathcal{C}^0 \bigl([-\varepsilon_0, \varepsilon_0]_t, \mathrm{Hol}\bigl(\Omega(c,\rho)_s\bigr)\bigr).
\end{equation}

\subsection{Regularity of the resolvent}\label{subsec:regularityresolvent}
Let $\mathcal{Z}$ be an open set of $\mathbb{C}$ whose closure is contained in the interior of $\Omega(c,\rho)$. We assume that $\overline{\mathcal{Z}} \cap \Res(\Lie_{X_0}^\nabla) = \emptyset$. Up to taking $\varepsilon_0$ smaller, Rouch\'e's theorem and (\ref{eq:rouchŽ}) imply that there exists $\delta > 0$ such that $\mathrm{dist}\left(\mathcal{Z}, ~\Res(\Lient)\right) > \delta$ for any $|t|Ê\leq \varepsilon_0$. As a consequence, we obtain that for every $|j| \leq J$, the map $\left(\Lient + s\right)^{-1} : {\H^\bullet_{\mu G,j} \to \H^\bullet_{\mu G,j}}$  is bounded independently of $(t,s) \in [-\varepsilon_0, \varepsilon_0] \times \mathcal{Z}$. Noting that
\begin{equation}\label{eq:resolventformula2}
\frac{\Bigl(\Lient + s\Bigr)^{-1} - \left(\Lie_{X_{t'}}^\nabla + s\right)^{-1}}{t-t'} = -\Bigl(\Lient + s\Bigr)^{-1} \frac{\Lient - \Lie_{X_{t'}}^\nabla }{t-t'}\left(\Lie_{X_{t'}}^\nabla + s\right)^{-1},
\end{equation}
we obtain by (\ref{eq:partialX}) that $t' \mapsto \left(\Lie_{X_{t'}}^\nabla + s\right)^{-1}$ is continuous in $\Lie(\H^\bullet_{\mu G,j+1}, \H^\bullet_{\mu G,j})$. Therefore, applying (\ref{eq:resolventformula2}) again, we get that
\begin{equation}\label{eq:regularityresolvent}
\left(\Lient +s \right)^{-1} \in \mathcal{C}^1\bigl([-\varepsilon_0, \varepsilon_0]_t, \mathrm{Hol}(\mathcal{Z}_s,~ \Lie(\H^\bullet_{\mu G,j+1}, \H^\bullet_{\mu G,j-2})\bigr).
\end{equation}
Note that here we need $|j-2|, |j+1|Ê\leq J$.

\subsection{Regularity of the spectral projectors}\label{subsec:regularityprojectors}
Let $0 < \lambda < 1$ such that $\{|s| = \lambda\}Ê\cap \Res(\Lie_{X_0}^\nabla) = \emptyset.$ Applying the last subsection with $\mathcal{Z} = \{|s| = \lambda\}$, we get $\{|s| = \lambda\}Ê\cap \Res(\Lie_{X_t}^\nabla) = \emptyset$ for any $|t|Ê\leq \varepsilon_0$. We can therefore define for those $t$
$$
\Pi_t = \frac{1}{2\pi i} \int_{|s| = \lambda} \left(\Lient + s\right)^{-1} \dd s : \H^\bullet_{\mu G,j} \to \H^\bullet_{\mu G,j}.
$$
Then (\ref{eq:regularityresolvent}) gives that $\Pi_t \in \mathcal{C}^1\bigl([-\varepsilon_0, \varepsilon_0]_t, \mathcal{Z}_s,~ \Lie(\H^\bullet_{\mu G,j+1}, \H^\bullet_{\mu G,j-2}\bigr).$ This is true for $j=3$ and $j=-1$ because $J \geq 4$. Moreover by Rouch\'e's theorem, the number $m$ of zeros of $s \mapsto D(t,s)$ does not depend on $t$. Noting that
$$
\partial_sK(t,s)(1 + K(t,s))^{-1} = -K(t,s)\left(\Lient - h^{-1}Q +s)^{-1}(1+K(t,s)\right)^{-1},
$$
we obtain by \cite[Theorem C.11]{dyatlov2019mathematical} and the cyclicity of the trace that $m$ is equal to
$$
\begin{aligned}
\frac{1}{2 \pi i} \tr \int_{|s| = \lambda} \partial_sK(t,s) &(1+K(t,s))^{-1} \dd s \\
&= - \frac{1}{2\pi i} \tr \int_{|s| = \lambda} \left(\Lient -h^{-1}Q + s\right)^{-1}(1 + K(t,s))^{-1} K(t,s) \dd s \\
&= \frac{1}{2\pi i} \tr \int_{|s| = \lambda} \left(\Lient -h^{-1}Q + s\right)^{-1}(1 + K(t,s))^{-1},
\end{aligned}
$$
where we used that $s \mapsto \left(\Lient - h^{-1}Q + s\right)^{-1}$ is holomorphic on $\{|s|Ê\leq \lambda\}$.
The last integral is equal to $\tr \Pi_t = \rank \Pi_t$ by (\ref{eq:fredholm}). As a consequence we can apply Lemma \ref{lem:projector3} to obtain that
\begin{equation}\label{eq:regularityprojector}
\Pi_t \in \mathcal{C}^1\bigl([-\varepsilon_0, \varepsilon_0]_t, \Lie(\H^\bullet_{\mu G,0}, \H^\bullet_{\mu G,1}\bigr).
\end{equation}

\subsection{Wavefront set of the spectral projectors}\label{subsec:wfsetprojectors}
Let $(E, \nabla^\vee)$ be the dual bundle of $(E, \nabla)$. Then (\ref{eq:resolvent}) implies, for any $\Re(s) \gg 0$,
\begin{equation}\label{eq:dualresolvent}
\left\langle \Bigl(\Lient+s\Bigr)^{-1}u, v \right \rangle = \left \langle u, \left(\Lie_{-X_t}^{\nabla^\vee} +s \right)^{-1}v \right\rangle, \quad u \in \Omega^k(M,E), \quad v \in \Omega^{n-k}(M, E^\vee),
\end{equation}
where $\langle \cdot, \cdot \rangle$ is the pairing from \S \ref{subsec:currents}. This shows that $\Res(\Lie_{-X_t}^{\nabla^\vee}) = \Res(\Lient)$. Therefore we can apply the preceding construction with the escape function $g$ replaced by $-g$ (the unstable bundle of $-X_t$ is the stable one of $X_t$ and reciprocally) and we obtain that 
$$
\displaystyle{\Pi_t^\vee = \frac{1}{2\pi i} \int_{|s| = \lambda} \left(\Lie_{-X_t}^{\nabla^\vee} + s\right)^{-1} \dd s}  ~\in~ \mathcal{C}^1\bigl([-\varepsilon_0, \varepsilon_0]_t, \Lie(\H^\bullet_{-\mu G,0}, \H^\bullet_{-\mu G,1})\bigr).
$$
Note that (\ref{eq:dualresolvent}) implies 
\begin{equation}\label{eq:dualprojector}
\left \langle \Pi_t u, v \right \rangle = \left \langle u, \Pi_t^\vee v \right \rangle, \quad u \in \Omega^k(M,E), \quad v \in \Omega^{n-k}(M,E^\vee).
\end{equation}
 We denote $C^\bullet_t = \mathrm{ran}~\Pi_t$,  $C^{\vee \bullet}_t = \mathrm{ran}~\Pi_t^\vee$ and $m = \mathrm{rank}~\Pi_t = \mathrm{rank}~\Pi_t^\vee $. Take $\varphi^1, \dots, \varphi^m, \psi^1, \dots, \psi^m \in \Omega^\bullet(M,E)$ such that $\Pi_0(\varphi^1), \dots \Pi_0(\varphi^m)$ is a basis of $C^\bullet_0$ and $\langle \Pi_0 \varphi^i, \psi^j \rangle =  0$ if $i \neq j$ and $\langle \Pi_0 \varphi^i, \psi_j \rangle = 1$ otherwise. For $t$ small enough we set
$$
\varphi^i_t = \Pi_t \varphi^i, \quad \psi^t_j = \Pi_t^\vee \psi^j.
$$
Like in the proof of Lemma \ref{lem:projector3}, (\ref{eq:dualprojector}) implies that
\begin{equation}\label{eq:formulaprojectordual}
\Pi_t = \sum_{i=1}^m m_{ij}(t) \varphi^i_t \langle \psi^j_t, \cdot \rangle,
\end{equation}
where $t \mapsto m_{ij}(t)$ is continuous near $t=0$ and $m_{ij}(0) = \delta_{ij}$.

Next we show that there exists open conic neighborhoods of $N_u$ and $N_s$ such that, uniformly in $t \in [-\varepsilon_0, \varepsilon_0]$,
\begin{equation}\label{eq:wfsetprojector}
\WF(\varphi^i_t) \subset W_u, \quad \WF(\psi^i_t) \subset W_s, \quad W_u \cap W_s = \emptyset, \quad i =1, \dots, m.
\end{equation}
This means that the map $[-\varepsilon_0, \varepsilon_0] \ni t \mapsto \varphi^i_t$ (resp. $\psi^i_t$) is bounded in $\mathcal{D}^{'\bullet}_{W_u}(M,E)$ (resp. $\mathcal{D}^{'\bullet}_{W_s}(M,E^\vee)$). To proceed, we note that we can construct two weight functions $m_u, m_s$ satisfying the properties of Lemma \ref{lem:bonthonneau} such that $\{m_u \leq 0\}Ê\cap \{m_s \geq 0\} = \emptyset$ (for example by choosing well the $\chi$ from \cite[p. 6]{bonthonneau2018flow}). Let $G_u , G_s \in \Psi^{0+}(M)$ be the associated operators from \S\ref{subsec:appendixanisotropic}. 
Up to choosing $\varepsilon_0$ smaller, we obtain with (\ref{eq:regularityprojector}) that the map $t \mapsto \varphi^i_t$ is bounded in $\H^\bullet_{\mu G_u, 0}$ for $\mu > 0$ big enough. For any $\chi \in \Cinf(T^*M, [0,1])$  such that $\supp\chi \subset \{m_u \geq \delta\}$ for some $\delta > 0$, we have by classical rules of pseudo-differential 
calculus
$$
\|\Op(\chi)\varphi^i_t\|_{H^{\delta\mu}(M,\Lambda^\bullet \otimes E)} \leq C_\mu \| \varphi^i_t \|_{\H^\bullet_{\mu G_u, 0}} \leq C'_\mu, \quad t \in [-\varepsilon_0, \varepsilon_0],
$$
for some constants $C_\mu, C'_\mu$ independent of $t$. As a consequence, we obtain (for example using \cite[Lemma 7.4]{dangwitten}) that $[-\varepsilon_0, \varepsilon_0]\ni t \mapsto \varphi^i_t$ is bounded in $\mathcal{D}^{'\bullet}_{W_u}(M,E)$ where $W_u = \{m_u \leq 0\}$. Doing exactly the same with $-m_s$ and $-X_t$ we obtain that $[-\varepsilon_0, \varepsilon_0]\ni t \mapsto \psi^i_t$ is bounded in $\mathcal{D}^{'\bullet}_{W_s}(M,E^\vee)$ with $W_s = \{-m_s \geq 0\}$. This shows (\ref{eq:wfsetprojector}).

\section{The wave front set of the Morse-Smale resolvent}\label{sec:wfmorse}
The purpose of this section is to prove Proposition \ref{prop:resolventmorse}. 
For simplicity we prove it for $\widetilde{X}$ instead of $-\widetilde{X}$. 
We will denote by $\widehat{\Pi}$ the spectral projector (\ref{eq:projectormorsesmale}) 
for the trivial bundle $(\mathbb{C}, \dd)$. Recall that $\mathcal{D}^\prime_{\Gamma}(M\times M)$ denotes
distributions whose wave front set is contained in the closed conic set $\Gamma\subset T^\bullet(M\times M)$.
A family $(f_t)_{t\geq 0}$ of distributions will be $\mathcal{O}_{\mathcal{D}^\prime_\Gamma}(1)$ if it is bounded in
$\mathcal{D}^\prime_\Gamma$ in the sense of~\cite[p.~31]{dangthesis}.
We will need the following
\begin{lemm}\label{lem:wfsetpropagator}
Let $\varepsilon > 0$ and $a \in \crit(f)$. There exists $c > 0$, a closed conic set $\Gamma \subset T^*(M\times M)$ with $\Gamma \cap N^*\Delta(T^*M) = \emptyset$ and $\chi \in \Cinf(M,[0,1])$ such that $\chi \equiv 1$ near $a$ such that
$$
\mathcal{K}_{\chi, t + \varepsilon} = \dom_{\mathcal{D}^{'n}_\Gamma(M \times M)}(\e^{-tc}),
$$
where for $t \geq 0$, $\mathcal{K}_{\chi, t}$ is the Schwartz kernel of the operator $\chi \e^{-t\Lie_{\widetilde{X}}}\left(\id - \widehat{\Pi}\right) \chi$.
\end{lemm}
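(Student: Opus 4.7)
The plan is to combine two ingredients: a spectral gap for $\Lie_{\widetilde X}$ on anisotropic Sobolev spaces adapted to the Morse--Smale dynamics, which yields exponential decay in operator norm of $e^{-t \Lie_{\widetilde X}}(\id - \widehat{\Pi})$, and a uniform wavefront set bound on the Schwartz kernel of the doubly cutoff propagator $\chi \, e^{-t\Lie_{\widetilde X}}(\id - \widehat{\Pi}) \chi$.

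For the operator norm decay, I would invoke the Morse--Smale spectral framework of Dang--Rivi\`ere \cite{dang2016spectral} already used in \S\ref{sec:turaevtorsion}: there exist anisotropic Sobolev spaces $\widetilde{\H}^\bullet_{1} \subset \widetilde{\H}^\bullet \subset \mathcal{D}^{'\bullet}(M)$ on which the family $\Lie_{\widetilde X} + s$ is a holomorphic family of Fredholm operators of index $0$ in a half plane $\{\Re s > -N\}$ for $N$ as large as desired, and on which the multiplication operator by $\chi$ is bounded. The Pollicott--Ruelle resonances of $\Lie_{\widetilde X}$ form a discrete subset, and $s = 0$ is the unique resonance of maximal real part, with generalised eigenspace $\mathrm{ran}\,\widehat{\Pi}$. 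Hence there exists $c_0 > 0$ such that the spectrum of $\Lie_{\widetilde X}$ restricted to $\ker \widehat{\Pi}$ is contained in $\{\Re s \geq 2 c_0\}$, and a standard contour integration of the resolvent representation of the semigroup yields, for any $c \in (0, c_0)$,
\begin{equation*}
\bigl\| e^{-t \Lie_{\widetilde X}}(\id - \widehat{\Pi}) \bigr\|_{\widetilde{\H}^\bullet \to \widetilde{\H}^\bullet} \leq C e^{-tc}, \qquad t \geq 0.
\end{equation*}

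For the wavefront set control, recall that the Schwartz kernel of $e^{-t \Lie_{\widetilde X}}$ is the integration current $[\mathrm{Gr}(\varphi^{-t})]$, whose wavefront set is the conormal to the graph of the flow. Choosing $\chi$ supported in a small Morse chart around $a$ in which $\varphi^t$ is conjugate to its hyperbolic linearisation, the double cutoff $\chi \otimes \chi$ restricts $\supp \mathcal{K}_{\chi, t}$ to a small neighbourhood of $(a, a)$ for all $t \geq \varepsilon$. Propagation along the Hamilton flow of the symbol of $\Lie_{\widetilde X}$, in the style of \cite[\S 4]{dyatlov2013dynamical} adapted to the Morse--Smale case, then produces a closed conic set $\Gamma \subset T^*(M \times M)$, independent of $t$, containing $\WF(\mathcal{K}_{\chi, t + \varepsilon})$ and lying in a conic neighbourhood of $E_u^*(a) \times (-E_s^*(a))$ inside $T^*_{(a, a)}(M \times M)$. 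Transversality at $a$ between unstable and stable cobundles then forces $\Gamma \cap N^*\Delta = \emptyset$.

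The main obstacle is to merge these two ingredients into a single $\mathcal{D}^{'n}_\Gamma$ bound with uniform exponential rate. I would proceed via the usual testing characterisation of the topology of $\mathcal{D}^{'n}_\Gamma(M \times M)$: a global Sobolev bound on the kernel is provided by the operator norm decay above combined with continuity of multiplication by $\chi$ on $\widetilde{\H}^\bullet$, while the microlocalised smoothness bounds outside $\Gamma$ are obtained by iterating the decay estimate on a scale of anisotropic Sobolev spaces of increasing order (as in \S\ref{subsec:appendixanisotropic}) and absorbing the cutoffs using the propagation estimates of the second step. Assembling these uniform estimates produces the claimed bound $\mathcal{K}_{\chi, t + \varepsilon} = \dom_{\mathcal{D}^{'n}_\Gamma(M \times M)}(e^{-tc})$, completing the plan.
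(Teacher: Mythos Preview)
Your approach is quite different from the paper's, and the merging step contains a real gap. The paper does not use anisotropic spaces or propagation at all; it exploits the assumption of \S\ref{subsec:morsesmale} that the metric is flat near each critical point, so $\widetilde X$ is smoothly linearisable near $a$ and $e^{-t\widetilde X}$ becomes $x\mapsto e^{-tA}x$ in a Morse chart with $A$ hyperbolic. In these coordinates $\widehat\Pi$ acts near $a$ by $\chi_1\mapsto \delta^{d_u}(x_u)\,dx_u\int_{W^s}\pi_{s,0}^*\chi_1$, and the pairing $\langle\mathcal K_{\chi,t},\pi_1^*\chi_1\wedge\pi_2^*\chi_2\rangle$ is rewritten as $\int_0^1\int_U \partial_\tau\bigl(e^{tA_s}\pi_{u,\tau}^*\chi_2\wedge e^{-tA_u}\pi_{s,\tau}^*\chi_1\bigr)\,d\tau$, where $\pi_{u,\tau},\pi_{s,\tau}$ rescale the stable and unstable coordinates by $\tau\in[0,1]$. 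Hyperbolicity of $A$ gives $\partial_\tau(\cdots)=\mathcal O(e^{-tc})$ directly, and replacing $\chi_j$ by $\chi_j e^{i\langle\xi_j,\cdot\rangle}$ followed by integration by parts yields the uniform Fourier decay away from $N^*\Delta$ that encodes the $\mathcal D'^{n}_\Gamma$ bound. No functional-analytic machinery enters.

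The gap in your argument is the passage from operator-norm decay on a fixed $\widetilde{\H}^\bullet$ to decay in $\mathcal D'^{n}_\Gamma$. The $\mathcal D'_\Gamma$ topology demands $C^\infty$ microlocal regularity outside $\Gamma$, hence control of seminorms of arbitrarily high order. Your proposal to iterate over anisotropic spaces of increasing order faces two obstacles. First, semigroup decay at a rate $c$ independent of the order would require resolvent bounds on vertical lines uniform across the whole scale, a Gearhart--Pr\"uss-type input not supplied by the Dang--Rivi\`ere Fredholm package you invoke. Second, raising the anisotropic order gains regularity only in the stable codirection while losing it in the unstable one, so these spaces do not directly dominate seminorms testing smoothness outside an arbitrary cone $\Gamma\subset T^*(M\times M)$. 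The explicit local computation in the paper bypasses both issues entirely.
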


\begin{proof}
Because $\widetilde{X}$ is $\Cinf$-linearizable, we can take $U \subset \mathbb{R}^n$ to be a coordinate patch centered in $a$ so that, in those coordinates, $\e^{-t\widetilde{X}}(x) = \e^{-t A}(x)$ where $A$ is a matrix whose eigenvalues have nonvanishing real parts. Denoting $(x^1, \dots, x^n)$ the coordinates of the patch, $\widetilde{X}$ reads
$$
\widetilde{X} = \sum_{1\leq i,j \leq n}A_i^j x^i \partial_j.
$$
We have a decomposition $\mathbb{R}^n = W^u \oplus W^s$ stable by $A$ such that $A|_{W^u}$ (resp. $A|_{W^s}$) have eigenvalues with positive (resp. negative) real parts, $d_{u/s}=\dim W^{u/s}$, this induces a decomposition of the coordinates
$x=(x_s,x_u)$. 
We will denote by $A_u = A|_{W^u} \oplus 0_{W^s}$, $A_s = 0_{W^u} \oplus A|_{W^s}$ and $c > 0$ such that
$$c < \inf_{\lambda \in \mathrm{sp}(A)} |\Re(\lambda)|$$
 where $\mathrm{sp}(A)$ is the spectrum of $A$.

Let $\chi_1, \chi_2 \in \Omega^\bullet(M)$ such that $\supp \chi_i \subset \supp \chi$ for $i=1,2$. For simplicity, we identify $\e^{-tA}$ and its action on differential forms and currents given by the pull-back, $\delta^d(x)$ denotes the Dirac $\delta$ distribution at $0 \in \mathbb{R}^d$,
$\pi_1,\pi_2$ are the projections $M\times M\mapsto M$ on the first and second factor respectively.
$$
\begin{aligned}
\langle \mathcal{K}_{\chi,t}, \pi_1^*\chi_1  \wedge \pi_2^*\chi_2 \rangle &= \langle \chi_2, ~\e^{-tA}(\id - \widehat{\Pi}) \chi_1 \rangle \\
&= \left\langle \chi_2,~ \e^{-tA} \left(\chi_1 - \delta^{d_u}(x_u) \dd x_u \int_{W^s} \pi_{s,0}^* \chi_1\right)\right \rangle \\
&= \left \langle \e^{tA_s }\chi_2, ~\e^{-tA_u} \chi_1 \right \rangle - \left(\int_{W^u} \pi_{u,0}^*\chi_2\right)\left( \int_{W^s}\pi_{s,0}^*\chi_1\right) \\
&= \int_0^1  \int_U \partial_\tau  \left(\e^{tA_s}\pi_{u,\tau}^* \chi_2 \wedge \e^{-tA_u }\pi_{s,\tau}^* \chi_1\right) \dd \tau,
\end{aligned}
$$
where $\pi_{u,\tau}, \pi_{s,\tau} : U \to U$ are defined by $\pi_{u,\tau}(x_u,x_s) = (x_u, \tau x_s)$ and $\pi_{s,\tau}(x_u, x_s) = (\tau x_u, x_s)$.
Now write $\chi_2 = \sum_{|I|= k} \beta_I \dd x_s^{I_s} \wedge \dd x_u^{I_u}.$ We have
$$
\begin{aligned}
\partial_\tau \pi_{u,\tau}^* \chi_2(x_u, x_s) &= \partial_\tau \sum_I \tau^{|I_s|} \beta_I(x_u, \tau x_s) \dd x_u^{I_u} \wedge \dd x_s^{I_s} \\
&= \sum_I |I_s| \tau^{|I_s| - 1}\beta_I(x_u, \tau x_s) \dd x_u^{I_u} \wedge \dd x_s^{I_s} \\
& \quad \quad \quad \quad + \sum_I \tau^{|I_s|} \left(\partial_{x_s}\beta_I\right)_{(x_u, \tau x_s)} (x_s) \dd x_u^{I_u} \wedge \dd x_s^{I_s}.
\end{aligned}
$$
Therefore 
$$
\partial_\tau \e^{tA_s}\pi_{u,\tau}^* \chi_2 = \sum_I \left(|I_s| \tau^{|I_s|-1}\beta_I(x_u, \tau \e^{tA_s}x_s) + \tau^{|I_s|} \left(\partial_{x_s}\beta_I\right)_{(x_u, \tau x_s)} (\e^{tA_s}x_s) \right) \e^{tA_s} \dd x^I.
$$
Because $|\e^{tA_s}x_s| = \dom(\e^{-tc})$ and $ \e^{tA_s} \dd x^I = \dom(\e^{-c t|I_s|})$, $I=(I_s,I_u)$ is a multi--index and repeating the
same argument for $\partial_\tau \e^{-tA_u} \pi_{s,\tau}^*\chi_1$, we obtain the bound~:
\begin{equation}\label{eq:partialtau}
\partial_\tau \left(\e^{tA_s} \pi_{u, \tau}^* \chi_2 \wedge  \e^{-tA_u}\pi_{s,\tau}^*\chi_1\right) = \dom_{\chi_1, \chi_2} (\e^{-tc}).
\end{equation}
Replacing $\chi_1$ and $\chi_2$ by $\chi_1 \e^{i \langle\xi, \cdot \rangle}$  and $\chi_2 \e^{i\langle \eta, \cdot \rangle}$ with $\xi, \eta \in \mathbb{R}^n$, one gets
$$
\begin{aligned}
\Bigl\langle \mathcal{K}_{\chi,t}&, \pi_1^*\left(\chi_1\e^{i \langle\xi, \cdot \rangle}\right)  \wedge \pi_2^*\left(\chi_2 \e^{i \langle\eta, \cdot \rangle}\right)\Bigr\rangle \\
&= \int_0^1 \int_U \partial_\tau \left(\e^{tA_s} \pi_{u, \tau}^* \chi_2 \wedge  \e^{-tA_u}\pi_{s,\tau}^*\chi_1\right) \e^{i\langle \e^{tA_s}(x_u, \tau x_s), \eta \rangle} \e^{i\langle \e^{-tA_u}(\tau x_u, x_s), \xi \rangle} \dd \tau\\
&\quad + \int_0^1 \int_U e^{tA_s} \pi_{u, \tau}^* \chi_2 \wedge  \e^{-tA_u}\pi_{s,\tau}^*\chi_1 \partial_\tau \left(\e^{i\langle \e^{tA_s}(x_u, \tau x_s), \eta \rangle} \e^{i\langle \e^{-tA_u}(\tau x_u, x_s), \xi \rangle}\right) \dd \tau.
\end{aligned}
$$
Denoting $g(\tau, x_u, x_s) = \e^{i\langle \e^{tA_s}(x_u, \tau x_s), \eta \rangle} \e^{i\langle \e^{-tA_u}(\tau x_u, x_s), \xi \rangle}$ we have
$$\partial_\tau g(\tau, x_u, x_s) = i\left(\langle \e^{tA_s}x_s, \eta_s\rangle + \langle\e^{-tA_u} x_u, \xi_u\rangle\right) g(\tau, x_u, x_s) = \dom_{\Cinf(M)}(\e^{-tc}),$$
because $|\e^{tA_s}x_s|, |\e^{-tA_u}x_u|Ê= \dom(\e^{-tc})$.
Repeating the process that led to (\ref{eq:partialtau}) 
but for derivatives of $\chi_1,\chi_2$ as test forms with successive integration by parts, 
we therefore obtain for any $N \in \mathbb{N}$:
$$
\begin{aligned}
\Big\vert\Bigl\langle \mathcal{K}_{\chi,t}&, \pi_1^*\left(\chi_1\e^{i \langle\xi_1, \cdot \rangle}\right)  \wedge \pi_2^*\left(\chi_2 \e^{i \langle\xi_2, \cdot \rangle}\right)\Bigr\rangle\Bigr\vert \\
& \leq C_{N, \chi_1, \chi_2} \e^{-tc} \left(1 + |\e^{tA_s}\eta_s| + |\e^{-tA_u} \xi_u|\right) \int_0^1 \Bigl(1 +|\tau \e^{tA_s} \eta_s + \xi_s| + |\tau \e^{-tA_u} \xi_u + \eta_u|\Bigr)^{-N} \dd \tau,
\end{aligned}
$$
where $\xi = (\xi_u, \xi_s)$ and $\eta = (\eta_u, \eta_s)$. Now assume $(\xi, \eta)$ is close to $N^*\Delta(T^*M)$, say
$$
\left|\frac{\xi}{|\xi|}+\frac{\eta}{|\eta|}\right|<\nu \text { and } 1-\nu<\frac{|\xi|}{|\eta|}<1+\nu
$$
for some $\nu > 0$. Then we have for any $\tau \in [0,1]$:
$$
|\tau \e^{tA_s} \eta_s + \xi_s| + |\tau \e^{-tA_u} \xi_u + \eta_u| \geq \left(1 - \e^{-tc}(1+\nu)\right)(|\xi_s| + |\eta_u|).
$$
As a consequence, if $\nu > 0$ is small enough so that $(1+\nu)\e^{-(t+\varepsilon)c} < 1$, for every $t \geq 0$, we obtain
$$
\left|\Bigl\langle \mathcal{K}_{\chi,t+\varepsilon}, \pi_1^*\left(\chi_1\e^{i \langle\xi, \cdot \rangle}\right)  \wedge \pi_2^*\left(\chi_2 \e^{i \langle\eta, \cdot \rangle}\right)\Bigr\rangle\right| \leq C'_{N, \chi_1, \chi_2} (1 + |\xi| + |\eta|)^{-N},
$$
which concludes.
\end{proof}

\begin{proof}[Proof of Proposition \ref{prop:resolventmorse}]
Fix $\varepsilon > 0$. For $a \in \crit(f)$, take $c_a, \Gamma_a, \chi_a$ as in Lemma \ref{lem:wfsetpropagator}. The proof of Lemma \ref{lem:wfsetpropagator} actually shows that for $\Re(s) > - c_a$, the integral
$$
G_{\chi_a, \varepsilon, s} = \int_0^\infty \e^{-ts} \chi_a \e^{-(t+\varepsilon){\widetilde{X}}}(\id-\widehat{\Pi})\chi_a \dd t
$$
converges as an operator $\Omega^\bullet(M) \to \mathcal{D}^{'\bullet}(M)$. Moreover, its Schwartz kernel $\mathcal{G}_{\chi_a, \varepsilon, s}$ is locally bounded in $\mathcal{D}^{'n}_{\Gamma_a}(M\times M)$ in the region $\{\Re(s) > -c_a\}$. We will need the following lemma.

\begin{lemm}\label{lem:farfromcrit} For any $\mu >0$, there is $\nu > 0$ with the following property. For every $x \in M$ such that $\mathrm{dist}(x, \crit(f)) \geq \mu$, it holds
$$\mathrm{dist}\left(x, \e^{-(t+\varepsilon){\widetilde{X}}}(x)\right) \geq \nu, \quad t \geq 0.$$
\end{lemm}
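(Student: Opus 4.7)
The central observation will be that $f$ strictly increases along $\psi(s):=\e^{-s\widetilde{X}}(x)$, since $\widetilde{X}=-\grad_g f$ makes $\psi$ a positive gradient trajectory: $\dot{\psi}(s)=\grad_g f(\psi(s))$ and hence $\frac{d}{ds}f(\psi(s))=|\grad_g f(\psi(s))|^2$. I will convert this definite increase of $f$ into a lower bound on the displacement $\mathrm{dist}(x,\psi(t+\varepsilon))$ via the fact that $f$ is globally Lipschitz.

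First, by compactness of $M$ and the fact that $\grad_g f$ vanishes exactly on $\crit(f)$, I choose $c_0>0$ such that $|\grad_g f|^2\geq c_0$ on the closed set $\{y:\mathrm{dist}(y,\crit f)\geq \mu/2\}$. Let $K=\|\grad_g f\|_{L^\infty}$; then $|\dot{\psi}|\leq K$, so for $s\leq s_0:=\mu/(2K)$ the trajectory satisfies $\mathrm{dist}(x,\psi(s))\leq sK\leq\mu/2$. Combined with $\mathrm{dist}(x,\crit f)\geq\mu$ and the triangle inequality, this gives $\mathrm{dist}(\psi(s),\crit f)\geq\mu/2$ for all $s\in[0,s_0]$. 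Setting $\tau_0=\min(\varepsilon,s_0)>0$ (which depends only on $\mu,\varepsilon$ and on $f,g$), I will integrate, using $\tau_0\leq\varepsilon\leq t+\varepsilon$, to obtain for every $t\geq 0$
$$f(\psi(t+\varepsilon))-f(x)=\int_0^{t+\varepsilon}|\grad_g f(\psi(s))|^2\,ds\geq\int_0^{\tau_0}c_0\,ds=c_0\tau_0.$$

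To conclude, I apply the Lipschitz bound $|f(y)-f(x)|\leq K\cdot\mathrm{dist}(x,y)$ (obtained by integrating $\grad_g f$ along a minimizing geodesic from $x$ to $y$) to $y=\psi(t+\varepsilon)$, which yields
$$\mathrm{dist}(x,\psi(t+\varepsilon))\geq \frac{f(\psi(t+\varepsilon))-f(x)}{K}\geq \frac{c_0\tau_0}{K}=:\nu>0,$$
with $\nu$ independent of $t\geq 0$ and of the particular point $x$ at distance $\geq\mu$ from $\crit(f)$. I do not anticipate any real obstacle: the only point worth checking is that the short-time confinement argument giving $\mathrm{dist}(\psi(s),\crit f)\geq\mu/2$ on $[0,\tau_0]$ is \emph{one-sided} (we do not need to follow $\psi$ all the way to $s=t+\varepsilon$), which is why the bound is uniform in $t$.
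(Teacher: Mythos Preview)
Your proof is correct and complete. It is also genuinely different from the paper's argument, and in fact more elementary.

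The paper proceeds by contradiction: assuming sequences $x_m$ and $t_m$ with $\mathrm{dist}(x_m,\e^{-t_m\widetilde X}(x_m))\to 0$ and $\mathrm{dist}(x_m,\crit f)\geq\mu$, it extracts limits, rules out bounded $t_m$ (no periodic orbits for a gradient flow), and then invokes the compactification of the moduli space of gradient trajectories by \emph{broken} curves. The limiting broken trajectory would have to revisit a critical point, contradicting the strict monotonicity of the Morse index along its pieces.

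Your approach avoids the moduli-space machinery entirely by exploiting the Lyapunov function $f$ directly: monotonicity of $f$ along $\psi$, a quantitative lower bound on $|\grad_g f|^2$ away from $\crit(f)$, a short-time confinement estimate, and the global Lipschitz bound for $f$. This yields an explicit $\nu=c_0\tau_0/K$ and uses nothing beyond elementary compactness on $M$. The paper's argument, by contrast, would adapt more readily to flows that are not gradients of a function but still admit a suitable Lyapunov structure and compactified trajectory spaces; in the present Morse--Smale gradient setting, however, your direct estimate is both shorter and sharper.
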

\begin{proof}
We proceed by contradiction. Suppose that there is $\mu > 0$ and sequences $x_m \in M$ and $t_m \geq \varepsilon$ such that $\mathrm{dist}\left(x_m, \e^{-t_m{\widetilde{X}}}(x_m)\right)Ê\to 0$ as $m \to \infty$ and $\mathrm{dist}(x_m, \crit(f)) \geq \mu$. Extracting a subsequence we may assume that $x_m \to x$, $t_m \to \infty$ (indeed if $t_m \to t_\infty < \infty$  then $x$ is a periodic point for $\widetilde{X}$, which does not exist) and for any $m$,
$$\e^{-t\widetilde{X}}(x_m) \to a \text{ and } \e^{t \widetilde{X}}(x_m) \to b  \text{ as } t \to \infty,$$
for some $a,b \in \crit(f)$. Since the space of broken curves $\overline{\Lie}(a,b)$ is compact (see \cite{audin2014morse}), we may assume that the sequence of curves $\gamma_m = \left\{\e^{t\widetilde{X}}(x_m),~t \in \mathbb{R}\right\}$ converges to a broken curve $\ell = (\ell^1, \dots, \ell^q) \in \overline{\Lie}(a,b)$ with $\ell^j \in \Lie(c_{j-1}, c_j)$ for some $c_0, \dots, c_q \in \crit(f)$ with $c_0 = a$ and $c_q = b$.  Because $x_m \to x$, the proof of \cite[Theorem 3.2.2]{audin2014morse} implies $x \in \ell^j$ for some $j$ so that $\e^{-t\widetilde{X}} x \to c_{j-1}$ as $t \to \infty$. Therefore replacing $x$ by $\e^{-t\widetilde{X}}(x)$ for $t$ big enough, we may assume that $x$ is contained in a Morse chart $\Omega(c_{j-1})$ near $c_{j-1}$. Then $c_{j-1} \neq a$. Indeed if it was not the case then we would have $\e^{-t_m\widetilde{X}}x_m \to a$ as $m \to \infty$ (since $x_m$ would be contained in $\Omega(a) \cap W^u(a)$ for big enough $m$ and $t_m \to \infty$), which is not the case since $\mathrm{dist}(x, \crit(f)) \geq \mu \implies  x \neq a$ and $\mathrm{dist}\left(x_m, \e^{-t_m{\widetilde{X}}}(x_m)\right)Ê\to 0$ as $m \to \infty$. Therefore the flow line of $x_m$ exits $\Omega(c_{j-1})$ in the past. We therefore obtain, since $\e^{-t_m\widetilde{X}} x_m \to x$, that there is $i < j-1$ so that $c_i = c_{j-1}$. This is absurd since the sequence $\bigl(\ind_f(c_i)\bigr)_{i=0,\dots,q}$ is strictly decreasing.
\end{proof}

By (\ref{eq:pimorse}) we have $\supp \mathcal{K}_{\widehat{\Pi}} \cap \Delta = \crit(f)$, where $\mathcal{K}_{\widehat{\Pi}}$ is the Schwartz kernel of $\widehat{\Pi}$ and $\Delta$ is the diagonal in $M\times M$; the same holds for $\e^{-(t+\varepsilon)\widetilde{X}}\widehat{\Pi} = \widehat{\Pi}$ (see \cite{dangwitten}). Moreover, Lemma \ref{lem:farfromcrit} implies that if $\chi \in \Cinf(M, [0,1])$ satisfies $\chi \equiv 1$ near $\Delta$ and has support close enough to $\Delta$, we have 
$$\chi \e^{-(t+\varepsilon)\widetilde{X}} \chi = \sum_a \chi_a \e^{-(t+\varepsilon)\widetilde{X}}\chi_a.$$
Let $c = \min_{a \in \crit(f)}  c_a$. For $\Re(s) > -c$,
$$
G_{\chi, \varepsilon, s} =\int_0^\infty \e^{-ts} \chi \e^{-(t+\varepsilon){\widetilde{X}}}(\id - \widehat{\Pi})\chi \dd t 
$$
defines an operator $\Omega^\bullet(M) \to \mathcal{D}^{'\bullet}(M)$, whose Schwartz kernel $\mathcal{G}_{\chi, \varepsilon, s}$ is locally bounded in $\mathcal{D}^{'n}_{\Gamma}(M\times M)$ in the region $\{\Re(s) > -c\}$, where $\Gamma = \bigcup_{a \in \crit(f)} \Gamma_a$.

Now for $\Re(s) \gg 0$, we have as a consequence of the Hille--Yosida Theorem applied to $\Lie_{\widetilde{X}}$ acting on suitable anisotropic spaces~\cite[3.2.3]{dangwitten}:
$$
\left(\Lie_{\widetilde{X}}+s\right)^{-1} = \int_0^\infty \e^{-ts} \e^{-t {\widetilde{X}}} \dd t: \Omega^\bullet(M)\mapsto \mathcal{D}^{\prime\bullet}(M) .
$$
Therefore for $\Re(s) \gg 0$, it holds
$$
G_{\chi, \varepsilon, s} = \chi \left(\Lie_{\widetilde{X}}+s\right)^{-1} (\id - \widehat{\Pi})\e^{-\varepsilon {\widetilde{X}}} \chi.
$$
Since both members are holomorphic in the region $\{\Re(s) > -c\}$ and coincide for $\Re(s) \gg 0$, they coincide in the region $\Re(s) > -c$. Let $\beta \in \Omega^1(M)$. We can compute for $\Re(s) \gg 0$, since $\iota_{\widetilde{X}} \widehat{\Pi} = 0$ by \cite{dangwitten},
$$
\begin{aligned}
\strf ~Ê\beta \iota_{\widetilde{X}} \left(\Lie_{\widetilde{X}}+s\right)^{-1} (\id - \widehat{\Pi})\e^{-\varepsilon \Lie_{\widetilde{X}}} &= \strf~Ê\beta \iota_{\widetilde{X}} G_{\chi, \varepsilon, s} \\
&= \int_0^\infty \e^{-ts} \strf~\beta \iota_{\widetilde{X}} \e^{-(t+\varepsilon) \widetilde{X}} (\id - \widehat{\Pi}) \\
&= \int_0^\infty \e^{-ts} \strf \beta \iota_{\widetilde{X}}\e^{-(t+\varepsilon) \widetilde{X}},
\end{aligned}
$$
where we could interchange the integral and the flat trace thanks to the bound obtained in Lemma \ref{lem:wfsetpropagator}. Now the Atiyah-Bott trace formula \cite{atiyah1967lefschetz} gives
$$
\strf \beta \iota_{\widetilde{X}} \e^{-(t+\varepsilon) \widetilde{X}} = 0
$$
since $\widetilde{X}$ vanishes at its critical points. By holomorphy this holds true for any $s$ such that $\Re(s) > -c$. In particular if $\lambda > 0$ is small enough
$$
\strf \beta \iota_{\widetilde{X}} \widehat{Y}(\id - \widehat{\Pi}) \e^{-\varepsilon \widetilde{X}} = \frac{1}{2i\pi} \int_{|s| = \lambda}\strf \beta \iota_{\widetilde{X}} \frac{\left(\Lie_{\widetilde{X}}+s\right)^{-1}}{s} (\id - \widehat{\Pi})\e^{-\varepsilon \Lie_{\widetilde{X}}} \dd s = 0,
$$
where $\displaystyle{\left(\Lie_{\widetilde{X}} + s\right)^{-1} = \widehat{Y} + \frac{\widehat{\Pi}}{s} + \dom(s)}.$ Therefore Proposition \ref{prop:resolventmorse} is proved in the case where $(E,\nabla)$ is the trivial bundle. The general case is handled similarly.
\end{proof}

\newcommand{\etalchar}[1]{$^{#1}$}

\end{document}